\documentclass[11pt]{amsart}

\usepackage{amsmath,amsthm,amsfonts,amssymb,bm,graphicx }

\usepackage{graphicx}
\usepackage{enumitem}
\usepackage{xparse}
\usepackage{tikz}
\usepackage{comment}
\usepackage{bm}
\usepackage[colorinlistoftodos]{todonotes}
\usepackage{cite}
\usepackage{multirow}
\usepackage{pifont}
\usepackage{amsaddr}
\usepackage{orcidlink}
\makeatletter
\newcommand{\citecomment}[2][]{\citen{#2}#1\citevar}
\newcommand{\citeone}[1]{\citecomment{#1}}
\newcommand{\citetwo}[2][]{\citecomment[,~#1]{#2}}
\newcommand{\citevar}{\@ifnextchar\bgroup{;~\citeone}{\@ifnextchar[{;~\citetwo}{]}}}
\newcommand{\citefirst}{\@ifnextchar\bgroup{\citeone}{\@ifnextchar[{\citetwo}{]}}}

\makeatother

\makeatletter
\newcommand{\addresseshere}{%
  \enddoc@text\let\enddoc@text\relax
}
\makeatother

\newcommand{\cmark}{\ding{51}}
\newcommand{\xmark}{\ding{55}}

\usepackage%[backref=page]
{hyperref}
\hypersetup{colorlinks=true,citecolor=blue,linkcolor=blue,urlcolor=blue}

\usepackage{geometry}
\geometry{
	a4paper,
	total={170mm,257mm},
	left=20mm,
	top=20mm,
}

\theoremstyle{plain}
\newtheorem{theorem}{Theorem}
\newtheorem*{theorem*}{Theorem}
\newtheorem{lemma}[theorem]{Lemma}

\newtheorem{proposition}[theorem]{Proposition}

\theoremstyle{remark}

\newtheorem{example}{Example}

\numberwithin{theorem}{section}

\numberwithin{equation}{section}

\def\N{\mathbb N}
\def\Z{\mathbb Z}
\def\R{\mathbb R}
\def\Q{\mathbb Q}

\def\O{\mathcal O}
\def\P{\mathcal P}

\def\ve{\varepsilon}

\begin{document}

\author{V\'it\v{e}zslav Kala$^{1}$ \orcidlink{0000-0001-5515-6801} ,
Ester Sgallov\'a$^1$
\orcidlink{0009-0008-6985-9215},
Magdal\'ena Tinkov\'a$^{1,2,3,*}$ \orcidlink{0000-0003-0874-9705}}

\title[Arithmetic of cubic number fields: Jacobi--Perron, Pythagoras, and indecomposables]
{Arithmetic of cubic number fields: \\  Jacobi--Perron, Pythagoras, and indecomposables}

\email{vitezslav.kala@matfyz.cuni.cz, ester.sgallova@centrum.cz, tinkova.magdalena@gmail.com}

\address{$^1$Charles University, Faculty of Mathematics and Physics, Department of Algebra, Sokolovsk\'{a} 83, 18600 Praha 8, Czech Republic}

\address{$^2$Czech Technical University in Prague, Faculty of Information Technology, Th\'akurova 9, 160 00 Praha 6, Czech Republic}

\address{$^3$TU Graz, Institute of Analysis and Number Theory, Kopernikusgasse 24/II, 8010 Graz, Austria}

\thanks{$^*$ Corresponding author.}

\keywords{Jacobi--Perron algorithm, multidimensional continued fraction, totally real number field, cubic number field, indecomposable integer, Pythagoras number}

\subjclass[2020]{11R16, 11R80, 11E25, 11J70}

\begin{abstract}
We study a new connection between multidimensional continued fractions, such as Jacobi--Perron algorithm, and additively indecomposable integers in totally real cubic number fields. First, we find the indecomposables of all signatures in Ennola's family of cubic fields, and use them to determine the Pythagoras numbers. Second, we compute a number of periodic JPA expansions, also in Shanks' family of simplest cubic fields. Finally, we compare these expansions with indecomposables to formulate our conclusions.
\end{abstract}

\setcounter{tocdepth}{1}  \maketitle 

\tableofcontents

\section{Introduction}

Motivated by Lagrange's 1770 result on the periodicity of the classical continued fraction for quadratic irrationalities, Hermite asked in 1849 whether there is a way of expressing cubic numbers with periodic expressions. Among the most influential and prominent approaches to this problem has been the \textit{Jacobi--Perron algorithm} (JPA, developed 1868 and 1907). Although it seems that JPA typically does not provide periodic expansions, recently Karpenkov \cite{Kar} used its modification to solve Hermite's problem in the totally real case.

The goal of this paper is to propose and explore a new connection between multidimensional continued fractions such as JPA and 
\textit{additively indecomposable integers} in totally real (cubic) number fields. Although these indecomposables are natural objects that were known and used already in 1945 by Siegel \cite{Si3}, they rose to a greater prominence only recently in connection with the study of the arithmetic of quadratic forms in number fields, e.g., the \textit{Pythagoras numbers}.
Let us now proceed by explaining the background  in more detail.

\medskip

The recent motivation for considering indecomposables comes from the study of universal quadratic forms over number fields. This topic has a long and beautiful history, starting with Lagrange's four square theorem which says that every positive integer can be written as the sum of four integer squares. In other words, the quadratic form $x^2+y^2+z^2+w^2$ is universal over $\Z$, i.e., it represents all elements of $\N(=\Z_{>0})$. This property was then studied for other (positive definite) quadratic forms and later extended to totally real number fields. In this case, coefficients of our quadratic form belong to  the ring $\O_K$ of algebraic integers of the field $K$. We say that such a form is \textit{universal} if it represents all elements of $\O_K^{+}$, the set of totally positive algebraic integers, i.e., of elements whose all conjugates are positive.

Every totally real number field possesses a universal quadratic form, which follows from the main theorem of \cite{HKK}, leading to the question of how many variables these universal forms must have. Blomer and Kala \cite{BK, Ka} showed that for every $M\in\N$, one can find infinitely many real quadratic fields which do not admit a universal quadratic form with less than $M$ variables. This result was then extended to cubic fields \cite{Ya}, multiquadratic fields \cite{KS}, and fields of degrees divisible by $2$ and $3$ \cite{Ka3}. In all these papers, indecomposable integers play an essential role.

We say that $\alpha\in\O_K^{+}$ is \textit{indecomposable} if one cannot express it as $\alpha=\beta+\gamma$ where $\beta,\gamma\in\O_K^{+}$. One of their first applications came in 1945 when Siegel \cite{Si3} proved that the sum of squares can be universal only over $\Q$ and $\Q(\sqrt{5})$ -- using  indecomposable integers (which he called {extremal elements}) in his proof. As we already mentioned, this connection was then crucial also in the aforementioned recent papers on more general universal forms (for more details, see the survey \cite{Ka survey}).

In the case of real quadratic fields $\Q(\sqrt{D})$, indecomposable integers are closely connected to the continued fraction of $\sqrt{D}$  (or $\frac{\sqrt{D}-1}{2}$ if $D\equiv 1\pmod 4$) \cite{Pe,DS}. Furthermore, we know their structure in several families of cubic fields \cite{KT,Ti2,GMT}, and some partial results for real biquadratic fields can be found in \cite{CLSTZ,KTZ, Ma}. Except for that, we do not have similar results about their structure for fields of higher degrees. However, Brunotte  \cite{Bru} proved that their norm is bounded, and one can take the discriminant of the field as this upper bound \cite[Theorem 5]{KY2}. However, this bound does not have to be sharp. Thus, it was further refined for quadratic fields in a series of articles \cite{DS,JK,Ka2,TV} and for several families of cubic fields \cite{Ti2,GMT}.

Before proceeding further, let us comment on our mention of \textit{families} in the last paragraph. In general, it is hard to study the properties of all number fields of given degree when we consider them ordered by, say, the discriminant. In particular, their class numbers or sizes of units behave quite erratically. However, the situation is much clearer in certain parametric families, such as Chowla's family of real quadratic fields $\Q(\sqrt{4a^2+1})$ \cite{CF}, Shanks' family of the simplest cubic fields \cite{Sh}  given as the rupture fields of polynomials $x^3-ax^2-(a+3)x-1$, or Ennola's family  of cubic fields corresponding to $x^3+(a-1)x^2-ax-1$ \cite{En}. In such a family, it is possible to express the (fundamental) units explicitly in terms of the parameter $a$ and to control the arithmetic behavior of these fields, including their indecomposables. 
Thus one gets an opportunity for proving results not just for a single number field, but rather for infinitely many of them at once -- yet in a setting that is simpler than dealing with all fields of given degree. However, one additional difficulty arises, as the nicest results may hold only for the order $\Z[\rho]$ generated by a root $\rho$ of the defining polynomial, rather than for the full ring of integers $\O_K$. Nevertheless, $\Z[\rho]$ typically equals the maximal order $\O_K$ for infinitely many values of the parameter $a$ (that include at least the squarefree values of some discriminant) -- and so, with this understanding, we will mostly work with the orders $\Z[\rho]$ in the present paper.

Besides totally positive indecomposables, it is also important to study indecomposables in other signatures $\mathfrak{s}$ (called \textit{$\mathfrak{s}$-indecomposables} in this paper), i.e., those integers which cannot be written as the sum of two elements with the same signature $\mathfrak{s}$. (For more details and precise definition, see Section \ref{sec:prelinf}.)
While these elements do not have a direct application to universal forms, they are key for determining the Pythagoras number (see below), as well as for our study of the Jacobi--Perron algorithm. 

In quadratic fields, $\mathfrak{s}$-indecomposables can be again constructed from usual continued fractions. The simplest cubic fields (which we also study in this paper) possess units of all signatures, and so their $\mathfrak{s}$-indecomposables can be obtained from the totally positive ones by multiplication by units. However, this is of course not true for all totally real number fields, and so representatives of $\mathfrak{s}$-indecomposables for the other signatures have to be dealt with separately. In this paper, we determine them for the family of {Ennola's cubic fields}.  
Besides the easy case of real quadratic fields, this is the first result on general  $\mathfrak{s}$-indecomposables.

\begin{theorem} \label{thm:m1}
Let $\rho$ be a root of the polynomial $x^3+(a-1)x^2-ax-1$ where $a\geq 3$. Then $1$, 
\begin{align*}
1+s\rho+\rho^2 \;&\text{ where } 1\leq s\leq a-1,\\
-v-(a(v-1)+w)\rho+(a(v-1)+w+1)\rho^2 \;&\text{ where } 1\leq v \leq a-1 \text{ and } \max\{1,v-1\}\leq w\leq a-1,\\
-1-u\rho+(u+2)\rho^2 \;&\text{ where } 0\leq u\leq a-2
\end{align*} 
are, up to multiplication by totally positive units, all the $\mathfrak{s}$-indecomposable integers in $\Z[\rho]$.
\end{theorem}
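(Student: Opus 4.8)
The plan is to combine explicit estimates for the three real embeddings of $\rho$ with the geometry of the signature orthants. First I would pin down the conjugates as functions of $a$: writing $f(x)=x^3+(a-1)x^2-ax-1$, elementary sign computations ($f(-a)<0<f(-(a-1))$, $f(1)<0<f(2)$, $f(-1)>0>f(0)$) locate one large root $\rho^{(1)}\in(-a,-(a-1))$, one root $\rho^{(2)}\in(1,2)$ just above $1$, and one small root $\rho^{(3)}\in(-1,0)$ near $0$, with Newton-step refinements giving $\rho^{(2)}=1+\tfrac{1}{a+1}+O(a^{-2})$ and $\rho^{(3)}=-\tfrac1a+O(a^{-2})$. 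These estimates fix, for each candidate $\alpha=x+y\rho+z\rho^2$, the signs of $\alpha^{(1)},\alpha^{(2)},\alpha^{(3)}$, hence its signature; this settles the easy direction that each listed element indeed has the advertised signature (totally positive for $1$ and $1+s\rho+\rho^2$, and the two remaining sign patterns for the other two families). The delicate case is the middle embedding $\rho^{(2)}$ for the family $-v-(a(v-1)+w)\rho+(a(v-1)+w+1)\rho^2$, where the leading terms cancel and one must use the $\tfrac1{a+1}$-correction together with the precise ranges of $v$ and $w$ to determine the sign.

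Next I would recall Ennola's explicit fundamental units and compute the subgroup of $(\Z/2)^3$ of signatures realized by units. Since multiplication by a totally positive unit preserves both signature and $\mathfrak s$-indecomposability, the classification is only needed up to this action, and --- crucially, because Ennola's fields do not have units of all signatures --- the signature cosets not hit by units must be treated by separate representatives; this is exactly why three families appear rather than one. I would organize the argument orthant by orthant: embedding $\Z[\rho]\hookrightarrow\R^3$ by conjugation, each signature $\mathfrak s$ corresponds to an open orthant $Q_{\mathfrak s}$, and an element of signature $\mathfrak s$ is $\mathfrak s$-indecomposable precisely when it is a lattice point of $Q_{\mathfrak s}$ admitting no splitting into two lattice points of $Q_{\mathfrak s}$, i.e. a boundary point of the convex hull of the nonzero lattice points in $Q_{\mathfrak s}$.

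For the forward direction (each listed $\alpha$ is $\mathfrak s$-indecomposable) I would argue by contradiction: a decomposition $\alpha=\beta+\gamma$ with $\beta,\gamma$ of signature $\mathfrak s$ yields, at each of the three embeddings, a sign constraint on $\beta=p+q\rho+r\rho^2$; writing these inequalities out with the root estimates and eliminating gives a bounded region that one checks contains no admissible lattice point. This is a finite but intricate computation, which I would streamline using the unit action to reduce to a fundamental domain before enumerating.

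The main obstacle is completeness: proving the list exhausts all $\mathfrak s$-indecomposables up to totally positive units. For this I would determine the full lower boundary ("staircase") of $\mathrm{conv}(Q_{\mathfrak s}\cap\Z[\rho]\setminus\{0\})$ in each relevant orthant, show it is periodic under the totally positive unit group with exactly the claimed representatives, and verify that every other lattice point of $Q_{\mathfrak s}$ lies above the staircase and hence decomposes. Controlling this boundary uniformly in $a$ --- especially matching the endpoints of the $v,w$-ranges with the unit translates so that no indecomposable is missed or double-counted --- is where the real work lies; the near-degenerate middle embedding again makes the relevant inequalities the most demanding part.
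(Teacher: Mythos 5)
Your overall architecture (root estimates to pin down signatures, the observation that units only realize four of the eight signatures so that the $(+,-,+)$ orbit must be handled by separate representatives, reduction modulo the totally positive unit action to a bounded region, then a finite check) matches the paper's strategy. But there is a genuine gap at the heart of your completeness argument: you assert that an element of signature $\mathfrak{s}$ is $\mathfrak{s}$-indecomposable \emph{precisely when} it lies on the boundary of the convex hull of the nonzero lattice points of $Q_{\mathfrak{s}}$, and you plan to conclude that everything ``above the staircase'' decomposes. Neither implication is available here. What is automatic is only that a decomposable $\alpha=\beta+\gamma$ is the midpoint of the segment from $\beta$ to $\alpha+\gamma$, hence not an extreme point; but a point in the interior of the convex hull is merely a convex combination of lattice points, and this does not produce a lattice point $\beta$ of signature $\mathfrak{s}$ with $\alpha-\beta$ again of signature $\mathfrak{s}$. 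In degree $\ge 3$ the boundary of the convex hull and the set of indecomposables genuinely differ, so ``lies above the staircase, hence decomposes'' is exactly the statement that needs proof, not a consequence of convexity. The paper avoids this by using the Thomas--Vasquez/Shintani simplicial cones: with $\gamma=-1+\rho^2$ and suitable proper units $\varepsilon_1,\varepsilon_2$, every element of signature $(+,-,+)$ has a totally positive unit multiple in $\overline{\mathcal C}(\gamma,\gamma\varepsilon_1,\gamma\varepsilon_2)\cup\overline{\mathcal C}(\gamma,\gamma\varepsilon_1,\gamma\varepsilon_1\varepsilon_2^{-1})$; if a barycentric coefficient exceeds $1$ one subtracts the corresponding generator (an explicit lattice decomposition), and otherwise the element lies in one of two parallelepipeds whose lattice points are enumerated by explicit congruences. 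The candidates that are decomposable are then killed by \emph{exhibited} two-term decompositions (Lemmas 3.1--3.2 of the paper), not by a convexity argument.

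You are also missing the idea that makes the indecomposability direction uniform in $a$: instead of your bounded-region lattice-point search (which would be painful to control for all $a$ simultaneously, especially at the near-degenerate embedding $\rho\approx 1$), the paper uses the codifferent. If $\delta\in\Z[\rho]^{\vee,\mathfrak{s}}$ satisfies $\operatorname{Tr}(\alpha\delta)=1$, then $\alpha$ cannot split as $\beta+\gamma$ with both summands of signature $\mathfrak{s}$, since each would contribute a positive integer to the trace. The element $\delta=\frac{1}{f'(\rho)}(\rho+a)$ gives $\operatorname{Tr}(\lambda_{v,w}\delta)=1$ for the whole triangular family at once; for the line $\mu_u$, where the minimal trace is $2$, a short supplementary analysis of the only possible shape of a decomposition finishes the argument. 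I would recommend replacing your convex-hull step by the cone/parallelepiped reduction plus explicit decompositions, and your region-check by the trace criterion; as written, the proposal does not close.
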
 

We already hinted that $\mathfrak{s}$-indecomposable integers are connected to the Pythagoras number. As the sum of squares is almost never universal over $\O_K$, one can instead focus only on those algebraic integers which actually can be written as the sum of squares. To be more precise, let $\O$ be a commutative ring, and let $\sum \O^2$ be the set of elements in $\O$ which can be written as the sum of squares of elements from $\O$. Moreover, let $\sum^{m}\O^2\subseteq \sum\O^2$ contain all the elements of $\O$ which can be expressed as the sum of at most $m$ squares. Then
\[
\P(\O)=\inf\bigg\{m\in\N\cup\{\infty\}\mid \sum\O^2={\sum}^{m}\O^2\bigg\}
\] 
is called the \textit{Pythagoras number} of the ring $\O$.

While there are many results on Pythagoras numbers of fields \cite{CEP, CDLR, Ho, Pf, Pr},  
in this paper, we focus on orders $\O\subseteq\O_K$ for which only quite little is known. In this case, $\P(\O)<\infty$, but Scharlau \cite{Sc2} used a nice construction in  multiquadratic fields to prove that  $\P(\O)$ can attain arbitrarily large values. However, Kala and Yatsyna \cite{KY} found an upper bound: they showed the existence of the function $f$ depending only on the degree $d$ of the field such that $\P(\O)\leq f(d)$. Furthermore, for $2\leq d\leq 5$, we have $\P(\O)\leq d+3$.

The Pythagoras number for all orders in real quadratic fields was determined by Peters \cite{Pet}. In particular, $\P(\O)=5$ for all but a few specific cases. 
However, we have such a complete characterization only in the quadratic case. Kr\'asensk\'y, Ra\v{s}ka, and Sgallov\'a \cite{KRS} obtained some partial results for orders in real biquadratic fields, mostly concerning the maximal order $\O_K$. 
Some recent results on the Pythagoras number can be also found in \cite{Kr,HH, GMT}. 

We determine the Pythagoras number for Ennola's cubic fields by following the result of Tinkov\'a \cite{Ti1} who proved that $\P(\Z[\rho])=6$ where $\rho$ generates a simplest cubic field, i.e.,  $\rho$ is a root of the polynomial $x^3-ax^2-(a+3)x-1$ with $a\geq 3$. Her proof was heavily based on the knowledge of (totally positive) indecomposable integers. Specifically, using her method and Theorem \ref{thm:m1}, we show the following:

\begin{theorem} \label{thm:m2}
Let $\rho$ be a root of the polynomial $x^3+(a-1)x^2-ax-1$ where $a\geq 4$. Then $\P(\Z[\rho])=6$.
\end{theorem}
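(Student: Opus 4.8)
The plan is to establish the two inequalities $\P(\Z[\rho])\le 6$ and $\P(\Z[\rho])\ge 6$ separately. For the upper bound I would simply invoke the general estimate of Kala and Yatsyna \cite{KY} quoted above: since $\Z[\rho]$ is an order in a cubic field ($d=3$), one has $\P(\Z[\rho])\le d+3=6$ for free. Thus the real content lies in the lower bound, i.e.\ in exhibiting a single element (in fact an explicit one-parameter family depending on $a$) that is a sum of squares in $\Z[\rho]$ but cannot be written as a sum of five squares. This is exactly the point where Tinkov\'a's method for the simplest cubic fields is adapted, and where the full list of $\mathfrak s$-indecomposables from Theorem~\ref{thm:m1} becomes essential: in the simplest cubic fields one has units of every signature, so square decompositions can be renormalised by totally positive units into a convenient region, whereas for Ennola's family this shortcut is unavailable and one must instead feed in the explicit representatives supplied by Theorem~\ref{thm:m1}.

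For the lower bound I would first write down an explicit totally positive $\alpha=\alpha(a)\in\Z[\rho]$, together with an explicit decomposition $\alpha=\sum_{i=1}^{6}\beta_i^2$, which shows at once that $\alpha\in\sum\Z[\rho]^2$; the natural candidates for $\alpha$ are small combinations of the indecomposables listed in Theorem~\ref{thm:m1}. The core of the argument is then to rule out $\alpha\in\sum^{5}\Z[\rho]^2$. I would start from the three real embeddings $\sigma_1,\sigma_2,\sigma_3$: any representation $\alpha=\sum_{i=1}^{5}\beta_i^2$ forces $\sigma_j(\beta_i)^2\le\sigma_j(\alpha)$ for all $i,j$, so each $\beta_i$ lies in an explicit region cut out by the conjugates of $\alpha$. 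Using the explicit shapes and conjugate sizes of the $\mathfrak s$-indecomposables from Theorem~\ref{thm:m1}, one can turn this into a uniformly describable list of admissible squares $\beta_i^2$ with $\alpha-\beta_i^2$ totally positive, and then show that no five of them add up to $\alpha$ while six do.

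The step I expect to be the main obstacle is making this last enumeration \emph{uniform in the parameter} $a\ge 4$. For a fixed field it is a finite computation, but here some conjugates of the relevant elements grow and others shrink with $a$, so a naive finiteness argument fails; one must instead track the admissible $\beta_i$ through the explicit $a$-dependent formulas of Theorem~\ref{thm:m1} and, where the archimedean constraints are not by themselves decisive, supplement them with a congruence condition modulo a suitably chosen modulus under which sums of five squares cannot meet the residue class of $\alpha$. I also expect the hypothesis $a\ge 4$ (rather than $a\ge 3$ as in Theorem~\ref{thm:m1}) to enter precisely here, with $a=3$ being a small exceptional field that would have to be checked, or excluded, by hand.
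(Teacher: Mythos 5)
Your plan is correct and follows essentially the same route as the paper: the upper bound comes from Kala--Yatsyna, and the lower bound is obtained by exhibiting an explicit $\gamma=\gamma(a)$ written as six nonzero squares and then enumerating, via the $\mathfrak s$-indecomposables of Theorem~\ref{thm:m1}, all squares $\omega^2\preceq\gamma$ to show five squares never suffice. The only deviation is your proposed congruence fallback, which turns out to be unnecessary --- the paper makes the enumeration uniform in $a$ (for $a\ge 12$) purely through the archimedean bounds combined with $\operatorname{Tr}(\gamma)\ge\operatorname{Tr}(\omega^2)$, $N(\gamma-\omega^2)\ge 0$ and the unit-size lemmas, handling $4\le a\le 11$ by separate elements and computer checks, and your guess that $a=3$ is genuinely exceptional is confirmed (there $\gamma$ is a sum of four squares and only the lower bound $5$ is obtained).
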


The first part of this paper is structured as follows. In Section \ref{sec:prelinf}, we review some (mostly) standard facts on totally real number fields, $\mathfrak{s}$-indecomposables, the method for finding them, and information about the simplest cubic fields and Ennola's cubic fields that are studied in this paper. Section \ref{Sec:indeennola} contains the proof of Theorem \ref{thm:m1} as well as several new results on $\mathfrak{s}$-indecomposables in Ennola's cubic fields. In Section \ref{sec:Pyth}, we proceed with the proof of Theorem \ref{thm:m2}. 

\medskip

In the second part, we turn our attention to the Jacobi--Perron algorithm and its connections with $\mathfrak{s}$-indecomposables. As we have mentioned before, for real quadratic fields $\Q(\sqrt{D})$ one can obtain all $\mathfrak{s}$-indecomposables using the continued fraction. 
Let us describe this relation in detail. For simplicity, let $D\equiv 2,3 \pmod 4$ be a squarefree positive integer (the case $D\equiv 1 \pmod4$ being similar), and let $\sqrt{D}=[u_0,\overline{u_1,u_2,\ldots, u_{s-1}u_{s}}]$ be the periodic continued fraction of $\sqrt{D}$. Moreover,  set
\[
\frac{p_i}{q_i}=[u_0,\ldots,u_i]
\] 
where $p_i$ and $q_i$ are coprime positive integers for each $i\in\N_0(=\Z_{\geq 0})$, and, moreover, let $p_{-1}=1$ and $q_{-1}=0$. Then the algebraic integers $\alpha_i=p_i+q_i\sqrt{D}$ are called \textit{convergents} (note that $\alpha_{s-1}$ is the fundamental unit). Moreover, we can define the elements $\alpha_{i,r}=\alpha_i+r\alpha_{i+1}$ where $0\leq r< u_{i+2}$ which we call \textit{semiconvergents}. Up to multiplication by $-1$ and conjugation, these semiconvergents cover all $\mathfrak{s}$-indecomposables in $\Q(\sqrt{D})$ \cite{Pe,DS}. For $i$ odd, we get the totally positive indecomposables, the case of even $i$ produces $\mathfrak{s}$-indecomposables in the signature $(+,-)$. 

To reach a similar result for fields of higher degrees, we cannot rely on the classical continued fraction as it is periodic only for quadratic irrationalities. And this motivation -- to find an algorithm that produces periodic expansions for all irrationalities of degrees greater than $2$, as proposed by Hermite -- started the interest in multidimensional continued fractions. However, the question of their periodicity is still open for most of them. Only for the algorithm recently introduced by Karpenkov \cite{Kar}, we know that it always gives the required periodic expansions in totally real cubic fields. This situation is partly caused by the fact that it is extremely hard to prove that some expansion is \textit{not} periodic. There is also some effort to define the usual continued fraction on algebraic integers -- see, e.g., \cite{MVV}.

In this paper, we study perhaps the most well-known of these algorithms, the \textit{Jacobi--Perron algorithm} (JPA; described in detail in Section \ref{sec:prelijpa}). This algorithm first appeared in the work of Jacobi \cite{Jac}, who was trying to find an algorithm that would solve Hermite's problem. He considered only the case of dimension $2$; his result was later extended to any dimension by Perron \cite{Pe2}. One of our motivations for choosing this algorithm (besides the pragmatic reason that JPA seems to work the best for our purposes) is that it gives a unit in the corresponding field, the so-called Hasse--Berstein unit \cite{BHunit}. However, in contrast to the classical continued fraction, this unit does not have to be fundamental \cite{Vo}, but in some cases, it is \cite{Ad,TB}.

Since the first appearance of JPA, many works discussed different families of periodic expansions. For example, Bernstein \cite{Be1,Be2,Be3} intensely studied JPA expansions of vectors $(\alpha,\alpha^2,\ldots,\alpha^{n-1})$ where $\alpha=\sqrt[n]{D}$ and $D $ satisfies some particular requirements. Moreover, we know that in every real  number field $K$ of degree $d$, we can find a $(d-1)$-tuple of elements in $K$ with periodic Jacobi--Perron expansion \cite{DPpern}. This expansion can even attain arbitrary period length \cite{LR,Vo}. For more information, we also refer to \cite{Bebook,Sch}.

In this paper, we study JPA expansions of specific vectors in the simplest \cite{Sh} and Ennola's cubic fields \cite{En,En1}. In Sections \ref{sec:jpasimplest} and \ref{sec:jpaennola} we show that these expansions are periodic. For example, we prove there the following theorem:

\begin{theorem} \label{thm:m3}
The Jacobi--Perron expansion of the vector $(1, |\tau|,\tau^2)$ is periodic whenever
\begin{enumerate}
    \item $\tau$ is a root of the polynomial $x^3+(a-1)x^2-ax-1$ where $a\geq 3$, or
    \item $\tau$ is a root of the polynomial $x^3-(a-1)x^2-ax-1$ where $a\geq 5$.
\end{enumerate}
\end{theorem}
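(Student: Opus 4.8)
The plan is to prove periodicity constructively: I would run the Jacobi--Perron algorithm on $(1,|\tau|,\tau^2)$ explicitly and exhibit the cycle, rather than invoke an abstract finiteness argument. Writing the remainder vectors as $\mathbf{x}^{(0)}=(1,|\tau|,\tau^2)$ and, in the normalization recalled in Section~\ref{sec:prelijpa}, passing from $(1,\alpha_1,\alpha_2)$ with $b_i=\lfloor\alpha_i\rfloor$ to $\bigl(1,\tfrac{\alpha_2-b_2}{\alpha_1-b_1},\tfrac{1}{\alpha_1-b_1}\bigr)$, one sees that since the starting coordinates lie in the cubic field $K=\Q(\tau)$ and each step is a $\Q$-rational operation (floor-subtraction and one division), every remainder coordinate stays in $K$. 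Thus the entire computation can be carried out symbolically, with each coordinate expressed as a polynomial in $\tau$ over $\Q$ whose coefficients are rational functions of the parameter $a$.

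The first genuine step is to pin down which root $\tau$ is and to estimate its size together with the sizes of $\tau^2$ and of the subsequent remainders as functions of $a$. For the cubic in case (1), the elementary symmetric functions (product of roots $=1$, sum $=1-a$, sum of pairwise products $=-a$) place the three real roots near $-a$, near $1$, and just below $0$ for large $a$, and analogously in case (2); this determines $\lfloor|\tau|\rfloor$, $\lfloor\tau^2\rfloor$, and the fractional part $\alpha_1-b_1$ that drives the first division. The crux is then to compute, step by step, the integer part of each remainder coordinate as an explicit expression in $a$ and to verify rigorously the defining inequalities $n\le\alpha<n+1$ for each floor. Since each coordinate is an element of $K$ written through $\tau$, checking $\lfloor\alpha\rfloor=n$ reduces to two polynomial inequalities in $a$ at the relevant real embedding, i.e.\ to sign conditions on fixed polynomials in $a$.

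Carrying the iteration forward, I expect the remainder vectors to return to a previously seen vector after a bounded number of steps, giving an eventually (and plausibly purely) periodic expansion whose period length and entries are uniform in $a$, possibly after a short, separately treated pre-period. Concretely, one computes $\mathbf{x}^{(1)},\mathbf{x}^{(2)},\dots$ until $\mathbf{x}^{(k+p)}=\mathbf{x}^{(k)}$; because each $\mathbf{x}^{(k)}$ is a concrete vector over $\Q(\tau)$ with coefficients rational in $a$, this equality is an identity valid simultaneously for all admissible $a$. The two families are handled by the same scheme, differing only in the explicit integer parts and in the length of the resulting period.

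The main obstacle I anticipate is precisely the uniform control of the integer parts across the whole range of the parameter (all $a\ge 3$, resp.\ $a\ge 5$). The algebraic numbers appearing as remainder coordinates grow or shrink with $a$, and their floors are themselves nonconstant functions of $a$, so establishing $\lfloor\alpha\rfloor=n(a)$ requires two-sided bounds that are delicate whenever a coordinate sits close to an integer. A secondary difficulty is that the cleanest pattern may fail for the smallest values of $a$, which I would isolate and verify by direct finite computation while proving the generic pattern for large $a$ via the symbolic inequalities above. As a fallback that guarantees periodicity even where the explicit pattern is awkward, one can combine the boundedness of all conjugates of the remainders with the fact that they are algebraic integers of bounded norm in the fixed field $K$, so that only finitely many values occur and the sequence must recur.
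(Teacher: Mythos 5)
Your main strategy is exactly the one the paper uses: for each of the six roots involved, Propositions \ref{prop:JPAennol1}--\ref{prop:JPAennol6} compute the inhomogeneous expansion symbolically, pin down each integer part $\bm{a}^{(k)}$ by two-sided polynomial inequalities in $a$ derived from the root estimates \eqref{eq:estimatesennola} (with the smallest parameter values checked directly), exhibit the recurrence $\bm{\alpha}^{(k+l_1)}=\bm{\alpha}^{(k)}$, and then transfer periodicity to the homogeneous expansion via Propositions \ref{lm:iJPA-JPA} and \ref{lm:iJPA-JPA,per}. Your anticipated difficulties (floors that are nonconstant in $a$, coordinates sitting near integers) are precisely where the paper's computations concentrate, so the proposal is a faithful blueprint of the actual proof.

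One caveat: your ``fallback'' argument is not valid. Boundedness of the remainders and of their conjugates is not automatic --- in the inhomogeneous algorithm the last coordinate of the next step is $1/(\alpha_1^{(k)}-a_1^{(k)})$, which blows up whenever a remainder is close to an integer, and in the homogeneous version the convergents are algebraic integers only up to unit multiples with no a priori norm bound. If such a finiteness argument worked, the periodicity of JPA for arbitrary cubic vectors would follow, whereas (as the paper notes in the introduction) this is a long-standing open problem. So there is no shortcut around the explicit computation: the periodicity here is genuinely a consequence of verifying the cycle by hand, not of a general recurrence principle.
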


Consequently, in Section \ref{sec:semi}, we propose the definitions \textit{convergents} and \textit{semiconvergents} which would be analogies of the same elements originating from the classical continued fraction. We discuss whether these elements are $\mathfrak{s}$-indecomposable; in some cases, they indeed are, but this is not always the case. For example, we show the following result as Theorem \ref{th.8.2}.

\begin{theorem}\label{th.1.4}
Let $\rho$, $\rho'$, $\rho''$, $\psi$, $\psi'$ and $\psi''$ be as in Subsections $\ref{subsec:ennola1polynomial}$ and $\ref{subsec:ennola2polynomial}$. Then
\begin{enumerate}
    \item all semiconvergents of JPA expansions of vectors $(1,-\rho',\rho'^2)$, $(1,-\psi',\psi'^2)$ and $(1,-\psi'',\psi''^2)$ are \emph{$\mathfrak{s}$-indecomposable} in their signatures,
    \item some semiconvergent of JPA expansions of vectors $(1,\rho,\rho^2)$, $(1,-\rho'',\rho''^2)$ and $(1,\psi,\psi^2)$ is \emph{$\mathfrak{s}$-decomposable} in its signature.
\end{enumerate}
\end{theorem}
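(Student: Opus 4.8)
The plan is to combine the explicit periodic JPA expansions underlying Theorem~\ref{thm:m3} with the complete classification of $\mathfrak{s}$-indecomposables from Theorem~\ref{thm:m1}. The starting point is to make the convergents and semiconvergents fully explicit: using the periodic expansions computed in Sections~\ref{sec:jpasimplest} and~\ref{sec:jpaennola} and the recurrence defining convergents in Section~\ref{sec:semi}, I would express each convergent $\alpha_i$ and each semiconvergent $\alpha_{i,r}=\alpha_i+r\alpha_{i+1}$ as an element of $\Z[\rho]$ (respectively $\Z[\psi]$) in the integral basis $1,\rho,\rho^2$. Since the expansions are periodic, it suffices to analyze the finitely many convergents within a single period; passage from one period to the next multiplies the convergents by the Hasse--Bernstein unit, so only these finitely many representatives per period need to be treated. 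I would then record the signature of each semiconvergent: the three real embeddings of $\Z[\rho]$ are governed by the positions of the roots $\rho,\rho',\rho''$ (respectively $\psi,\psi',\psi''$) of the Ennola polynomials from Subsections~\ref{subsec:ennola1polynomial} and~\ref{subsec:ennola2polynomial}, which are known explicitly in terms of $a$, so the sign of each conjugate can be read off from the coordinate expression (with small values of $a$ checked separately where needed).

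For part~(1), concerning the vectors $(1,-\rho',\rho'^2)$, $(1,-\psi',\psi'^2)$ and $(1,-\psi'',\psi''^2)$, the goal is to show that \emph{every} semiconvergent is $\mathfrak{s}$-indecomposable. The approach is to match each computed semiconvergent, after multiplication by a suitable totally positive unit, to one of the elements listed in Theorem~\ref{thm:m1}. Because that list is exhaustive up to totally positive units, such an identification immediately yields $\mathfrak{s}$-indecomposability in the corresponding signature. Concretely, I would compare the coordinate vectors of the semiconvergents with the parametrized families $1+s\rho+\rho^2$, $-v-(a(v-1)+w)\rho+(a(v-1)+w+1)\rho^2$, and $-1-u\rho+(u+2)\rho^2$ from Theorem~\ref{thm:m1}, identifying the index $r$ with the parameter $s$, $w$, or $u$ as appropriate.

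For part~(2), concerning $(1,\rho,\rho^2)$, $(1,-\rho'',\rho''^2)$ and $(1,\psi,\psi^2)$, it suffices to exhibit for each vector a single $\mathfrak{s}$-decomposable semiconvergent. I would do this by producing an explicit splitting $\alpha=\beta+\gamma$ with $\beta$ and $\gamma$ of the same signature as $\alpha$; alternatively, one shows that the chosen semiconvergent is not a totally positive unit multiple of any element on the Theorem~\ref{thm:m1} list, whence exhaustiveness forces it to be $\mathfrak{s}$-decomposable. Since only a single witness per vector is required, a direct coordinate computation should suffice here.

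The main obstacle is the matching in part~(1). While writing down the semiconvergents is routine once the expansions are known, verifying that the entire periodic family reduces, modulo totally positive units, into the finite list of Theorem~\ref{thm:m1} is delicate: for each semiconvergent one must exhibit the precise totally positive unit that normalizes it to a listed representative, and this requires simultaneously controlling the signs of all three conjugates. The explicit knowledge of the fundamental units of Ennola's fields (recalled in Section~\ref{sec:prelinf}) makes this feasible but computationally involved, and care is needed to align the range of the semiconvergent index $r$ with the bounds on the parameters $s$, $w$, $u$ in Theorem~\ref{thm:m1}; by contrast, the decomposable cases in part~(2) are comparatively easy, since a single explicit decomposition settles each one.
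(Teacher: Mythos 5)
Your proposal matches the paper's proof in both structure and substance: the paper also reduces to the finitely many (semi)convergents in one period, exhibits for each one in part~(1) an explicit unit $\varepsilon$ and a representative $\alpha$ from the classification of Theorem~\ref{thm:m1} with $\delta_{i,j}^{(k)}=\alpha\varepsilon$ (recorded in Tables~\ref{tab:ennola2}, \ref{tab:ennola1} and the appendix tables), and for part~(2) flags semiconvergents that are not associated to any listed representative, which by exhaustiveness of the classification forces $\mathfrak{s}$-decomposability. The only cosmetic discrepancy is your use of the quadratic-field notation $\alpha_i+r\alpha_{i+1}$ instead of the paper's JPA semiconvergents $\delta_{i,j}^{(k)}=\beta_i^{(k)}-j\beta_1^{(k)}$, but the intended computation is the same.
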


To help resolve this ambiguity, in Section \ref{sec:exper}, we provide similar data for concrete examples of cubic fields not belonging to the above families. In Section \ref{sec:gensemi}, we generalize our definitions and study other linear combinations of convergents.

To conclude, it seems that the Jacobi--Perron algorithm sometimes gives nice results in relation to $\mathfrak{s}$-indecomposables. 
However, so far, we have not found such a close connection as in the case of the classical continued fraction. That was a reason why we also started to study other multidimensional continued fraction algorithms; we discuss some of our experiments in Section \ref{sec:othermulti}. %We plan to summarize our results in this direction in a forthcoming article.
We can also mention an alternative approach to these general algorithms in the paper of \v Rada, Starosta, and Kala \cite{RSK}.  

%\v Rada, Starosta, and Kala \cite{RSK} are finishing a paper on an alternative approach to these general algorithms.

Finally, note that many of the arguments in Sections 6--9 are primarily lengthy, straightforward computations. In such cases, we often only explain the method and give an example in the main body of the paper, leaving most of the actual calculations to the Appendices A6--A9 \cite{KST}.
The computer programs that we used are available online at \url{https://sites.google.com/view/tinkovamagdalena/codes}.
This article is partly based on the theses \cite{Sg,Ti3}.

\section{Preliminaries on number fields} \label{sec:prelinf}

Let $K$ be a totally real number field of degree $d$, and let us denote by $\O_K$ its ring of algebraic integers. For an element $\alpha\in K$, we will define the \textit{trace} of $\alpha$ and the \textit{norm} of $\alpha$ as
\[
\text{Tr}(\alpha)=\sum_{i=1}^d\sigma_i(\alpha) \qquad\text{ and }\qquad N(\alpha)= \prod_{i=1}^d\sigma_i(\alpha) 
\]
where $\sigma_i$ are all the embedding of $K$ into $\R$.
The \textit{signature} of $\alpha\in K$ is the $d$-tuple of signs of the form
\[
\mathfrak{s}(\alpha)=\left(\text{sgn}(\sigma_1(\alpha)), \text{sgn}(\sigma_2(\alpha)),\ldots,\text{sgn}(\sigma_d(\alpha))\right)
\] 
where $\text{sgn}$ is the signum function. Throughout this paper, we will replace $\pm 1$ in $\mathfrak{s}(\alpha)$ by the symbols $+$ and $-$. Moreover, we will slightly abuse this notation and use the symbol $\mathfrak{s}$ also for a $d$-tuple of given signs. 

Let $\O\subseteq \O_K$ be an order in a number field $K$. Then $\O^\times$ denotes its group of units. We will use the symbol $\O^{\mathfrak{s}}$ to denote the subset of those elements in $\O$ which have the signature $\mathfrak{s}$. The element $\alpha\in\O^{\mathfrak{s}}$ is \textit{$\mathfrak s$-indecomposable} in $\O$ if it cannot be written as $\alpha=\beta+\gamma$ for any $\beta,\gamma\in\O^{\mathfrak{s}}$. Otherwise, we say that $\alpha$ is \textit{$\mathfrak{s}$-decomposable} in $\O$. The particular case of totally positive indecomposable integers, i.e., for which $\mathfrak{s}=(+,+,\ldots,+)$, was intensively studied in the past. Moreover, we say that elements $\alpha,\beta\in\O$ are \textit{associated} if there exists a unit $\varepsilon\in\O^\times$ such that $\alpha=\varepsilon\beta$. Similarly, we will also use the symbol $X^{\mathfrak{s}}$ to denote the set of all elements in $X$ with the signature $\mathfrak{s}$ for $X$ being an arbitrary subset of $K$. 

For $\O$, we define the \textit{codifferent} of $\O$ as
\[
\O^{\vee}=\{\delta\in K; \text{Tr}(\alpha\delta)\in\Z \text{ for all }\alpha\in \O\}.
\]
Moreover, if $\O=\Z[\eta]$ for some $\eta\in \O$ with the minimal polynomial $f$, we know that
\[
\O^{\vee}=\frac{1}{f'(\eta)}\Z[\eta]
\]
where $f'$ is the derivative of $f$ \cite[Proposition 4.17]{N}. It is easy to see that if for $\alpha\in\O^{\mathfrak s}$, there exists $\delta\in\O^{\vee,\mathfrak{s}}$ such that $\text{Tr}(\alpha\delta)=1$, then $\alpha$ is $\mathfrak{s}$-indecomposable in $\O$. However, the opposite implication is not true in general.  

\subsection{Method for the determination of $\mathfrak{s}$-indecomposables} \label{subsec:method}
Now we will describe a method for the determination of candidates on totally positive indecomposables which we developed in \cite[Section 4]{KT}, and show how we can modify it to get $\mathfrak{s}$-indecomposables for some other signatures $\mathfrak{s}$.

Let $\sigma:K\rightarrow\R^d$ be the diagonal embedding defined by $\sigma(\alpha)=(\sigma_1(\alpha),\sigma_2(\alpha),\ldots,\sigma_d(\alpha))$. Moreover, let us denote by $\R^{d,+}=\{(v_1,\dots,v_d)\in\R^d, v_i>0\text{ for all }i\}$ the totally positive octant of $\R^d$. In this octant, we can find images of all totally positive elements in $K$.

When we consider the fundamental domain for the action of multiplication by totally positive units in $K$, we know that this domain can be chosen as a polyhedric cone $\mathcal{Q}$ by Shintani's unit theorem \cite[Thm (9.3)]{Ne}. Recall that a polyhedric cone is a finite disjoint union of simplicial cones. It means that for every element $\alpha\in \O^{+}$, there exists a totally positive unit $\varepsilon\in\O^{\times,+}$ such that the image $\sigma(\alpha\varepsilon)$ belongs to one of these simplicial cones. We will use the symbol $\mathcal{C}(\alpha_1,\ldots,\alpha_e)=\R^{+}\alpha_1+\cdots+\R^{+}\alpha_e$ for these cones, and $\overline{\mathcal{C}}(\alpha_1,\ldots,\alpha_e)$ for their closures given by $\overline{\mathcal{C}}(\alpha_1,\ldots,\alpha_e)=\R_0^{+}\alpha_1+\cdots+\R_0^{+}\alpha_e$.

For the case on the totally real cubic fields, one possible choice of these simplicial cones was described by Thomas and Vasquez \cite[Theorem 1]{ThV}. Let $\varepsilon_1,\varepsilon_2\in\O^{\times,+}$ be two units satisfying some additional properties (in particular, they are \textit{proper} as defined in \cite{ThV}). Then
\begin{align*}
\mathcal Q&= \mathcal C(1,\ve_1,\ve_2)\sqcup \mathcal C(1,\ve_1,\ve_1\ve_2^{-1})\sqcup\mathcal C(1,\ve_1)\sqcup\mathcal C(1,\ve_2)\sqcup\mathcal C(1,\ve_1\ve_2^{-1})\sqcup\mathcal C(1)\\
&\subset
\overline{\mathcal C}(1,\ve_1,\ve_2)\cup \overline{\mathcal C}(1,\ve_1,\ve_1\ve_2^{-1}),
\end{align*}
where $\sqcup$ is disjoint union. Moreover, to get totally positive indecomposables, we can restrict to some bounded subsets of these two sets. More concretely, let $\mathcal{D}(\alpha_1,\alpha_2,\alpha_3)=[0,1]\alpha_1+[0,1]\alpha_2+[0,1]\alpha_3$; we will call such sets parallelepipeds. Then, if $\alpha\in\O^{+}$, there exists a totally positive unit $\varepsilon\in\O^{\times,+}$ such that $\alpha\varepsilon$ lies in $\overline{\mathcal C}(1,\ve_1,\ve_2)$ or $\overline{\mathcal C}(1,\ve_1,\ve_1\ve_2^{-1})$. If $\alpha\varepsilon\in\overline{\mathcal C}(1,\ve_1,\ve_2)$, then we can express it as $\alpha\varepsilon=t_1+t_2\ve_1+t_3\ve_2$ for some $t_1,t_2,t_3\in\R_0^+$. However, if $t_1>1$, then $\alpha\varepsilon$ is totally greater than $1$, which implies that it is decomposable. We can draw a similar conclusion for coefficients $t_2$ and $t_3$, and the same is also true for $\alpha\varepsilon$ lying in $\overline{\mathcal C}(1,\ve_1,\ve_1\ve_2^{-1})$. Therefore, all candidates on the indecomposable integers are contained either in $\mathcal{D}(1,\varepsilon_1,\varepsilon_2)$, or in $\mathcal{D}(1,\ve_1,\ve_1\ve_2^{-1})$. Thus, to get all totally positive indecomposables in $\O$ (up to multiplication by totally positive units), it is enough to determine all elements from $\O$ belonging to these two sets and decide which of them are indeed indecomposable.

This way, we can get all totally positive indecomposables in $\O$. Moreover, if the set $\O^{\times,\mathfrak{s}}$ is nonempty for a given signature $\mathfrak{s}$, then all $\mathfrak{s}$-indecomposables can be expressed as $\alpha\varepsilon$ where $\varepsilon\in\O^{\times,\mathfrak{s}}$ and $\alpha$ is a totally positive indecomposable. However, in our fields, it can occur that $\O^{\times,\mathfrak{s}}=\emptyset$ for some $\mathfrak{s}$. In that case, $\mathfrak{s}$-indecomposables are not associated with totally positive indecomposables. For them, we have to slightly modify our method. Let us take some element $\gamma\in\O^{\mathfrak{s}}$, and let $\varepsilon_1$ and $\varepsilon_2$ be as above. Then $\overline{\mathcal{C}} (\gamma, \gamma \ve_1, \gamma \ve_2) \cup \overline{\mathcal{C}} (\gamma, \gamma \ve_1, \gamma \ve_1 \ve_2^{-1})$
contains a fundamental domain for $\O^{\mathfrak{s}}$ modulo multiplication by $\O^{\times, +}$, i.e., for each element $\alpha \in \O^{\mathfrak{s}}$ there is (not necessarily unique) $\ve \in \O^{\times, +}$ such that $\alpha \ve \in \overline{\mathcal{C}} (\gamma, \gamma \ve_1, \gamma \ve_2) \cup \overline{\mathcal{C}} (\gamma, \gamma \ve_1, \gamma \ve_1 \ve_2^{-1})$. As before, if $\alpha \ve$ does not lie in $\mathcal{D}(\gamma, \gamma \ve_1, \gamma \ve_2) \cup \mathcal{D}(\gamma, \gamma \ve_1, \gamma \ve_1 \ve_2^{-1})$, then $\alpha$ is $\mathfrak{s}$-decomposable. For the finite set of elements from $\O^{\mathfrak{s}}$ obtained this way, we can decide similarly as before (using, e.g., the codifferent of $\O$) which of them are $\mathfrak{s}$-indecomposable.

\subsection{The simplest cubic fields} 
In this paper, we will study in detail two families of totally real cubic number fields. One of them are the so-called simplest cubic fields $\Q(\rho)$ which are generated by a root $\rho$ of the polynomial $f(x)=x^3-ax^2-(a+3)x-1$ with $a\geq -1$ \cite{Sh}. In this paper, we denote the roots of polynomial $f$ as $a+1<\rho$, $-2<\rho'<-1$, and $-1<\rho''<0$.
Moreover, if $a\geq 7$, then  
\begin{equation} \label{eq:estimates}
a+1<\rho<a+1+\frac{2}{a},\ \ -1-\frac{1}{a+1}<\rho'<-1-\frac{1}{a+2},\text{ and } -\frac{1}{a+2}<\rho''<-\frac{1}{a+3}.  
\end{equation}  
These bounds mostly comes from \cite{LP}, only the original estimate for $\rho'$ is slightly refined (see, e.g., \cite{Ti1}).

We know that $\O_K=\Z[\rho]$ in infinitely many cases of $a$, e.g., whenever the square root $a^2+3a+9$ of the discriminant of $K$ is squarefree. Moreover, the fields $\Q(\rho)$ are Galois. A system of fundamental units in $\Z[\rho]$ is formed by conjugates $\rho$ and $\rho'$ \cite{Th}. It follows that in these fields, we have units of all signatures, and, moreover, every totally positive unit is a square. 

Applying the method described in Subsection \ref{subsec:method}, we determined the structure of totally positive indecomposables as follows:

\begin{theorem}[{\cite[Theorem 1.2]{KT}}] \label{thm:indesimplest}
Let $K$ be the simplest cubic field with the parameter
$a\in\Z_{\geq -1}$. 
The elements $1$, $1+\rho+\rho^2$, and $-v-w\rho+(v+1)\rho^2$ where $0\leq v\leq a$ and $v(a+2)+1\leq w\leq (v+1)(a+1)$ are, up to multiplication by totally positive units, all the totally positive indecomposable elements in $\Z[\rho]$.
\end{theorem}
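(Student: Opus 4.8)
The plan is to apply the method of Subsection \ref{subsec:method} verbatim, since the simplest cubic field meets all of its hypotheses: it is a totally real cubic field in which $\rho,\rho'$ are fundamental units and (as recorded above) every totally positive unit is a square, so that $\O^{\times,+}=\langle\rho^2,\rho'^2\rangle$. First I would fix two totally positive units $\varepsilon_1,\varepsilon_2$ to feed into the Thomas--Vasquez decomposition; the natural choice is $\varepsilon_1=\rho^2$, $\varepsilon_2=\rho'^2$ (or suitable products of these), and I would check that they are \emph{proper} in the sense of \cite{ThV}, so that $\overline{\mathcal C}(1,\varepsilon_1,\varepsilon_2)\cup\overline{\mathcal C}(1,\varepsilon_1,\varepsilon_1\varepsilon_2^{-1})$ contains a fundamental domain for $\O^{+}$ modulo $\O^{\times,+}$.

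With the units fixed, the reduction to finitely many lattice points is immediate: by the method, every totally positive indecomposable is, after multiplication by a totally positive unit, an element of $\Z[\rho]$ lying in one of the two parallelepipeds $\mathcal{D}(1,\varepsilon_1,\varepsilon_2)$ or $\mathcal{D}(1,\varepsilon_1,\varepsilon_1\varepsilon_2^{-1})$. Writing a candidate as $\alpha=x+y\rho+z\rho^2$ and solving the linear system $\sigma_j(\alpha)=t_1+t_2\sigma_j(\varepsilon_1)+t_3\sigma_j(\varepsilon_2)$ for the barycentric coordinates $(t_1,t_2,t_3)$, the membership conditions $t_i\in[0,1]$ combined with the conjugate estimates \eqref{eq:estimates} should confine the integer triples $(x,y,z)$ to explicit $a$-dependent ranges. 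Enumerating these lattice points is the computational core, and I expect it to return exactly the claimed list: $1$, $1+\rho+\rho^2$, and the family $-v-w\rho+(v+1)\rho^2$ with $0\le v\le a$ and $v(a+2)+1\le w\le(v+1)(a+1)$.

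It then remains to decide which of these are genuinely indecomposable and to remove redundancies up to totally positive units. For indecomposability I would use the codifferent criterion stated in Subsection \ref{subsec:method}: for each listed $\alpha$, exhibit a totally positive $\delta\in\O^{\vee}=\tfrac{1}{f'(\rho)}\Z[\rho]$ with $\Tr(\alpha\delta)=1$; any decomposition $\alpha=\beta+\gamma$ into totally positive integers would then force $\Tr(\beta\delta)+\Tr(\gamma\delta)=1$ with both summands positive integers, a contradiction. For completeness in the other direction, I would show that every remaining lattice point of the two parallelepipeds is either a totally positive unit multiple of a listed element or decomposable, the latter by producing an explicit totally positive integer $\beta$ with $\alpha-\beta$ also totally positive.

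The hard part will be carrying out the last two steps \emph{uniformly in $a$}: making the coordinate ranges collapse to precisely $v(a+2)+1\le w\le(v+1)(a+1)$, so that no spurious point survives and none of the asserted ones is lost, and finding codifferent certificates $\delta$ that work for the whole family at once rather than case by case. The boundary values of $v$ and $w$ are the delicate points, since there the candidate sits on a face of a cone shared by the two parallelepipeds; one must check separately that such an element is truly indecomposable and is not merely a unit-translate already counted. This bookkeeping of associated elements across the overlap of $\mathcal{D}(1,\varepsilon_1,\varepsilon_2)$ and $\mathcal{D}(1,\varepsilon_1,\varepsilon_1\varepsilon_2^{-1})$ is where I expect most of the care to be required.
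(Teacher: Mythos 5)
Your plan is essentially the paper's own route: this theorem is not reproved here but quoted from \cite{KT}, and the method you describe --- Thomas--Vasquez cones for a pair of proper totally positive units, reduction to the two parallelepipeds $\mathcal{D}(1,\ve_1,\ve_2)$ and $\mathcal{D}(1,\ve_1,\ve_1\ve_2^{-1})$, enumeration of lattice points, and codifferent trace certificates --- is exactly the one laid out in Subsection \ref{subsec:method} and attributed to \cite[Section 4]{KT}. So the approach is the right one, and the remaining work is the enumeration you correctly identify as the computational core.

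Two caveats on the plan as written. First, your blanket strategy of certifying indecomposability by finding a totally positive $\delta\in\O^{\vee}$ with $\Tr(\alpha\delta)=1$ does not work for the exceptional element $1+\rho+\rho^2$: its minimal trace against totally positive codifferent elements is $2$, so that element needs a direct argument ruling out a decomposition into two trace-$1$ pieces, exactly as is done for the analogous line $\mu_u$ in Proposition \ref{prop:indeennolaneg} of this paper. Second, the statement covers all $a\ge -1$, while the conjugate estimates \eqref{eq:estimates} you invoke hold only for $a\ge 7$; the finitely many small parameters have to be checked separately (here the order $\Z[\rho]$ is still well defined even when it is not maximal, which is why the hypothesis $\O_K=\Z[\rho]$ could be dropped). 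With those two adjustments your outline matches the cited proof.
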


Note that Theorem 1.2 in \cite{KT} also contained the assumption that $\O_K=\Z[\rho]$. However, this assumption was not necessary anywhere in the proofs, and so Theorem 2.1 above holds as stated.
Moreover, this theorem also provides us all $\mathfrak{s}$-indecomposables $\beta$ for every signature $\mathfrak{s}$. Since $\Q(\rho)$ contains units of all signatures, every such $\beta$ can be expressed as $\beta=\alpha\varepsilon$ where $\alpha$ is one of the elements from Theorem \ref{thm:indesimplest}, and $\varepsilon$ is a unit with the signature $\mathfrak{s}$.

Moreover, in this article, we use the following notation:
\[
\theta_{v,W}=-v-(v(a+2)+1+W)\rho+(v+1)\rho^2
\]
where $0\leq v\leq a$ and $0\leq W\leq a-v$.

\subsection{Ennola's cubic fields}

The second family are Ennola's cubic fields \cite{En,En1}. They are generated by a root $\rho>1$ of the polynomial $x^3+(a-1)x^2-ax-1$ where $a\geq 3$. In this case, we denote the conjugates of $\rho$ as 
\begin{equation} \label{eq:estimatesennola}
1+\frac{1}{a+3}<\rho<1+\frac{1}{a+2}, \ \
-\frac{1}{a}<\rho'<-\frac{1}{a+1}, \text{ and }
-a+\frac{1}{a^2+a}<\rho''<-a+\frac{1}{a^2}.
\end{equation}
In Ennola's cubic fields, a system of fundamental units of $\Z[\rho]$ is formed by $\rho$ and $\rho-1$ \cite{Th}. Note that in some cases these units also give a system of fundamental units in the maximal order $\O_K$ \cite{En1}. They have the signature $(+,-,-)$. Thus, the order $\Z[\rho]$ contains only units of signatures $(+,-,-)$, $(-,+,+)$, $(+,+,+)$ and $(-,-,-)$. It also implies that in $\Z[\rho]$, we have two separate sets of $\mathfrak{s}$-indecomposables which are not associated by multiplying by units (of any signatures). 

The set of totally positive indecomposables was determined in \cite{KT} in the following way:

\begin{proposition}[{\cite[Proposition 8.1]{KT}}] \label{prop:indeennola}
Let $\rho$ be a root of the polynomial $x^3+(a-1)x^2-ax-1$ where $a\geq 3$. Then $1$ and $1+s\rho+\rho^2$ where $1\leq s\leq a-1$ are, up to multiplication by totally positive units, all the totally positive indecomposable elements in $\Z[\rho]$. 
\end{proposition}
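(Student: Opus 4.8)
The plan is to apply the method of Subsection~\ref{subsec:method} verbatim, specialized to the totally positive signature $\mathfrak s=(+,+,+)$. Because the totally positive units already form a finite-index subgroup of $\O^\times$ (in fact $\O^{\times,+}\neq\emptyset$ trivially contains $1$), no auxiliary shift by an element $\gamma$ is needed, and the problem reduces to enumerating the integers of $\Z[\rho]$ lying in two explicit parallelepipeds. First I would fix two \emph{proper} totally positive units $\ve_1,\ve_2$ in the sense of Thomas--Vasquez. Since the fundamental units $\rho$ and $\rho-1$ both have signature $(+,-,-)$, the group $\O^{\times,+}$ has rank two and is generated by totally positive products such as $\rho^2$ and $(\rho-1)^2$ (equivalently $\rho(\rho-1)$); I would pick a suitable pair among these and verify the properness hypothesis directly from the sharp bounds \eqref{eq:estimatesennola}.

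Next I would translate membership in $\overline{\mathcal C}(1,\ve_1,\ve_2)\cup\overline{\mathcal C}(1,\ve_1,\ve_1\ve_2^{-1})$ into coordinate inequalities. Writing a candidate as $\alpha=x+y\rho+z\rho^2$ with $x,y,z\in\Z$ and a generic point of the closed cone as $t_1\cdot 1+t_2\ve_1+t_3\ve_2$, applying the three embeddings $\sigma_1,\sigma_2,\sigma_3$ yields a linear system whose solution expresses each $t_i$ as an explicit linear combination of the conjugates $\sigma_j(\alpha)$. The requirement that $\alpha$ lie in the parallelepiped $\mathcal D(1,\ve_1,\ve_2)$, i.e.\ that $0\le t_i\le 1$ for all $i$, then becomes a finite system of inequalities in $x,y,z$ once $\rho,\rho',\rho''$ and the conjugates of $\ve_1,\ve_2$ are replaced by the bounds of \eqref{eq:estimatesennola}. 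Solving these inequalities (and doing the same for $\mathcal D(1,\ve_1,\ve_1\ve_2^{-1})$) produces the finite list of lattice points to be tested, which I expect to consist of $1$, the elements $1+s\rho+\rho^2$, and a bounded number of further candidates.

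Finally, for each surviving candidate I would decide indecomposability. For $1$ and for each $1+s\rho+\rho^2$ with $1\le s\le a-1$ I would prove total indecomposability by exhibiting $\delta\in\O^{\vee,+}$ with $\Tr(\alpha\delta)=1$, using $\O^\vee=\frac{1}{f'(\rho)}\Z[\rho]$ for $f(x)=x^3+(a-1)x^2-ax-1$; this amounts to choosing the correct numerator in $\Z[\rho]$, checking total positivity of $\delta$ via \eqref{eq:estimatesennola}, and performing a single trace computation. For every remaining candidate I would instead display an explicit decomposition $\alpha=\beta+\gamma$ with $\beta,\gamma\in\O^+$, typically splitting off a copy of $1$ or of one of the already certified indecomposables, and confirming total positivity of both summands from the conjugate estimates.

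The main obstacle is the middle step: pinning down exactly which lattice points lie in the two parallelepipeds is delicate, since the bounds of \eqref{eq:estimatesennola} must be exploited carefully to exclude the borderline points (and small values of $a$ may require separate treatment). A secondary difficulty is that the codifferent criterion is only sufficient for indecomposability, so any candidate it fails to certify must be resolved by an ad hoc decomposition or a direct trace argument; organizing these finitely many cases cleanly, rather than the individual computations, is where the real care is required.
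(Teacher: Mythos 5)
Your plan is essentially the same route as the paper's: the statement is imported from \cite[Proposition 8.1]{KT}, whose proof is precisely the cone-and-parallelepiped method recalled in Subsection \ref{subsec:method}, carried out (as the paper notes at the start of Section \ref{Sec:indeennola}) with the totally positive units $\varepsilon_1=\rho^2$ and $\varepsilon_2=\rho(\rho-1)$ --- the very pair you single out --- followed by codifferent trace-one certificates for $1+s\rho+\rho^2$ and explicit decompositions for the remaining lattice points. The only caveat is that this paper gives no proof of its own here, so your reconstruction matches the cited argument of \cite{KT} rather than any argument appearing in the present text.
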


Similarly as in the case of the simplest cubic fields, Proposition \ref{prop:indeennola} covers all $\mathfrak{s}$-indecomposables with the signatures $\mathfrak{s}=(+,+,+),(-,-,-),(+,-,-),(-,+,+)$. The remaining $\mathfrak{s}$-indecomposables in $\Z[\rho]$ are derived in Section \ref{Sec:indeennola}. Moreover, in this paper, we will adopt the following notation:
\begin{enumerate}
\item $\kappa_s= 1+s \rho + \rho^2 \text{ where } 1\leq s \leq a-1 $,
\item $\lambda_{v,w}=-v-(a(v-1)+w)\rho+(a(v-1)+w+1)\rho^2 \text{ where } 1\leq v \leq a-1 \text{ and } \max\{1,v-1\}\leq w \leq a-1$,
\item $\mu_u=-1-u\rho+(u+2)\rho^2 \text{ where } 0\leq u \leq a-2$.
\end{enumerate}
Note that these elements are representatives of all $\mathfrak{s}$-indecomposables introduced in Theorem \ref{thm:m1}. 

\section{Indecomposable integers in Ennola's cubic fields} \label{Sec:indeennola}

Following the method described in Subsection \ref{subsec:method}, we will find all the $\mathfrak{s}$-indecomposables for $\mathfrak{s}=(+,-,+)$ in Ennola's cubic fields. We have chosen this concrete signature since the element $\frac{1}{f'(\rho)}$ which takes a role in the codifferent of order $\Z[\rho]$, has this signature.
Note that $\mathfrak{s}$-indecomposables with this signature are associated to $\mathfrak{s}$-indecomposables with signatures $\mathfrak{s}=(-,+,-),(-,-,+),(+,+,-)$.

Note that in \cite{KT}, we used totally positive units $1$, $\rho^2$ and $\rho(\rho-1)$ to determine the structure of totally positive indecomposables in $\Z[\rho]$. Moreover, as we will discuss below in Subsection \ref{subsec:smallestnorm}, the element $-1+\rho^2$ of the signature $(+,-,+)$ has the smallest norm $2a-3$ (in absolute value) which is not attained by an element associated with a rational integer. Note that one can easily check that $-1+\rho^2$ has minimal polynomial $x^3-(a^2-2)x^2-(a-1)^2x+2a-3$.

We will now use the method for finding all $\mathfrak{s}$-indecomposables as described at the end of Subsection~\ref{subsec:method}; we take $\gamma=-1+\rho^2$, $\varepsilon_1=\rho^2$ and $\varepsilon_2=-1+\rho^2$ to define the corresponding two parallelepipeds.

\subsection{The first parallelepiped}
First of these parallelepipeds is generated by the triple $-1+\rho^2$, $(-1+\rho^2)\rho^2=-a+1-(a^2-a-1)\rho+(a^2-a)\rho^2$ and $(-1+\rho^2)\rho(\rho-1)=-a-(a^2-2)\rho+(a^2-1)\rho^2$. 

The first parallelepiped contains elements of the form 
\begin{align*} 
-1-w-w(a+1)\rho+(1+w(a+1))\rho^2 &\text{ where } 1\leq w\leq a-2,\\
-2-w-w(a+1)\rho+(2+w(a+1))\rho^2 &\text{ where }a-1\leq w\leq 2a-4.
\end{align*}
However, the elements in the second row can be rewritten as
\begin{multline*}
-2-w-w(a+1)\rho+(2+w(a+1))\rho^2=(-1-w_1-w_1(a+1)\rho+(1+w_1(a+1))\rho^2)\\+(-1-w_2-w_2(a+1)\rho+(1+w_2(a+1))\rho^2)
\end{multline*} 
where $w_1=a-2$ and $w_2=w-(a-2)$. Easily, $w-(a-2)\geq 1$ and $w-(a-2)\leq a-2$. Thus, these elements are $\mathfrak{s}$-decomposable, and it suffices to consider only the integers in the first row. 

\subsection{The second parallelepiped}

The second parallelepiped is generated by the triple $-1+\rho^2$, $(-1+\rho^2)\rho^2=-a+1-(a^2-a-1)\rho+(a^2-a)\rho^2$ and $(-1+\rho^2)\rho(\rho-1)^{-1}=\rho+\rho^2$. Thus we get the following inside elements:
\[
m+n\rho+o\rho^2=-t_1-(a-1)t_2+(-(a^2-a-1)t_2+t_3)\rho+(t_1+(a^2-a)t_2+t_3)\rho^2
\]
where $0\leq t_1,t_2,t_3<1$ and $m,n,o\in\Z$. As $-1+\rho^2$ has the norm $-(2a-3)$ and $\rho(\rho-1)^{-1}=1+a\rho+\rho^2$, we must have $t_1=\frac{t'_1}{2a^2-3a}$, $t_2=\frac{t'_2}{2a^2-3a}$ and $t_3=\frac{t'_3}{2a^2-3a}$ where $0\leq t'_1,t'_2,t'_3<2a^2-3a$, $0<t_2'$ and $t'_1,t'_2,t'_3\in\Z$. Moreover, we easily get the following congruences
\begin{align*}
t'_1&\equiv -(a-1)t'_2 \pmod{2a^2-3a},\\
t'_3&\equiv (a^2-a-1)t'_2 \pmod{2a^2-3a}.
\end{align*} 
Put $t=t'_2$. Moreover, let $t'_1=-(a-1)t+k(2a^2-3a)$ where $1\leq k\leq a-1$ is such that $(k-1)(2a-1)\leq t\leq k(2a-1)-1$. Similarly, let $t'_3=(a^2-a-1)t-l(2a^2-3a)$ for some $l\in\N_0$. Supposing this, we have
\[
(m,n,o)=(-k,-l,t+k-l).
\]

Moreover, we have $l+B=t+k-l$ for some $B\in\{1,2,3\}$, and that follows from the fact that
\[
B=-l+t+k-l=n+o=-(a^2-a-1)t_2+t_3+t_1+(a^2-a)t_2+t_3=t_1+t_2+2t_3
\]
where $0\leq t_1,t_2,t_3<1$, supposing that $B$ is an integer.
 
If $t\equiv k\pmod2$, then obviously $B=2$. Otherwise, we get the following two cases. If $t\leq (k-1)(2a-1)+2(k-1)$, then $B=3$. If $t\geq (k-1)(2a-1)+2k$, then $B=1$. Here, it is important to realize that $k$ and $(k-1)(2a-1)$ have different parities, so odd coefficients $B$ occur for those $t$ which are of the form $(k-1)(2a-1)+2s$.

Now we will discuss under which conditions the above elements are $\mathfrak{s}$-decomposable.

\begin{lemma} \label{lem:decom_B2}
If $k\geq 2$ and $B=2$, then elements $-k-l\rho+(t+k-l)\rho^2$ are $\mathfrak{s}$-decomposable.
\end{lemma}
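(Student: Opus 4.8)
The plan is to exhibit an explicit splitting of each such element into two summands that both lie in $\O^{(+,-,+)}$. Since the signature $(+,-,+)$ is preserved coordinatewise under addition, producing such a splitting immediately witnesses $\mathfrak{s}$-decomposability.

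First I would use $B=2$ to fix the shape of the element. As $B=n+o=t+k-2l$, the condition $B=2$ forces $n=-l$ and $o=l+2$, so the element equals $-k-l\rho+(l+2)\rho^2$; substituting $t=2l-k+2$ into the admissibility range $(k-1)(2a-1)\le t\le k(2a-1)-1$ then yields $a(k-1)\le l\le ak-2$.

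Next I would propose the decomposition
\[
-k-l\rho+(l+2)\rho^2 \;=\; \lambda_{k-1,\,a-1} \;+\; \lambda_{1,\,l-a(k-1)+1}.
\]
Expanding the right-hand side from the definition of $\lambda_{v,w}$ recovers the coordinate triple $(-k,-l,l+2)$, so the identity itself is purely formal; the content is that both summands are genuine members of the family of Theorem \ref{thm:m1}. This is exactly where $k\ge 2$ enters: it guarantees $v=k-1\ge 1$ for the first summand, while $k\le a-1$ gives $\max\{1,k-2\}\le a-1$, so $\lambda_{k-1,a-1}$ is admissible; and the range $a(k-1)\le l\le ak-2$ translates precisely into $1\le l-a(k-1)+1\le a-1$, the admissible range of the second index for $v=1$. (For $k=1$ the recipe would require the nonexistent $\lambda_{0,\cdot}$, matching the fact that the $k=1$, $B=2$ elements are the indecomposables $\mu_u$.)

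Finally, one must confirm that $\lambda_{k-1,a-1}$ and $\lambda_{1,l-a(k-1)+1}$ really have signature $(+,-,+)$, which I would read off from the conjugate estimates \eqref{eq:estimatesennola}. I expect this to be the only delicate point: the first conjugate of a general $\lambda_{v,w}$ is a near-cancellation, $\sigma_1(\lambda_{v,w})=(1-v)+(c+2)(\rho-1)+(c+1)(\rho-1)^2$ with $c=a(v-1)+w$, which is positive but of small order, so proving $\sigma_1>0$ (for the large-index summand $\lambda_{k-1,a-1}$, where $v=k-1$ may be large) genuinely needs the sharp bound $\rho>1+\frac{1}{a+3}$ together with the quadratic term, rather than a crude estimate; for $\lambda_{1,\cdot}$ positivity is immediate since $\sigma_1=-1+\rho^2>0$ there. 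The remaining two sign conditions and the finite index bookkeeping are routine. With the signatures in hand, the displayed identity writes the element as a sum of two elements of $\O^{(+,-,+)}$, proving $\mathfrak{s}$-decomposability.
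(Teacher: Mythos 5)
Your decomposition is exactly the one the paper uses: rewriting the paper's summands $(k_1,l_1,s_1)=(k-1,\,(k-1)a-1,\,(k-1)(2a-1)-1)$ and $(k_2,l_2,s_2)=\bigl(1,\tfrac{t-k(2a-1)+2a}{2},\,t-k(2a-1)+2a\bigr)$ in the $\lambda_{v,w}$ notation gives precisely $\lambda_{k-1,a-1}$ and $\lambda_{1,\,l-a(k-1)+1}$, and your index bookkeeping matches the paper's observation that $1\le s_2\le 2a-2$. The only (immaterial) difference is in how the summands are certified to have signature $(+,-,+)$: the paper notes they lie in the second parallelepiped (a cone spanned by elements of that signature), whereas you propose direct conjugate estimates — which do go through, e.g.\ using $\rho(\rho-1)=\frac{1}{\rho+a}$ one gets $\sigma_1(\lambda_{v,a-1})=\rho\cdot\frac{\rho+a-v}{\rho+a}>0$ without any delicate bounding.
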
 

\begin{proof}
In this case, our elements can be decomposed as
\begin{multline*}
-k-l\rho+(t+k-l)\rho^2=(-k_1-l_1\rho+(s_1+k_1-l_1)\rho^2)+(-k_2-l_2\rho+(s_2+k_2-l_2)\rho^2)
\end{multline*}
where
\begin{align*}
k_1 &= k-1,
&l_1 &= (k-1)a-1,
&s_1 &= (k-1)(2a-1)-1,\\ 
k_2 &= 1,
&l_2 &= \frac{t - k(2a-1)+2a}{2},
&s_2 &= t - k(2a-1)+2a.    
\end{align*}
Note that both of these two elements lie in the second parallelepiped and have $B=1$. Moreover, it is important to realize that $1\leq s_2\leq 2a-2$ where the upper bound follows from the fact that $k$ and $k(2a-1)-1$ have different parities.   
\end{proof}

Note that in the previous lemma, we have omitted the case when $k=1$ which we will discuss later, and for which we obtain $\mathfrak{s}$-indecomposables.

We will proceed with $B=3$.

\begin{lemma}
If $B=3$, then elements $-k-l\rho+(t+k-l)\rho^2$ are $\mathfrak{s}$-decomposable. 
\end{lemma}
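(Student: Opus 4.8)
The plan is to follow the strategy of Lemma~\ref{lem:decom_B2} and produce an explicit decomposition of $-k-l\rho+(t+k-l)\rho^2$ into two summands, both lying in the second parallelepiped and thus both of signature $(+,-,+)$. First I would record what $B=3$ forces. Since $B$ is odd, we have $t=(k-1)(2a-1)+2s$ for some integer $s\geq 0$, and the bound $t\leq (k-1)(2a-1)+2(k-1)$ yields $0\leq s\leq k-1$; in particular $k\geq 2$, as $k=1$ would force $l<0$. The relation $B=3$ then reads $l=(t+k-3)/2=(k-1)a-1+s$, which pins down every parameter in terms of $k$ and $s$.

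Next I would exhibit the two summands. The ``large'' one is the block-$(k-1)$ element with $B=1$,
\[
-(k-1)-\bigl((k-1)a-1\bigr)\rho+(k-1)a\,\rho^2,
\]
which has exactly the shape of the first summand in Lemma~\ref{lem:decom_B2} (with $k_1=k-1$, $l_1=(k-1)a-1$, and block parameter $s_1=(k-1)(2a-1)-1$). The ``small'' one is
\[
\mu_s=-1-s\rho+(s+2)\rho^2,
\]
i.e.\ the $k=1$, $B=2$ element with block parameter $s_2=2s+1$ and $l_2=s$. A direct check of the three coordinates gives $k_1+k_2=k$, $l_1+l_2=l$, and $s_1+s_2=t$, so the summands add up to $-k-l\rho+(t+k-l)\rho^2$.

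It then remains to confirm that each summand is a genuine element of the parallelepiped of signature $(+,-,+)$. For the large summand this reduces to the block inequalities $(k-2)(2a-1)\leq s_1\leq (k-1)(2a-1)-1$ and $l_1\geq 0$, both valid for $k\geq 2$ and $a\geq 3$, together with the verification that its $B$-value equals $1$. For the small summand I would simply note that $0\leq s\leq k-1\leq a-2$, so $\mu_s$ is one of the elements $-1-u\rho+(u+2)\rho^2$ of the $\mu_u$ family, whose signature $(+,-,+)$ follows directly from the conjugate estimates \eqref{eq:estimatesennola}.

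The main obstacle is the bookkeeping of the range and parity constraints: one must check that peeling off $\mu_s$ always leaves a block-$(k-1)$ element with $B=1$ and that the index $s$ never escapes $[0,a-2]$, including the boundary cases $s=0$ (where the small summand is $\mu_0=-1+2\rho^2$) and $k=2$. These verifications are routine but must be carried out with care, so that the decomposition provably stays inside the parallelepiped.
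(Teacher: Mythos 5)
Your decomposition is exactly the one the paper uses: the first summand $-(k-1)-((k-1)a-1)\rho+(k-1)a\rho^2$ is the paper's $(k_1,l_1,s_1)=(k-1,(k-1)a-1,(k-1)(2a-1)-1)$ element with $B=1$, and your $\mu_s$ with $t=(k-1)(2a-1)+2s$ coincides with the paper's second summand $(k_2,l_2,s_2)=(1,\tfrac{t-k(2a-1)+2a-1}{2},\,t-k(2a-1)+2a)$, which has $B=2$. The parameter checks ($k\geq 2$, $0\leq s\leq k-1\leq a-2$, and the block inequalities for the first summand) are correct, so this is a correct proof taking essentially the same approach as the paper.
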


\begin{proof}
For $B=3$, our element can be decomposed as
\begin{multline*}
-k-l\rho+(t+k-l)\rho^2=(-k_1-l_1\rho+(s_1+k_1-l_1)\rho^2)+(-k_2-l_2\rho+(s_2+k_2-l_2)\rho^2)
\end{multline*}
where $-k_1-l_1\rho+(s_1+k_1-l_1)\rho^2$ is as in the proof of Lemma \ref{lem:decom_B2} and   
\begin{align*}
k_2 &= 1,
&l_2 &= \frac{t - k(2a-1)+2a-1}{2},
&s_2 &= t - k(2a-1)+2a.    
\end{align*}
Note that the second element has $B=2$.
In this case, we have $1\leq s_2\leq 2a-3$ as desired.
\end{proof}

Thus, we are left with the elements with $B=1$ which can be expressed as 
\[
-v-(a(v-1)+w)\rho+(a(v-1)+w+1)\rho^2
\]
where $1\leq v\leq a-1$ and $v\leq w\leq a-1$,
and the elements with $B=2$ and $k=1$ which can be rewritten as
\[
-1-u\rho+(u+2)\rho^2
\]
where $0\leq u \leq a-2$.

\subsection{Properties of indecomposables}

It is not difficult to see that the remaining elements from the first parallelepiped can be joined with the elements with $B=1$ from the second parallelepiped, and together form the set of elements of the form 
$-v-(a(v-1)+w)\rho+(a(v-1)+w+1)\rho^2$ where $1\leq v \leq a-1$ and $\max\{1,v-1\}\leq w\leq a-1$. Note that we exclude the element with $w=0$ which is $-1+\rho^2$ and was used in the determination of $\mathfrak{s}$-indecomposables, since its unit multiple is already contained in this set.

Now we will prove that all elements which we have not excluded so far, are $\mathfrak{s}$-indecomposable.   

\begin{proposition} \label{prop:indeennolaneg}
Let $\rho$ be a root of the polynomial $x^3+(a-1)x^2-ax-1$ where $a\geq 3$. Then 
\begin{align*}
-v-(a(v-1)+w)\rho+(a(v-1)+w+1)\rho^2 \;&\text{ where } 1\leq v \leq a-1 \text{ and } \max\{1,v-1\}\leq w\leq a-1,\\
-1-u\rho+(u+2)\rho^2 \;&\text{ where } 0\leq u\leq a-2
\end{align*} 
are, up to multiplication by totally positive units, all the $\mathfrak{s}$-indecomposable integers in $\Z[\rho]$ for $\mathfrak{s}=(+,-,+)$.
\end{proposition}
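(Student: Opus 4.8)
The preceding two lemmas, together with the analysis of the two parallelepipeds, have already shown that every element of $\O^{\mathfrak{s}}$ with $\mathfrak{s}=(+,-,+)$ becomes, after multiplication by a suitable totally positive unit, either decomposable or one of the elements listed in the statement. So the only thing left to establish is that each listed element is genuinely $\mathfrak{s}$-indecomposable. The plan is to do this via the codifferent criterion recalled in Section~\ref{sec:prelinf}: for $\alpha\in\O^{\mathfrak{s}}$ it suffices to produce $\delta\in\O^{\vee,\mathfrak{s}}$ with $\Tr(\alpha\delta)=1$. Indeed, if $\alpha=\beta+\gamma$ with $\beta,\gamma\in\O^{\mathfrak{s}}$, then $\beta\delta$ and $\gamma\delta$ are totally positive, since their signature is $(+,-,+)\cdot(+,-,+)=(+,+,+)$, and they lie in $\O\cdot\O^{\vee}$; hence each of $\Tr(\beta\delta)$ and $\Tr(\gamma\delta)$ is a positive integer, forcing $\Tr(\alpha\delta)\geq 2$ and contradicting $\Tr(\alpha\delta)=1$.

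To make the criterion effective, I would first exploit the explicit description $\O^{\vee}=\tfrac{1}{f'(\rho)}\Z[\rho]$ together with the standard trace identities
\[
\Tr\!\left(\frac{1}{f'(\rho)}\right)=\Tr\!\left(\frac{\rho}{f'(\rho)}\right)=0,\qquad \Tr\!\left(\frac{\rho^2}{f'(\rho)}\right)=1,
\]
which follow from the partial-fraction expansion of $1/f$. Writing a general codifferent element as $\delta=g(\rho)/f'(\rho)$ with $g\in\Z[x]$ of degree at most $2$, and reducing $\alpha\,g(\rho)$ modulo $f$ (using $\rho^3=-(a-1)\rho^2+a\rho+1$) to the form $A+B\rho+C\rho^2$, the condition $\Tr(\alpha\delta)=1$ collapses to the purely elementary requirement that the $\rho^2$-coefficient $C$ equal $1$. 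Thus for each candidate $\alpha$ I only need a polynomial $g$ for which $\alpha\,g(\rho)\equiv(\text{lower order})+\rho^2\pmod f$ and for which $\delta$ carries the prescribed signature.

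I would then treat the two families separately. For $\mu_u=-1-u\rho+(u+2)\rho^2$ and for $\lambda_{v,w}=-v-(a(v-1)+w)\rho+(a(v-1)+w+1)\rho^2$, note that $\delta=1/f'(\rho)$ already has signature $(+,-,+)$ but yields trace equal to the (too large) $\rho^2$-coefficient of $\alpha$; so the work is to engineer $g=g_u$ (respectively $g_{v,w}$) that simultaneously trims this coefficient down to $1$ modulo $f$ and keeps the signature at $(+,-,+)$. Once $g$ is chosen, verifying $\Tr(\alpha\delta)=1$ is a direct computation with the multiplication table of $\Z[\rho]$.

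The step I expect to be the main obstacle is checking that each chosen $\delta=g(\rho)/f'(\rho)$ really has signature $(+,-,+)$, i.e.\ that $g(\rho)/f'(\rho)>0$, $g(\rho')/f'(\rho')<0$, and $g(\rho'')/f'(\rho'')>0$. This is where the conjugate estimates \eqref{eq:estimatesennola} are indispensable: the conjugate $\rho'$ lies very close to $0$ and $\rho''$ very close to $-a$, so the signs of both $g$ and $f'$ at these conjugates are delicate and must be pinned down uniformly in $a$ and in the family parameters. Once the signs are controlled for all admissible $u$ (respectively $v,w$), the criterion applies to every listed element at once and completes the proof.
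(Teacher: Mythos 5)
Your reduction of the problem to the codifferent criterion is sound as far as it goes, and for the triangle family $\lambda_{v,w}=-v-(a(v-1)+w)\rho+(a(v-1)+w+1)\rho^2$ your plan works exactly as the paper carries it out: the choice $g(\rho)=\rho+a$ gives $\delta=(\rho+a)/f'(\rho)\in\Z[\rho]^{\vee,\mathfrak{s}}$ with $\Tr(\alpha\delta)$ equal to the sum of the $\rho$- and $\rho^2$-coefficients of $\alpha$, which is $1$ for every $\lambda_{v,w}$. The trace identities you quote and the reduction of $\Tr(\alpha g(\rho)/f'(\rho))$ to the $\rho^2$-coefficient of $\alpha g(\rho)\bmod f$ are also correct.

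The genuine gap is in your treatment of the line $\mu_u=-1-u\rho+(u+2)\rho^2$. You propose to ``engineer $g_u$'' so that $\delta=g_u(\rho)/f'(\rho)$ has signature $(+,-,+)$ and $\Tr(\mu_u\delta)=1$. No such $\delta$ exists: the paper proves immediately after this proposition that $\min_{\delta\in\Z[\rho]^{\vee,\mathfrak{s}}}\Tr(\mu_u\delta)=2$. This is precisely an instance of the caveat from Section~\ref{sec:prelinf} that the codifferent criterion is sufficient but not necessary, so your strategy cannot close the argument for the $\mu_u$ family, and the ``main obstacle'' you anticipate (signature control of $\delta$) is not the real obstruction. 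What is needed instead is a direct analysis of a hypothetical decomposition $\mu_u=\alpha_1+\alpha_2$ with $\alpha_i\in\Z[\rho]^{\mathfrak{s}}$: since $\Tr(\mu_u\cdot(\rho+a)/f'(\rho))=2$ and each $\Tr(\alpha_i\cdot(\rho+a)/f'(\rho))$ is a positive integer, both must equal $1$; combined with the constraint $\Tr(\alpha_i/f'(\rho))\geq 1$ (note $1/f'(\rho)$ also has signature $(+,-,+)$), this forces $\alpha_i=v_1-v_2\rho+(v_2+1)\rho^2$ with $0\leq v_2\leq u$. One then checks, using $\rho'<0$, that $-v_2\rho'+(v_2+1)\rho'^2>0$, so the signature condition at $\rho'$ forces the constant coefficients $v_1,w_1$ of $\alpha_1,\alpha_2$ to be $\leq -1$, contradicting $v_1+w_1=-1$. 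Without an argument of this kind, your proof of indecomposability covers only half of the listed elements.
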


\begin{proof}
To prove this statement, we will use the element $\delta=\frac{1}{f'(\rho)}(\rho+a)\in\Z[\rho]^{\vee,\mathfrak{s}}$ from the codifferent. Easy computations show that if $\alpha=v_1+v_2\rho+v_3\rho^2\in\Z[\rho]$, then $\text{Tr}(\alpha\delta)=v_2+v_3$. Thus, for the elements $-v-(a(v-1)+w)\rho+(a(v-1)+w+1)\rho^2$, this trace is $1$ which immediately gives $\mathfrak{s}$-indecomposability of these elements.
  
Therefore, it remains to discuss elements $\mu_u=-1-u\rho+(u+2)\rho^2$, for which we have $\text{Tr}(\mu_u\delta)=2$. Note that these elements could decompose only as the sum of two elements $\alpha_1,\alpha_2\in\Z[\rho]^{\mathfrak{s}}$ such that $\text{Tr}(\alpha_i\delta)=1$ for $i=1,2$. Moreover, recall that $\frac{1}{f'(\rho)}$ has the same signature as our elements. For it, we get 
\[
\text{Tr}\left(\frac{1}{f'(\rho)}(v_1+v_2\rho+v_3\rho^2)\right)=v_3.
\] 
This implies that, if $\mu_u$ were $\mathfrak{s}$-decomposable, the only possible decompositions would be of the form
\[
-1-u\rho+(u+2)\rho^2=\underbrace{v_1-v_2\rho+(v_2+1)\rho^2}_{\alpha_1}+\underbrace{w_1-w_2\rho+(w_2+1)\rho^2}_{\alpha_2}
\]
where $0\leq v_2,w_2\leq u$.
Moreover, we can see that
\[
-v_2\rho'+(v_2+1)\rho'^2, -w_2\rho'+(w_2+1)\rho'^2>0  
\]
since $v_2,w_2\geq 0$ and $\rho'<0$. Thus, the elements $\alpha_1$ and $\alpha_2$ have the signature $(+,-,+)$ only if $v_1,w_1\leq -1$. However, under this condition, we can never obtain $v_1+w_1=-1$. Thus, our elements are $\mathfrak{s}$-indecomposable.    
\end{proof}

Thus, we have obtained the (almost triangular) set of $\mathfrak{s}$-indecomposables with the minimal trace $1$, and the line of remaining elements, for which we have found an element of the codifferent giving the trace $2$. Now we will show that this trace is minimal.

\begin{proposition}
Let $\mu_u=-1-u\rho+(u+2)\rho^2$ where $0\leq u\leq a-2$, and $\mathfrak{s}=(+,-,+)$. Then \[\min_{\delta\in\Z[\rho]^{\vee,\mathfrak{s}}}\textup{Tr}(\mu_u\delta)=2.\]
\end{proposition}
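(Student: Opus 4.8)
The plan is to reduce everything to excluding the value $1$. Since the product of two elements of signature $\mathfrak{s}=(+,-,+)$ is totally positive, for every $\delta\in\Z[\rho]^{\vee,\mathfrak{s}}$ the element $\mu_u\delta$ lies in $\Z[\rho]^{\vee}$ and is totally positive, so $\Tr(\mu_u\delta)$ is a \emph{positive} integer. Combined with the witness $\delta=\frac{1}{f'(\rho)}(\rho+a)$ from the proof of Proposition \ref{prop:indeennolaneg}, for which $\Tr(\mu_u\delta)=2$, this already shows the minimum lies in $\{1,2\}$. Hence the whole proposition reduces to showing that no $\delta\in\Z[\rho]^{\vee,\mathfrak{s}}$ gives $\Tr(\mu_u\delta)=1$.

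To analyse this I would write $\delta=\frac{1}{f'(\rho)}\beta$ with $\beta\in\Z[\rho]$. Since $f'(\rho)$ has signature $(+,-,+)$ (positive at the largest and smallest roots of $f$, negative at the middle one, by \eqref{eq:estimatesennola}), the condition $\delta\in\Z[\rho]^{\vee,\mathfrak{s}}$ is equivalent to $\beta\in\Z[\rho]^{+}$ being totally positive; and by the dual-basis identity $\Tr\left(\frac{1}{f'(\rho)}(v_1+v_2\rho+v_3\rho^2)\right)=v_3$ already used in Proposition \ref{prop:indeennolaneg}, the trace $\Tr(\mu_u\delta)$ equals the coefficient of $\rho^2$ in $\mu_u\beta$. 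Writing $g=\mu_u\delta$ and $p_i=\sigma_i(\mu_u/f'(\rho))>0$, one has $\sigma_i(g)=p_i\,\sigma_i(\beta)$. If $\Tr(g)=1$, then $g$ totally positive forces $0<\sigma_i(g)<1$ for every $i$, hence $0<\sigma_i(\beta)<1/p_i$. Using \eqref{eq:estimatesennola} one checks $1/p_1,1/p_2=O(a)$ while $p_3>1$, so in particular $\sigma_3(\beta)<1$. These three inequalities confine the totally positive integer $\beta=d_0+d_1\rho+d_2\rho^2$ to a bounded region, hence to finitely many lattice points, and it remains to verify that none of them makes the $\rho^2$-coefficient of $\mu_u\beta$ equal to $1$.

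Two observations should make this final check tractable. First, the map $\beta\mapsto[\text{coefficient of }\rho^2\text{ in }\mu_u\beta]=\Tr\left(\frac{\mu_u}{f'(\rho)}\beta\right)$ is linear, and each totally positive summand in a decomposition of $\beta$ into totally positive indecomposables contributes a positive integer to it; therefore the value $1$ forces $\beta$ to be a single indecomposable, which by Proposition \ref{prop:indeennola} means a totally positive unit or a unit multiple of some $\kappa_s$. Second, a volume-versus-covolume heuristic for the box $0<\sigma_i(\beta)<1/p_i$, whose volume $\prod_i p_i^{-1}=|N(f'(\rho))|/|N(\mu_u)|$ is comparable to $\sqrt{|N(f'(\rho))|}/|N(\mu_u)|<1$ times the covolume $\sqrt{|N(f'(\rho))|}$ of $\Z[\rho]$, suggests that the box contains essentially no admissible $\beta$, so that trace $1$ should not occur.

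The hard part will be this last step, since the confining box has dimensions depending on $a$ and $u$, and the finitely many candidates must be made explicit and handled uniformly. Concretely, one must translate $0<\sigma_i(\beta)<1/p_i$ into integer constraints on $(d_0,d_1,d_2)$ and exclude the $\rho^2$-coefficient $1$ for every totally positive unit $\beta$ (and every $\kappa_s\beta$). Here a naive reduction to $\beta=1$ is \emph{not} available: the minimal value $2$ is already attained at the nontrivial totally positive unit $\beta=\rho+a$ (which has norm $1$), so the argument must genuinely control the full unit orbit, for instance through the explicit estimates \eqref{eq:estimatesennola} or the fundamental-domain method of Subsection \ref{subsec:method}.
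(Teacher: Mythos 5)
Your reduction to excluding the value $1$ is correct, and so are the two structural observations you make along the way: writing $\delta=\beta/f'(\rho)$ with $\beta$ totally positive, noting that $\Tr(\mu_u\delta)$ is a positive integer, and observing that linearity plus positivity of $\Tr\bigl(\tfrac{\mu_u}{f'(\rho)}\,\cdot\,\bigr)$ on totally positive arguments forces $\beta$ to be a single totally positive indecomposable, hence (by Proposition \ref{prop:indeennola}) a totally positive unit times $1$ or some $\kappa_s$. That last reduction is genuinely nice and does not appear in the paper. However, the argument stops precisely where the real content of the proposition lies. The exclusion of trace $1$ over the whole unit orbit is never carried out: the box $0<\sigma_i(\beta)<1/p_i$ has sides of order $a$, $a$, and $1$, so its volume comfortably accommodates lattice points (your own example $\beta=\rho+a$ sits essentially on its boundary), and a volume-versus-covolume comparison is a heuristic, not a proof --- a box of volume below the covolume can still contain nonzero lattice points. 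Nothing in the proposal pins down which units can occur or shows uniformly in $a$ and $u$ that none of the admissible $\beta$ yields the value $1$. As written, the proposal is a plan with the decisive step missing.

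For comparison, the paper closes exactly this gap by a short trace-pairing argument rather than a geometry-of-numbers one. It solves the linear condition $\Tr(\mu_u\delta)=1$ explicitly, parametrizing the candidate numerators $\delta_1=f'(\rho)\delta$ by two integers $k,l$, and then pairs $\delta_1/f'(\rho)$ against a handful of specific elements of signature $(+,-,+)$: the neighbours $\mu_{u-1}$ and $\mu_{u+1}$ (which remain of the right signature even for $\overline u=-1$ and $\overline u=a-1$) give the traces $k+1$ and $-k+1$, forcing $k=0$; then $-1+\rho^2$ and its product with the totally positive unit $1+a\rho+\rho^2$ give $-l'$ and $2l'+2$, which cannot both be positive. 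Each pairing must be a positive integer because $\delta_1$ is totally positive, so the contradiction is immediate. If you want to salvage your approach, you would need to replace the volume heuristic by something of this kind --- e.g.\ evaluate your linear functional on an explicit fundamental domain for the totally positive units acting on $\{\varepsilon,\kappa_s\varepsilon\}$ and rule out the value $1$ case by case using the estimates \eqref{eq:estimatesennola} --- which is substantially more work than the paper's two pairings.
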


\begin{proof}
To prove this statement, we will use a similar method as used in \cite[Subsection 5.2]{Ti2}. Assume that $\textup{Tr}(\mu_u\delta)=1$ for some element $\delta=\frac{\delta_1}{f'(\rho)}\in\Z[\rho]^{\vee,\mathfrak{s}}$. It implies that $\delta_1\in\Z[\rho]$ is of the form
\[
\delta_1=((u+2)a-2)\frac{k}{2}-a\frac{l}{2}+\left(a+(u+2)\frac{k}{2}+(2a-1)\frac{l}{2}\right)\rho+(1+l)\rho^2
\]
for some $k,l\in\Z$. Note that $\delta_1$ must be totally positive since our $\mathfrak{s}$-indecomposables and $f'(\rho)$ have the same signature. Now we will discuss several traces which would be positive if $\delta_1$ were totally positive.

First of all, it is easy to compute that
\[
\text{Tr}\left(\frac{\delta_1}{f'(\rho)}(-1-\overline{u}\rho+(\overline{u}+2)\rho^2)\right)=(u-\overline{u})k+1.
\]  
If $0\leq \overline{u}\leq a-2$, then we know $-1-\overline{u}\rho+(\overline{u}+2)\rho^2$ is of the signature $(+,-,+)$ as $f'(\rho)$. It is also not difficult to verify (using estimates on $\rho,\rho'$ and $\rho''$, where we only need more precise lower bound on $\rho'$ in the spirit of Section \ref{sec:Pyth}) that this is also true for $\overline{u}=-1,a-1$. For the choices $\overline{u}=u-1$ and $\overline{u}=u+1$, we get the traces $k+1$ and $-k+1$. These two traces must be both positive which is possible only if $k=0$.

For $k=0$, the element $\delta_1$ lies in $\Z[\rho]$ only if $l=2l'$ for some $l'\in\Z$. If we consider the element $-1+\rho^2$ with the signature $(+,-,+)$, we get
\[
\text{Tr}\left(\frac{\delta_1}{f'(\rho)}(-1+\rho^2)\right)=-l'. 
\]
Thus, $l'<0$. On the other hand, if we consider the product of $-1+\rho^2$ and the totally positive unit $\rho(\rho-1)^{-1}=1+a\rho+\rho^2$, we obtain
\[
\text{Tr}\left(\frac{\delta_1}{f'(\rho)}(-1-\rho^2)(1+a\rho+\rho^2)\right)=2l'+2, 
\] 
which implies $l'>-1$. However, this is a contradiction with $l'<0$. Therefore, there is no totally positive $\delta_1$ satisfying $\textup{Tr}\left(\mu_u\frac{\delta_1}{f'(\rho)}\right)=1$ which gives the statement of the proposition.     
\end{proof}

\subsection{Sizes of units}

Every unit is $\mathfrak{s}$-indecomposable in its signature $\mathfrak{s}$. In this subsection, we will derive some properties of units in Ennola's cubic fields which we use in Section \ref{sec:Pyth} to find the Pythagoras number of $\Z[\rho]$, or to derive the smallest norm in Subsection \ref{subsec:smallestnorm}.

\begin{lemma} \label{lem:sizeunits}
Let $\varepsilon\in\Z[\rho]$ be a unit such that $|\varepsilon|,|\varepsilon'|,|\varepsilon''|<a$. Then $\varepsilon\in\{\pm 1,\pm \rho\}$.
\end{lemma}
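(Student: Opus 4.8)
The plan is to classify all units $\varepsilon \in \Z[\rho]$ whose three real embeddings are all small in absolute value, using the fact that the product of the conjugates has absolute value $1$ (the norm condition) together with the explicit estimates on $\rho, \rho', \rho''$ from \eqref{eq:estimatesennola}. Writing $\varepsilon = v_1 + v_2\rho + v_3\rho^2$ with $v_1, v_2, v_3 \in \Z$, the three conjugates are $\varepsilon = v_1 + v_2\rho + v_3\rho^2$, $\varepsilon' = v_1 + v_2\rho' + v_3\rho'^2$, and $\varepsilon'' = v_1 + v_2\rho'' + v_3\rho''^2$. The hypothesis bounds each of these by $a$ in absolute value, and since $\varepsilon$ is a unit we have $|\varepsilon \varepsilon' \varepsilon''| = 1$. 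The central idea is that the conjugate $\varepsilon''$, which involves $\rho'' \approx -a$ and hence $\rho''^2 \approx a^2$, forces $v_3$ to be extremely constrained: a nonzero $v_3$ would push $|\varepsilon''|$ to order $a^2 |v_3|$, contradicting $|\varepsilon''| < a$ unless the $\rho''$ terms nearly cancel.

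First I would use the estimate $-a + \tfrac{1}{a^2+a} < \rho'' < -a + \tfrac{1}{a^2}$ to expand $\varepsilon'' = v_1 + v_2 \rho'' + v_3 \rho''^2$ and show that $|\varepsilon''| < a$ forces $v_3 = 0$. Indeed, if $v_3 \neq 0$ then $v_3 \rho''^2 \approx v_3 a^2$ dominates, and the linear-in-$a$ contributions $v_2 \rho'' \approx -v_2 a$ cannot compensate a quadratic term unless $v_3 = 0$; a careful inequality using the explicit bounds makes this rigorous. Once $v_3 = 0$, the unit is $\varepsilon = v_1 + v_2 \rho$, and I would feed this back into the other two embeddings. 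Using $\rho \approx 1$ (more precisely $1 + \tfrac{1}{a+3} < \rho < 1 + \tfrac{1}{a+2}$) and $\rho' \approx 0$ (namely $-\tfrac{1}{a} < \rho' < -\tfrac{1}{a+1}$), the conjugate $\varepsilon'' = v_1 + v_2 \rho''$ with $\rho'' \approx -a$ again controls $v_2$: the condition $|v_1 + v_2 \rho''| < a$ combined with $\rho'' \approx -a$ restricts $|v_2|$ to be at most roughly $1$, so $v_2 \in \{-1, 0, 1\}$.

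Having reduced to $\varepsilon = v_1 + v_2 \rho$ with $v_2 \in \{-1,0,1\}$, I would finish by checking each subcase against the norm condition $N(\varepsilon) = \pm 1$ and the size bounds. If $v_2 = 0$ then $\varepsilon = v_1 \in \Z$ is a rational unit, giving $\varepsilon = \pm 1$. If $v_2 = \pm 1$, then $\varepsilon = v_1 \pm \rho$, and the embeddings $\varepsilon' = v_1 \pm \rho'$ and $\varepsilon'' = v_1 \pm \rho''$ must all stay below $a$ in absolute value; since $\rho'' \approx -a$, the term $v_1 \pm \rho''$ is near $v_1 \mp a$, and keeping this bounded by $a$ while $N(\varepsilon) = \pm 1$ pins down $v_1 = 0$, yielding $\varepsilon = \pm \rho$. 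Collecting the surviving possibilities gives exactly $\varepsilon \in \{\pm 1, \pm \rho\}$.

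The main obstacle will be making the elimination of $v_3$ fully rigorous with the given estimates rather than merely asymptotically: one must verify that for \emph{all} $a \geq 3$ (or wherever the lemma is applied) the quadratic growth of $v_3 \rho''^2$ genuinely cannot be cancelled by the lower-order terms within the allowed window $|\varepsilon''| < a$. This requires combining the upper bound on $|\rho''|$, the lower bound on $|\rho''|$, and the simultaneous constraints $|\varepsilon| < a$, $|\varepsilon'| < a$ to trap $v_1$ and $v_2$ tightly enough that no choice with $v_3 \neq 0$ survives. The subsequent steps are then short finite case checks, but this first quadratic-versus-linear separation is where the explicit numerics in \eqref{eq:estimatesennola} must be used with some care.
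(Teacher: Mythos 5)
Your opening reduction does not work: the three size constraints alone do not force $v_3=0$. The flaw is in the claim that ``the linear-in-$a$ contributions $v_2\rho''$ cannot compensate a quadratic term'' --- $v_2$ is an unbounded integer, and if $v_2\approx v_3a$ then $v_2\rho''+v_3\rho''^2=\rho''(v_2+v_3\rho'')\approx -a(v_2-v_3a)$ is small. Concretely, for $a\geq 5$ the element $-3+a\rho+\rho^2$ has conjugates approximately $a-1$, $-4$ and $-3$, so it satisfies all three bounds $|\cdot|<a$ while having $v_3=1$; it simply is not a unit. A Lagrange-interpolation estimate using the spacings $\rho-\rho'\approx 1$, $\rho-\rho''\approx a+1$, $\rho'-\rho''\approx a$ shows that the most the three constraints can give is $|v_3|\leq 1$, and in fact the region $\{|\varepsilon|,|\varepsilon'|,|\varepsilon''|<a\}$ contains on the order of $a$ lattice points of $\Z[\rho]$, including whole one-parameter families with $v_3=\pm1$ and $v_2\approx\pm a$ (and likewise families with $v_3=0$, $v_2=\pm1$ and $v_1$ ranging over roughly $a$ values). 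To finish along your lines you would therefore have to invoke $N(\varepsilon)=\pm1$ already at the first step and rule out each of these families by explicit norm computations; your sketch only uses the norm condition in the final finite check, after the (invalid) reductions have been made.

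The paper avoids this entirely by using Thomas's result that $\rho$ and $\rho-1$ form a system of fundamental units of $\Z[\rho]$: writing $\varepsilon=\pm\rho^k(\rho-1)^l$, the sign pattern of $(k,l)$ immediately singles out one conjugate exceeding $a$ (for instance $k\geq0$, $l>0$ makes $|\varepsilon''|>a$; $k<0$, $l\geq0$ makes $|\varepsilon'|>a$; $k\geq0$, $l<0$ makes $|\varepsilon|>a$), the remaining quadrant $k,l<0$ is split by comparing $k$ with $l$, and the diagonal case $k=l$ is settled via the minimal polynomial of $\rho(\rho-1)$. If you want to keep a coordinate-based argument you must either import that unit structure or carry out the norm computations for the surviving $v_3\neq0$ and $v_2\neq0$ families; as written, the first elimination is the step that fails.
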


\begin{proof}
Recall that a system of fundamental units in $\Z[\rho]$ is generated by the pair $\rho$ and $\rho-1$. Moreover, we can estimate conjugates of $\rho$ by
\begin{align*}
1+\frac{1}{a+3}&<\rho<1+\frac{1}{a+2},\\
-\frac{1}{a}&<\rho'<-\frac{1}{a+1},\\
-a+\frac{1}{a^2+a}&<\rho''<-a+\frac{1}{a^2}.
\end{align*}
Let $\varepsilon=\pm \rho^k(\rho-1)^l$ for some $k,l\in\Z$. If $k\geq 0$ and $l<0$, then clearly $|\varepsilon|>a$. If $k<0$ and $l\geq 0$, then $|\varepsilon'|>a$. Similarly, if $k\geq 0$ and $l>0$, then $|\varepsilon''|>a$. 

If $l = 0$ and $k\geq 2$, we obtain $|\varepsilon''|>a$. Thus, it remains to solve the case when $k<0$ and $l<0$. If $k<l$, it is not difficult to see that $|\varepsilon'|>a$. Likewise, $l<k$ leads to $|\varepsilon|>a$. Therefore, we are left with $k=l$. Moreover, it suffices to show that the minimal polynomial $(\rho(\rho-1))^{-1}$ has a root greater than $a$; in that case, every power of this element has a root $>a$. It is easy to check that the minimal polynomial of $\rho(\rho-1)$ is $x^3-(a^2+a)x^2+(2a+1)x-1$. Furthermore, this polynomial has a root in the interval $\big(0,\frac{1}{a}\big)$, from which the previous statement follows. Thus $k=0$ and $l=0$, and $k=1$ and $l=0$ are the only cases whose conjugates are all smaller than $a$ in absolute value.
\end{proof}

Moreover, Lemma \ref{lem:sizeunits} implies that if $\varepsilon$ is the square of some unit, then some of its conjugates is greater than $a^2$ unless $\varepsilon=1,\rho^2$; we will use this lemma in the determination of the Pythagoras number of $\Z[\rho]$.

\begin{lemma} \label{lem:sizesofunits2}
	Let $\ve\in\Z[\rho]$ be the square of a unit such that $\ve'>a^2$. If $\ve\neq \rho^{-2},\rho^{-2}(\rho-1)^2$, then at least one of the following holds:
	\begin{enumerate}
		\item $\ve'>a^4$, or
		\item $\ve>a^2$, or
		\item $\ve''>a^2$.
	\end{enumerate} 
\end{lemma}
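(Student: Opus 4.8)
The plan is to reduce to a two–parameter family and then settle a handful of boundary cases. Since $\ve$ is the square of a unit and, by \cite{Th}, $\rho$ and $\rho-1$ generate the unit group of $\Z[\rho]$, I would write $\ve=\rho^m(\rho-1)^n$ with $m,n$ even. Then all three conjugates of $\ve$ are positive, and, as $N(\rho)=N(\rho-1)=1$, we have $\ve\ve'\ve''=1$. The bounds \eqref{eq:estimatesennola} record the orders of magnitude I will use throughout: $|\rho'|$ and $\rho-1$ are smaller than $\tfrac1a$, both $\rho$ and $|\rho'-1|$ lie in $(1,1+\tfrac1a)$, and $|\rho''|,\,|\rho''-1|$ are close to $a$. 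Heuristically this gives $\ve\approx a^{-n}$, $\ve'\approx a^{-m}$, $\ve''\approx a^{m+n}$; the entire proof amounts to making these three estimates rigorous near the thresholds $a^2$ and $a^4$. Arguing by contradiction, I assume that none of (1)--(3) holds, i.e.\ $a^2<\ve'\le a^4$, $\ve\le a^2$ and $\ve''\le a^2$, and I aim to force $(m,n)\in\{(-2,0),(-2,2)\}$, which are exactly $\ve=\rho^{-2}$ and $\ve=\rho^{-2}(\rho-1)^2$.

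I would organize the argument by the value of $m$. If $m\ge0$ and $n<0$, then $\ve\ge(\rho-1)^{n}>(a+2)^{|n|}>a^2$, contradicting $\ve\le a^2$. If $m\ge0$ and $n\ge0$, then from $|\rho''|>|\rho'|$ and $|\rho''-1|>|\rho'-1|$ one gets $\ve''\ge\ve'>a^2$, contradicting $\ve''\le a^2$. Hence $m\le-2$. If $m\le-4$ and $n\ge0$, then $\ve'=|\rho'|^{-|m|}|\rho'-1|^{n}>a^{|m|}\ge a^4$ since $|\rho'|<\tfrac1a$, contradicting $\ve'\le a^4$. If $m\le-4$ and $n<0$, I would use $\ve\ve'=1/\ve''=|\rho''|^{|m|}|\rho''-1|^{|n|}$; away from the single corner $(m,n)=(-4,-2)$ this comfortably exceeds $a^6$, so with $\ve'\le a^4$ it gives $\ve=\ve\ve'/\ve'>a^2$, again a contradiction. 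The corner $(-4,-2)$ is handled directly: there $\ve=(\rho^2(\rho-1))^{-2}$, and \eqref{eq:estimatesennola} yields $\rho^2(\rho-1)=(1+(\rho-1))^{2}(\rho-1)<\tfrac1a$, whence $\ve>a^2$.

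This leaves $m=-2$. For $n\le-2$, writing $\ve=\rho^{-2}(\rho-1)^{n}$ and using $\rho-1<\tfrac1{a+2}$ together with $\rho^{-2}>\tfrac{a}{a+2}$ gives $\ve>\tfrac{a}{a+2}(a+2)^{|n|}\ge a(a+2)>a^2$, a contradiction. For $n\ge4$, the factor $(|\rho''|+1)^{n}/|\rho''|^{2}$ is increasing in both $|\rho''|$ and $n$, and already at $n=4$ one checks $\ve''=(|\rho''|+1)^{4}/|\rho''|^{2}>a^{4}/a^{2}=a^{2}$, contradicting $\ve''\le a^2$. Only $n\in\{0,2\}$ remains, and for these a direct evaluation confirms $a^2<\ve'<a^4$, $\ve<a^2$ and $\ve''<a^2$; these are precisely the two excluded elements $\rho^{-2}$ and $\rho^{-2}(\rho-1)^2$, so no contradiction arises and the case analysis is complete.

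The main obstacle is that the leading-order picture $\ve\approx a^{-n}$, $\ve'\approx a^{-m}$, $\ve''\approx a^{m+n}$ is inconclusive exactly at the thresholds that decide the lemma, so one cannot finish by mere $\log_a$ counting. The genuinely delicate point is the corner $(m,n)=(-4,-2)$: here $\ve'$ is so close to $a^4$ that whether $\ve'>a^4$ depends on whether $|\rho'|$ is nearer $\tfrac1a$ or $\tfrac1{a+1}$, so the crude estimate fails and one must instead verify (2) via the sharper inequality $(1+(\rho-1))^2(\rho-1)<\tfrac1a$. The same care is needed at $m=-2$ for $n\in\{0,2,4\}$, where the small correction factors of the shape $(1+O(\tfrac1a))^{n}$ have to be controlled by the explicit bounds in \eqref{eq:estimatesennola} rather than by order-of-magnitude reasoning.
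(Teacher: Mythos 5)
Your proof is correct and follows essentially the same route as the paper's: write $\ve=\rho^m(\rho-1)^n$ with $m,n$ even, run an exhaustive case analysis on the exponents using the conjugate estimates \eqref{eq:estimatesennola}, and reduce to $(m,n)\in\{(-2,0),(-2,2)\}$. The only cosmetic differences are that you frame it as a contradiction, invoke the norm relation $\ve\ve'\ve''=1$ in the subcase $m\le-4$, $n<0$ (where the paper instead bounds $\ve$ or $\ve'$ directly, with a slightly different partition of that region), and use the monotonicity $\ve''\ge\ve'$ for $m,n\ge 0$ where the paper treats $n=0$ separately via $\ve'<a^2$.
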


\begin{proof}
Let $\ve=\rho^k(\rho-1)^l$ for some $k,l\in 2\Z$. If $k\geq 0$ and $l>0$, then we get
\[
\ve''=\rho''^k(\rho''-1)^l>\left(a-\frac{1}{a^2}\right)^k\left(a+1-\frac{1}{a^2}\right)^l>a^2.
\]
On the other hand, if $k\geq 0$ and $l<0$, then
\[
\ve=\rho^k(\rho-1)^l>\left(1+\frac{1}{a+3}\right)^k(a+2)^{-l}>a^2.
\]
In the case of $l=0$ and $k\geq 0$, we see that $\ve'<a^2$. Therefore, we have completely resolved the cases when $k\geq 0$.

If $k\leq -4$ and $k+2<l$, we can conclude that
\[ 
\ve'=\rho'^k(\rho'-1)^l\geq \rho'^{-4}>a^4.
\]
On the other hand, $l-2\leq k<0$ with $l<0$ leads to
\[
\ve=\rho^k(\rho-1)^l>\frac{1}{\left(1+\frac{1}{a+2}\right)^{-k}}(a+2)^{-l}\geq \frac{(a+2)^2}{\left(1+\frac{1}{a+2}\right)^4}>a^2.
\]
Thus, it remains to discuss the case when $k=-2$ and $l\geq 0$.
If $l\geq 4$, then clearly $\ve''>a^2$. Therefore, we are left with the cases $k=-2$ and $l=0$, and $k=-2$ and $l=2$ which are excluded in the statement of the lemma.
\end{proof}

\subsection{The smallest norm} \label{subsec:smallestnorm}

Using properties of $\mathfrak{s}$-indecomposables, we will find the minimal norm of elements in $\Z[\rho]$ that are not associated with a rational integer. Note that a similar result for the simplest cubic fields was provided by Lemmermeyer and Peth\"o \cite{LP}, who applied different methods.

\begin{proposition} \label{prop:smallestnorm}
Let $\alpha\in\Z[\rho]$. Then $|N(\alpha)|\geq 2a-3$ or $\alpha$ is associated to a rational integer. 
\end{proposition}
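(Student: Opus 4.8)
The plan is to exploit that $|N(\alpha)|$ is invariant under multiplication by units, so it suffices to bound the norm of a single representative of each orbit $\alpha\,\Z[\rho]^{\times}$ and then minimize. First I would reduce the signature: since the fundamental units $\rho,\rho-1$ (together with $-1$) realize exactly the four signatures $(+,+,+),(-,-,-),(+,-,-),(-,+,+)$ — namely those whose second and third signs agree — every $\alpha$ is associated to an element that is either totally positive or of signature $(+,-,+)$. Thus it is enough to produce, in each of these two classes, the element of least norm that is not associated to a rational integer, and to check that this least value is $2a-3$.

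Next I would pass to a fundamental domain. In each of the two classes, the method of Subsection~\ref{subsec:method} lets me multiply $\alpha$ by a totally positive unit so that its image lies in the union of two closed simplicial cones, generated in the totally positive case by the units $1,\rho^2,\rho(\rho-1)$ used to prove Proposition~\ref{prop:indeennola}, and in the other case by the triples built from $\gamma=-1+\rho^2$. The crucial quantitative input is that, by the explicit conjugate estimates~\eqref{eq:estimatesennola} together with the size control on units in Lemmas~\ref{lem:sizeunits} and~\ref{lem:sizesofunits2}, a representative whose norm is bounded by $2a-3$ cannot have a barycentric coordinate exceeding $1$: enlarging any such coordinate forces one conjugate (typically $\sigma_3$, which carries the factor $\rho''\approx -a$) to grow, hence pushes $|N|$ above $2a-3$. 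This confines all candidates to the two parallelepipeds, leaving only finitely many lattice points to inspect.

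It then remains a finite computation, and here the structure theorems do the work. A convenient reduction is that any element with vanishing constant coordinate, $\alpha=v_2\rho+v_3\rho^2=\rho(v_2+v_3\rho)$, is associated to the two-term element $v_2+v_3\rho$; for these I would use the explicit norm form
\[
N(m+n\rho)=m^{3}-(a-1)m^{2}n-amn^{2}+n^{3},
\]
whose smallest absolute value over pairs $(m,n)$ not yielding a unit or a rational integer is $2a-3$, attained at $(m,n)=(1,1)$, i.e.\ at $1+\rho$ (and indeed $-1+\rho^{2}=(\rho-1)(1+\rho)$ is its unit multiple). For the genuinely three-term candidates I would invoke the classifications: the totally positive indecomposables $1$ and $\kappa_s=1+s\rho+\rho^{2}$ of Proposition~\ref{prop:indeennola} have norm $\ge 2a+3$ (the minimum $2a+3$ occurring at $s=a-1$), while the signature-$(+,-,+)$ indecomposables $\mu_u$ and $\lambda_{v,w}$ of Theorem~\ref{thm:m1} all have $|N|\ge 2a-3$, with equality only for $\gamma=-1+\rho^{2}$. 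Combining the two classes yields the claimed bound.

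The main obstacle is the quantitative reduction of the second paragraph: one must guarantee, \emph{uniformly in the parameter $a$}, that an element of small norm really does admit a representative with coordinates small enough to lie in the two parallelepipeds, so that the final check runs over a list which does not secretly grow with $a$. This is precisely why the explicit sizes of the units (Lemmas~\ref{lem:sizeunits} and~\ref{lem:sizesofunits2}) and the sharp estimates~\eqref{eq:estimatesennola} are needed. The routine but somewhat delicate remaining part is the bookkeeping verifying that no three-term element in the parallelepipeds drops below $2a-3$, for which the trace-with-codifferent technique used in Proposition~\ref{prop:indeennolaneg} again streamlines the case analysis.
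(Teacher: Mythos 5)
Your overall plan---reduce modulo units to a finite list of candidates and then read the minimum off the classification of $\mathfrak{s}$-indecomposables---points in the right direction, but the step that is supposed to make the list finite does not work as stated. You claim that an element of norm at most $2a-3$ cannot have a barycentric coordinate exceeding $1$ in the simplicial cones, because a larger coordinate ``forces one conjugate to grow, hence pushes $|N|$ above $2a-3$.'' One conjugate growing does not control the norm: for $\alpha=t_1\gamma+t_2\gamma\varepsilon_1+t_3\gamma\varepsilon_2$ with $t_j\geq 0$, a coordinate $t_i>1$ only yields that $\alpha$ is $\mathfrak{s}$-decomposable and that $|N(\alpha)|\geq t_i^3\,|N(\gamma\varepsilon_i)|$, which is nowhere near $2a-3$ for large $a$; so small-norm elements are confined only to a dilate of the parallelepipeds whose lattice-point count grows with $a$. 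The engine the paper actually uses is the observation that if $\beta,\gamma$ have the same signature, then every monomial in the expansion of $\prod_i\sigma_i(\beta+\gamma)$ has the same sign, whence $|N(\beta+\gamma)|>|N(\beta)|$. This single inequality reduces the minimization to (a) non-unit $\mathfrak{s}$-indecomposables, whose norms you can indeed read off from Proposition \ref{prop:indeennola} and Theorem \ref{thm:m1} exactly as you propose, and (b) sums of two units of the same signature---a case your proposal omits entirely, and which the paper dispatches via $N(1+\varepsilon)=2+\Tr(\varepsilon)+\Tr(\varepsilon^{-1})\geq 2a+2$ using Lemma \ref{lem:sizeunits}. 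Without (b) the argument is incomplete even if the confinement step were repaired, since $1+\varepsilon$ is decomposable and appears in no list of indecomposables.

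A second gap: your treatment of the ``two-term'' elements reduces to showing $|m^3-(a-1)m^2n-amn^2+n^3|\geq 2a-3$ for all integer pairs $(m,n)$ not giving a unit or a rational integer. That is a minimization of a binary cubic norm form over all of $\Z^2$---a small-norm problem of essentially the same difficulty as the proposition itself---and you assert its answer without proof. Once the same-signature monotonicity is in place this detour is unnecessary: $1+\rho$ is associated to $-1+\rho^2=(\rho-1)(1+\rho)$, which is a unit multiple of one of the $\lambda_{v,w}$ and is already covered by that case analysis. Your final bookkeeping on the norms of $\kappa_s$, $\mu_u$, $\lambda_{v,w}$ and the identification of the minimizer $-1+\rho^2$ with $|N|=2a-3$ do agree with the paper.
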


\begin{proof}  
Consider the sum $\beta+\gamma$ of two elements of the same signature $\mathfrak s$.
Then $$|N(\beta+\gamma)|=\left|\prod_i \sigma_i(\beta+\gamma)\right|=\left|\prod_i \sigma_i(\beta)\right|+\text{ other positive summands}>|N(\beta)|,$$
which follows from the fact that if we split the product $\prod_i \sigma_i(\beta+\gamma)$ into a sum of elements, then these summands have the same sign since $\beta$ and $\gamma$ have the same signature. Therefore, the absolute value of $\prod_i \sigma_i(\beta+\gamma)$ equals the sum of absolute values of these summands.      
%(where the other summands are positive thanks to the assumption that $\beta,\gamma$ have the same signature).

Thus the minimal norm in $\Z[\rho]$ is attained by a non-unit $\mathfrak{s}$-indecomposable integer or by the sum of two units of the same signature. In the latter case, without loss of generality, we can consider only units of the signature $(+,+,+)$, where every such sum is associated with an element of the form $1+\varepsilon$ where $\varepsilon\neq 1$ is a totally positive unit. Since $\varepsilon\neq \rho$ as $\rho$ is not totally positive, some conjugate of $\varepsilon$ is greater than $a$ by Lemma \ref{lem:sizeunits}. The same is also true for $\varepsilon^{-1}$. Thus, we have
\[
N(1+\varepsilon)=N(1)+N(\varepsilon)+\text{Tr}(\varepsilon)+\text{Tr}(\varepsilon^{-1})\geq 2a+2>2a-3. 
\]
Since we know that the element $-1+\rho^2$ has absolute value of its norm $2a-3$, we can exclude sums of units from the consideration.

For the rest of the proof, let us first consider totally positive indecomposables of the form $1+s\rho+\rho^2$ where $1\leq s\leq a-1$. These elements have norms
\[
2a^2+2a+1+(a^2-3a-2)s-(2a-1)s^2+s^3,
\] 
see \cite[Section 4]{Ti2}.
The minimal norm among these elements can be attained only for some $s$ in the border of our interval, i.e., for $s=1$ or $s=a-1$, and for them, we get norms $3a^2+3a-1$ and $2a+3$ which are both clearly greater than $2a-3$ for $a\geq 3$.

Let us now focus on the elements $-1-u\rho+(u+2)\rho^2$. The absolute value of their norm is equal to
\[
2a^2+8a-17+(a^2+5a-14)u-7u^2-u^3.
\]
As in the previous case, the minimal norm can be obtained only at one of the border points. Here we get norms $2a^2+8a-17$ and $4a^2-9$ which are greater than $2a-3$ for $a\geq 3$.

Thus, it remains to discuss norms of elements $-v-(a(v-1)+w)\rho+(a(v-1)+w+1)\rho^2$ where $1\leq v \leq a-1$ and $\max\{1,v-1\}\leq w\leq a-1$. The absolute value of their norm is equal to
\begin{multline*}
 -w^3 - ((a - 1) v - 2 a + 4) w^2 + (av^2 + (a^2 - 5 a - 1) v - (a^2 - 7 a + 3)) w \\+ v^3 - (a^2 + a + 1) v^2 + (4 a^2 - 2) v - 3 a^2 + 3 a - 1.
\end{multline*}
It can be shown that for $v\geq 2$ and $w=v-3$, and for $v=1$ and $w=-1$, this expression is negative. Thus, for fixed $v$, it takes its smallest value in $w=v-1$ or $w=a-1$. Moreover, in both of these cases of $w$, the coefficient before $v^3$ is positive and, moreover, this norm is negative for $v=a$. Thus, our minimum can be again attained only in the border points $v=1$ or $v=a-1$. We obtain norms $2a-3$, $a^2+3a-9$ and $a^2-a-1$, from which $2a-3$ is the smallest.
\end{proof}

\section{Pythagoras number in Ennola's cubic fields} \label{sec:Pyth}

Now we will focus on the determination of the Pythagoras number of the order $\Z[\rho]$ where $\rho$ is a root of the polynomial $x^3+(a-1)x^2-ax-1$ with $a\geq 3$. Before that, let us recall the definition of the Pythagoras number. Let $\O$ be a commutative ring, and let
\[
\sum\O^2=\Big\{\sum_{i=1}^n \alpha_i^2;\; \alpha_i\in\O, n\in\N\Big\}.
\]
The set $\sum\O^2$ contains all elements from $\O$ which can be written as the sum of squares. Similarly, let
\[
\sum^m\O^2=\Big\{\sum_{i=1}^m \alpha_i^2;\; \alpha_i\in\O\Big\},
\]
i.e., the set $\sum^m\O^2$ contains all elements from $\O$ which are expressible as the sum of at most $m$ squares of elements from $\O$. Then the Pythagoras number of $\O$ is 
\[
\P(\O)=\inf\Big\{m\in\N\cup\{\infty\};\; \sum \O^2=\sum^m \O^2\Big\}.
\]
Recall that in our case of $\O\subseteq\O_K$, the value of $\P(\O)$ is always finite. 

From \cite[Corollary 3.3]{KY}, we know that $\P(\Z[\rho])\leq 6$. In this section, we show that $\P(\Z[\rho])= 6$ for $a\geq 4$. To do that, we use the method introduced in \cite[Section 3]{Ti1}, where Tinková determined the Pythagoras number for an analogous order in the simplest cubic fields. To apply this method, it is enough to know the structure of $\mathfrak{s}$-indecomposables in $\Z[\rho]$. Recall that the method consists of the following steps:
\begin{enumerate}
\item Suitably choose an element $\gamma\in\Z[\rho]$ which can be written as the sum of six non-zero squares.
\item Find all representatives of $\mathfrak{s}$-indecomposables $\alpha$ such that $N(\alpha)^2\leq N(\gamma)$. However, this step is not necessary, and we will skip it in some parts.
\item For all $\mathfrak{s}$-indecomposables $\alpha\in\Z[\rho]$ from Step (2), find all units $\varepsilon\in\Z[\rho]$ such that $\gamma\succeq (\varepsilon\alpha)^2$.
\item From $\mathfrak{s}$-indecomposables obtained in Step (3), construct all $\mathfrak{s}$-decomposables $\omega\in\Z[\rho]$ such that $\gamma\succeq \omega^2$. Note that every decomposition of $\gamma$ to the sum of squares can consist only of these squares. 
\item Discuss all possible decomposition of $\gamma$ as the sum of squares of elements derived in Step (4). 
\end{enumerate}

However, in the simplest cubic fields, we have units of all signatures, so all $\mathfrak{s}$-indecomposables are associated with totally positive indecomposables. Ennola's cubic fields do not have this useful property. Thus, to apply this method, we can benefit from the determination of $\mathfrak{s}$-indecomposables which we have done in Section \ref{Sec:indeennola}. Moreover, we will sometimes use the fact that if $\gamma\succeq \omega^2$, then $\text{Tr}(\gamma)\geq \text{Tr}(\omega^2)$ and $N(\gamma-\omega^2)\geq 0$.

To find the lower bound on the Pythagoras number of $\Z[\rho]$, we will use the element
\[
\gamma=a^2-3a+11-(a^2-5a+1)\rho-(a-5)\rho^2=1+1+1+4+(1+\rho)^2+(a-2-(a-1)\rho-\rho^2)^2.
\]
Using estimates on conjugates of $\rho$, we see that for $a\geq 6$, we have
\begin{align*}
\gamma &< 20+\frac{a^2-4a-13}{(a+3)^2}<21 <a^2,\\
\gamma' &< a^2-2a+6+\frac{7a+1}{a(a+1)^2}<a^2,\\
\gamma''& <a^2-2a+13-\frac{a^5+5a^4+11a^3+a^2-4a-5}{a^4(a+1)}<a^2.   
\end{align*}
It immediately implies that if $\gamma\succeq \omega^2$ where $\omega\in\Z$, then $\omega^2=1,4,9,16$.

First of all, we will find all squares of units which are totally smaller than $\gamma$.

\begin{lemma} \label{lem:pythunits}
Let $a\geq 6$. If $\gamma\succeq \varepsilon^2$ where $\varepsilon\in\Z[\rho]$ is a unit, then $\varepsilon^2=1$.
\end{lemma}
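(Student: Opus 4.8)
The plan is to reduce, via the size bounds on the conjugates of $\gamma$ together with Lemma \ref{lem:sizeunits}, to just two candidates for $\varepsilon^2$, and then to eliminate the nontrivial one by a single conjugate comparison. Recall that $\gamma\succeq\varepsilon^2$ means $\sigma_i(\varepsilon)^2=\sigma_i(\varepsilon^2)\leq\sigma_i(\gamma)$ for every embedding $\sigma_i$. Since we have just shown that $\gamma,\gamma',\gamma''<a^2$ for $a\geq 6$, this forces $|\varepsilon|,|\varepsilon'|,|\varepsilon''|<a$. By Lemma \ref{lem:sizeunits} the only units all of whose conjugates are smaller than $a$ in absolute value are $\pm 1$ and $\pm\rho$, so $\varepsilon^2\in\{1,\rho^2\}$. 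It therefore remains only to exclude the possibility $\varepsilon^2=\rho^2$.

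To do this I would show that the domination $\gamma\succeq\rho^2$ breaks down in the third embedding, namely $\gamma''<\rho''^2$; this is where the large conjugate $\rho''^2\approx a^2$ overtakes $\gamma''\approx a^2-2a+11$. Writing $g(x)=(a^2-3a+11)-(a^2-5a+1)x-(a-4)x^2$, so that $g(\rho'')=\gamma''-\rho''^2$, a direct evaluation gives $g(-a)=-2a+11<0$ for $a\geq 6$. Since $\rho''$ differs from $-a$ only by the small positive quantity controlled by \eqref{eq:estimatesennola}, that is $0<\rho''+a<\tfrac{1}{a^2}$, I would bound $g(\rho'')$ by the mean value theorem: as $g''=-2(a-4)<0$, the derivative $g'$ is decreasing, so $g(\rho'')<g(-a)+g'(-a)(\rho''+a)$ with $g'(-a)=a^2-3a-1>0$. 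Hence $g(\rho'')<(-2a+11)+\tfrac{a^2-3a-1}{a^2}=-2a+12-\tfrac{3}{a}-\tfrac{1}{a^2}<0$ for $a\geq 6$. This gives $\gamma''<\rho''^2$, so $\gamma\not\succeq\rho^2$, and consequently $\varepsilon^2=1$.

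The main obstacle is precisely this last comparison: the margin $g(-a)=-2a+11$ is only about $-1$ in the boundary case $a=6$, so the crude approximation $\rho''\approx -a$ does not by itself suffice, and one genuinely has to feed in the two-sided bound $0<\rho''+a<\tfrac{1}{a^2}$ to keep the perturbation below the available slack. An equivalent and perhaps cleaner alternative would be to compute the norm $N(\gamma-\rho^2)$ directly and verify that it is negative, which by itself forces one of the three conjugate differences to be negative; this trades the mean value estimate for a routine, if lengthy, cubic norm computation as a polynomial in $a$.
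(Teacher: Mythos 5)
Your proof is correct and follows essentially the same route as the paper: both reduce to $\varepsilon^2\in\{1,\rho^2\}$ via the bounds $\gamma,\gamma',\gamma''<a^2$ together with Lemma \ref{lem:sizeunits}, and both then exclude $\rho^2$ by verifying $\gamma''<\rho''^2$. The only difference is the bookkeeping for that last inequality — you use a tangent-line (concavity) estimate on the quadratic $g$ at $-a$, whereas the paper substitutes the interval bounds for $\rho''$ directly — and your version checks out, including the tight boundary case $a=6$.
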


\begin{proof}
Since $\gamma,\gamma',\gamma''<a^2$ for $a\geq 6$, Lemma \ref{lem:sizeunits} implies that the only possible candidates on $\varepsilon^2$ are $1$ and $\rho^2$. However, it can be checked that 
\[
\gamma'' <a^2-2a+13-\frac{a^5+5a^4+11a^3+a^2-4a-5}{a^4(a+1)}<\left(a-\frac{1}{a^2}\right)^2<\rho''^2.
\]
Thus, we are left only with $\varepsilon^2=1$. 
\end{proof}

In the following lemmas, we will find all $\mathfrak{s}$-indecomposables whose squares are totally smaller than $\gamma$. For that, we will use sets of representatives of $\mathfrak{s}$-indecomposables stated in Theorem \ref{thm:m1}. We will start with those which are associated with totally positive indecomposables from Proposition \ref{prop:indeennola}.

\begin{lemma} \label{lem:pythtotallypos}
Let $a\geq 6$. If $\gamma\succeq (\varepsilon\alpha)^2$ for some unit $\varepsilon\in\Z[\rho]$ and non-unit totally positive indecomposable integers $\alpha$, then 
\[
(\varepsilon\alpha)^2=a^2-3a+5-(a^2-5a-1)\rho-(a-5)\rho^2.
\]
\end{lemma}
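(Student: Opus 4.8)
The plan is to translate the condition $\gamma \succeq (\varepsilon\alpha)^2$ into bounds on the three real conjugates of $\varepsilon\alpha$ and then use the explicit shape of the units and of the totally positive indecomposables to force a unique candidate. By Proposition~\ref{prop:indeennola}, every non-unit totally positive indecomposable equals $1+s\rho+\rho^2$ with $1\le s\le a-1$ up to a totally positive unit; absorbing that unit into $\varepsilon$, I may assume $\alpha=1+s\rho+\rho^2$ and write $\varepsilon=\pm\rho^k(\rho-1)^l$ with $k,l\in\Z$, since $\rho$ and $\rho-1$ are fundamental units. The inequality $\gamma\succeq(\varepsilon\alpha)^2$ is equivalent to $\sigma_i(\varepsilon\alpha)^2\le\sigma_i(\gamma)$ for each $i$. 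The computations preceding Lemma~\ref{lem:pythunits} give $\gamma<21$ and $\gamma',\gamma''<a^2$; the decisive point is that the \emph{first} conjugate of $\gamma$ is small, so $|\sigma_1(\varepsilon\alpha)|<\sqrt{21}<5$, whereas only $|\sigma_2(\varepsilon\alpha)|,|\sigma_3(\varepsilon\alpha)|<a$.

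Next I would pin down the unit using \eqref{eq:estimatesennola}. Writing $\sigma_i(\varepsilon\alpha)=\pm\sigma_i(\rho)^k\,\sigma_i(\rho-1)^l\,\sigma_i(\alpha)$ and using that $\alpha$ is totally positive, the estimates give the orders of magnitude $|\sigma_1(\rho)|\approx 1$, $|\sigma_1(\rho-1)|\approx a^{-1}$, $\sigma_1(\alpha)\approx s+2$; $|\sigma_2(\rho)|\approx a^{-1}$, $|\sigma_2(\rho-1)|\approx 1$, $\sigma_2(\alpha)\approx 1$; and $|\sigma_3(\rho)|\approx a$, $|\sigma_3(\rho-1)|\approx a$, $\sigma_3(\alpha)\approx a(a-s)$. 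The third-conjugate bound $|\sigma_3(\varepsilon\alpha)|<a$ then forces $k+l\le -1$ (the factor $a^{k+l+1}(a-s)$ must drop below $a$, while $a-s\ge 1$); the first-conjugate bound, since $\sigma_1(\alpha)\ge 3$ and $|\sigma_1(\rho-1)|^l$ blows up for $l<0$, forces $l\ge 0$; and the second-conjugate bound, since $|\sigma_2(\rho)|^k$ blows up for $k<0$, forces $k\ge -1$. These three inequalities leave only $k=-1$, $l=0$, i.e.\ $\varepsilon=\pm\rho^{-1}$. Feeding this back into $|\sigma_1(\varepsilon\alpha)|<\sqrt{21}$ and using $\sigma_1(\varepsilon\alpha)\approx s+2$ leaves $s\in\{1,2\}$.

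Finally I would settle the two remaining cases by direct computation. Using $\rho^{-1}=\rho^2+(a-1)\rho-a$ one gets the clean form $\beta:=\rho^{-1}(1+s\rho+\rho^2)=(s-a)+a\rho+\rho^2$, and reducing $\rho^3,\rho^4$ modulo the minimal polynomial yields
\[
\beta^2=(s^2-2as+a^2+a+1)+(2as-a^2+a+1)\rho+(2s-a+1)\rho^2 .
\]
For $s=2$ this is exactly $a^2-3a+5-(a^2-5a-1)\rho-(a-5)\rho^2$, and $\gamma-\beta^2=6-2\rho$, which is totally positive by \eqref{eq:estimatesennola}; for $s=1$ one finds $\gamma-\beta^2=(-2a+9)+(2a-2)\rho+2\rho^2$, whose second conjugate is $\approx -2a+7<0$ for $a\ge 4$, so $\gamma\not\succeq\beta^2$. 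Hence the only possibility is $(\varepsilon\alpha)^2=a^2-3a+5-(a^2-5a-1)\rho-(a-5)\rho^2$, as claimed. I expect the main obstacle to be the middle step: the coarse bounds $|\sigma_i(\varepsilon\alpha)|<a$ are nearly tight (for instance, when $s=1$ the value $|\sigma_2(\beta)|$ sits just below $a$), so ruling out the off-diagonal choices of $(k,l)$ and the stray value $s=1$ requires the sharp inequalities \eqref{eq:estimatesennola} rather than the order-of-magnitude heuristics, and the borderline cases (such as $k=-2$ near $s=a-1$) must be checked by hand.
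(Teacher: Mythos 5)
Your overall strategy coincides with the paper's: reduce to $\alpha=\kappa_s=1+s\rho+\rho^2$ via Proposition \ref{prop:indeennola}, pin the unit down to $\varepsilon^2=\rho^{-2}$, cut $s$ down to a short list using the smallness of $\gamma$ at the identity embedding, and finish by direct computation. Your endgame is correct and verifiable: the identity $\rho^{-1}(1+s\rho+\rho^2)=(s-a)+a\rho+\rho^2$, the expansion of $\beta^2$, the fact that $\gamma-\beta^2=6-2\rho$ is totally positive for $s=2$, and the negativity of the second conjugate of $\gamma-\beta^2$ for $s=1$ all check out; the last point is a legitimate alternative to the paper's computation $N(\gamma-\rho^{-2}(1+\rho+\rho^2)^2)=-198a+773<0$.

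The gap is in the middle step, and it is not merely a matter of replacing order-of-magnitude estimates by sharp inequalities: the three implications you chain together are each false in isolation. For instance, $|\sigma_3(\varepsilon)|=|\rho''|^{k}|\rho''-1|^{l}\approx a^{k}(a+1)^{l}$, so taking $k=N$, $l=-N$ gives roughly $(a/(a+1))^{N}$, which tends to $0$; hence the third-conjugate bound alone does not force $k+l\le-1$. Likewise $\rho^{k}\to0$ as $k\to-\infty$ (since $\rho>1$), so a very negative $k$ can offset $|\rho-1|^{l}$ at $\sigma_1$ and the first-conjugate bound alone does not force $l\ge0$; and $|\rho'-1|^{-|l|}$ can offset $|\rho'|^{-|k|}$ at $\sigma_2$, so the second-conjugate bound alone does not force $k\ge-1$. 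The conclusion $(k,l)=(-1,0)$ is correct, but it only follows from playing the three embeddings off against one another simultaneously: the constraints are jointly, not severally, inconsistent for every other $(k,l)$. This is exactly what the paper's Lemmas \ref{lem:sizeunits} and \ref{lem:sizesofunits2} package -- the square of any unit other than $1$, $\rho^{\pm2}$, $\rho^{-2}(\rho-1)^2$ has a conjugate exceeding $a^2$ (or $a^4$) in a controlled position, and the surviving exceptional units are then eliminated by $\varepsilon''^2(\kappa_s'')^2>a^2>\gamma''$. To make your argument a proof you would need either to import those lemmas or to carry out the joint three-embedding elimination explicitly; you should also record the borderline check that $s=3$ fails (the paper needs $a\ge9$ for the clean bound and verifies $6\le a\le8$ numerically), since $\sigma_1(\rho^{-1}\kappa_3)$ sits only just above $\sqrt{21}$.
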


\begin{proof}
Let $\kappa_s=1+s\rho+\rho^2$ where $1\leq s\leq a-1$. Using estimates on conjugates of $\rho$, we see that
\begin{align*}
(1+s\rho+\rho^2)^2&>(2+s)^2,\\
(1+s\rho'+\rho'^2)^2&>\left(1-\frac{s}{a}\right)^2,\\
(1+s\rho''+\rho''^2)^2&>\left(1-sa+a^2+\frac{s}{a^2+a}-\frac{2}{a}+\frac{1}{a^4}\right)^2>a^2>\gamma''.
\end{align*}
Thus, we can immediately exclude $\varepsilon^2=1$.
Let now $\varepsilon^2=\rho^k(\rho-1)^l$ for some $k,l\in 2\Z$. By Lemma~\ref{lem:sizeunits} we know that except for two cases, one of conjugates of $\varepsilon^2$ is greater than $a^2$. If $\varepsilon^2>a^2$, then $(\kappa_s\varepsilon)^2>\gamma$, and if $\varepsilon''^2>a^2$, then $(\kappa_s''\varepsilon'')^2>\gamma''$. Thus, the only possible case is when $\varepsilon'^2>a^2$. By Lemma~\ref{lem:sizesofunits2}, if $\varepsilon^2\neq \rho^{-2},\rho^{-2}(\rho-1)^2$, then $\varepsilon'^2>a^4$, or $\varepsilon^2>a^2$, or $\varepsilon''^2>a^2$. However, the last two cases can be excluded by the previous part. Regarding the fist one, we can see that
\[
\varepsilon'^2(1+s\rho'+\rho'^2)^2>a^4\left(1-\frac{a-1}{a}\right)^2=a^2>\gamma',
\]
which is again impossible.    

Therefore, we are left with the cases $\varepsilon^2= \rho^2, \rho^{-2},\rho^{-2}(\rho-1)^2$. However, for $\varepsilon^2=\rho^2, \rho^{-2}(\rho-1)^2$, we have $\varepsilon''^2>1$ which implies $\varepsilon''^2(1+s\rho''+\rho''^2)^2>a^2>\gamma''$.

It remains to discuss the case when $\varepsilon^2=\rho^{-2}$. First of all, let us assume $s\geq 4$. In that case,
\[
\rho^{-2}(1+s\rho+\rho^2)^2>\frac{1}{\left(1+\frac{1}{a+2}\right)^2}(2+s)^2\geq \frac{64}{81} 36>28>21>\gamma.
\]
If $s=3$ and $a\geq 9$, we similarly get
\[
\rho^{-2}(1+3\rho+\rho^2)^2>\left(\frac{11}{12}\right)^2 25>21>\gamma.
\]
Using an approximate value of $\rho$, we can easily verify that the same is true for $s=3$ and $6\leq a \leq 8$.  

For $s=1$, we can compute that
\[
N(\gamma-\rho^{-2}(1+\rho+\rho^2)^2)=-198a+773<0
\]  
for $a\geq 6$. However, for $s=2$, we obtain an element totally smaller that $\gamma$, namely
\[
\rho^{-2}(1+2\rho+\rho^2)^2=a^2-3a+5-(a^2-5a-1)\rho-(a-5)\rho^2.\qedhere
\]
\end{proof}

Now we turn our attention to $\mathfrak{s}$-indecomposables with $\mathfrak{s}=(+,-,+)$ and start with the line of elements of the form $-1-u\rho+(u+2)\rho^2$.

\begin{lemma} \label{lem:negusecka}
Let $a\geq 6$. Then $(\varepsilon(-1-u\rho+(u+2)\rho^2))^2\not\preceq \gamma$ for every unit $\varepsilon\in\Z[\rho]$ and $0\leq u\leq a-2$.
\end{lemma}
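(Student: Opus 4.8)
The plan is to imitate the proof of Lemma~\ref{lem:pythtotallypos}, writing $\omega=\ve\mu_u$ with $\mu_u=-1-u\rho+(u+2)\rho^2$ and comparing the three conjugates of $\omega^2$ with those of $\gamma$ one coordinate at a time. The whole argument is driven by the observation that the third conjugate of $\mu_u$ is \emph{always} very large: using the estimates in \eqref{eq:estimatesennola} one checks that $\mu_u''$ is increasing in $u$ (the difference $\mu_{u+1}''-\mu_u''$ equals $\rho''(\rho''-1)>0$), so $\mu_u''\geq \mu_0''=-1+2\rho''^2>2a^2-2$, whereas every conjugate of $\gamma$ is $<a^2$ for $a\geq 6$. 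Thus, in order to have $\gamma\succeq\omega^2$ we would in particular need $\ve''^2\mu_u''^2\leq\gamma''<a^2$, forcing $\ve''$ to be extremely small, and this is exactly what eliminates the delicate cases. Alongside this bound I would also record the estimates $\mu_u>1$ (indeed $\mu_u=-1+u\rho(\rho-1)+2\rho^2$ is a sum of positive contributions since $\rho>1$, so $\mu_u^2>1$) and $\tfrac1a<|\mu_u'|<1$, the lower bound following from the fact that $-\mu_u'=1+u\rho'-(u+2)\rho'^2$ is decreasing in $u$ and exceeds $\tfrac1a$ at the endpoint $u=a-2$.

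With these in hand I would first dispose of $\ve^2=1$, where $\omega''^2=\mu_u''^2>(2a^2-2)^2>a^2>\gamma''$ already fails. For a general unit, set $E=\ve^2=\rho^k(\rho-1)^l$ with $k,l\in2\Z$. If $E\neq 1,\rho^2$, then by the consequence of Lemma~\ref{lem:sizeunits} some conjugate of $E$ exceeds $a^2$; if $E>a^2$ then $\ve^2\mu_u^2>a^2>21>\gamma$, and if $E''>a^2$ then $\ve''^2\mu_u''^2>a^2>\gamma''$, so in either case $\gamma\not\succeq\omega^2$. This leaves the subcase $E'>a^2$ with $E,E''\leq a^2$, to which I apply Lemma~\ref{lem:sizesofunits2}: unless $E\in\{\rho^{-2},\rho^{-2}(\rho-1)^2\}$, it yields $E'>a^4$ (the alternatives $E>a^2$, $E''>a^2$ being incompatible with $E,E''\leq a^2$), and then $\ve'^2\mu_u'^2>a^4\cdot a^{-2}=a^2>\gamma'$ by the bound $|\mu_u'|>\tfrac1a$, so the second coordinate fails.

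It then remains to treat the finitely many exceptional units $E=\rho^2,\rho^{-2},\rho^{-2}(\rho-1)^2$, and in each case the third coordinate finishes the job precisely because $\mu_u''$ is large. For $E=\rho^2$ one has $\ve''^2=\rho''^2>(a-1)^2$, so $\ve''^2\mu_u''^2>(a-1)^2(2a^2-2)^2\gg\gamma''$. For $E=\rho^{-2}$ the third conjugate of $\omega$ is $\rho''^{-1}\mu_u''=(u+2)\rho''-u-\rho''^{-1}$, whose absolute value is bounded below by $2a-3$, so $\omega''^2>(2a-3)^2>a^2>\gamma''$ for $a\geq 6$; and for $E=\rho^{-2}(\rho-1)^2$ the factor $\ve''=\rho''^{-1}(\rho''-1)$ satisfies $\ve''^2>1$, whence $\ve''^2\mu_u''^2>(2a^2-2)^2>\gamma''$. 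This is exactly the point where the statement departs from Lemma~\ref{lem:pythtotallypos}: there the unit $\rho^{-2}$ survived because $\kappa_s''$ can be made small, whereas here $\mu_u''\geq 2a^2-2$ is forced to be large and kills all three exceptional units.

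I expect the main obstacle to be establishing the two sharp one-sided estimates on the conjugates of $\mu_u$, namely $\mu_u''>2a^2-2$ and especially $|\mu_u'|>\tfrac1a$; the latter is essentially an equality at $u=a-2$ and so must be proved from the precise bounds on $\rho'$ in \eqref{eq:estimatesennola} together with the monotonicity of $-\mu_u'$ in $u$, rather than from crude approximations. Once these estimates and the inequalities $\gamma,\gamma',\gamma''<a^2$ are available, everything else reduces to the routine conjugate-by-conjugate comparisons indicated above, in direct analogy with Lemma~\ref{lem:pythtotallypos}.
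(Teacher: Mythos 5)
Your proof is correct, and while it shares the overall skeleton of the paper's argument (conjugate-by-conjugate comparison against $\gamma$, organized via Lemma \ref{lem:sizeunits} and Lemma \ref{lem:sizesofunits2}), the endgame is genuinely different and in fact cleaner. The paper bounds the first conjugate only by $\mu_u > 1 - \tfrac{u}{a+2} \geq \tfrac{4}{a+2}$, so the first coordinate can only eliminate units with $\ve^2 > a^4$; as a result the two units $\rho^{-2}(\rho-1)^{-2}$ and $\rho^{-4}(\rho-1)^{-2}$ (whose first conjugates are of size roughly $(a+2)^2$, between $a^2$ and $a^4$) survive the coordinate comparisons and have to be excluded by explicit computations of $\Tr(\ve^2\mu_u^2)$ against $\Tr(\gamma)$. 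Your observation that $\mu_u = -1 + u\rho(\rho-1) + 2\rho^2 > 1$ is sharper and removes this entire step: any $\ve^2 > a^2$ is already killed in the first coordinate because $\gamma < 21 < a^2$, so after Lemmas \ref{lem:sizeunits} and \ref{lem:sizesofunits2} only $\rho^2$, $\rho^{-2}$, $\rho^{-2}(\rho-1)^2$ remain, and each dies in the third coordinate since $\mu_u'' > 2a^2-2$ (your monotonicity argument $\mu_{u+1}''-\mu_u''=\rho''(\rho''-1)>0$ is valid, as is the endpoint estimate $-\mu_{a-2}' > 1 - \tfrac{a-2}{a} - \tfrac{1}{a} = \tfrac1a$ needed for the $\ve'^2>a^4$ case). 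What your route buys is the elimination of the trace computations; what the paper's route buys is nothing extra here — its weaker first-coordinate bound simply forces the additional work. Both arguments are complete and correct.
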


\begin{proof}
For $\mu_u=-1-u\rho+(u+2)\rho^2$, we can deduce the following lower bounds:
\begin{align*}
(-1-u\rho+(u+2)\rho^2)^2&>\left(-1-u\left(1+\frac{1}{a+2}\right)+(u+2)\right)^2=\left(1-\frac{u}{a+2}\right)^2\\
&\hspace{7cm}\geq \left(1-\frac{a-2}{a+2}\right)^2=\frac{16}{(a+2)^2},\\
(-1-u\rho'+(u+2)\rho'^2)^2&>\left(-1+\frac{u}{a}+\frac{u+2}{a^2}\right)^2\geq \left(-1+\frac{a-2}{a}+\frac{a}{a^2}\right)^2=\frac{1}{a^2},\\
(-1-u\rho''+(u+2)\rho''^2)^2&>(-1+u(a-1)+(u+2)(a-1)^2)^2\geq (-1+2(a-1)^2)^2\\&\hspace{5cm}=4a^4-16a^3+20a^2-8a+1>a^2>\gamma''
\end{align*}
for $a\geq 6$. Thus, we can immediately exclude $\varepsilon^2=1$. If $\varepsilon^2\neq 1,\rho^2$, then $\varepsilon^2$ has one conjugate greater than $a^2$ by Lemma~\ref{lem:sizeunits}. However, if $\varepsilon''^2>a^2$, then $(\varepsilon''\mu_u'')^2>\gamma''$. Moreover, if $\varepsilon^2>a^4$, then
\[
(\varepsilon\mu_u)^2>a^4\frac{16}{(a+2)^2}>21>\gamma
\]
for $a\geq 6$. Similarly, if $\varepsilon'^2>a^4$, then
\[
(\varepsilon'\mu_u')^2>\frac{a^4}{a^2}=a^2>\gamma'.
\]
Thus, we must have $\varepsilon^2,\varepsilon'^2<a^4$ and $\varepsilon''^2<1$. Note that $\varepsilon^2=\rho^2$ does not satisfy $\varepsilon''^2<1$, so we can directly exclude this unit. Let $\varepsilon^2=\rho^k(\rho-1)^l$ for some $k,l\in 2\Z$. If $k\geq 0$ and $l\geq 0$, then clearly $\varepsilon''^2\geq 1$. If $l\leq -4$ and $k\geq 0$, then $\varepsilon^2>a^4$, and if $k\leq -4$ and $l\geq 0$, then $\varepsilon'^2>a^4$. Furthermore, if $l=-2$ and $k\geq 0$, then
\[
(\varepsilon''\mu_u'')^2>\frac{1}{(a+1)^2}(4a^4-16a^3+20a^2-8a+1)>a^2>\gamma'',
\] 
and the same is true if $k=-2$ and $l\geq 0$. Therefore, we must have $k,l<0$. 

In the case of $k\leq -4$ and $k+2<l<0$, we see that
\[
(\varepsilon'\mu_u')^2>a^4\frac{a^{-k-4}}{\left(1+\frac{1}{a}\right)^{-l}}\frac{1}{a^2}\geq a^4 \frac{1}{a^2}=a^2>\gamma'.
\] 
Likewise, if $l\leq -4$ and $l-2\leq k<0$, then
\[
(\varepsilon\mu_u)^2>\frac{(a+2)^4}{(1+\frac{1}{a+2})^6}\frac{16}{(a+2)^2}>21>\gamma
\]
for $a\geq 6$. Thus, we are left with only two units, namely $\rho^{-2}(\rho-1)^{-2}$ and $\rho^{-4}(\rho-1)^{-2}$. For $\rho^{-2}(\rho-1)^{-2}$, we can compute that
\[
\text{Tr}(\rho^{-2}(\rho-1)^{-2}\mu_u^2)=2a^2+10a+21+3u^2+14u\geq 2a^2+10a+21>2a^2-4a+37=\text{Tr}(\gamma) 
\]
for $a\geq 6$. For $\rho^{-4}(\rho-1)^{-2}$, we get
\[
\text{Tr}(\rho^{-4}(\rho-1)^{-2}\mu_u^2)=a^4+2a^3-8a^2+8a+19-(2a^3+4a^2-14a-4)u+(a^2+2a-2)u^2.
\]
Moreover, this trace decreases in $u$ in the interval $[0,a-2]$ which implies 
\[
\text{Tr}(\rho^{-4}(\rho-1)^{-2}\mu_u^2)>\text{Tr}(\rho^{-4}(\rho-1)^{-2}(-1-(a-2)\rho+a\rho^2)^2)=8a^2+3>2a^2-4a+37=\text{Tr}(\gamma)
\] 
for $a\geq 6$. By this, the proof is completed.  
\end{proof}

Now we will use the norm of $\gamma$ to restrict the remaining set of $\mathfrak{s}$-indecomposable integers whose squares can be totally smaller than $\gamma$. It can be derived that
\[
N(\gamma)=20a^4-82a^3+440a^2-778a+1559.
\]
Moreover, recall that
\begin{multline*}
|N(-v-(a(v-1)+w)\rho+(a(v-1)+w+1)\rho^2)|= -w^3 - ((a - 1) v - 2 a + 4) w^2 + (av^2 + (a^2 - 5 a - 1) v \\- (a^2 - 7 a + 3)) w + v^3 - (a^2 + a + 1) v^2 + (4 a^2 - 2) v - 3 a^2 + 3 a - 1.
\end{multline*}

\begin{lemma} \label{lem:normpythtriangle}
Let $a\geq 12$.
Let $\lambda_{v,w}=-v-(a(v-1)+w)\rho+(a(v-1)+w+1)\rho^2$ for some $1\leq v \leq a-1$ and $\max\{1,v-1\}\leq w\leq a-1$, and let $N(\gamma)\geq N(\lambda_{v,w}^2)$. Then $v$ and $w$ satisfy one of the following conditions:
\begin{enumerate}
\item $v=1$ and $1\leq w\leq a-2$,
\item $v=2$ and $w=2,3,a-2$,
\item $w=v-1$ and $v=2,3,a-4,a-3,a-2,a-1$,
\item $1\leq v\leq a-1$ and $w=a-1$,
\item $(v,w)\in\{(a-4,a-2),(a-3,a-3),(a-3,a-2),(a-2,a-2)\}$.
\end{enumerate}
\end{lemma}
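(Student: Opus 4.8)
Throughout write $F(v,w)$ for the displayed absolute norm $|N(\lambda_{v,w})|$, so that, since $N(\lambda_{v,w}^2)=F(v,w)^2$, the hypothesis reads $F(v,w)\le\sqrt{N(\gamma)}$. The plan is to prove the contrapositive: every admissible $(v,w)$ that fails all of (1)--(5) satisfies $F(v,w)^2>N(\gamma)$. First I would record that $N(\gamma)=20a^4-82a^3+440a^2-778a+1559<20a^4$ for $a\ge 6$, so $\sqrt{N(\gamma)}<\sqrt{20}\,a^2$; thus for most points it is enough to prove the cleaner inequality $F(v,w)\ge\sqrt{20}\,a^2$. Because conditions (1) and (4) already account for the whole edges $v=1$ and $w=a-1$, the work reduces to the region $2\le v\le a-1$, $\max\{1,v-1\}\le w\le a-2$, with the finitely many points of (2), (3), (5) removed.

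The structural key is that, for fixed $v$, the map $w\mapsto F(v,w)$ is a cubic with leading term $-w^3$ whose derivative $-3w^2+2c_2w+c_1$ has constant term $c_1=av^2+(a^2-5a-1)v-(a^2-7a+3)$. A short check gives $c_1>0$ for every $v\ge 1$, so the two roots of the derivative have opposite signs and $F(v,\cdot)$ is \emph{unimodal} on $w>0$: it increases to a single interior peak and then decreases. Consequently $F(v,\cdot)$ attains its minimum over the admissible interval at one of the two endpoints $w=\max\{1,v-1\}$ and $w=a-2$, and the sub-threshold set $\{w: F(v,w)\le\sqrt{N(\gamma)}\}$ is a union of at most two segments, one hanging off each endpoint. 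Hence for a given $v$ there are candidate solutions only when $F$ is below the threshold at an endpoint, and then only on a short initial or final run of $w$'s.

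It then remains to evaluate the endpoints and walk inward. On the diagonal one computes low-degree expressions such as $F(2,1)=2a^2-9$, $F(3,2)=4a^2-8a-13$, and $F(a-1,a-2)=a^2+3a-9$, which fall below $\sqrt{N(\gamma)}$ only for $v\in\{2,3\}$ or $v\in\{a-4,\dots,a-1\}$; walking inward (e.g. $F(3,3)=6a^2-12a-43$ already exceeds the threshold) shows the left clusters are exactly the diagonal points of (3), enlarged for $v=2$ to $(2,2),(2,3)$ of (2). For the upper endpoint one uses $F(1,a-2)=2a^2-4a+1$, $F(2,a-2)=(2a-3)^2$, and the analogous evaluations near $v\approx a$, which recover $(2,a-2)$ and the corner cluster (5). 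Collecting the surviving $(v,w)$ yields precisely the list (1)--(5).

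The main obstacle is the behaviour near the corners, where the leading $a^2$-coefficients of $F$ partially cancel: several points \emph{adjacent} to the true solution region -- for instance $(2,4)$ with $F=5a^2+3a-117$, or $(4,3)$ with $F=6a^2-22a-9$ -- have $F$ lying between $\sqrt{N(\gamma)}$ and $\sqrt{20}\,a^2$ for small $a$, so the crude threshold is not decisive and one must instead verify $F(v,w)^2>N(\gamma)$ directly. Controlling these cancellations, and ensuring that the small-$v$ exceptions $v\in\{2,3\}$ stay disjoint from the large-$v$ exceptions $v\in\{a-4,\dots,a-1\}$ so that the two ends can be analysed independently, is exactly what forces the hypothesis $a\ge 12$ and constitutes the technical heart of the argument.
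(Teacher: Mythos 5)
Your proposal is correct and follows essentially the same route as the paper: both reduce to the boundary of the $(v,w)$-region by observing that the cubic $|N(\lambda_{v,w})|$ restricted to a row (and likewise to the diagonal and to the column $w=a-2$) attains its minimum at an endpoint, and then compare the squares of explicit endpoint values such as those at $(3,3)$, $(3,a-2)$, $(2,4)$, $(4,3)$, $(a-5,a-6)$ directly with $N(\gamma)$, which is exactly where the hypothesis $a\ge 12$ is needed. The paper merely organizes the case split as one bulk block $3\le v\le a-5$, $v\le w\le a-2$ plus the special rows and the diagonal rather than row by row, but the underlying unimodality argument and the borderline numerical checks coincide with yours.
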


\begin{proof}
We will proceed similarly as in the proof of Proposition \ref{prop:smallestnorm}. First of all, let us focus on the subset given by $3\leq v\leq a-5$ and $v\leq w\leq a-2$. As before, we can see that the minimal norm is attained at some border point. We can compute that
\begin{align*}
N(\lambda_{3,3}^2)&=36a^4-114a^3-372a^2+1032a+1849>N(\gamma),\\
N(\lambda_{3,a-2}^2)&=36a^4-312a^3+1000a^2-1404a+729>N(\gamma),\\
N(\lambda_{a-5,a-5}^2)&=64a^4-32a^3-4012a^2+1004a+63001>N(\gamma),\\
N(\lambda_{a-5,a-2}^2)&=25a^4+70a^3-1681a^2-2422a+29929>N(\gamma)
\end{align*}
for $a\geq 12$. Moreover, for $a-4\leq v\leq a-2$ and $v\leq w\leq a-2$, we can compute that \[N(\lambda_{a-4,a-4}^2),N(\lambda_{a-4,a-3}^2) >N(\gamma);\] the remaining elements are listed in the case (5).

Thus, we are left with the following cases:
\begin{itemize}
\item $v=1$, i.e., the case (1), where the case $w=a-1$ is covered by (4). 
\item $v=2$, for which $N(\lambda_{2,4}^2),N(\lambda_{2,a-3}^2)>N(\gamma)$; thus, it is enough to discuss $w=2,3,a-2$ which gives (2),
\item $w=v-1$, for which $N(\lambda_{4,3}^2),N(\lambda_{a-5,a-6}^2)>N(\gamma)$; thus, it is enough to discuss $v=2,3,a-4,a-3,a-2,a-1$ which gives (3),
\item $w=a-1$, i.e., the case (4).\qedhere
\end{itemize}    
\end{proof}

However, most of the integers from Lemma \ref{lem:normpythtriangle} do not produce any suitable elements.

\begin{lemma}
Let $a\geq 12$. There does not exist a unit $\varepsilon\in\Z[\rho]$ such that $(\varepsilon\lambda_{v,w})^2\preceq \gamma$ if $v$ and $w$ are as in (2), (3) and (5) of Lemma $\ref{lem:normpythtriangle}$. 
\end{lemma}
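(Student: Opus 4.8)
The plan is to follow the same norm-and-trace elimination strategy used in Lemmas \ref{lem:pythtotallypos} and \ref{lem:negusecka}: for each of the finitely many pairs $(v,w)$ listed in cases (2), (3), and (5) of Lemma \ref{lem:normpythtriangle}, I would show that no totally positive unit $\varepsilon^2$ can shrink $\lambda_{v,w}^2$ to lie totally below $\gamma$. The key observation driving everything is that $\gamma,\gamma',\gamma''<a^2$ for $a\geq 6$ (and here $a\geq 12$), so by Lemma \ref{lem:sizeunits} the only units with all conjugates bounded by $a$ are $\pm 1,\pm\rho$, and by Lemma \ref{lem:sizesofunits2} any square of a unit other than the handful of exceptional ones has a conjugate exceeding $a^2$ or even $a^4$. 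So I would first estimate the three conjugates $\lambda_{v,w}^2$, $\lambda_{v,w}'^2$, $\lambda_{v,w}''^2$ using the bounds \eqref{eq:estimatesennola} on $\rho,\rho',\rho''$, and observe that for these specific $(v,w)$ at least one conjugate is already comparable to a power of $a$; multiplying by a unit square then forces some conjugate above the corresponding bound on $\gamma$ for all but a small list of candidate units.

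Concretely, I would mirror the case analysis of Lemma \ref{lem:negusecka}. First rule out $\varepsilon^2=1$ by showing directly that one conjugate of $\lambda_{v,w}^2$ already exceeds the matching conjugate of $\gamma$. Then, writing $\varepsilon^2=\rho^k(\rho-1)^l$ with $k,l\in 2\Z$, I would use Lemma \ref{lem:sizesofunits2} to reduce to the few exceptional units $\rho^{\pm2},\rho^{-2}(\rho-1)^{\pm2},\rho^{-4}(\rho-1)^{-2}$, and for each remaining candidate unit check one of the inequalities $(\varepsilon\lambda_{v,w})^2>\gamma$, $(\varepsilon'\lambda_{v,w}')^2>\gamma'$, or $(\varepsilon''\lambda_{v,w}'')^2>\gamma''$. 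For the few cases where the conjugatewise comparison is too delicate to settle by crude estimates, I would instead invoke the trace test: if $(\varepsilon\lambda_{v,w})^2\preceq\gamma$ then necessarily $\Tr((\varepsilon\lambda_{v,w})^2)\leq\Tr(\gamma)=2a^2-4a+37$, and I would compute $\Tr(\rho^k(\rho-1)^l\lambda_{v,w}^2)$ as an explicit polynomial in $a$ (with the chosen numerical $v,w$) and show it exceeds $2a^2-4a+37$ for $a\geq 12$. The norm bound from Lemma \ref{lem:normpythtriangle} has already pruned the list to a manageable size, so only a bounded number of $(v,w,\varepsilon^2)$ combinations survive to this point.

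The main obstacle I expect is the borderline cases in (3) where $w=v-1$ with $v$ near $a$, and the cluster in (5) where $v,w$ are both close to $a$. For these, the element $\lambda_{v,w}$ has a conjugate very close to $1$ in absolute value (the signature-$(+,-,+)$ trace is minimal there), so the crude power-of-$a$ estimates leave a genuine gap, and one cannot simply say ``some conjugate is huge.'' In those cases the trace computation must be carried out carefully, and one has to track the exceptional units $\rho^{-2}$ and $\rho^{-2}(\rho-1)^{\pm2}$ individually rather than dismissing them en masse. I anticipate that, exactly as in Lemma \ref{lem:negusecka}, these surviving units will be eliminated by showing the relevant trace is a polynomial in $a$ whose leading term is $a^4$ or $a^3$, hence eventually (and for $a\geq 12$ already) larger than $\Tr(\gamma)\sim 2a^2$. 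Since each $(v,w)$ in (2), (3), (5) is an explicit numerical instance, the whole argument reduces to finitely many such polynomial inequalities, and the only real work is organizing them so the exceptional units are not overlooked; the bulk of these routine verifications would be relegated to the appendix in the spirit of the rest of Sections 6--9.
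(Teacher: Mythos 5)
Your proposal follows essentially the same route as the paper's proof: lower-bound the three conjugates of $\lambda_{v,w}^2$ via the estimates \eqref{eq:estimatesennola}, use Lemmas \ref{lem:sizeunits} and \ref{lem:sizesofunits2} (or the equivalent direct case analysis on $\varepsilon^2=\rho^k(\rho-1)^l$) to whittle the candidate units down to a short explicit list such as $\rho^{-2}(\rho-1)^{-2}$ and $\rho^{-4}(\rho-1)^{-2}$, and then eliminate the survivors by comparing $\Tr(\varepsilon^2\lambda_{v,w}^2)$ with $\Tr(\gamma)=2a^2-4a+37$ (plus, where needed, sharper bounds on $\rho$ and $\rho'$ for the $w=v-1$ and near-diagonal cases). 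This matches the paper's argument in both structure and the key estimates, so no further comparison is needed.
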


\begin{proof}
Let us start with $\lambda_{2,2}$. We can deduce that
\begin{align*}
(-2-(a+2)\rho+(a+3)\rho^2)^2&>\frac{1}{(a+3)^2},\\
(-2-(a+2)\rho'+(a+3)\rho'^2)^2&>\frac{(a^2-3a-3)^2}{a^4},\\
(-2-(a+2)\rho''+(a+3)\rho''^2)^2&>a^6+4a^5-4a^4-18a^3+12a^2+8a+1>a^2>\gamma''
\end{align*}
for $a\geq 12$. Thus, we can immediately exclude $\varepsilon^2=1,\rho^2$ as $(\varepsilon''\lambda_{2,2}'')^2>a^2>\gamma''$. Since $a^4\lambda_{2,2}^2>\gamma$ and $a^4\lambda_{2,2}'^2>\gamma'$, we must have $\varepsilon^2,\varepsilon'^2<a^4$ and $\varepsilon''^2<1$. We can proceed similarly as in the proof of Lemma~\ref{lem:negusecka} to conclude that $\varepsilon^2=\rho^{-2}(\rho-1)^{-2},\rho^{-4}(\rho-1)^{-2}$. For these units, we see that
\begin{align*}
\text{Tr}(\rho^{-2}(\rho-1)^{-2}\lambda_{2,2}^2)&=2a^2+2a+14>\text{Tr}(\gamma),\\
\text{Tr}(\rho^{-4}(\rho-1)^{-2}\lambda_{2,2}^2)&=a^4-2a^3-13a^2+30a+9>\text{Tr}(\gamma)
\end{align*} 
for $a\geq 12$.

Let us proceed with $\lambda_{2,3}$. Here we have
\begin{align*}
(-2-(a+3)\rho+(a+4)\rho^2)^2&>\frac{(2a^2+10a+11)^2}{(a+2)^2(a+3)^4},\\
(-2-(a+3)\rho'+(a+4)\rho'^2)^2&>\frac{(a^2-4a-4)^2}{a^4},\\
(-2-(a+3)\rho''+(a+4)\rho''^2)^2&>a^6+6a^5-a^4-32a^3+19a^2+10a+1>a^2>\gamma''
\end{align*}
for $a\geq 12$. Similarly as in the previous case, we are left with the units $\varepsilon^2=\rho^{-2}(\rho-1)^{-2},\rho^{-4}(\rho-1)^{-2}$ which leads to
\begin{align*}
\text{Tr}(\rho^{-2}(\rho-1)^{-2}\lambda_{2,3}^2)&=2a^2+2a+33>\text{Tr}(\gamma),\\
\text{Tr}(\rho^{-4}(\rho-1)^{-2}\lambda_{2,3}^2)&=a^4-4a^3-12a^2+52a+1>\text{Tr}(\gamma)
\end{align*} 
for $a\geq 12$. 

For $v=2$, it remains to discuss the element with $w=a-2$. For it, we see that
\begin{align*}
(-2-(2a-2)\rho+(2a-1)\rho^2)^2&>\frac{(a^3+2a^2-10a-14)^2}{(a+2)^2(a+3)^4},\\
(-2-(2a-2)\rho'+(2a-1)\rho'^2)^2&>\frac{1}{a^4},\\
(-2-(2a-2)\rho''+(2a-1)\rho''^2)^2&>4a^6-12a^5+9a^4-4a^3+6a^2+1>a^2>\gamma''.
\end{align*}
In this case, we must have $\varepsilon^2<a^2$, $a^2<\varepsilon'^2<a^6$ and $\varepsilon''^2<1$. This (together with the lower bound on $\lambda_{2,a-2}^2$) leads to the units $\rho^{-2}$, $\rho^{-4}$ and $\rho^{-4}(\rho-1)^2$. For them, we compute that
\begin{align*}
\text{Tr}(\rho^{-2}\lambda_{2,a-2}^2)&=4a^4+4a^3-7a^2-20a+13>\text{Tr}(\gamma),\\
\text{Tr}(\rho^{-4}\lambda_{2,a-2}^2)&=8a^2+8a-29>\text{Tr}(\gamma),\\
\text{Tr}(\rho^{-4}(\rho-1)^{2}\lambda_{2,a-2}^2)&=4a^4+12a^3+9a^2-18a-50>\text{Tr}(\gamma)
\end{align*}
for $a\geq 12$. Thus, we have completed the proof for (2).

Now, we will turn our attention to (3), i.e., $w=v-1$ and $v=2,3,a-4,a-3,a-2,a-1$. Here, we have to use more precise estimates on $\rho$ and $\rho'$, namely $1+\frac{1}{a+2+\frac{1}{a^2}}<\rho$ and $-\frac{1}{a+1-\frac{2}{a}+\frac{2}{a^2}}<\rho'$ for $a\geq 7$ (which can be checked using the minimal polynomial of $\rho$). Using them, we can conclude that
\begin{align*}
\lambda_{v-1,v}^2&>\frac{(a^6+7a^5+10a^4+10a^3+18a^2+a+6)^2}{(a+2)^2(a^3+2a^2+1)^4},\\
\lambda_{v-1,v}'^2&>\frac{(a^5+2a^4+6a^3-12a^2+12a-4)^2}{(a^3+a^2-2a+2)^4},\\
\lambda_{v-1,v}''^2&>a^6+2a^5-5a^4-8a^3+7a^2+6a+1.
\end{align*}
Again, we must have $\varepsilon^2,\varepsilon'^2<a^4$ and $\varepsilon''^2<1$ which using the same procedure as in the proof of Lemma \ref{lem:negusecka}, gives $\varepsilon^2=\rho^{-2}(\rho-1)^{-2},\rho^{-4}(\rho-1)^{-2}$. For these units, we have
\begin{align*}
\text{Tr}(\rho^{-2}(\rho-1)^{-2}\lambda_{v-1,v}^2)&=(a^2+2a+2)v^2-(2a^2+4a+6)v+2a^2+2a+5\\
& \hspace{7cm}\geq 2a^2+2a+1>\text{Tr}(\gamma),\\
\text{Tr}(\rho^{-4}(\rho-1)^{-2}\lambda_{v-1,v}^2)&=(a^2+4a+1)v^2-(2a^3+8a^2-4)v+a^4+4a^3-4a+1\\
& \hspace{7cm}\geq 2a^2+2a-1>\text{Tr}(\gamma).
\end{align*} 

We will proceed with with (5), i.e., with $(v,w)\in\{(a-4,a-2),(a-3,a-3),(a-3,a-2),(a-2,a-2)\}$. In this case, similarly as before, we can deduce that
\begin{align*}
\lambda_{v,w}^2&>\frac{(2a^6+13a^5+20a^4+18a^3+26a^2+3a+8)^2}{(a+2)^2(a^3+2a^2+1)^4},\\
\lambda_{v,w}'^2&>\frac{(a^5+3a^4+8a^3-20a^2+20a-8)^2}{(a^3+a^2-2a+2)^4},\\
\lambda_{v,w}''^2&>a^8-10a^7+31a^6-32a^5+29a^4-56a^3+31a^2-10a+25.
\end{align*}  
Therefore, we must have $\varepsilon^2,\varepsilon'^2<a^4$, and $\varepsilon''^2<1$. This again leads to the unit $\varepsilon^2=\rho^{-2}(\rho-1)^{-2},\rho^{-4}(\rho-1)^{-2}$. However, $\rho''^{-2}(\rho''-1)^{-2}\lambda_{v,w}''^2>a^2$, and
\begin{align*}
\text{Tr}(\rho^{-4}(\rho-1)^{-2}\lambda_{v,w}^2)&\geq 2a^2+2a+9>\text{Tr}(\gamma)
\end{align*} 
for $a\geq 12$. By this, the proof is completed.   
\end{proof}

Now, we will discuss the remaining elements found in Lemma \ref{lem:normpythtriangle}.

\begin{lemma} \label{lem:pythtriangle}
Let $a\geq 12$ and let $v$ and $w$ be as in (1) or (4) of Lemma $\ref{lem:normpythtriangle}$. Moreover, let $(\varepsilon\lambda_{v,w})^2\preceq \gamma$ for some unit $\varepsilon\in\Z[\rho]$. Then $(\varepsilon\lambda_{v,w})^2$ is one of the following elements:   
\begin{itemize}
\item $\rho^{-2}(\rho-1)^{-2}(-1-\rho+2\rho^2)^2=a^2-3a+3-(a^2-5a+3)\rho-(a-4)\rho^2$,
\item $\rho^{-2}(\rho-1)^{-2}(-1-2\rho+3\rho^2)^2=a^2-5a+8-(a^2-7a+5)\rho-(a-6)\rho^2$,
\item $\rho^{-4}(\rho-1)^{-2}(-(a-1)-(a(a-2)+a-1)\rho+(a(a-2)+a)\rho^2)^2=1+2\rho+\rho^2$,
\item $\rho^{-4}(\rho-1)^{-2}(-(a-2)-(a(a-3)+a-1)\rho+(a(a-3)+a)\rho^2)^2=4+4\rho+\rho^2$,
\item $\rho^{-4}(\rho-1)^{-2}(-(a-3)-(a(a-4)+a-1)\rho+(a(a-4)+a)\rho^2)^2=9+6\rho+\rho^2$. 
\end{itemize} 
\end{lemma}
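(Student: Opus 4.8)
The plan is to run the three-step template of Lemmas \ref{lem:pythtotallypos} and \ref{lem:negusecka} separately on the two ranges (1) and (4) of Lemma \ref{lem:normpythtriangle}, in each of which a single parameter ($w$, respectively $v$) is free. What organizes the computation, and explains why exactly five elements survive, are two algebraic identities that one verifies directly from $\rho^3=-(a-1)\rho^2+a\rho+1$:
\begin{align*}
\lambda_{1,w}&=\rho(\rho-1)\bigl((w+1)+\rho^{-1}\bigr)\qquad(1\le w\le a-2),\\
\lambda_{v,a-1}&=\rho^2(\rho-1)\bigl(\rho+a-v\bigr)\qquad(1\le v\le a-1).
\end{align*}
Thus $\lambda_{1,w}$ is associated, via the unit $\rho(\rho-1)$, to $m_w:=(w+1)+\rho^{-1}$, and $\lambda_{v,a-1}$ is associated, via $\rho^2(\rho-1)$, to $m_v:=\rho+a-v$. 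Since $\varepsilon$ ranges over all units, the question whether $(\varepsilon\lambda)^2\preceq\gamma$ for some unit $\varepsilon$ becomes the question whether $(\delta m)^2\preceq\gamma$ for some unit $\delta$, and the choice $\delta=1$ corresponds exactly to $\varepsilon^2=\rho^{-2}(\rho-1)^{-2}$ in case (1) and to $\varepsilon^2=\rho^{-4}(\rho-1)^{-2}$ in case (4).

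First I would treat $\delta=1$. By \eqref{eq:estimatesennola}, the three conjugates of $m_w$ lie near $w+2$, $w+1-a$, $w+1$, and those of $m_v$ near $a-v+1$, $a-v$, $-v$; for every admissible parameter the second and third squared conjugates stay below $\gamma'$ and $\gamma''$, so the condition $m^2\preceq\gamma$ reduces to its first-conjugate part. This forces $w\in\{1,2\}$ (as $m_w^2\approx(w+2)^2$ and $\gamma<21$) and $a-v\in\{1,2,3\}$. Substituting these values into $m_w^2=\rho^{-2}(\rho-1)^{-2}\lambda_{1,w}^2$ and $m_v^2=\rho^{-4}(\rho-1)^{-2}\lambda_{v,a-1}^2$ produces exactly the five listed elements; in particular $m_v^2=(\rho+a-v)^2$ equals $(\rho+1)^2,(\rho+2)^2,(\rho+3)^2$, which is why the last three right-hand sides are the clean squares $1+2\rho+\rho^2$, $4+4\rho+\rho^2$ and $9+6\rho+\rho^2$.

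It then remains to rule out $\delta\ne 1$. The decisive feature of $m_w$ and $m_v$ is that all of their conjugates are bounded away from $0$ uniformly in the parameter, unlike those of $\lambda$ itself. Hence, when the parameter is in the surviving range, the componentwise constraints coming from $\delta^2m^2\preceq\gamma$ together with $\gamma<21$ and $\gamma',\gamma''<a^2$ keep all three conjugates of $\delta$ below $a$, so Lemma \ref{lem:sizeunits} gives $\delta\in\{\pm1,\pm\rho\}$; the value $\delta=\pm\rho$ is excluded since its third conjugate $\rho''^2\approx a^2$ makes the third conjugate of $(\delta m)^2$ exceed $\gamma''$, leaving only $\delta^2=1$. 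For $w\ge3$ or $a-v\ge4$ the first conjugate of $m^2$ already exceeds $\gamma$, so one would need $\delta^2<1$; but then $|\delta|<1<a$ forces $|\delta'|\ge a$ or $|\delta''|\ge a$ by Lemma \ref{lem:sizeunits}, which drives the corresponding conjugate of $(\delta m)^2$ past $\gamma'$ or $\gamma''$. So no element beyond the five listed occurs.

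The main obstacle is exactly this unit bookkeeping, and it is where passing to the associates $m_w,m_v$ pays off. Working with $\lambda_{v,a-1}$ directly, one finds that near $v=a-1$ the second conjugate $\lambda_{v,a-1}'=\rho'^2(\rho'-1)(\rho'+a-v)$ is only of size $a^{-2}$, so the naive estimate $\varepsilon'^2<\gamma'/\lambda_{v,a-1}'^2$ yields merely $\varepsilon'^2<a^6$ and admits several spurious units; this is precisely the phenomenon treated for $\lambda_{2,a-2}$ in the previous lemma, where $\rho^{-2}$, $\rho^{-4}$ and $\rho^{-4}(\rho-1)^2$ had to be eliminated one at a time using Lemma \ref{lem:sizesofunits2}. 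Replacing $\lambda$ by $m$, whose conjugates are all bounded away from $0$, removes this degeneracy; if one prefers the trace method instead, the delicate point becomes securing lower bounds on the tiny conjugates, which requires the refined estimates $1+\tfrac{1}{a+2+1/a^2}<\rho$ and $-\tfrac{1}{a+1-2/a+2/a^2}<\rho'$ already used in the preceding lemma. Once the unit list is pinned down, the remaining inequalities are routine and hold for $a\ge12$.
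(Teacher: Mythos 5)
Your two factorizations $\lambda_{1,w}=\rho(\rho-1)\bigl((w+1)+\rho^{-1}\bigr)$ and $\lambda_{v,a-1}=\rho^2(\rho-1)(\rho+a-v)$ are correct (I checked them against $\rho^3=-(a-1)\rho^2+a\rho+1$), and passing to the associates $m_w,m_v$ is a genuinely nicer organization than the paper's, which works with $\lambda_{v,w}$ directly and, for range (4), has to list six candidate units and eliminate them one by one. Your identification of the five survivors under $\delta=1$ is also sound: the first-conjugate test $m^2<\gamma<21$ cleanly forces $w\in\{1,2\}$ and $v\in\{a-3,a-2,a-1\}$, replacing the paper's norm computation for $w=3$.

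The gap is in the unit-exclusion step, specifically at $v=a-1$. There $m_{a-1}'=1+\rho'$ satisfies $|1+\rho'|<1$ (it is about $1-\tfrac1a$), so the componentwise constraint $\delta'^2m_{a-1}'^2\le\gamma'$ together with $\gamma'<a^2$ only yields $|\delta'|<\tfrac{a^2}{a-1}>a$; your claim that the constraints ``keep all three conjugates of $\delta$ below $a$'' fails here, and Lemma \ref{lem:sizeunits} cannot be invoked. Concretely, $\delta=\pm\rho^{-1}$ has $|\delta'|=|\rho'|^{-1}\in(a,a+1)$ and tiny other conjugates, so $\delta^2=\rho^{-2}$ — i.e.\ $\varepsilon^2=\rho^{-6}(\rho-1)^{-2}$ acting on $\lambda_{a-1,a-1}$ — passes all three coarse componentwise bounds. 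It is in fact excluded, since $(1+\rho'^{-1})^2=(a-1+\theta)^2$ with $\theta=|\rho'|^{-1}-a>1-\tfrac2a$ exceeds $\gamma'\approx a^2-2a+6$; but establishing this needs the refined lower bound on $|\rho'|^{-1}$ (the coarse bound $\theta>0$ gives only $(a-1)^2<\gamma'$, which is inconclusive). This is exactly the candidate the paper kills separately via $N(\gamma-\rho^{-6}(\rho-1)^{-2}\lambda_{a-1,a-1}^2)=-32a^3+204a^2-580a+1523<0$. So your argument is reparable — either insert the refined estimate on $\rho'$ at this point, or fall back on the norm test — but as written the case $v=a-1$, $\delta^2=\rho^{-2}$ slips through.
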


\begin{proof}
We will start with (1), i.e., with elements of the form $\lambda_{v,w}=-1-w\rho+(w+1)\rho^2$ where $1\leq w\leq a-2$, for which we have
\begin{align*}
(-1-w\rho+(w+1)\rho^2)^2&>\frac{(3a^2+16a+19)^2}{(a+2)^2(a+3)^4},\\
(-1-w\rho'+(w+1)\rho'^2)^2&>\frac{(a+1)^2}{a^4},\\
(-1-w\rho''+(w+1)\rho''^2)^2&>4a^4-12a^3+9a^2.
\end{align*}
As in some of the previous cases, our unit has to satisfy $\varepsilon^2,\varepsilon'^2<a^4$ and $\varepsilon''^2<1$ which gives the units $\rho^{-2}(\rho-1)^{-2}$ and $\rho^{-4}(\rho-1)^{-2}$. Moreover, if $w\geq 4$, then $a^2\lambda_{v,w}^2>21$ for $a\geq 12$, and $\varepsilon^2<a^2$ is not fulfilled by any of these two units. Thus, we are left with $w=1,2,3$ and $\varepsilon^2=\rho^{-2}(\rho-1)^{-2},\rho^{-4}(\rho-1)^{-2}$.

For $\rho^{-4}(\rho-1)^{-2}$, we obtain
\[
\text{Tr}(\rho^{-4}(\rho-1)^{-2}(-1-w\rho+(w+1)\rho^2)^2)\geq a^4-4a^3-10a^2+48a-6>\text{Tr}(\gamma).
\]
For $w=3$ and $\rho^{-2}(\rho-1)^{-2}$, we get
\[
N(\gamma-\rho^{-2}(\rho-1)^{-2}(-1-3\rho+4\rho^2)^2)=-20a^3+72a^2-10a-79<0
\] 
for $a\geq 12$. On the other hand, for $w=1,2$, we indeed obtain elements totally smaller than $\gamma$, namely
\begin{align*}
\rho^{-2}(\rho-1)^{-2}(-1-\rho+2\rho^2)^2&=a^2-3a+3-(a^2-5a+3)\rho-(a-4)\rho^2,\\
\rho^{-2}(\rho-1)^{-2}(-1-2\rho+3\rho^2)^2&=a^2-5a+8-(a^2-7a+5)\rho-(a-6)\rho^2.
\end{align*}

We will proceed with the elements from (4), i.e., with $\lambda_{v,a-1}=-v-(a(v-1)+a-1)\rho+(a(v-1)+a)\rho^2$ where $1\leq v \leq a-1$. For them, we get the following lower bounds:
\begin{align*}
\lambda_{v,a-1}^2&>\frac{(2a^6+12a^5+16a^4+10a^3+18a^2+a+5)^2}{(a+2)^2(a^3+2a^2+1)^4},\\
\lambda_{v,a-1}'^2&>\frac{4(4a^3-7a^2+6a-2)^2}{(a^3+a^2-2a+2)^4},\\
\lambda_{v,a-1}''^2&>a^6-2a^5-a^4+2a^3+a^2.
\end{align*}
It follows that our unit $\varepsilon$ has to satisfy $\varepsilon^2<a^4$, $\varepsilon'^2<a^8$ and $\varepsilon''^2<1$. Using a similar procedure as before, we are left with the units $\rho^{-6}(\rho-1)^2$, $\rho^{-6}$, $\rho^{-6}(\rho-1)^{-2}$, $\rho^{-4}$, $\rho^{-4}(\rho-1)^{-2}$ and $\rho^{-2}(\rho-1)^{-2}$. Moreover, if $1\leq v\leq a-4$, then the above lower bounds are larger, and using these better bounds or, as before, $\text{Tr}(\varepsilon^2\lambda_{v,a-1}^2)$ to exclude all of the above units.
Thus, it remains to discuss the cases when $v=a-3, a-2, a-1$.

For $\rho^{-6}(\rho-1)^2$, $\rho^{-6}$, $\rho^{-4}$ and $\rho^{-2}(\rho-1)^{-2}$, we can deduce the following:
\begin{align*}
\text{Tr}(\rho^{-6}(\rho-1)^2\lambda_{v,a-1}^2)&\geq \text{Tr}(\rho^{-6}(\rho-1)^2\lambda_{a-3, a-1}^2)=a^4-2a^3+48a+75>\text{Tr}(\gamma),\\
\text{Tr}(\rho^{-6}\lambda_{v,a-1}^2)&\geq \text{Tr}(\rho^{-6}\lambda_{a-1, a-1}^2)=2a^2+2a-7>\text{Tr}(\gamma),\\
\text{Tr}(\rho^{-4}\lambda_{v,a-1}^2)&\geq \text{Tr}(\rho^{-4}\lambda_{a-3, a-1}^2)=a^4-4a^3-2a^2+8a+34>\text{Tr}(\gamma),\\ 
\text{Tr}(\rho^{-2}(\rho-1)^{-2}\lambda_{v,a-1}^2)&\geq \text{Tr}(\rho^{-2}(\rho-1)^{-2}\lambda_{a-3, a-1}^2)=a^4-6a^3+9a^2-4a+38>\text{Tr}(\gamma)
\end{align*}
for $a\geq 12$. Let us now focus on $\rho^{-6}(\rho-1)^{-2}$. For this unit, we obtain 
\begin{align*}
\text{Tr}(\rho^{-6}(\rho-1)^{-2}\lambda_{a-3, a-1}^2)&=9a^2+12a-15>\text{Tr}(\gamma),\\ 
\text{Tr}(\rho^{-6}(\rho-1)^{-2}\lambda_{a-2, a-1}^2)&=4a^2+4a-5>\text{Tr}(\gamma),\\
N(\gamma-\rho^{-6}(\rho-1)^{-2}\lambda_{a-1, a-1}^2)&=-32a^3+204a^2-580a+1523<0
\end{align*}  
for $a\geq 12$. Thus, we can exclude this unit. On the other hand, considering the unit $\rho^{-4}(\rho-1)^{-2}$, we get three squares totally smaller than $\gamma$, namely
\begin{align*}
\rho^{-4}(\rho-1)^{-2}\lambda_{a-3, a-1}^2&=9+6\rho+\rho^2,\\
\rho^{-4}(\rho-1)^{-2}\lambda_{a-2, a-1}^2&=4+4\rho+\rho^2,\\
\rho^{-4}(\rho-1)^{-2}\lambda_{a-1, a-1}^2&=1+2\rho+\rho^2.\qedhere
\end{align*}
\end{proof}

We will now focus on squares of $\mathfrak{s}$-decomposables totally smaller than $\gamma$. We will construct them using $\mathfrak{s}$-indecomposables which we derived in previous lemmas and whose signatures can be found in Table \ref{tab:signat}. Recall that the signature of both $\rho$ and $\rho-1$ is $(+,-,-)$.

More concretely, we will use the following fact. If $\omega\in\Z[\rho]$ is a $\mathfrak{s}$-decomposable integer, it can be written as $\omega=\sum_{i=1}^n \beta_i$ where elements $\beta_i\in\Z[\rho]$ are $\mathfrak{s}$-indecomposables. Then, if $\gamma\succeq\omega^2$, we have
\[
\gamma\succeq \omega^2=\Big(\sum_{i=1}^n \beta_i\Big)^2\succeq (\beta_i+\beta_j)^2
\]
for all $1\leq i,j\leq n$. Therefore, if we fix the signature $\mathfrak{s}$ and show that $(\beta_i+\beta_j)^2\succ \gamma$ for all pair $\beta_i,\beta_j$ such that $\beta_i$ and $\beta_j$ have the signature $\mathfrak{s}$, and their squares were found in Lemmas \ref{lem:pythunits}, \ref{lem:pythtotallypos}, \ref{lem:negusecka} and \ref{lem:pythtriangle}, then there does not exist a $\mathfrak{s}$-decomposable integer $\omega\in\Z[\rho]$ such that $\gamma\succeq\omega^2$. As we will see, this happens for almost all signatures $\mathfrak{s}$.

\begin{table}[ht]
\begin{tabular}{|c|c|c|}
\hline
$\mathfrak{s}$-indecomposables $\beta$ & Signature of $\beta$ & Signature of $-\beta$\\
\hline
$1$ & $(+,+,+)$ & $(-,-,-)$\\
\hline
$\rho^{-1}(1+2\rho+\rho^2)$ & $(+,-,-)$ & $(-,+,+)$\\
\hline
$\rho^{-1}(\rho-1)^{-1}\lambda_{1,1}$ & $(+,-,+)$ & $(-,+,-)$\\
\hline
$\rho^{-1}(\rho-1)^{-1}\lambda_{1,2}$ & $(+,-,+)$ & $(-,+,-)$\\
\hline
$\rho^{-2}(\rho-1)^{-1}\lambda_{a-1,a-1}$ & $(+,+,-)$ & $(-,-,+)$\\
\hline
$\rho^{-2}(\rho-1)^{-1}\lambda_{a-2,a-1}$ & $(+,+,-)$ & $(-,-,+)$\\
\hline
$\rho^{-2}(\rho-1)^{-1}\lambda_{a-3,a-1}$ & $(+,+,-)$ & $(-,-,+)$\\
\hline
\end{tabular}
\caption{Signatures of $\mathfrak{s}$-indecomposables whose squares are totally smaller than $\gamma$.} \label{tab:signat}
\end{table}

\begin{lemma}
Let $a\geq 12$ and $\gamma\succeq \omega^2$ for some $\mathfrak{s}$-decomposable integer $\omega\in\Z[\rho]$. Then $\omega^2\in\{4,9,16\}$.
\end{lemma}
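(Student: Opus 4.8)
The plan is to apply the pairwise criterion stated immediately before the lemma. Suppose $\omega$ is $\mathfrak{s}$-decomposable with $\gamma\succeq\omega^2$, and write $\omega=\sum_{i=1}^{n}\beta_i$ where each $\beta_i$ is an $\mathfrak{s}$-indecomposable of the common signature $\mathfrak{s}$ and $n\geq 2$. Because all summands share the signature $\mathfrak{s}$, every conjugate $\sigma_m(\omega)$ is a sum of reals of a single sign, so $\sigma_m(\omega)^2\geq(\sigma_m(\beta_i)+\sigma_m(\beta_j))^2\geq\sigma_m(\beta_i)^2$ for all $m$ and all $i,j$. Hence $\gamma\succeq\omega^2\succeq(\beta_i+\beta_j)^2\succeq\beta_i^2$, which forces each $\beta_i^2\preceq\gamma$; by Lemmas \ref{lem:pythunits}, \ref{lem:pythtotallypos}, \ref{lem:negusecka} and \ref{lem:pythtriangle} the only such indecomposables, together with their signatures, are exactly the ones collected in Table \ref{tab:signat}. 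I would then fix $\mathfrak{s}$ and argue case by case, the operative inequality being $\gamma\succeq(\beta_i+\beta_j)^2$ for every admissible pair.

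First I would treat the totally definite signatures $\mathfrak{s}=(+,+,+)$ and $\mathfrak{s}=(-,-,-)$. In Table \ref{tab:signat} the only indecomposable of these signatures is $\beta_i=1$, respectively $\beta_i=-1$, so $\omega=\pm n$ is a rational integer with $n\geq 2$. The rational-integer observation made right after the definition of $\gamma$ gives that $\gamma\succeq\omega^2=n^2$ implies $n^2\in\{1,4,9,16\}$, and $n\geq 2$ leaves precisely $\omega^2\in\{4,9,16\}$. These are exactly the values allowed by the statement, and they are the only decomposables that survive.

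For each remaining signature I would instead derive a contradiction by showing $(\beta_i+\beta_j)^2\succ\gamma$ for every admissible pair; the mechanism is that in each such signature the indecomposable square is only barely below $\gamma$ in one fixed coordinate, so forming a sum of two of them overshoots $\gamma$ there. For $\mathfrak{s}=(+,-,-)$ (and symmetrically $(-,+,+)$) there is a single indecomposable, with square $\beta^2=a^2-3a+5-(a^2-5a-1)\rho-(a-5)\rho^2$, and the clean way to rule out $2\beta$ is the trace bound $\Tr(4\beta^2)=8a^2-24a+84>2a^2-4a+37=\Tr(\gamma)$ for $a\geq 12$. For $\mathfrak{s}=(+,-,+)$ (and $(-,+,-)$) the two indecomposables $\rho^{-1}(\rho-1)^{-1}\lambda_{1,1}$ and $\rho^{-1}(\rho-1)^{-1}\lambda_{1,2}$ both have negative second conjugate of size about $a$, so each of the three sums $2\beta_1$, $\beta_1+\beta_2$, $2\beta_2$ has second conjugate of size about $2a$ and hence square about $4a^2>a^2>\gamma'$. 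For $\mathfrak{s}=(+,+,-)$ (and $(-,-,+)$) the three indecomposables equal $1+\rho$, $2+\rho$, $3+\rho$ up to sign; each has third conjugate near $-a$ by \eqref{eq:estimatesennola}, so every pairwise sum has third conjugate near $-2a$ and square about $4a^2>a^2>\gamma''$. In each case the heuristic ``size about'' is replaced by the exact estimates \eqref{eq:estimatesennola} on $\rho,\rho',\rho''$, exactly as in the preceding lemmas, or by an explicit trace inequality against $\Tr(\gamma)=2a^2-4a+37$.

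The genuinely laborious part is the last paragraph: one must check $(\beta_i+\beta_j)^2\succ\gamma$ for every pair in every non-definite signature, and the decisive coordinate changes from signature to signature (first/trace for $(+,-,-)$, second for $(+,-,+)$, third for $(+,+,-)$), so the real effort is the bookkeeping of selecting, per signature, the conjugate in which the doubling is cleanly visible and then confirming the resulting polynomial inequality in $a$ for $a\geq 12$. The mixed pair $\beta_1+\beta_2$ in the $(+,-,+)$ case is slightly more delicate than the doubled pairs, since one cannot simply quadruple a single square and must instead use that both second conjugates are negative so their sum has second conjugate of magnitude about $2a$. Once all signatures other than $(+,+,+)$ and $(-,-,-)$ are eliminated, the only $\mathfrak{s}$-decomposable integers with $\gamma\succeq\omega^2$ are $\pm 2,\pm 3,\pm 4$, giving $\omega^2\in\{4,9,16\}$ as claimed.
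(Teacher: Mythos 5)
Your proposal is correct and follows essentially the same route as the paper: reduce to pairwise sums of the indecomposables collected in Table \ref{tab:signat} and eliminate every non-definite signature by verifying $(\beta_i+\beta_j)^2\succ\gamma$ for all admissible pairs, leaving only $\pm 2,\pm 3,\pm 4$ from the signature $(\pm,\pm,\pm)$ case. The only difference is which invariant you use to witness the overshoot in each signature (you use the trace for $(+,-,-)$ and the second resp.\ third conjugate uniformly for $(+,-,+)$ resp.\ $(+,+,-)$, whereas the paper mostly bounds the first conjugate), but the inequalities you select do hold for $a\geq 12$, so this is an equivalent verification.
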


\begin{proof}
As indicated before, we will discuss each of the possible signatures separately:
\begin{enumerate}
\item Signature $(+,+,+)$ (or $(-,-,-)$): In this case, we have only one $\mathfrak{s}$-indecomposable integer $1$, from which we can get additional elements $4$, $9$, and $16$.
\item Signature $(+,-,-)$ (or $(-,+,+)$): In this part with one $\mathfrak{s}$-indecomposable integer $\rho^{-1}(1+2\rho+\rho^2)$, we can deduce that
\begin{align*}
(2\rho^{-1}(1+2\rho+\rho^2))^2&=4(a^2-3a+5-(a^2-5a-1)\rho-(a-5)\rho^2)\\&\hspace{4cm}>4\left(11+\frac{5a^2+20a+27}{(a+2)^2}\right)>21>\gamma.
\end{align*} 
Thus, we do not get any suitable $\mathfrak{s}$-decomposables.
\item Signature $(+,-,+)$ (or $(-,+,-)$): In this signature, we work with two $\mathfrak{s}$-indecomposables $\rho^{-1}(\rho-1)^{-1}\lambda_{1,1}$ and $\rho^{-1}(\rho-1)^{-1}\lambda_{1,2}$. For them, we can conclude that 
\begin{enumerate}
\item $(2\rho^{-1}(\rho-1)^{-1}\lambda_{1,1})^2>21+\frac{3(5a^2+12a+12)}{(a+2)^2}>\gamma$,
\item $(\rho^{-1}(\rho-1)^{-1}(\lambda_{1,1}+\lambda_{1,2}))^2>25+\frac{4(6a^2+17a+17)}{(a+2)^2}>\gamma$,
\item $(2\rho^{-1}(\rho-1)^{-1}\lambda_{1,2})^2>36+\frac{4(7a^2+20a+20)}{(a+2)^2}>\gamma$. 
\end{enumerate} 
\item Signature $(+,+,-)$ (or $(-,-,+)$): For three $\mathfrak{s}$-indecomposables in this signature, we can deduce the following results:
\begin{enumerate}
\item $(2\rho^{-2}(\rho-1)^{-1}\lambda_{a-1,a-1})''^2>4a^2-16a+8>a^2>\gamma''$,
\item $(\rho^{-2}(\rho-1)^{-1}(\lambda_{a-1,a-1}+\lambda_{a-2,a-1}))^2>25+\frac{20}{a+3}+\frac{4}{(a+3)^2}>\gamma$,
\item $(\rho^{-2}(\rho-1)^{-1}(\lambda_{a-1,a-1}+\lambda_{a-3,a-1}))^2>36+\frac{24}{a+3}+\frac{4}{(a+3)^2}>\gamma$, 
\item $(2\rho^{-2}(\rho-1)^{-1}\lambda_{a-2,a-1})^2>36+\frac{24}{a+3}+\frac{4}{(a+3)^2}>\gamma$, 
\item $(\rho^{-2}(\rho-1)^{-1}(\lambda_{a-2,a-1}+\lambda_{a-3,a-1}))^2>49+\frac{28}{a+3}+\frac{4}{(a+3)^2}>\gamma$,
\item $(2\rho^{-2}(\rho-1)^{-1}\lambda_{a-3,a-1})^2>64+\frac{32}{a+3}+\frac{4}{(a+3)^2}>\gamma$ 
\end{enumerate}
for $a\geq 12$.    
\end{enumerate}
Therefore, in this lemma, we obtain only the squares $4$, $9$ and $16$.
\end{proof}

In previous lemmas, we have found all squares totally smaller than $\gamma$ for $a\geq 12$. Since every decomposition of $\gamma$ can consists only of these squares, we are now able to prove that we need at least six squares to express $\gamma$. 

\begin{proposition}
Let $\rho$ be a root of the polynomial $x^3+(a-1)x^2-ax-1$ where $a\geq 4$. Then $\P(\Z[\rho])=6$.
\end{proposition}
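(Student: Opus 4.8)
The plan is to establish the two bounds $\P(\Z[\rho])\le 6$ and $\P(\Z[\rho])\ge 6$ separately. The upper bound is free: \cite[Corollary 3.3]{KY} already gives $\P(\Z[\rho])\le 6$ for every $a$, so the entire content of the proposition is the existence of a single element that is a sum of squares but not a sum of five squares. I would take this witness to be the element $\gamma$ fixed above, which by its displayed form lies in $\sum^6\Z[\rho]^2$; the whole task is therefore to prove $\gamma\notin\sum^5\Z[\rho]^2$. Since the structural Lemmas \ref{lem:pythunits}--\ref{lem:pythtriangle} are proved only for $a\ge 12$, I would split off the finitely many parameters $4\le a\le 11$ and dispatch them by a direct (computer-assisted) search, which is finite because any summand $\omega^2$ in a decomposition of $\gamma$ must satisfy $\omega^2\preceq\gamma$.

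For $a\ge 12$ the key reduction is that any representation $\gamma=\sum_i\omega_i^2$ can only use squares totally below $\gamma$, and these have already been completely enumerated in the preceding lemmas: the rational squares $1,4,9,16$, the square $s_1=\rho^{-2}(1+2\rho+\rho^2)^2$, the two squares $s_2=(a-2-(a-1)\rho-\rho^2)^2$ and $s_3=\rho^{-2}(\rho-1)^{-2}(-1-2\rho+3\rho^2)^2$, and the three squares $t_1=(1+\rho)^2$, $t_2=(2+\rho)^2$, $t_3=(3+\rho)^2$. Thus the problem becomes a finite combinatorial count of how many of these summands are forced.

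I would carry out that count by tracking three quantities of a putative decomposition: its second conjugate, its third conjugate, and its $\rho^2$-coefficient. Among the listed squares only $s_1,s_2,s_3$ are of size $\approx a^2$ in the second embedding, and only $s_1,t_1,t_2,t_3$ are of size $\approx a^2$ in the third; since $\gamma'<a^2$ and $\gamma''<a^2$, any two squares large in the same embedding already overshoot, while the very small bound $\gamma<21$ in the first embedding forbids filling a large conjugate with many small squares. Hence a decomposition must contain exactly one square large in the second embedding and exactly one large in the third. The choice $s_1$ is eliminated because $\gamma-s_1=6-2\rho$ is totally positive but has a nonzero $\rho$-coefficient and a zero $\rho^2$-coefficient, so it cannot be completed from the remaining (rational or $t$-type) squares. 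Therefore the decomposition uses exactly one of $s_2,s_3$ and exactly one of $t_1,t_2,t_3$; comparing the $\rho^2$-coefficient of $\gamma$, which is $5-a$, against the contributions $4-a$ (for $s_2$), $6-a$ (for $s_3$) and $+1$ (for each $t_i$) forces the pair to be $s_2$ together with some $t_i$. Finally $\gamma-s_2-t_i$ is a rational integer only when $i=1$, in which case it equals $7$.

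The crux is then the elementary fact that $7\equiv 7\pmod 8$ is not a sum of three integer squares by the Gauss--Legendre three-square theorem, so it requires four rational squares. Consequently every representation of $\gamma$ needs at least $1+1+4=6$ squares, which yields $\gamma\notin\sum^5\Z[\rho]^2$ and hence $\P(\Z[\rho])\ge 6$. I expect the main obstacle to lie not in any single computation but in making the conjugate-size bookkeeping tight enough to pin the decomposition down to the single shape $s_2+t_1+7$ uniformly for all $a\ge 12$ (and in honestly clearing the small range $4\le a\le 11$); once the finite list of squares below $\gamma$ is granted from the preceding lemmas, the reduction to the three-square obstruction for $7$ is the decisive and only genuinely rigid step.
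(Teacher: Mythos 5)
Your proposal is correct and is essentially the paper's own proof: same witness $\gamma$, same reliance on the preceding lemmas for the ten squares totally below $\gamma$ (with the range $6\le a\le 11$ checked by computer and $a=4,5$ treated by separate explicit lists, matching your plan for the small parameters), the same forced shape $s_2+t_1+7$, and the same conclusion from $7$ requiring four rational squares. The only divergence is how that shape is forced: the paper works purely with coefficients in the basis $1,\rho,\rho^2$ (the $\rho$-coefficient of $\gamma$ is so negative that one of $s_1,s_2,s_3$ must appear, two of them already overshoot the constant coefficient, and only $s_2$ leaves a nonnegative residual $\rho$-coefficient), which sidesteps the conjugate-magnitude bookkeeping you rightly flag as the delicate point of your variant — e.g.\ the crude bound ``at most $20$ summands each contributing at most $16$ to the second conjugate'' is not quite enough for $12\le a\le 18$ and would need the refinement that rational squares contribute the same amount to the first and second conjugates.
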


\begin{proof}
In the previous parts, we have obtained all squares which are totally smaller than our given element $\gamma$. In particular, for $a\geq 12$, we get the squares
\begin{enumerate}
\item rational integers $1$, $4$, $9$ and $16$,
\item $a^2-3a+5-(a^2-5a-1)\rho-(a-5)\rho^2$,
\item $a^2-3a+3-(a^2-5a+3)\rho-(a-4)\rho^2$,
\item $a^2-5a+8-(a^2-7a+5)\rho-(a-6)\rho^2$,
\item $1+2\rho+\rho^2$,
\item $4+4\rho+\rho^2$,
\item $9+6\rho+\rho^2$.
\end{enumerate}
Moreover, as described in \cite{Ti1}, we can verify by a computer program (we used Mathematica) that the same is true also for $6\leq a\leq 11$. The program which we used is available at
\url{https://sites.google.com/view/tinkovamagdalena/codes}.

Recall that $\gamma=a^2-3a+11-(a^2-5a+1)\rho-(a-5)\rho^2$. The coefficient $-(a^2-5a+1)$ before $\rho$ is clearly negative. Thus, every decomposition of $\gamma$ to the sum of squares must contain at least one of the elements $a^2-3a+5-(a^2-5a-1)\rho-(a-5)\rho^2$, $a^2-3a+3-(a^2-5a+3)\rho-(a-4)\rho^2$ and $a^2-5a+8-(a^2-7a+5)\rho-(a-6)\rho^2$. However, the coefficient before $\rho$ is greater than $-(a^2-5a+1)$ for $a^2-3a+5-(a^2-5a-1)\rho-(a-5)\rho^2$ and $a^2-5a+8-(a^2-7a+5)\rho-(a-6)\rho^2$. Therefore, if one of these two elements were a summand in a decomposition of $\gamma$, then some other summand would have to be one of these three elements. In that case, the coefficient before $1$ (in the base $1$, $\rho$ and $\rho^2$) would be at least $2a^2-10a+16>a^2-3a+11$ for $a\geq 7$. This is impossible as all squares totally smaller than $\gamma$ have the first coefficient positive. Note that almost the same is valid also for $a=6$ since $-(a^2-7a+5)=1$ and $-(a^2-5a+1)=-7$, and, thus, the first coefficient would be at least $a^2-5a+8+a^2-3a+3=35>29=a^2-3a+11$. 

Therefore, we must have
\[
\gamma=(a^2-3a+3-(a^2-5a+3)\rho-(a-4)\rho^2)+(8+2\rho+\rho^2).
\]
In every decomposition of $8+2\rho+\rho^2$ to the sum of squares, we can use only the elements $1$, $4$, $1+2\rho+\rho^2$ and $4+4\rho+\rho^2$. Nevertheless, $4+4\rho+\rho^2$ cannot appear in this decomposition since it has a too large coefficient before $\rho$. It follows that 
\[
8+2\rho+\rho^2=(1+2\rho+\rho^2)+7.
\] 
Moreover, we see that every decomposition of $7$ can consist only of the elements $1$ and $4$, and for that, it is necessary to have at least $4$ squares. Therefore, every decomposition of $\gamma$ to the sum of squares requires at least $6$ squares which implies $\P(\Z[\rho])=6$.   

For $a=5$, the element $\gamma$ is equal to $21-\rho$. By a computer program, we can derive that the squares totally smaller than $\gamma$ are $1$, $4$, $9$, $16$, $1+2\rho+\rho^2$, $4+4\rho+\rho^2$, $9+6\rho+\rho^2$, $8+5\rho+\rho^2$, $13-3\rho-\rho^2$, $15+\rho$ and $\rho^2$. However, easily, we can conclude that every decomposition of $\gamma$ must be of the form
\[
21-\rho=(13-3\rho-\rho^2)+(1+2\rho+\rho^2)+7,
\]
which implies that we need at least $6$ squares.

Similarly, for $a=4$ and $\gamma=15+3\rho+\rho^2$, we obtain the squares $1$, $4$, $9$, $1+2\rho+\rho^2$, $4+4\rho+\rho^2$, $9+6\rho+\rho^2$, $7+\rho$, $\rho^2$, $4+7\rho+2\rho^2$, $12-5\rho-2\rho^2$ and $9+5\rho+\rho^2$. This leads to the decomposition
\[
15+3\rho+\rho^2=(7+\rho)+(1+2\rho+\rho^2)+7.
\]
Thus, even in this case, it is necessary to have at least $6$ squares.
\end{proof}

For $a=3$, the situation is different, and the element $\gamma$ can be written as the sum of $4$ squares. In this case, we can provide the lower bound $5$ on $\P(\Z[\rho])$ which is, for example, attained by the element $9-\rho$.

\section{Preliminaries on the Jacobi--Perron algorithm} \label{sec:prelijpa}

In this section, we will summarize several basic facts about the Jacobi--Perron algorithm. In the following, we mostly use the notation from \cite{Vo}. We will be concerned with both its inhomogeneous and homogeneous form, for which we also show their mutual relation.

\subsection{Inhomogeneous Jacobi--Perron Algorithm}

As described in the introduction, the inhomogeneous Jacobi--Perron algorithm (iJPA) is a generalization of the classical continued fraction algorithm to higher dimensions.
To define it precisely, let us consider a vector $ \bm{\alpha} ^{(0)}=\big(\alpha_1^{(0)}, \alpha_2^{(0)}, \dots, \alpha _{n-1}^{(0)}\big) \in \mathbb{R}^{n-1} $ for $ n\geq 2 $. For $\bm{\alpha} ^{(0)}$, we define its \textit{inhomogeneous Jacobi--Perron Algorithm expansion} as the sequence $ \left\langle \bm{\alpha} ^{(k)}\right\rangle _{k \geq 0 }$ where    
\[ \bm{\alpha} ^{(k +1)} = \left(\alpha _1^{(k +1)}, \dots, \alpha _{n-2}^{(k +1)}, \alpha_{n-1}^{(k +1)}\right) = \left( \frac{\alpha_2^{(k)}-a_2^{(k)}}{\alpha_1^{(k)}-a_1^{(k)}}, \dots, \frac{\alpha_{n-1}^{(k)}-a_{n-1}^{(k)}}{\alpha_1^{(k)}-a_1^{(k)}}, \frac{1}{\alpha_1^{(k)}-a_1^{(k)}} \right) \hspace{-0.15cm} , \]
where $\alpha_1^{(k)}\neq a_1^{(k)}$ and $a_i^{(k)}=\big\lfloor \alpha_i^{(k)} \big\rfloor$. We will also use the notation $\bm{a}^{(k)}=\big(a_1^{(k)},a_2^{(k)},\ldots,a_{n-1}^{(k)}\big)$. We can easily see that for $n=2$, this expansion is exactly the continued fraction expansion for real number $\alpha_1^{(0)}=\alpha \in \mathbb{R}$. 

The iJPA expansion of $\bm{\alpha} ^{(0)}$ is called \textit{periodic} if there exist two integers $l_0, l_1\in\Z$ such that $l_0\geq 0$, $l_1 \geq 1$ and 
\[\bm{\alpha} ^{(k + l_1)}=\bm{\alpha} ^{(k)} \]
for every $k \geq l_0$. If $l_0$ and $l_1$ are the smallest integers fulfilling  these conditions, then the vectors $\bm{\alpha}^{(0)}, \bm{\alpha}^{(1)}, \dots, \bm{\alpha}^{(l_0-1)}$ and $\bm{\alpha}^{(l_0)}, \bm{\alpha}^{(l_0+1)}, \dots, \bm{\alpha}^{(l_0+l_1-1)}$ are called the \textit{preperiod} and the \textit{period} of the iJPA expansion of $\bm{\alpha}^{(0)}$, and $l_0$ and $l_1$ are their \textit{lengths}. If $l_0=0$, then the iJPA expansion of $\bm{\alpha} ^{(0)}$ is called \textit{purely periodic}.

Moreover, for the periodic iJPA expansion of some vector of algebraic integers $\bm{\alpha}^{(0)}$, we can study the element 
\[ \varepsilon = \displaystyle\prod_{k=l_0}^{l_0+l_1-1} \alpha_{n-1}^{(k)}. \]
It was proved in \cite{BHunit} that $\varepsilon$ is a unit in the ring of algebraic integers of $\mathbb{Q}\big( \alpha_1^{(0)},\alpha_2^{(0)}, \dots, \alpha_{n-1}^{(0)} \big)$. This unit is called the Hasse--Bernstein unit.

\subsection{The homogenous Jacobi--Perron Algorithm}
For us, it is more convenient to use the homogenous Jacobi--Perron Algorithm (JPA) which is defined in the following way:
Let us consider $ \bm{\beta} ^{(0)}=\left(\beta_1^{(0)}, \beta_2^{(0)}, \dots, \beta _{n}^{(0)}\right) \in \mathbb{R}^{n} $ for $ n\geq 2 $. The \textit{Jacobi--Perron Algorithm expansion} of $ \bm{\beta}  ^{(0)}$ is the sequence of vectors $ \langle \bm{\beta} ^{(k)}\rangle _{k \geq 0 }$ defined by
\[ \bm{\beta}^{(k+1)}=\left(\beta_2^{(k)}-b_2^{(k)} \beta_1^{(k)}, \beta_3^{(k)}-b_3^{(k)} \beta_1^{(k)}, \dots , \beta_n^{(k)}-b_n^{(k)} \beta_1^{(k)}, \beta_1^{(k)} \right),
\]
where $b_i^{(k)}= \Big \lfloor \frac{\beta_i^{(k)}}{\beta_1^{(k)}} \Big \rfloor$. We will also use the notation $\bm{b}^{(k)}=\big(b_{2}^{(k)},\ldots,b_{n}^{(k)}\big)$. 
For this expansion, in comparison to the iJPA expansion, if the starting elements are algebraic integers, then the elements in the expansion are always algebraic integers. We call the elements $\beta^{(k)}_i$ where $i,k \in \mathbb{N}$ the \textit{convergents of the expansion} or simply \textit{convergents}.

Moreover, in the article, we define the \textit{semiconvergents} of the JPA expansion of $\bm{\beta}=\bm{\beta} ^{(0)}$ as
\begin{equation} \label{eq:semi}
\delta_{i,j}^{(k)}=\beta_i^{(k)}-j\beta_1^{(k)}
\end{equation}
where $2\leq i \leq n$, $0\leq j\leq \Big\lfloor\frac{\beta_i^{(k)}}{\beta_1^{(k)}}\Big\rfloor-1$ and $k\in\N_{0}$. Note that for $j=0$, we obtain convergents of the JPA expansion of $\bm{\beta} ^{(0)}$.

The following proposition gives us a relationship between iJPA and JPA expansions of similar vectors. In the next sections, we will compute the JPA expansion for some specific vectors. However, for the iJPA expansions, we can easily see if they are periodic. So firstly, we will derive the iJPA expansion, and then we use the following well-known relation to determine the JPA expansion. The proof (as well as the proof of Proposition \ref{lm:iJPA-JPA,per}) can be found in Appendix A5 of the arxiv version of this paper \cite{KST}.

\begin{proposition}\label{lm:iJPA-JPA}
Let $\langle\bm{\alpha} ^{(k)}\rangle$ be the iJPA expansion of the vector $(\theta_1, \theta_2, \dots, \theta_{n-1})\in\R^{n-1}$. Then the JPA expansion of $\bm{\beta}^{(0)}= (1, \theta_1, \theta_2, \dots, \theta_{n-1})$ is equal to
\[ \bm{\beta}^{(k)}=\left(\delta_{k}, \alpha_1^{(k)} \delta_{k}, \dots, \alpha_{n-2}^{(k)} \delta_{k},\alpha_{n-1}^{(k)} \delta_{k}\right),\]
where $\delta_k=\frac{1}{\alpha_{n-1}^{(1)} \cdot \dots \cdot \alpha_{n-1}^{(k)}}$ for $k \geq 0$, and $\delta_{0}=1$. Moreover for every $i,k$, we have
\[  \Bigg\lfloor \frac{\beta_{i+1}^{(k)}}{\beta_1^{(k)}} \Bigg\rfloor = \big\lfloor \alpha_i^{(k)}\big\rfloor.\]
\end{proposition}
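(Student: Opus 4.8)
The plan is to prove the claim by induction on $k$, establishing the displayed formula for $\bm{\beta}^{(k)}$ and the floor identity simultaneously. For $k=0$ the statement is immediate: since $\delta_0=1$ and $\alpha_i^{(0)}=\theta_i$, the asserted vector $(\delta_0,\alpha_1^{(0)}\delta_0,\dots,\alpha_{n-1}^{(0)}\delta_0)$ is exactly $\bm{\beta}^{(0)}=(1,\theta_1,\dots,\theta_{n-1})$.

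For the inductive step, I would assume $\bm{\beta}^{(k)}=(\delta_k,\alpha_1^{(k)}\delta_k,\dots,\alpha_{n-1}^{(k)}\delta_k)$, so that $\beta_1^{(k)}=\delta_k$ and $\beta_{i+1}^{(k)}=\alpha_i^{(k)}\delta_k$ for $1\le i\le n-1$. The first thing to establish is that $\delta_k>0$: each factor $\alpha_{n-1}^{(j)}=1/(\alpha_1^{(j-1)}-a_1^{(j-1)})$ is strictly greater than $1$, because $a_1^{(j-1)}=\lfloor\alpha_1^{(j-1)}\rfloor$ forces the fractional part $\alpha_1^{(j-1)}-a_1^{(j-1)}$ to lie in $(0,1)$; hence $\delta_k=1/\prod_{j=1}^k\alpha_{n-1}^{(j)}>0$. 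With positivity in hand, $b_{i+1}^{(k)}=\lfloor\beta_{i+1}^{(k)}/\beta_1^{(k)}\rfloor=\lfloor\alpha_i^{(k)}\delta_k/\delta_k\rfloor=\lfloor\alpha_i^{(k)}\rfloor=a_i^{(k)}$, which already yields the ``moreover'' floor identity and aligns the JPA partial quotients with the iJPA ones.

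Next I would apply the JPA recurrence to $\bm{\beta}^{(k)}$ and compare componentwise with the target vector at level $k+1$, using the two auxiliary identities $\alpha_{n-1}^{(k+1)}=1/(\alpha_1^{(k)}-a_1^{(k)})$ and $\delta_{k+1}=\delta_k(\alpha_1^{(k)}-a_1^{(k)})$ (the latter being the definition of $\delta_{k+1}$ rewritten via the former). For $1\le m\le n-1$ the $m$-th entry of $\bm{\beta}^{(k+1)}$ is $\beta_{m+1}^{(k)}-b_{m+1}^{(k)}\beta_1^{(k)}=(\alpha_m^{(k)}-a_m^{(k)})\delta_k$; feeding in the iJPA recurrence $\alpha_{m-1}^{(k+1)}=(\alpha_m^{(k)}-a_m^{(k)})/(\alpha_1^{(k)}-a_1^{(k)})$ for $m\ge 2$ and the defining formula for $\delta_{k+1}$ shows this equals $\delta_{k+1}$ when $m=1$ and $\alpha_{m-1}^{(k+1)}\delta_{k+1}$ otherwise. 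Finally the last entry of $\bm{\beta}^{(k+1)}$ is $\beta_1^{(k)}=\delta_k=\alpha_{n-1}^{(k+1)}\delta_{k+1}$, matching the target. This closes the induction.

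Since every step is a direct substitution, the only genuine point requiring care is the sign bookkeeping: the floor identity $b_{i+1}^{(k)}=a_i^{(k)}$ hinges on $\delta_k$ being positive, and this follows from the observation that every factor $\alpha_{n-1}^{(j)}=1/(\alpha_1^{(j-1)}-a_1^{(j-1)})$ exceeds $1$. I expect this positivity check to be the crux; the remaining algebra is routine cancellation of the factors $(\alpha_1^{(k)}-a_1^{(k)})$.
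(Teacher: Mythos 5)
Your proof is correct and follows essentially the same route as the paper's: induction on $k$, verifying the three component identities via the cancellation of $(\alpha_1^{(k)}-a_1^{(k)})$ and reading off the floor identity from $\beta_{i+1}^{(k)}/\beta_1^{(k)}=\alpha_i^{(k)}$. The only small remark is that your "crux" is milder than you suggest: the exact cancellation $\alpha_i^{(k)}\delta_k/\delta_k=\alpha_i^{(k)}$ needs only $\delta_k\neq 0$, not positivity, though your positivity argument is valid and harmless.
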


The JPA expansion of $\bm{\beta}^{(0)}$ is called \textit{periodic} if there exist two integers $l_0, l_1$ with $l_0\geq 0$ and $l_1 \geq 1$, and a unit $\varepsilon$ such that 
\[\beta_i^{(k + l_1)}=\beta_i^{(k)} \varepsilon\]
for all $1\leq i\leq n$ and $k \geq l_0$. If $l_0$ and $l_1$ are the smallest integers satisfying these conditions, then $\bm{\beta}^{(0)},\bm{\beta}^{(1)}, \dots, \bm{\beta}^{(l_0-1)}$ and $\bm{\beta}^{(l_0)}, \bm{\beta}^{(l_0+1)}, \dots, \bm{\beta}^{(l_0+l_1-1)}$ are called the \textit{preperiod} and the \textit{period} of the JPA expansion of $\bm{\beta}^{(0)}$, and $l_0$ and $l_1$ are their \textit{lengths}. If $l_0=0$, then the JPA expansion of $\bm{\beta}^{(0)}$ is \textit{purely periodic}. Moreover, Proposition \ref{lm:iJPA-JPA} implies the following statement:

\begin{proposition} \label{lm:iJPA-JPA,per}
If the iJPA expansion of $(\theta_1, \theta_2, \dots, \theta_{n-1})\in\R^{n-1}$ is periodic with preperiod length $l_0$ and period length $l_1$, then the JPA expansion of $(1, \theta_1, \theta_2, \dots, \theta_{n-1})$ is also periodic with the same length of preperiod and period, and the unit $\varepsilon$ in the period of iJPA expansion is the inverse of the Hasse--Bernstein unit.
\end{proposition}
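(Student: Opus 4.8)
The plan is to read everything off the explicit formula of Proposition \ref{lm:iJPA-JPA}, which writes each JPA convergent as $\beta_i^{(k)}=\alpha_{i-1}^{(k)}\delta_k$ under the harmless convention $\alpha_0^{(k)}=1$ (so that $\beta_1^{(k)}=\delta_k$), where $\delta_k=\big(\alpha_{n-1}^{(1)}\cdots\alpha_{n-1}^{(k)}\big)^{-1}$. Periodicity of the iJPA expansion means exactly that $\alpha_i^{(k+l_1)}=\alpha_i^{(k)}$ for all $k\geq l_0$ and all $i$, i.e.\ ordinary periodicity of the vectors $\bm{\alpha}^{(k)}$; the goal is to convert this into the projective periodicity $\beta_i^{(k+l_1)}=\varepsilon\,\beta_i^{(k)}$ required by the definition of a periodic JPA expansion.

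First I would fix $k\geq l_0$ and $1\leq i\leq n$ and compute
\[
\frac{\beta_i^{(k+l_1)}}{\beta_i^{(k)}}=\frac{\alpha_{i-1}^{(k+l_1)}\delta_{k+l_1}}{\alpha_{i-1}^{(k)}\delta_k}=\frac{\delta_{k+l_1}}{\delta_k}=\frac{1}{\prod_{j=k+1}^{k+l_1}\alpha_{n-1}^{(j)}},
\]
where the middle equality uses $\alpha_{i-1}^{(k+l_1)}=\alpha_{i-1}^{(k)}$ (valid since $k\geq l_0$) and the last telescopes the definition of $\delta_k$. The crucial point is that this ratio is the \emph{same} for every index $i$, which is precisely what allows a single common scaling unit.

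Next I would show this ratio is independent of $k$ for $k\geq l_0$ and identify it. Since $\alpha_{n-1}^{(j+l_1)}=\alpha_{n-1}^{(j)}$ for every $j\geq l_0$, the value $\alpha_{n-1}^{(j)}$ depends only on the residue of $j$ once $j\geq l_0$, so any product of $l_1$ consecutive factors with indices $\geq l_0$ runs over one full period. Re-indexing therefore gives
\[
\prod_{j=k+1}^{k+l_1}\alpha_{n-1}^{(j)}=\prod_{j=l_0}^{l_0+l_1-1}\alpha_{n-1}^{(j)}=\varepsilon
\]
for every $k\geq l_0$, where $\varepsilon$ is exactly the Hasse--Bernstein unit. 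Hence $\beta_i^{(k+l_1)}=\varepsilon^{-1}\beta_i^{(k)}$ for all $1\leq i\leq n$ and all $k\geq l_0$, so the JPA expansion is periodic with period unit $\varepsilon^{-1}$ and with preperiod and period lengths at most $l_0$ and $l_1$.

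Finally, to upgrade ``at most'' to ``equal'', I would run the correspondence backwards. Since $\beta_1^{(k)}=\delta_k\neq 0$, Proposition \ref{lm:iJPA-JPA} recovers $\alpha_i^{(k)}=\beta_{i+1}^{(k)}/\beta_1^{(k)}$; so if the JPA expansion were periodic with some shorter preperiod $l_0'$ or period $l_1'$ and unit $\eta$, then $\beta_i^{(k+l_1')}=\eta\,\beta_i^{(k)}$ would force $\alpha_i^{(k+l_1')}=(\eta\beta_{i+1}^{(k)})/(\eta\beta_1^{(k)})=\alpha_i^{(k)}$ for $k\geq l_0'$, contradicting minimality of $l_0,l_1$ for the iJPA. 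Thus the minimal lengths coincide. The only genuinely delicate steps are this matching of minimal lengths and the bookkeeping that makes the $k$-independent product land on the Hasse--Bernstein unit $\varepsilon$ itself (so that the JPA period unit is $\varepsilon^{-1}$) rather than a shifted variant; everything else is direct substitution into the formula of Proposition \ref{lm:iJPA-JPA}.
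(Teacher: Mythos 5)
Your proof is correct and follows essentially the same route as the paper's: both read $\beta_i^{(k)}=\alpha_{i-1}^{(k)}\delta_k$ off Proposition \ref{lm:iJPA-JPA} and observe that $\delta_{k+l_1}/\delta_k$ telescopes to the inverse of the Hasse--Bernstein unit, independently of $k\geq l_0$. Your final paragraph, pinning down that the minimal preperiod and period lengths actually coincide by running the correspondence backwards via $\alpha_i^{(k)}=\beta_{i+1}^{(k)}/\beta_1^{(k)}$, is a detail the paper's proof leaves implicit, and it is a worthwhile addition.
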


Note that from the imhomogeneous JPA expansion $\langle\bm{\alpha}^{(k)}\rangle$ of $(\theta_1, \theta_2, \dots, \theta_{n-1})$, it is almost trivial to compute the JPA expansion $\langle\bm{\beta}^{(k)}\rangle$ of $(1, \theta_1, \theta_2, \dots, \theta_{n-1})$. It is enough to take $\beta_i^{k+1}=\beta_{i+1}^{(k)}-a_{i}^{(k)}\beta_1^{(k)}$ if $i\neq n$ and $\beta_n^{(k+1)}=\beta_1^{(k)}$ where $a_{i}^{(k)}=\lfloor \alpha_{i}^{(k)}\rfloor$, $\beta_{1}^{(0)}=1$ and $\beta_{i}^{(0)}=\alpha_{i-1}^{(0)}$ if $i\neq 1$. Thus, in the following, we mostly omit the computation of the JPA expansion if we know the iJPA expansion of the vector of the above form.

\section{Jacobi--Perron algorithm in the simplest cubic fields} \label{sec:jpasimplest}

In the case of quadratic fields, $\mathfrak{s}$-indecomposables can be obtained using the continued fraction of some concrete numbers. This expansion is periodic for these number and thus produces only finitely many elements up to multiplication by units. Therefore, to study elements determined by the Jacobi--Perron algorithm, it is natural to focus only on periodic JPA expansions. As described in the introduction, there exist numerous results on periodic expansions in many families of (not only cubic) fields. However, for our particular families of fields, we need to find  JPA expansions of some concrete initial vectors. To do that, we firstly discuss inhomogeneous JPA expansions, for which some of calculations are easier. Then we transform these results to the determination of homogeneous JPA expansion using Proposition \ref{lm:iJPA-JPA}. Since most of the computations are easy, we provide only sketches of proofs or, consequently, only statements without proofs. 

\subsection{Inhomogenous Jacobi--Perron expansion in the simplest cubic fields}

We will start with the determination of the inhomogeneous JPA expansion of the vector $(-\rho',\rho'^2)$ where $\rho'$ is the root of the polynomial $x^3-ax^2-(a+3)x-1$ such that $-2<\rho'<-1$. As we will see, this expansion is periodic for all $a\geq -1$. 
For $-1\leq a\leq 3$, this iJPA expansion has a different form, in particular it has a different period length than the expansion of $(-\rho',\rho'^2)$ for $a\geq 4$. We show the precise form of this expansion in Appendix A6 of the arxiv version of this paper \cite{KST}.  

For $a\geq 4$, the iJPA expansion of $(-\rho',\rho'^2)$ has the same length of preperiod and period. Moreover, there are only slight differences in its form for odd and even values of the coefficient $a$ which occurs in the middle of the period.

\begin{proposition} \label{prop:ijpasimplestperiodic4}
Let $a\geq 4$.
Then the Jacobi--Perron expansion of the vector $(-\rho',\rho'^2)$ is periodic. In particular, let $\lfloor\frac{a}{2}\rfloor=A_0$. Then the preperiod of the iJPA expansion of $(-\rho',\rho'^2)$ is 
\[
\begin{array}{ll}
\bm{\alpha}^{(0)}=(-\rho',\rho'^2),&\quad\bm{a}^{(0)}=(1,1),\\
\bm{\alpha}^{(1)}=(1-\rho',-2-(a+1)\rho'+\rho'^2), &\quad\bm{a}^{(1)}=(2,a+1).
\end{array}
\] 
For $a$ even, the period is
\begin{align*}
&\bm{\alpha}^{(2)}=(a+4+a\rho'-\rho'^2,-2-(a+1)\rho'+\rho'^2),\\
&\bm{\alpha}^{(3)}=(-1+\rho'^2,-\rho'),\\
&\bm{\alpha}^{(4)}=\left(\frac{2a+2+(a-1)\rho'-\rho'^2}{2a+3},\frac{-3a-4-(a^2+3a+1)\rho'+(a+2)\rho'^2}{2a+3}\right),\\
&\bm{\alpha}^{(5)}= \left(-\left(A_0+2\right)-\left(A_0+1\right)\rho'+\rho'^2,1-\rho'\right),\\
&\bm{\alpha}^{(6)}=\Bigg(\frac{A_0^2+2A_0-1+(A_0-2)\rho'-\rho'^2}{A_0^3+4A_0^2+3A_0-1},\\&\hspace{3cm}\frac{-2A_0^2-3A_0+1-(2A_0^3+3A_0^2+A_0-1)\rho'+(A_0^2+A_0)\rho'^2}{A_0^3+4A_0^2+3A_0-1}\Bigg),\\
& \bm{\alpha}^{(7)}= (-(A_0+4)-2A_0\rho'+\rho'^2,A_0+2-\rho'),\\
& \bm{\alpha}^{(8)}=(-\rho',-1-(a+1)\rho'+\rho'^2),
\end{align*}
and the corresponding vectors of integers parts are $\bm{a}^{(2)}=(1,a+1)$, $\bm{a}^{(3)}=(0,1)$, $\bm{a}^{(4)}=(0,A_0)$, $\bm{a}^{(5)}=(0,2)$, $\bm{a}^{(6)}=(0,1)$, $\bm{a}^{(7)}=(A_0-2,A_0+3)$ and $\bm{a}^{(8)}=(1,a+2)$.

For $a$ odd, the period is
\begin{align*}
&\bm{\alpha}^{(2)}=(a+4+a\rho'-\rho'^2,-2-(a+1)\rho'+\rho'^2),\\
&\bm{\alpha}^{(3)}=(-1+\rho'^2,-\rho'),\\
&\bm{\alpha}^{(4)}=\left(\frac{2a+2+(a-1)\rho'-\rho'^2}{2a+3},\frac{-3a-4-(a^2+3a+1)\rho'+(a+2)\rho'^2}{2a+3}\right),\\
&\bm{\alpha}^{(5)}=\left(-\left(A_0+2\right)-\left(A_0+2\right)\rho'+\rho'^2,1-\rho'\right),\\
& \bm{\alpha}^{(6)}=\Bigg(\frac{A_0^2+3A_0-2+(A_0-2)\rho'-\rho'^2}{A_0^3+6A_0^2+7A_0-5},\\&\hspace{2cm}
\frac{-2A_0^2-5A_0+3-(2A_0^3+6A_0^2+2A_0-3)\rho'+(A_0^2+2A_0-1)\rho'^2}{A_0^3+6A_0^2+7A_0-5}\Bigg),\\
&\bm{\alpha}^{(7)}=(-(A_0+5)-(2A_0+1)\rho'+\rho'^2,A_0+3-\rho'),\\
& \bm{\alpha}^{(8)}=(-\rho',-1-(a+1)\rho'+\rho'^2),
\end{align*}
and the corresponding vectors of integers parts are $\bm{a}^{(2)}=(1,a+1)$, $\bm{a}^{(3)}=(0,1)$, $\bm{a}^{(4)}=(0,A_0)$, $\bm{a}^{(5)}=(1,2)$, $\bm{a}^{(6)}=(0,1)$, $\bm{a}^{(7)}=(A_0-2,A_0+4)$ and $\bm{a}^{(8)}=(1,a+2)$.
\end{proposition}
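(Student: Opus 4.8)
The plan is to verify the displayed expansion directly, computing its terms one at a time: given $\bm{\alpha}^{(k)}$ in the stated form, I would determine the integer parts $a_i^{(k)}=\lfloor\alpha_i^{(k)}\rfloor$ and apply one step of the iJPA map
\[
\bm{\alpha}^{(k+1)}=\left(\frac{\alpha_2^{(k)}-a_2^{(k)}}{\alpha_1^{(k)}-a_1^{(k)}},\ \frac{1}{\alpha_1^{(k)}-a_1^{(k)}}\right),
\]
checking that the result matches the claimed $\bm{\alpha}^{(k+1)}$. Two observations make each step mechanical. First, every coordinate is an explicit element of $\Q(\rho')$, so to evaluate the two quotients I would clear denominators using the relation $\rho'^3=a\rho'^2+(a+3)\rho'+1$ coming from the defining polynomial; the rational denominators appearing in $\bm{\alpha}^{(4)}$ and $\bm{\alpha}^{(6)}$ are exactly the norms $|N(\alpha_1^{(k)}-a_1^{(k)})|$ produced by this inversion (for example $|N(-1+\rho'^2)|=2a+3$, matching $\bm{\alpha}^{(4)}$). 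Second, to pin down each floor I would use the two-sided estimate $-1-\tfrac{1}{a+1}<\rho'<-1-\tfrac{1}{a+2}$ from \eqref{eq:estimates}.

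I would carry out the computation in order for $k=0,1,\dots,8$, starting from $\bm{a}^{(0)}=(1,1)$, which follows at once from $1<-\rho'<2$ and $1<\rho'^2<2$. For the closure of the period the decisive point is that applying the map to $\bm{\alpha}^{(8)}=(-\rho',-1-(a+1)\rho'+\rho'^2)$ returns $\bm{\alpha}^{(2)}$: since $\bm{a}^{(8)}=(1,a+2)$ one has $\alpha_1^{(8)}-1=-\rho'-1$, and a short computation with the defining relation gives $\frac{1}{-\rho'-1}=-2-(a+1)\rho'+\rho'^2=\alpha_2^{(2)}$, and likewise for the first coordinate. Hence $\bm{\alpha}^{(9)}=\bm{\alpha}^{(2)}$, so the preperiod has length $2$ and the period has length $7$, and no further terms need to be computed.

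The step that needs the most care --- and the source of the even/odd dichotomy --- is the floor $\lfloor\alpha_2^{(4)}\rfloor$, where $\alpha_2^{(4)}=\frac{-3a-4-(a^2+3a+1)\rho'+(a+2)\rho'^2}{2a+3}$ is close to $a/2$. Here I would substitute the bounds on $\rho'$ to show that this value lies in $[A_0,A_0+1)$ with $A_0=\lfloor a/2\rfloor$, and then separate the cases $a=2A_0$ and $a=2A_0+1$; the two different fractional parts feed into $\bm{\alpha}^{(5)}$ and force the slightly different shapes of $\bm{\alpha}^{(5)},\bm{\alpha}^{(6)},\bm{\alpha}^{(7)}$ in the two parities. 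A secondary subtlety is that several floors (for instance $a_2^{(1)}=a+1$ and $a_1^{(3)}=0$, the latter because $-1+\rho'^2$ is small and positive) are sensitive enough that one needs \eqref{eq:estimates} in its full strength; since those bounds are only asserted for $a\geq7$, the finitely many remaining cases $4\leq a\leq6$ would be checked directly by substituting the numerical value of $\rho'$. Everything else is routine but lengthy polynomial algebra, which I would relegate to an appendix.
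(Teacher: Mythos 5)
Your proposal is correct and follows essentially the same route as the paper's proof: iterate the map directly, pin down each floor with the bounds $-1-\tfrac{1}{a+1}<\rho'<-1-\tfrac{1}{a+2}$ (checking $4\leq a\leq 6$ numerically), and close the period by showing the ninth iteration reproduces $\bm{\alpha}^{(2)}$. The only cosmetic difference is at the delicate floor $\lfloor\alpha_2^{(4)}\rfloor$, where the paper locates the root of the minimal polynomial $h(x)=(2a+3)x^3-a^2x^2-(a^2+2a+3)x+1$ between $a/2$ and $(a+1)/2$ via sign changes, while you substitute the two-sided bounds on $\rho'$ directly; both work.
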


\begin{proof}
We will provide only a sketch of the proof since the calculations are similar for each iteration. The most challenging part of this proof is to determine the values of $\bm{a}^{(k)}$, i.e., the integer parts of the included elements. However, the simplest cubic fields have many useful properties which can significantly facilitate our computations.
Recall that we have nice estimates on conjugates of $\rho$, namely for $a\geq 7$, we have
\begin{equation} \label{eq:estimates2}
 -1-\frac{1}{a+1}<\rho'<-1-\frac{1}{a+2}.  
\end{equation} 

In the initial iteration, we have the vector $(-\rho',\rho'^2)$. We know that $\lfloor -\rho'\rfloor=1$, i.e., $a_1^{(0)}=1$. Using (\ref{eq:estimates2}), we see that
\[
1<\rho'^2<\Big(1+\frac{1}{a+1}\Big)^2<1+\frac{2}{a+1}+\frac{1}{(a+1)^2}<2
\] 
for $a\geq 7$. It implies that $a_2^{(0)}=1$ in these cases of $a$. The same is also true for $4\leq a\leq 6$ which can be easily checked by a computer program (we used Mathematica). Then we have
\[
\alpha_1^{(1)}=\frac{\rho'^2-1}{-\rho'-1}=1-\rho'.
\]
The fact that
\[
\alpha_2^{(1)}=\frac{1}{-\rho'-1}=-2-(a+1)\rho'+\rho'^2
\] 
can be verified using $\rho'^3-a\rho'^2-(a+3)\rho'-1=0$.

We will proceed similarly with $\bm{\alpha}^{(1)}$. We can immediately conclude that $a_1^{(1)}=\lfloor 1-\rho'\rfloor=2$. On the other hand, 
the element $\alpha_2^{(1)}=-2-(a+1)\rho'+\rho'^2$ is a conjugate of $\rho'$, namely $\rho$, for which we have $a_2^{(1)}=\lfloor\rho\rfloor=a+1$.

In the second iteration, we have $a_2^{(2)}=\lfloor \rho\rfloor=a+1$. For $a_1^{(2)}$, we can conclude that
\[
\alpha_1^{(2)}=a+4+a\rho'-\rho'^2>a+4-a\Big(1+\frac{1}{a+1}\Big)-\Big(1+\frac{1}{a+1}\Big)^2=1+\frac{a^2+a-1}{(a+1)^2}>1
\]
and
\[
\alpha_1^{(2)}=a+4+a\rho'-\rho'^2<a+4-a\Big(1+\frac{1}{a+2}\Big)-\Big(1+\frac{1}{a+2}\Big)^2=2-\frac{1}{(a+2)^2}<2.
\]
Therefore, $a_1^{(2)}=1$. We can proceed similarly for other iteration.

We will only discuss in detail the fourth iteration, where the expansions start to differ for odd and even coefficients $a$. In particular, we will focus on the determination of $a_2^{(4)}$. One can easily check that $\alpha_2^{(4)}$ is a root of the polynomial
\[
h(x)=(2a+3)x^3-a^2x^2-(a^2+2a+3)x+1,
\]
which has two positive and one negative root. In particular $h(0)=1>0$, $h(1)=1-2a^2<0$, $8h\big(\frac{a}{2}\big)=- a^3-8a^2-12a+8<0$ and $8h\big(\frac{a+1}{2}\big)=a^3+a^2-9a-1>0$. The element $\alpha_2^{(4)}$ is one of these two positive roots. Moreover, 
\[
-3a-4-(a^2+3a+1)\rho'+(a+2)\rho'^2>-3a-4+(a^2+3a+1)+(a+2)=a^2+a-1>2a+3,
\]
which implies $\alpha_2^{(4)}>1$. It follows that $\lfloor\alpha_2^{(4)}\rfloor=\lfloor\frac{a}{2}\rfloor=A_0$.   
\end{proof} 

Moreover, we can state the following proposition:

\begin{proposition} 
The Jacobi--Perron expansion of the vector $(-\rho',\rho'^2)$ is periodic.
\end{proposition}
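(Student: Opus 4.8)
The plan is to read this statement as the clean summary of the inhomogeneous computation just performed, covering all admissible parameters $a\geq -1$ at once. Since $(-\rho',\rho'^2)$ has two components, its Jacobi--Perron expansion is the inhomogeneous expansion used throughout this subsection, and I would split the argument according to whether $a\geq 4$ or $-1\leq a\leq 3$.

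For $a\geq 4$, periodicity is essentially already contained in Proposition \ref{prop:ijpasimplestperiodic4}, which exhibits a candidate preperiod $\bm{\alpha}^{(0)},\bm{\alpha}^{(1)}$ and a candidate period $\bm{\alpha}^{(2)},\dots,\bm{\alpha}^{(8)}$ together with all integer-part vectors $\bm{a}^{(k)}$. The one point left to confirm is that the cycle actually closes, i.e. that one further iJPA step applied to $\bm{\alpha}^{(8)}$ returns $\bm{\alpha}^{(2)}$, giving $l_0=2$ and $l_1=7$. I would check $\bm{\alpha}^{(9)}=\bm{\alpha}^{(2)}$ directly from the recurrence: since $\alpha_1^{(8)}-a_1^{(8)}=-\rho'-1$, the second coordinate is $\alpha_2^{(9)}=\tfrac{1}{-\rho'-1}=-2-(a+1)\rho'+\rho'^2$ (the identity already used in the preperiod), while the first coordinate $\alpha_1^{(9)}=\tfrac{\alpha_2^{(8)}-a_2^{(8)}}{-\rho'-1}$ reduces to $a+4+a\rho'-\rho'^2$ after clearing the denominator and substituting $\rho'^3=a\rho'^2+(a+3)\rho'+1$. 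For merely proving periodicity (as opposed to minimality of $l_0,l_1$) this single verification suffices.

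For the finitely many remaining parameters $a\in\{-1,0,1,2,3\}$, each value yields a single cubic field, and I would compute the iJPA expansion of $(-\rho',\rho'^2)$ case by case. As noted just before Proposition \ref{prop:ijpasimplestperiodic4}, these expansions have a genuinely different shape, in particular a different period length, so they cannot be read off from the $a\geq 4$ formulas and must be isolated; the explicit periodic expansions are recorded in Appendix A6 of \cite{KST}, and confirming that each settles into a cycle is a routine finite check.

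The only real difficulty here is conceptual rather than computational: one must recognize that the five small-$a$ fields fall outside the general pattern and handle them separately. Once they are dispatched, the substance of the $a\geq 4$ case is carried entirely by Proposition \ref{prop:ijpasimplestperiodic4}, so no estimates on the conjugates of $\rho'$ beyond those already established in \eqref{eq:estimates2} are needed.
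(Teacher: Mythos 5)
Your proposal matches the paper's argument: for $a\geq 4$ periodicity is delegated to Proposition \ref{prop:ijpasimplestperiodic4} (whose full proof, in the appendix version, contains exactly the ninth-iteration check $\bm{\alpha}^{(9)}=\bm{\alpha}^{(2)}$ that you spell out), and the five cases $-1\leq a\leq 3$ are handled separately by direct finite computation, with the different period lengths recorded explicitly. Your extra care in noting that closing the cycle suffices for periodicity without minimality is correct but does not change the route.
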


\begin{proof}
The verification for $-1\leq a\leq 3$ is trivial. However, in these few cases, the iJPA expansion of $(-\rho',\rho'^2)$ differs from the expansion for $a\geq 4$. For $a=-1$, it has the preperiod length $1$ and the period length $2$. For $a=0$, it is $2$ and $3$, and for $1\leq a\leq 3$, it is equal to $2$ and $5$. The statement for $a\geq 4$ follows from Proposition \ref{prop:ijpasimplestperiodic4}.   
\end{proof}

Note that a similar result can be also derived for the iJPA expansion of the vector $(-\rho'',\rho'')$ where $-1<\rho''<0$. We also studied the iJPA expansion of $(\rho,\rho^2)$, but except for a few values of $a$ (e.g., $a=-1,0,2$), it does not seem to be periodic.

\subsection{Homogenous Jacobi--Perron expansion in the simplest cubic fields} \label{subsec:homJPAsimplest}

Now, from the inhomogeneous JPA expansion of $(-\rho',\rho'^2)$, we obtain the homogeneous JPA expansion of the vector $(1,-\rho',\rho'^2)$.

\begin{theorem} \label{thm:hjpasimplest}
Let $a\geq 4$. Then the homogenous Jacobi--Perron expansion of the vector $(1,-\rho',\rho'^2)$ is periodic. The preperiod is equal to
\begin{align*}
\bm{\beta}^{(0)}&=(1,-\rho',\rho'^2),\\
\bm{\beta}^{(1)}&=(-1-\rho',-1+\rho'^2,1).
\end{align*}
The period length is $7$. The iterations (2)-(4) are equal to
\begin{align*}
\bm{\beta}^{(2)}&=(1+2\rho'+\rho'^2,a+2+(a+1)\rho',-1-\rho'),\\
\bm{\beta}^{(3)}&=(a+1+(a-1)\rho'-\rho'^2,-(a+2)-(2a+3)\rho'-(a+1)\rho'^2,1+2\rho'+\rho'^2),\\
\bm{\beta}^{(4)}&=(-(a+2)-(2a+3)\rho'-(a+1)\rho'^2,-a-(a-3)\rho'+2\rho'^2,a+1+(a-1)\rho'-\rho'^2).
\end{align*}
If $a$ is even, the iterations (5)-(7) are 
\begin{align*}
\bm{\beta}^{(5)}&=(-a-(a-3)\rho'+2\rho'^2,2A_0^2+4A_0+1+(4A_0^2+5A_0-1)\rho'+(2A_0^2+A_0-1)\rho'^2,\\&\hspace{7cm}-(a+2)-(2a+3)\rho'-(a+1)\rho'^2),\\
\bm{\beta}^{(6)}&=(2A_0^2+4A_0+1+(4A_0^2+5A_0-1)\rho'+(2A_0^2+A_0-1)\rho'^2,a-2-9\rho'-(a+5)\rho'^2,\\&\hspace{7cm}-a-(a-3)\rho'+2\rho'^2),\\
\bm{\beta}^{(7)}&=(a-2-9\rho'-(a+5)\rho'^2,-2A_0^2-6A_0-1-(4A_0^2+7A_0-4)\rho'-(2A_0^2+A_0-3)\rho'^2,\\&\hspace{6cm}2A_0^2+4A_0+1+(4A_0^2+5A_0-1)\rho'+(2A_0^2+A_0-1)\rho'^2).
\end{align*}
If $a$ is odd, the iterations (5)-(7) are
\begin{align*}
\bm{\beta}^{(5)}&=(-a-(a-3)\rho'+2\rho'^2,2A_0^2+5A_0+2+(4A_0^2+7A_0)\rho'+(2A_0^2+2A_0-1)\rho'^2,\\&\hspace{7cm}-(a+2)-(2a+3)\rho'-(a+1)\rho'^2),\\
\bm{\beta}^{(6)}&=(2A_0^2+7A_0+3+(4A_0^2+9A_0-2)\rho'+(2A_0^2+2A_0-3)\rho'^2,a-2-9\rho'-(a+5)\rho'^2,\\&\hspace{7cm}-a-(a-3)\rho'+2\rho'^2),\\
\bm{\beta}^{(7)}&=(a-2-9\rho'-(a+5)\rho'^2,-2A_0^2-9A_0-4-(4A_0^2+11A_0-4)\rho'-(2A_0^2+2A_0-5)\rho'^2,\\&\hspace{6cm}2A_0^2+7A_0+3+(4A_0^2+9A_0-2)\rho'+(2A_0^2+2A_0-3)\rho'^2).
\end{align*}
And finally, the iteration (8) is
\begin{align*}
\bm{\beta}^{(8)}&=(-a^2-5-(a^2-a+14)\rho'-7\rho'^2,7+(a^2+7a+26)\rho'+(a^2+6a+14)\rho'^2,\\&\hspace{10cm}a-2-9\rho'-(a+5)\rho'^2).
\end{align*} 
\end{theorem}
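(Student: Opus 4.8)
The plan is to derive the homogeneous expansion entirely from the inhomogeneous one computed in Proposition \ref{prop:ijpasimplestperiodic4}, invoking the transfer machinery of Section \ref{sec:prelijpa}. Periodicity needs no separate argument: Proposition \ref{prop:ijpasimplestperiodic4} exhibits the iJPA expansion of $(-\rho',\rho'^2)$ as periodic with preperiod length $2$ and period length $7$, and Proposition \ref{lm:iJPA-JPA,per} then yields at once that the JPA expansion of $(1,-\rho',\rho'^2)$ is periodic with the very same preperiod and period lengths, the period unit $\varepsilon$ being the inverse of the Hasse--Bernstein unit.

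To produce the explicit convergents I would use the elementary recursion recorded at the close of Section \ref{sec:prelijpa}. With $n=3$ and $\bm{\beta}^{(0)}=(1,-\rho',\rho'^2)$, and writing $\bm{a}^{(k)}=(a_1^{(k)},a_2^{(k)})$ for the integer-part vectors already tabulated in Proposition \ref{prop:ijpasimplestperiodic4}, the homogeneous iterates satisfy
\[
\beta_1^{(k+1)}=\beta_2^{(k)}-a_1^{(k)}\beta_1^{(k)},\qquad \beta_2^{(k+1)}=\beta_3^{(k)}-a_2^{(k)}\beta_1^{(k)},\qquad \beta_3^{(k+1)}=\beta_1^{(k)}.
\]
The final assertion of Proposition \ref{lm:iJPA-JPA} is what makes this legitimate: it guarantees that the homogeneous floors $\lfloor\beta_{i+1}^{(k)}/\beta_1^{(k)}\rfloor$ equal $\lfloor\alpha_i^{(k)}\rfloor=a_i^{(k)}$, so the integer parts carry over verbatim and no floor needs to be recomputed. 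One then simply iterates; for instance $\bm{a}^{(0)}=(1,1)$ gives $\bm{\beta}^{(1)}=(-1-\rho',-1+\rho'^2,1)$, and $\bm{a}^{(1)}=(2,a+1)$ gives $\bm{\beta}^{(2)}=(1+2\rho'+\rho'^2,a+2+(a+1)\rho',-1-\rho')$, and so on through $\bm{\beta}^{(8)}$.

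What keeps the bookkeeping tractable is that the recursion is $\Z$-linear on coordinates expressed in the basis $\{1,\rho',\rho'^2\}$: since each $\bm{a}^{(k)}$ is integral and $\bm{\beta}^{(0)}$ lies in $\Z+\Z\rho'+\Z\rho'^2$, every iterate stays there, and---in contrast to the iJPA computation---no denominators occur and no reduction via $\rho'^3=a\rho'^2+(a+3)\rho'+1$ is ever needed. The even/odd dichotomy enters exactly where it does in Proposition \ref{prop:ijpasimplestperiodic4}, at iterations $(5)$--$(7)$, through the differing $\bm{a}^{(5)},\bm{a}^{(6)},\bm{a}^{(7)}$ and the $A_0=\lfloor a/2\rfloor$ terms.

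The only real difficulty is the length and delicacy of the middle-of-period arithmetic, where entries carry coefficients quadratic in $A_0$ and the parity of $a$ must be tracked throughout; I expect this to be the main obstacle rather than any conceptual point. Two consistency checks would close the argument cleanly: first, that on substituting $A_0=\lfloor a/2\rfloor$ the two parity branches reconcile into the single parity-free formula claimed for $\bm{\beta}^{(8)}$; and second, that one further JPA step applied to $\bm{\beta}^{(8)}$ reproduces $\bm{\beta}^{(2)}$ up to multiplication by the unit $\varepsilon$, i.e. $\bm{\beta}^{(9)}=\varepsilon\,\bm{\beta}^{(2)}$, thereby confirming the period length $7$ and preperiod length $2$ already predicted by Proposition \ref{lm:iJPA-JPA,per}.
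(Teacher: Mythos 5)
Your proposal is correct and follows essentially the same route as the paper: the paper's proof likewise combines Proposition \ref{prop:ijpasimplestperiodic4} with Propositions \ref{lm:iJPA-JPA} and \ref{lm:iJPA-JPA,per}, and then iterates the linear recursion $\beta_i^{(k+1)}=\beta_{i+1}^{(k)}-a_i^{(k)}\beta_1^{(k)}$, $\beta_3^{(k+1)}=\beta_1^{(k)}$ with the integer parts carried over from the inhomogeneous expansion. Your two closing consistency checks are a nice (optional) addition but do not change the argument.
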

 
\begin{proof}
To prove this theorem, it is enough to combine results of Propositions \ref{prop:ijpasimplestperiodic4}, \ref{lm:iJPA-JPA} and \ref{lm:iJPA-JPA,per}. As suggested at the end of Section \ref{sec:prelijpa}, in each iteration, it suffices to compute a concrete linear combination of some elements. Thus, in the first iteration, we get
\begin{align*}
    \beta_1^{(1)}&=\beta_2^{(0)}-a_1^{(0)}\beta_{1}^{(0)}=-\rho'-1\cdot 1=-1-\rho',\\
    \beta_2^{(1)}&=\beta_3^{(0)}-a_2^{(0)}\beta_{1}^{(0)}=\rho'^2-1\cdot 1=-1+\rho'^2,\\
    \beta_3^{(1)}&=\beta_1^{(0)}=1
\end{align*}
where $\bm{a}^{(0)}=(a_1^{(0)},a_2^{(0)})$ is the vector of integer parts from Proposition \ref{prop:ijpasimplestperiodic4}. 

Similarly, in the next step, we get
\begin{align*}
    \beta_1^{(2)}&=\beta_2^{(1)}-a_1^{(1)}\beta_{1}^{(1)}=-1+\rho'^2-2\cdot (-1-\rho')=1+2\rho'+\rho'^2,\\
    \beta_2^{(1)}&=\beta_3^{(1)}-a_2^{(1)}\beta_{1}^{(1)}=1-(a+1)\cdot (-1-\rho')=a+2+(a+1)\rho',\\
    \beta_3^{(2)}&=\beta_1^{(1)}=-1-\rho'.
\end{align*}
Since these computation are trivial, we omit the remaining steps.
\end{proof}

\section{Jacobi--Perron algorithm in Ennola's cubic fields} \label{sec:jpaennola}

Let us recall that Ennola's cubic fields are fields generated by a root $\rho$ of the polynomial $x^3+(a-1)x^2-ax-1$ where $a \in \N, a\geq 3$. Here, we denote the conjugates of $\rho$ as 
\begin{align*}
1+\frac{1}{a+3}&<\rho<1+\frac{1}{a+2},\\
-\frac{1}{a}&<\rho'<-\frac{1}{a+1},\\
-a+\frac{1}{a^2+a}&<\rho''<-a+\frac{1}{a^2}.
\end{align*}
Similarly, like for the simplest cubics fields, we will consider JPA expansions of vectors of the form $(1,|\theta|,\theta^2)$ where $\theta\in\{\rho,\rho',\rho''\}$, prove their periodicity and determine their form.

Moreover, in Subsection \ref{subsec:ennola2polynomial}, we will discuss the same question for roots of the polynomial $x^3-(a-1)x^2-ax-1$ where $a \in \N, a\geq 5$. Note that roots of these polynomial also generate Ennola's cubic fields. In particular, if $\rho$ is a root of $x^3+(a-1)x^2-ax-1$, then $a\rho+\rho^2$ is a root of $x^3-(a+1)x^2-(a+2)x-1$.   

\subsection{First generating polynomial} \label{subsec:ennola1polynomial}
First of all, let us note that the proofs of the following propositions are technical and similar to the proofs for the simplest cubic fields. Therefore, we do not include them (some of the proofs can be found in \cite{Sg} or in Appendix A7 of the arxiv version of this paper \cite{KST}). 

We will start with the root $1<\rho<2$ of the polynomial $x^3+(a-1)x^2-ax-1$.

\begin{proposition} \label{prop:JPAennol1}
Let $a\geq 3$.
Then the Jacobi--Perron expansion of the vector $(1, \rho,\rho^2)$ is periodic. The preperiod of the JPA expansion of $(1, \rho,\rho^2)$ is 
\[
\begin{array}{ll}
\bm{\beta}^{(0)}=\left(1, \rho, \rho^2\right), & \bm{b}^{(0)}=(1,1), \\
\bm{\beta}^{(1)}=\left(-1+\rho, -1+\rho^2, 1 \right), & \bm{b}^{(1)}=(2,a+2), \\
\end{array}
\] 
and the period is 
\[
\begin{array}{ll}
\bm{\beta}^{(2)}=\left(1-2\rho+\rho^2, a+3-(a+2)\rho, -1+\rho\right), & \bm{b}^{(2)}=(0,a+2), \\
\bm{\beta}^{(3)}=\left(a+3-(a+2)\rho, -(a+3)+(2a+5)\rho-(a+2)\rho^2, 1-2\rho+\rho^2\right), & \bm{b}^{(3)}=(0,a+3).
%\bm{\beta}^{(4)}=(-3-a+(5+2a)\rho+(-2-a)\rho^2, 8-6a-a^2+(4+5a+a^2)\rho + \rho^2, 3+a+(-2-a)\rho).
\end{array}
\] 
\end{proposition}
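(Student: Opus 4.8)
The plan is to run the homogeneous algorithm directly, iteration by iteration, exploiting the fact that in the homogeneous form no division occurs: once the partial quotients $\bm{b}^{(k)}$ are known, each coordinate of $\bm{\beta}^{(k+1)}$ is an explicit $\Z$-linear combination of the coordinates of $\bm{\beta}^{(k)}$, so the only genuine work is the determination of the integer parts. Since the algorithm is applied to the real vector $(1,\rho,\rho^2)$, all floors are taken in the single real embedding sending $\rho$ to the root in $(1,2)$; hence only the estimate \eqref{eq:estimatesennola} for $\rho$ itself (not its conjugates) is needed, together with the relation $\rho^3+(a-1)\rho^2-a\rho-1=0$.

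First I would verify the partial quotients. Throughout one has $\beta_1^{(k)}>0$ — indeed $\beta_1^{(0)}=1$, $\beta_1^{(1)}=\rho-1$, $\beta_1^{(2)}=(\rho-1)^2$ and $\beta_1^{(3)}=a+3-(a+2)\rho$ are all positive for $a\geq 3$ — so $b_i^{(k)}=m$ holds precisely when $0\leq \beta_i^{(k)}-m\beta_1^{(k)}<\beta_1^{(k)}$, a pair of sign conditions on explicit elements of $\Z[\rho]$. The easy cases are immediate: $\lfloor\rho\rfloor=\lfloor\rho^2\rfloor=1$ gives $\bm{b}^{(0)}=(1,1)$, while $(\rho^2-1)/(\rho-1)=\rho+1\in(2,3)$ and $1/(\rho-1)\in(a+2,a+3)$ give $\bm{b}^{(1)}=(2,a+2)$. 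The delicate cases are $b_2^{(2)}=0$ and $b_3^{(3)}=a+3$. Writing $t=\rho-1$, the shifted minimal polynomial is $t^3+(a+2)t^2+(a+1)t-1=0$; for instance $b_2^{(2)}=0$ amounts to $1-(a+2)t<t^2$, which I would prove by substituting $1=(a+1)t+(a+2)t^2+t^3$ to rewrite the difference as $t\bigl(1-(a+1)t-t^2\bigr)$ and bounding the bracket using $t<1/(a+2)$. In general each such inequality reduces to checking that $\rho$ lies on the correct side of the roots of the cubic that is the minimal polynomial of $\beta_i^{(k)}-m\beta_1^{(k)}$, which one does by evaluating that cubic at the rational endpoints $1+\frac{1}{a+3}$ and $1+\frac{1}{a+2}$, exactly as in the proof of Proposition \ref{prop:ijpasimplestperiodic4}. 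This bookkeeping is the main obstacle — not conceptually hard, but requiring the estimates to be sharp enough, since the crude bounds on $\rho$ alone do not always separate the relevant roots.

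With the partial quotients in hand, the coordinates of $\bm{\beta}^{(2)},\bm{\beta}^{(3)},\bm{\beta}^{(4)}$ are obtained by forming the prescribed linear combinations. To close the period it remains to show $\bm{\beta}^{(4)}=\varepsilon\,\bm{\beta}^{(2)}$ for a unit $\varepsilon$, and here I would exhibit $\varepsilon$ explicitly. Since $\rho-1$ is a fundamental unit, $(\rho-1)^{-1}=\rho^2+a\rho\in\Z[\rho]$, and I claim $\varepsilon=(a+3-(a+2)\rho)(\rho-1)^{-1}=\rho^2+a\rho-(a+2)$. One checks $N(\varepsilon)=1$ — for example $N(a+3-(a+2)\rho)=(a+2)^3 f\!\left(\tfrac{a+3}{a+2}\right)=1$ and $N(\rho-1)=1$ — so $\varepsilon$ is a unit, and then verifies the three scalar identities $\beta_i^{(4)}=\varepsilon\,\beta_i^{(2)}$ coordinatewise, which become routine once $(\rho-1)^{-1}=\rho^2+a\rho$ is substituted.

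Finally, because the partial quotients $\bm{b}^{(k)}$ depend only on the ratios $\beta_i^{(k)}/\beta_1^{(k)}$, they are invariant under scaling by $\varepsilon$; hence $\bm{\beta}^{(4)}=\varepsilon\,\bm{\beta}^{(2)}$ forces $\bm{b}^{(4)}=\bm{b}^{(2)}$, and an immediate induction gives $\bm{\beta}^{(k+2)}=\varepsilon\,\bm{\beta}^{(k)}$ for all $k\geq 2$. This is exactly the periodicity asserted, with preperiod $\bm{\beta}^{(0)},\bm{\beta}^{(1)}$ and period $\bm{\beta}^{(2)},\bm{\beta}^{(3)}$, and it identifies the associated Hasse--Bernstein unit as $\varepsilon^{-1}$.
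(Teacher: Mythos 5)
Your proof is correct, and all the computations I checked go through: the floors $\bm{b}^{(0)},\dots,\bm{b}^{(3)}$ are right (for $b_3^{(3)}$ one has, with $t=\rho-1$, $\beta_3^{(3)}/\beta_1^{(3)}=t^2/(1-(a+2)t)=\rho/(\rho-1)=1+1/(\rho-1)$, whose floor is $a+3$; as you anticipate, the crude interval bounds alone do not settle this and one must use the relation $1=(a+1)t+(a+2)t^2+t^3$), the element $\varepsilon=\rho^2+a\rho-(a+2)=(a+3-(a+2)\rho)(\rho-1)^{-1}$ is indeed a unit with $N(a+3-(a+2)\rho)=(a+2)^3f\!\left(\tfrac{a+3}{a+2}\right)=1$, and the identity $\beta_2^{(4)}=\varepsilon\beta_2^{(2)}$ reduces exactly to $t^3+(a+2)t^2+(a+1)t-1=0$. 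The paper itself omits the proof of this proposition (deferring to the thesis and the arxiv appendix), but its method — as executed for Proposition \ref{prop:ijpasimplestperiodic4} and in Appendix A7 — is to compute the \emph{inhomogeneous} expansion first, where the period closes by exact equality of real vectors $\bm{\alpha}^{(k+l_1)}=\bm{\alpha}^{(k)}$, and then transfer to the homogeneous form via Proposition \ref{lm:iJPA-JPA}. You instead run the homogeneous algorithm directly and certify periodicity by exhibiting the unit with $\bm{\beta}^{(4)}=\varepsilon\,\bm{\beta}^{(2)}$; the substantive work (determining the integer parts from the estimate \eqref{eq:estimatesennola} and the minimal polynomial) is identical in both routes, so the difference is organizational: you trade the iJPA bookkeeping and the transfer propositions for a coordinatewise verification of the unit relation, and you get the Hasse--Bernstein unit for free. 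The only cosmetic omission is a one-line check that the period length is not $1$ (immediate, since $\bm{b}^{(2)}=(0,a+2)\neq(0,a+3)=\bm{b}^{(3)}$).
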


We will proceed with the root $-1<\rho'<0$.

\begin{proposition} \label{prop:JPAennol2}
Let $a\geq 3$.
Then the Jacobi--Perron expansion of the vector $\left(1, -\rho',\rho'^2\right)$ is periodic. The preperiod of the JPA expansion of $\left(1,-\rho',\rho'^2\right)$ is 
\[
\begin{array}{ll}
\bm{\beta}^{(0)}=\left(1, - \rho', \rho'^2\right), & \bm{b}^{(0)}=(0,0),\\
\bm{\beta}^{(1)}=\left(-\rho', \rho'^2, 1\right), & \bm{b}^{(1)}=(0,a), \\
\bm{\beta}^{(2)}=\left(\rho'^2, 1+a\rho', -\rho'\right), & \bm{b}^{(2)}=(a-2,a).
\end{array}
\] 
and the period is 
\[
\begin{array}{ll}
\bm{\beta}^{(3)}=\left(1+a\rho'-(a-2)\rho'^2, -\rho'-a\rho'^2, \rho'^2\right), & \bm{b}^{(3)}=(0,1),  \\
\bm{\beta}^{(4)}=\left(-\rho'-a\rho'^2, -1-a\rho'+(a-1)\rho'^2, 1+a\rho'-(a-2)\rho'^2 \right), & \bm{b}^{(4)}=(0,1), \\
\bm{\beta}^{(5)}=\left(-1-a\rho'+(a-1)\rho'^2, 1+(a+1)\rho'+2\rho'^2, -\rho'-a\rho'^2\right), & \bm{b}^{(5)}=(0,a-2),  \\
\bm{\beta}^{(6)}=\big( 1+(a+1)\rho'+2\rho'^2, a-2+(a^2-2a-1)\rho'-(a^2-2a+2)\rho'^2,\\ \hspace{7cm} -1-a\rho'+(a-1)\rho'^2\big),  & \bm{b}^{(6)}=(0,1), \\
\bm{\beta}^{(7)}=\big(a-2+(a^2-2a-1)\rho'-(a^2-2a+2)\rho'^2, \\ \hspace{3cm} -2-(2a+1)\rho'+(a-3)\rho'^2, 1+(a+1)\rho'+2\rho'^2 \big), & \bm{b}^{(7)}=(0,1),\\
\bm{\beta}^{(8)}=\big(-2-(2a+1)\rho'+(a-3)\rho'^2,-a+3-(a^2-3a-2)\rho' \\ \hspace{1cm} +(a^2-2a+4)\rho'^2,a-2+(a^2-2a-1)\rho'-(a^2-2a+2)\rho'^2\big), & \bm{b}^{(8)}=(0,a), \\
\bm{\beta}^{(9)}=\big(-a+3-(a^2-3a-2)\rho'+(a^2-2a+4)\rho'^2, \\ \hspace{3cm} 3a-2+(3a^2-a-1)\rho'-(2a^2-5a+2)\rho'^2, \\ \hspace{6cm} -2-(2a+1)\rho'+(a-3)\rho'^2\big), & \bm{b}^{(9)}=(a-3,a).
%\bm{\beta}^{(10)}=( 7-3a+a^2+(5+6a-3a^2+a^3)\rho'+(10-5a+3a^2-a^3)\rho'^2, -2-3a+a^2+(-1-4a-3a^2+a^3)\rho'+(-3-3a+2a^2-a^3)\rho'^2, 3-a+(2+3a-a^2)\rho'+(4-2a+a^2)\rho'^2).
\end{array}
\] 
\end{proposition}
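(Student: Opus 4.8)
The plan is to establish the expansion by iterating the homogeneous Jacobi--Perron map directly, starting from $\bm{\beta}^{(0)}=(1,-\rho',\rho'^2)$, verifying both the explicit entries and the periodicity as I go. Each iteration splits into two tasks: first determine the integer-part vector $\bm{b}^{(k)}=(b_2^{(k)},b_3^{(k)})$, where $b_i^{(k)}=\lfloor\beta_i^{(k)}/\beta_1^{(k)}\rfloor$, and then form the next vector $\bm{\beta}^{(k+1)}=(\beta_2^{(k)}-b_2^{(k)}\beta_1^{(k)},\beta_3^{(k)}-b_3^{(k)}\beta_1^{(k)},\beta_1^{(k)})$. The second task is purely mechanical once $\bm{b}^{(k)}$ is known: it is a linear combination of the previous entries, which I then rewrite in the integral basis $\{1,\rho',\rho'^2\}$ using the defining relation $\rho'^3=-(a-1)\rho'^2+a\rho'+1$. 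Equivalently, by Propositions \ref{lm:iJPA-JPA} and \ref{lm:iJPA-JPA,per} one may instead compute the inhomogeneous expansion of the two-dimensional vector $(-\rho',\rho'^2)$ and transport the outcome; the underlying arithmetic is identical, and this is the route matching the treatment of the simplest cubic fields.

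The substance of the proof, and the step I expect to be the main obstacle, is the evaluation of the floors $b_i^{(k)}$. All of them are taken in the single real embedding $\rho\mapsto\rho'$, so the basic tool is the estimate $-\tfrac1a<\rho'<-\tfrac1{a+1}$ from \eqref{eq:estimatesennola}: for each ratio I substitute these bounds and check that it falls in $[b_i^{(k)},b_i^{(k)}+1)$ for the asserted integer. The early iterations are easy (e.g.\ $b_3^{(1)}=\lfloor-1/\rho'\rfloor=a$ because $a<-1/\rho'<a+1$), but in the interior of the period the entries carry coefficients quadratic in $a$ and the pivot $\beta_1^{(k)}$ is itself tiny and arises through cancellation---for instance $\beta_1^{(4)}=-\rho'(1+a\rho')$ is of size $\sim a^{-2}$. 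There the two-sided bound on $\rho'$ is too coarse to pin down a sign or a floor, and I anticipate resolving such cases by returning to the minimal polynomial: many of the delicate numerators simplify outright (e.g.\ $-1-a\rho'+(a-1)\rho'^2=-\rho'^3>0$), and where they do not, I would sharpen the estimate on $\rho'$ to a bound with an explicit lower-order correction, exactly as the authors do for the simplest cubic fields. Carrying out this bookkeeping consistently across all seven period iterations is the technical heart of the argument.

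To conclude periodicity, I would perform one further iteration from $\bm{\beta}^{(9)}$ using $\bm{b}^{(9)}=(a-3,a)$ to obtain $\bm{\beta}^{(10)}$, and then check the proportionality $\beta_i^{(10)}=\varepsilon\beta_i^{(3)}$ for $i=1,2,3$ with a single $\varepsilon\in\Z[\rho]$. This is a finite algebraic identity: it amounts to comparing the three coordinate vectors of $\bm{\beta}^{(10)}$ and $\bm{\beta}^{(3)}$ in the basis $\{1,\rho',\rho'^2\}$. Once it holds, $\varepsilon$ is automatically a unit, and by the discussion of the Hasse--Bernstein unit in Section \ref{sec:prelijpa} (together with \cite{BHunit}) it is the inverse of that unit; this yields a preperiod of length $3$ and a period of length $7$. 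Minimality of these lengths follows from a direct comparison of the coordinate expressions, since $7$ is prime and the listed vectors $\bm{\beta}^{(3)},\dots,\bm{\beta}^{(9)}$ are pairwise non-associate while none of $\bm{\beta}^{(0)},\bm{\beta}^{(1)},\bm{\beta}^{(2)}$ is a unit multiple of a period vector.
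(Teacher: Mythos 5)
Your proposal is correct and follows essentially the same route as the paper: the authors also iterate the algorithm (via the inhomogeneous form transported by Proposition \ref{lm:iJPA-JPA}), pin down each floor $b_i^{(k)}$ using the bounds $-\tfrac1a<\rho'<-\tfrac1{a+1}$ together with simplifications from the minimal polynomial, and close the period by checking that the next iterate is a unit multiple of $\bm{\beta}^{(3)}$; they merely relegate this bookkeeping to the arXiv appendix and \cite{Sg}. Your treatment of minimality of the period length is, if anything, slightly more careful than what the paper records.
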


And finally, we discuss the root $-a<\rho''<-a+1$, for which the corresponding JPA expansion is again periodic as in the previous cases.

\begin{proposition} \label{prop:JPAennol3}
Let $a\geq 3$.
Then the Jacobi--Perron expansion of the vector $\left(1,-\rho'',\rho''^2\right)$ is periodic. The preperiod of the JPA expansion of $\left(1,-\rho'',\rho''^2\right)$ is 
\[
\begin{array}{ll}
\bm{\beta}^{(0)}=\left(1, - \rho'', \rho''^2\right), \qquad \bm{b}^{(0)}=(a-1,a^2-1), \\
\end{array}
\] 
and the period is 
\[
\begin{array}{ll}
\bm{\beta}^{(1)}=\left(-a+1-\rho'', -a^2+1+\rho''^2, 1\right), & \bm{b}^{(1)}=(0,1), \\
\bm{\beta}^{(2)}=\left(-a^2+1+\rho''^2, a+\rho, -a+1-\rho''\right), & \bm{b}^{(2)}=(0,1), \\
\bm{\beta}^{(3)}=\left(a+\rho'', a^2-a-\rho''-\rho''^2, -a^2+1+\rho''^2 \right), & \bm{b}^{(3)}=(2a-2,a^2-a-1).
%\bm{\beta}^{(4)}=(a-a^2+(1-2a)\rho''-\rho''^2, 1+a-a^3+(1+a-a^2)\rho'' + \rho''^2, a+\rho'' ).
\end{array}
\] 
\end{proposition}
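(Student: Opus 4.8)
The plan is to verify the claimed expansion directly from the definition of the homogeneous JPA specialized to $n=3$, where the recurrence reads $\bm{\beta}^{(k+1)}=\bigl(\beta_2^{(k)}-b_2^{(k)}\beta_1^{(k)},\,\beta_3^{(k)}-b_3^{(k)}\beta_1^{(k)},\,\beta_1^{(k)}\bigr)$ with $b_i^{(k)}=\lfloor \beta_i^{(k)}/\beta_1^{(k)}\rfloor$. Since the vectors $\bm{\beta}^{(0)},\dots,\bm{\beta}^{(3)}$ and the integer pairs $\bm{b}^{(0)},\dots,\bm{b}^{(3)}$ are written out explicitly, the proposition reduces to three verifications: (i) at each $k\in\{0,1,2,3\}$ the algebraic recurrence identity holds, i.e. the displayed $\bm{\beta}^{(k+1)}$ equals $\bigl(\beta_2^{(k)}-b_2^{(k)}\beta_1^{(k)},\beta_3^{(k)}-b_3^{(k)}\beta_1^{(k)},\beta_1^{(k)}\bigr)$; (ii) the displayed $\bm{b}^{(k)}$ are the correct integer parts; and (iii) the expansion closes up periodically.

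Step (i) is a routine computation in the $\Z$-basis $\{1,\rho'',\rho''^2\}$ of $\Z[\rho'']$, the only tool being the relation $\rho''^3=-(a-1)\rho''^2+a\rho''+1$ from the minimal polynomial, applied whenever a cubic power appears; I would simply carry out the four linear combinations. For step (ii) I would use the following reduction. Along the way one checks that $\beta_1^{(k)}>0$ for $k=0,1,2,3$ (each of these is an explicit element of $(0,1)$ for $k\ge1$), and then the equality $b_i^{(k)}=\lfloor \beta_i^{(k)}/\beta_1^{(k)}\rfloor$ is equivalent to $0\le \beta_i^{(k)}-b_i^{(k)}\beta_1^{(k)}<\beta_1^{(k)}$. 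But $\beta_i^{(k)}-b_i^{(k)}\beta_1^{(k)}$ is \emph{already} a component of the next vector $\bm{\beta}^{(k+1)}$, while $\beta_1^{(k)}=\beta_3^{(k+1)}$. Hence every floor check turns into a pair of inequalities $0\le \beta_j^{(k+1)}<\beta_3^{(k+1)}$ between explicit $\Z$-combinations of $1,\rho'',\rho''^2$, to be confirmed using the estimates (\ref{eq:estimatesennola}) on $\rho''$.

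For step (iii) I would exhibit the unit $\varepsilon=a+\rho''=\beta_1^{(3)}$. Writing $f(x)=x^3+(a-1)x^2-ax-1$, its norm is $N(a+\rho'')=-f(-a)=1$, so $\varepsilon\in\Z[\rho'']^\times$; and a direct computation of $\bm{\beta}^{(4)}$ from $\bm{\beta}^{(3)}$ and $\bm{b}^{(3)}$ (again reducing $\rho''^3$) gives $\bm{\beta}^{(4)}=\varepsilon\,\bm{\beta}^{(1)}$ componentwise. This establishes periodicity with preperiod length $l_0=1$ and period length $l_1=3$, and identifies $\varepsilon$ as the inverse Hasse--Bernstein unit in the sense of Proposition \ref{lm:iJPA-JPA,per}. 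Equivalently, one could first compute the iJPA expansion of $(-\rho'',\rho''^2)$ and transport it via Propositions \ref{lm:iJPA-JPA} and \ref{lm:iJPA-JPA,per}, exactly as in the simplest cubic case, but here the direct verification is cleaner.

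The main obstacle is step (ii): several of the ratios $\beta_i^{(k)}/\beta_1^{(k)}$ lie within $O(1/a)$, and in places within $O(1/a^2)$, of an integer, so the inequalities are genuinely tight. For instance $\beta_2^{(1)}/\beta_1^{(1)}$ falls just below $1$ (forcing $b_2^{(1)}=0$), and for small $a$ the condition $\beta_2^{(3)}<\beta_1^{(2)}$ behind $b_3^{(2)}=1$ is close to an equality. In such borderline cases the raw form of (\ref{eq:estimatesennola}) is not decisive, and I would sharpen the bounds on $\rho''$ directly from its minimal polynomial, in the spirit of the refined estimates used in Section \ref{sec:Pyth}, to pin each inequality down on the correct side.
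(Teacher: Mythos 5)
Your plan is correct and is essentially the verification the paper itself performs (the paper omits this particular proof, deferring it to the thesis and appendix, but its stated method for all these propositions is to compute the iJPA expansion of $(-\rho'',\rho''^2)$ using the estimates on $\rho''$ and then transfer via Propositions \ref{lm:iJPA-JPA} and \ref{lm:iJPA-JPA,per}; by Proposition \ref{lm:iJPA-JPA} the floor conditions $b_i^{(k)}=\lfloor\beta_i^{(k)}/\beta_1^{(k)}\rfloor=\lfloor\alpha_{i-1}^{(k)}\rfloor$ coincide, so your direct homogeneous verification is the same computation in a different dress). You also correctly identify the two genuinely delicate points: the tight floor estimates, which for small $a$ (e.g.\ $b_3^{(2)}=1$ at $a=3$) require the exact relation $(a+\rho'')\bigl(a^2+a-(2a+1)(a+\rho'')+(a+\rho'')^2\bigr)=1$ rather than the raw bounds \eqref{eq:estimatesennola}, and the closing unit $\varepsilon=a+\rho''$ with $N(a+\rho'')=-f(-a)=1$ giving $\bm{\beta}^{(4)}=\varepsilon\,\bm{\beta}^{(1)}$.
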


\subsection{Second generating polynomial} \label{subsec:ennola2polynomial}
In this subsection, we will focus on roots of the polynomial $x^3-(a-1)x^2-ax-1$ where $a\geq 5$. Note that these elements also generate Ennola's cubic fields. 
Let us denote the roots of the polynomial $x^3-(a-1)x^2-ax-1$ by $\psi, \psi'$ and $\psi''$ where
\[ a<a+\frac{a-1}{a^3}<\psi<a+\frac{a^2-1}{a^4}<a+\frac{1}{a^2},\]
\[-1 < -\frac{a-1}{a}<\psi' < -\frac{a-2}{a-1} < 0,\]
\[-1 < -\frac{1}{a-2}<\psi'' < -\frac{1}{a-1} < 0. \]
Note that we have $\rho=-(a-1)-(a-1)\psi+\psi^2$ where $1<\rho$ is a root of the polynomial $x^3+(a-3)x^2-(a-2)x-1$.

First of all, we will derive the JPA expansion of $(1,\psi,\psi^2)$.

\begin{proposition} \label{prop:JPAennol4}
Let $a\geq 5$.
Then the Jacobi--Perron expansion of the vector $\left(1,\psi,\psi^2\right)$ is periodic. The preperiod of the JPA expansion of $\left(1,\psi,\psi^2\right)$ is 
\[
\begin{array}{ll}
\bm{\beta}^{(0)}=\left(1, \psi, \psi^2\right), & \bm{b}^{(0)}=(a,a^2),\\
\bm{\beta}^{(1)}=\left(-a+\psi, -a^2+\psi^2, 1 \right), & \bm{b}^{(1)}=(2a,a^2+a),  
\end{array}
\] 
and the period is
\[
\begin{array}{ll}
\bm{\beta}^{(2)}=\left(a^2-2a\psi+\psi^2, a^3+a^2+1-(a^2+a)\psi, -a+\psi\right), & \bm{b}^{(2)}=(2a+1,a^2+a).  % \\
%\bm{\beta}^{(3)}=\big(1-a^3+(a+3a^2)\psi''+(-1-2a)\psi''^2,\\ \hspace{3cm} -a-a^3-a^4+(1+2a^2+2a^3)\psi''+(-a-a^2)\psi''^2,\\ \hspace{4cm} a^2-2a\psi''+\psi''^2\big), & \bm{b}^{(3)}=(2a+1,a+a^2). 
\end{array}
\] 
\end{proposition}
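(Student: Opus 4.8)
The plan is to apply the homogeneous JPA recursion directly to $\bm{\beta}^{(0)}=(1,\psi,\psi^2)$, where the only genuine work is pinning down the integer vectors $\bm{b}^{(k)}$ via the stated bounds on $\psi$, and then to recognize that the period has length $1$ because $\bm{\beta}^{(3)}$ turns out to be a unit multiple of $\bm{\beta}^{(2)}$. Before iterating, I would record two facts coming from the defining polynomial $f(x)=x^3-(a-1)x^2-ax-1$. Since $f(a)=a^3-(a-1)a^2-a^2-1=-1$, the element $\psi-a$ has norm $N(\psi-a)=-f(a)=1$ and is therefore a unit; this will be the Hasse--Bernstein unit of the expansion. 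A short reduction modulo $f(\psi)=0$ gives the key identity $\tfrac{1}{\psi-a}=\psi+\psi^2$, i.e. $(\psi-a)(\psi+\psi^2)=1$. Throughout I use $a<\psi<a+\tfrac{1}{a^2}$, from which $a^2<\psi^2<a^2+1$ and $a^2+a<\psi+\psi^2<a^2+a+1$ once $a\geq 5$ (so that $\tfrac{1}{a^2}+\tfrac{2}{a}+\tfrac{1}{a^4}<1$).

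For the preperiod, from $\lfloor\psi\rfloor=a$ and $\lfloor\psi^2\rfloor=a^2$ I get $\bm{b}^{(0)}=(a,a^2)$ and hence $\bm{\beta}^{(1)}=(\psi-a,\psi^2-a^2,1)$. Writing $t=\psi-a$, I would then use $(\psi^2-a^2)/(\psi-a)=\psi+a$ (floor $2a$) and $1/(\psi-a)=\psi+\psi^2$ (floor $a^2+a$) to obtain $\bm{b}^{(1)}=(2a,a^2+a)$, and substituting yields $\bm{\beta}^{(2)}=(a^2-2a\psi+\psi^2,\,a^3+a^2+1-(a^2+a)\psi,\,\psi-a)$. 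It is convenient to rewrite the entries as $\beta_1^{(2)}=t^2$, $\beta_2^{(2)}=1-a(a+1)t$, and $\beta_3^{(2)}=t$, since the factor $t^2$ is what makes the final unit-multiple structure transparent.

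The decisive step is iteration $2$, where \emph{exact} simplification rather than mere estimation is needed. Here $\beta_3^{(2)}/\beta_1^{(2)}=1/t=\psi+\psi^2$ gives $b_3^{(2)}=a^2+a$, while for the middle ratio I would compute $\beta_2^{(2)}/\beta_1^{(2)}=1/t^2-a(a+1)/t=(\psi+\psi^2)\bigl[(\psi+\psi^2)-a(a+1)\bigr]$ and then use $(\psi+\psi^2)-a(a+1)=(\psi-a)+(\psi^2-a^2)=t(1+a+\psi)$ to collapse this to $(\psi+\psi^2)\cdot t(1+a+\psi)=1+a+\psi$, whence $b_2^{(2)}=\lfloor 1+a+\psi\rfloor=2a+1$. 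This matches $\bm{b}^{(2)}=(2a+1,a^2+a)$. Finally I would compute $\bm{\beta}^{(3)}$ and check termwise that $\bm{\beta}^{(3)}=(\psi-a)\bm{\beta}^{(2)}$: the first entry reduces via $f(\psi)=0$ to $(\psi-a)^3=-a^3+1+(3a^2+a)\psi-(2a+1)\psi^2$, the second to $t\bigl(1-(a^2+a)t\bigr)=(\psi-a)\beta_2^{(2)}$, and the third is $\beta_1^{(2)}=t^2=(\psi-a)\beta_3^{(2)}$. Since $\psi-a$ is a unit, this exhibits the expansion as periodic with preperiod length $2$ and period length $1$.

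The main obstacle is precisely the floor $b_2^{(2)}=2a+1$: because $\beta_1^{(2)}=t^2$ has size $\sim a^{-4}$ while $\beta_2^{(2)}$ has size $\sim a^{-1}$, naive bounding of the quotient is hopeless, and one must instead carry out the exact cancellation $\beta_2^{(2)}/\beta_1^{(2)}=1+a+\psi$ using $1/(\psi-a)=\psi+\psi^2$. Everything else is bounded-range bookkeeping that is valid as soon as $a\geq 5$ secures the inequalities on $\psi^2$ and $\psi+\psi^2$ above; the clean closure $\bm{\beta}^{(3)}=(\psi-a)\bm{\beta}^{(2)}$ then makes the periodicity (and the identification of $\psi-a$ as the relevant unit) immediate.
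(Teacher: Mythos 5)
Your proof is correct, and all the computations check out: $f(a)=-1$ gives $N(\psi-a)=1$, the identity $(\psi-a)(\psi+\psi^2)=1$ follows from $f(\psi)=0$, the floors $\bm{b}^{(0)}=(a,a^2)$, $\bm{b}^{(1)}=(2a,a^2+a)$, $\bm{b}^{(2)}=(2a+1,a^2+a)$ all follow from $a<\psi<a+\tfrac1{a^2}$, and the closure $\bm{\beta}^{(3)}=(\psi-a)\bm{\beta}^{(2)}$ is equivalent to $t=\psi-a$ being a root of $f(x+a)=x^3+(2a+1)x^2+(a^2+a)x-1$, which it is. The paper takes a slightly different route: it first computes the \emph{inhomogeneous} JPA expansion of $(\psi,\psi^2)$ (whose iterates are $(a+\psi,\psi+\psi^2)$ and then $(a+1+\psi,\psi+\psi^2)$, repeating exactly), and then transfers to the homogeneous expansion via Propositions \ref{lm:iJPA-JPA} and \ref{lm:iJPA-JPA,per}. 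In that formulation the step you single out as decisive, the exact cancellation $\beta_2^{(2)}/\beta_1^{(2)}=1+a+\psi$, is simply the statement $\alpha_1^{(2)}=a+1+\psi$, so the "hopeless naive bounding" problem never arises because the inhomogeneous algorithm keeps the quotients in normalized form; your direct homogeneous computation buys self-containedness (no appeal to the transfer propositions) at the cost of having to perform that cancellation by hand. One terminological nit: by the paper's convention the Hasse--Bernstein unit is $\prod\alpha_{n-1}^{(k)}$ over the period, i.e.\ $\psi+\psi^2$, and the scaling unit $\psi-a$ appearing in the homogeneous period is its \emph{inverse} (cf.\ Proposition \ref{lm:iJPA-JPA,per}); this does not affect your argument.
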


Our next step is to determine the JPA expansion of $\left(1, -\psi',\psi'^2\right)$.

\begin{proposition} \label{prop:JPAennol5}
Let $a\geq 5$.
Then the Jacobi--Perron expansion of the vector $\left(1,-\psi',\psi'^2\right)$ is periodic. The preperiod of the JPA expansion of $\left(1, -\psi',\psi'^2\right)$ is 
\[
\begin{array}{ll}
\bm{\beta}^{(0)}=\left(1, - \psi', \psi'^2\right), & \bm{b}^{(0)}=(0,0),\\
\bm{\beta}^{(1)}=\left(-\psi', \psi'^2, 1 \right), & \bm{b}^{(1)}=(0,1), \\
\bm{\beta}^{(2)}=\left(\psi'^2, 1+\psi', -\psi'\right), & \bm{b}^{(2)}=(0,1), \\
\bm{\beta}^{(3)}=\left(1+\psi', -\psi'-\psi'^2, \psi'^2 \right), & \bm{b}^{(3)}=(0,a-3).
\end{array}
\] 
The period is
\[
\begin{array}{ll}
\bm{\beta}^{(4)}=\left(-\psi'-\psi'^2, -a+3 -(a-3)\psi' + \psi'^2, 1+\psi'\right),  & \bm{b}^{(4)}=(0,1),\\
\bm{\beta}^{(5)}=\left(-a+3 - (a-3)\psi' + \psi'^2, 1+2\psi'+\psi'^2, -\psi'-\psi'^2\right), & \bm{b}^{(5)}=(0,1), \\
\bm{\beta}^{(6)}=\left(1+2\psi'+\psi'^2, a-3+(a-4)\psi' -2\psi'^2, -a+3 -(a-3)\psi' + \psi'^2\right), & \bm{b}^{(6)}=(0,a-2), \\
\bm{\beta}^{(7)}=\big(a-3+(a-4)\psi' -2\psi'^2, -2a+5-(3a-7)\psi' -(a-3)\psi'^2,\\ \hspace{8cm} 1+2\psi'+\psi'^2 \big),  & \bm{b}^{(7)}=(a-5,a-2),\\
\bm{\beta}^{(8)}=\big(-a^2+6a-10-(a^2-6a+13)\psi' +(a-7)\psi'^2,\\ \hspace{3cm} -a^2+5a-5-(a^2-6a+6)\psi' +(2a-3)\psi'^2,\\ \hspace{6cm} a-3+(a-4)\psi' -2\psi'^2 \big), & \bm{b}^{(8)}=(0,1),\\
\bm{\beta}^{(9)}=\big(-a^2+5a-5-(a^2-6a+6)\psi' +(2a-3)\psi'^2,\\ \hspace{3cm} a^2-5a+7+(a^2-5a+9)\psi' -(a-5)\psi'^2,\\ \hspace{3cm} -a^2+6a-10-(a^2-6a+13)\psi' +(a-7)\psi'^2\big), & \bm{b}^{(9)}=(0,1), \\
\bm{\beta}^{(10)}=\big(a^2-5a+7+(a^2-5a+9)\psi' -(a-5)\psi'^2,\\ \hspace{3cm} a-5-7\psi' -(a+4)\psi'^2,\\ \hspace{3cm} -a^2+5a-5-(a^2-6a+6)\psi' +(2a-3)\psi'^2\big), & \bm{b}^{(10)}=(0,a-4).  %\\
%\bm{\beta}^{(11)}=\big(a-5-7\psi +(-a-4)\psi^2, \\ \hspace{1cm} -a^3+8a^2-22a+23+(-a^3+8a^2-23a+30)\psi +(a^2-7a+17)\psi^2, \\ \hspace{3cm} a^2-5a+7+(a^2-5a+9)\psi +(-a+5)\psi^2\big), & \bm{b}^{(11)}=??.
\end{array}
\] 
\end{proposition}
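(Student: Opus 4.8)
The plan is to verify the displayed expansion directly, iteration by iteration, from the defining recursion of the homogeneous JPA in Section~\ref{sec:prelijpa}. For a three-component vector this reads $b_i^{(k)}=\big\lfloor \beta_i^{(k)}/\beta_1^{(k)}\big\rfloor$ for $i=2,3$, followed by $\bm{\beta}^{(k+1)}=\big(\beta_2^{(k)}-b_2^{(k)}\beta_1^{(k)},\,\beta_3^{(k)}-b_3^{(k)}\beta_1^{(k)},\,\beta_1^{(k)}\big)$. Once the integer vectors $\bm{b}^{(k)}$ are known, each passage $\bm{\beta}^{(k)}\mapsto\bm{\beta}^{(k+1)}$ is a routine linear computation in which powers $\psi'^3$ are reduced via $\psi'^3=(a-1)\psi'^2+a\psi'+1$; so the whole content of the proposition is the determination of the floors $b_2^{(k)},b_3^{(k)}$ at the eleven listed steps. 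This is exactly the mechanism of the proof of Proposition~\ref{prop:ijpasimplestperiodic4}, where likewise the integer parts are the only delicate point, and (as there) one may instead pass through the iJPA expansion of $(-\psi',\psi'^2)$ and invoke Propositions~\ref{lm:iJPA-JPA} and~\ref{lm:iJPA-JPA,per}.

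For a single step I would write $\beta_1^{(k)},\beta_2^{(k)},\beta_3^{(k)}$ as explicit polynomials of degree $\le 2$ in $\psi'$, first check $\beta_1^{(k)}>0$ at the real root $\psi'\in\big(-\tfrac{a-1}{a},-\tfrac{a-2}{a-1}\big)$, and then confirm the claimed value $b=b_i^{(k)}$ by the double inequality $b\,\beta_1^{(k)}\le\beta_i^{(k)}<(b+1)\,\beta_1^{(k)}$. Each inequality asserts positivity or negativity of a concrete element of $\Z[\psi']$, which I would settle by substituting the interval bounds for $\psi,\psi',\psi''$ and estimating. For instance at $k=1$ one has $\bm{\beta}^{(1)}=(-\psi',\psi'^2,1)$ with $-\psi'\in\big(1-\tfrac{1}{a-1},1-\tfrac{1}{a}\big)$ positive, whence $b_2^{(1)}=\lfloor-\psi'\rfloor=0$ and $b_3^{(1)}=\lfloor 1/(-\psi')\rfloor=1$ because $1/(-\psi')\in\big(\tfrac{a}{a-1},\tfrac{a-1}{a-2}\big)\subset(1,2)$; the remaining easy steps follow the same template.

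To close the period I would perform one further iteration from $\bm{\beta}^{(10)}$ with $\bm{b}^{(10)}=(0,a-4)$ and check that the resulting $\bm{\beta}^{(11)}$ equals $\varepsilon\,\bm{\beta}^{(4)}$ for some unit $\varepsilon\in\Z[\psi']^{\times}$. By the definition of a periodic JPA expansion this yields periodicity with preperiod length $4$ and period length $7$, and $\varepsilon^{-1}$ is then the Hasse--Bernstein unit, in agreement with Proposition~\ref{lm:iJPA-JPA,per}.

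The main obstacle is the borderline floor computations at the steps with large integer parts, namely $\bm{b}^{(3)}=(0,a-3)$, $\bm{b}^{(6)}=(0,a-2)$, $\bm{b}^{(7)}=(a-5,a-2)$ and $\bm{b}^{(10)}=(0,a-4)$. Here the relevant ratio lands close to an integer: already at $k=3$ one gets $\beta_3^{(3)}/\beta_1^{(3)}=\psi'^2/(1+\psi')$, which the crude interval for $\psi'$ only places in $\big(a-3+\tfrac{1}{a-1},\,a-2+\tfrac{1}{a}\big)$, a range straddling $a-2$ and so failing to separate the true floor $a-3$ from $a-2$. For these steps I would derive from the minimal polynomial sharper one-sided bounds on $\psi'$ (and on $\psi,\psi''$ when they enter), exactly as the proof of Proposition~\ref{prop:ijpasimplestperiodic4} uses a refined estimate on $\rho'$ and, in its hardest iteration, locates a root of an auxiliary cubic between $\tfrac{a}{2}$ and $\tfrac{a+1}{2}$. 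Any finite range of small $a$ that escapes the asymptotic estimates I would dispatch by direct computation, as is done elsewhere in the paper.
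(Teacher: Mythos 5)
Your proposal is correct and follows essentially the same route as the paper: the paper's proof (relegated to Appendix A7 of the arXiv version) computes the iJPA expansion of $(-\psi',\psi'^2)$ iteration by iteration, pinning down each integer part via the double inequality $b\le\alpha_i^{(k)}<b+1$ with the element written as a quadratic polynomial in $\psi'$ and the interval bounds on $\psi'$ (refined from the minimal polynomial at one step), and then transfers to the homogeneous expansion via Propositions~\ref{lm:iJPA-JPA} and~\ref{lm:iJPA-JPA,per} — exactly the mechanism you describe. The only minor inaccuracy is your worry about $k=3$: applying the interval bounds monomial-wise to $\beta_3^{(3)}-b\beta_1^{(3)}\in\Z[\psi']$ (rather than to the quotient $\psi'^2/(1+\psi')$ directly) already separates $a-3$ from $a-2$ there, so no refined bound is needed at that step.
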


In the end, we will focus on the last root $\psi''$.

\begin{proposition} \label{prop:JPAennol6}
Let $a\geq 5$.
Then the Jacobi--Perron expansion of the vector $\left(1, -\psi'',\psi''^2\right)$ is periodic. The preperiod of the JPA expansion of $\left(1, -\psi'',\psi''^2\right)$ is 
\[
\begin{array}{ll}
\bm{\beta}^{(0)}=\left(1, - \psi'', \psi''^2\right), & \bm{b}^{(0)}=(0,0),\\
\bm{\beta}^{(1)}=\left(-\psi'', \psi''^2, 1\right), & \bm{b}^{(1)}=(0,a-2), \\
\bm{\beta}^{(2)}=\left(\psi''^2, 1+(a-2)\psi'', -\psi''\right), & \bm{b}^{(2)}=(a-2,a-2), 
\end{array}
\]
and the period is
\[
\begin{array}{ll}
\bm{\beta}^{(3)}=\left(1+(a-2)\psi'' - (a-2)\psi''^2, -\psi''-(a-2)\psi''^2, \psi''^2\right), & \bm{b}^{(3)}=(1,1),\\
\bm{\beta}^{(4)}=\big(-1-(a-1)\psi'', -1 - (a-2)\psi'' + (a-1)\psi''^2, \\ \hspace{5cm} 1+(a-2)\psi'' - (a-2)\psi''^2\big), & \bm{b}^{(4)}=(1,a-4), \\
\bm{\beta}^{(5)}=\big(\psi'' + (a-1)\psi''^2, a-3+ (a^2-4a+2)\psi''  -(a-2)\psi''^2, \\ \hspace{6cm} -1-(a-1)\psi''\big), & \bm{b}^{(5)}=(a-3,a-2). %\\
%\bm{\beta}^{(6)}=\big(a-3+ (a^2-5a+5)\psi' + (-a^2+3a-1)\psi'^2, \\ \hspace{3cm} -1+(-2a+3)\psi' + (-a^2+3a-2)\psi'^2, \psi' + (a-1)\psi'^2\big), & \bm{b}^{(6)}=(1,1).
\end{array}
\] 
\end{proposition}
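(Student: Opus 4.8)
The plan is to verify the claimed expansion directly, imitating the computation carried out for the simplest cubic fields in Proposition \ref{prop:ijpasimplestperiodic4}. Working in the real embedding in which $\psi''$ takes the value specified by the estimates $-\frac{1}{a-2}<\psi''<-\frac{1}{a-1}$, I would start from $\bm{\beta}^{(0)}=(1,-\psi'',\psi''^2)$ and apply the homogeneous recurrence $\bm{\beta}^{(k+1)}=\big(\beta_2^{(k)}-b_2^{(k)}\beta_1^{(k)},\,\beta_3^{(k)}-b_3^{(k)}\beta_1^{(k)},\,\beta_1^{(k)}\big)$ six times, reducing every entry to the $\Z$-basis $1,\psi'',\psi''^2$ at each stage by means of the relation $\psi''^3=(a-1)\psi''^2+a\psi''+1$. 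Each individual step is then a short linear computation that reproduces the stated vector $\bm{\beta}^{(k+1)}$, so the substance of the proof lies entirely in justifying the floor vectors $\bm{b}^{(k)}$.

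The heart of the argument is the determination of the floors $b_i^{(k)}=\big\lfloor \beta_i^{(k)}/\beta_1^{(k)}\big\rfloor$. The first few are immediate from $-\psi''\in\big(\frac{1}{a-1},\frac{1}{a-2}\big)$: this yields $b_2^{(0)}=b_3^{(0)}=b_2^{(1)}=0$ and, since then $-1/\psi''\in(a-2,a-1)$, also $b_3^{(1)}=a-2$, matching the stated table. From $\bm{\beta}^{(2)}$ onward, however, the leading entries $\beta_1^{(k)}$ shrink and the relevant ratios sit close to integer boundaries, so the crude bounds on $\psi''$ no longer suffice and one must sharpen them (in the spirit of the refined estimates used elsewhere in the paper) to force, e.g., $b_2^{(2)}=a-2$. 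One must also check at each step that $\beta_1^{(k)}>0$, so that the division is legitimate; the delicate instance is $\beta_1^{(4)}=-1-(a-1)\psi''$, which is positive precisely because $\psi''<-\frac{1}{a-1}$. I expect this estimation to be the main obstacle, and for the finitely many small values of $a$ not covered by the general bounds one can verify the floors numerically, as is done in Proposition \ref{prop:ijpasimplestperiodic4}.

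Finally, to conclude periodicity with preperiod $\bm{\beta}^{(0)},\bm{\beta}^{(1)},\bm{\beta}^{(2)}$ and period $\bm{\beta}^{(3)},\bm{\beta}^{(4)},\bm{\beta}^{(5)}$, I would compute $\bm{\beta}^{(6)}$ from $\bm{\beta}^{(5)}$ using $\bm{b}^{(5)}=(a-3,a-2)$ and exhibit a unit $\varepsilon\in\Z[\psi'']$ with $\beta_i^{(6)}=\varepsilon\,\beta_i^{(3)}$ for $i=1,2,3$. Since the third coordinate forces $\varepsilon=\beta_1^{(5)}/\psi''^2=(1+(a-1)\psi'')/\psi''$, which is a unit because $N(\psi'')=1$, the crucial verification is that this same proportionality constant reappears in the first two coordinates; this both establishes period length $3$ and identifies $\varepsilon$ with the inverse of the Hasse--Bernstein unit, in accordance with Proposition \ref{lm:iJPA-JPA,per}. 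As the paper notes, these are lengthy but routine computations, so in the write-up I would present one representative iteration in full and relegate the remaining linear algebra and the boundary estimates to the appendix.
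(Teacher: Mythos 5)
Your plan is correct and its computational core coincides with the paper's: the floors you must pin down are exactly the integer parts $\lfloor\alpha_i^{(k)}\rfloor$ that the paper bounds, using the same estimates on $\psi''$ (with the crude bounds sufficing for larger $a$ and a refined bound or numerical check for $a=5$). The organizational difference is that the paper never iterates the homogeneous recurrence directly: in Appendix A7 it computes the \emph{inhomogeneous} expansion of $(-\psi'',\psi''^2)$, proves literal periodicity $\bm{\alpha}^{(6)}=\bm{\alpha}^{(3)}$ with preperiod and period length $3$, and then transfers to the homogeneous statement via Propositions \ref{lm:iJPA-JPA} and \ref{lm:iJPA-JPA,per}; that transfer automatically produces the closing unit (the inverse Hasse--Bernstein unit) and the positivity of the leading entries, at the cost of manipulating non-integral elements with denominators such as $2a^2-12a+17$. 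Your direct homogeneous route keeps everything inside $\Z[\psi'']$ but obliges you to close the period by hand, and there your justification is incomplete: $N(\psi'')=1$ only makes the denominator of $\varepsilon=(1+(a-1)\psi'')/\psi''$ a unit, and you still need $1+(a-1)\psi''$ to be a unit --- e.g.\ by evaluating $N(1+(a-1)\psi'')=(a-1)^3\bigl(-f(-\tfrac{1}{a-1})\bigr)=1$, or by noting that each JPA step is multiplication by a matrix of determinant $1$, so $\Z\beta_1^{(k)}+\Z\beta_2^{(k)}+\Z\beta_3^{(k)}=\Z[\psi'']$ for all $k$ and $\varepsilon\Z[\psi'']=\Z[\psi'']$ forces $\varepsilon\in\Z[\psi'']^{\times}$. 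This is a one-line repair, not a flaw in the method; note also that positivity of $\beta_1^{(k)}$ is automatic by induction, since $\beta_1^{(k+1)}$ equals $\beta_1^{(k)}$ times the fractional part of $\beta_2^{(k)}/\beta_1^{(k)}$, so your separate check of $\beta_1^{(4)}$ is harmless but not needed.
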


Summarizing results of the previous propositions, we can make the following conclusion which is Theorem \ref{thm:m3} from the introduction. 

\begin{theorem}
We have
\begin{enumerate}
    \item if $\tau$ is a root of the polynomial $x^3+(a-1)x^2-ax-1$ where $a\geq 3$, then the Jacobi--Perron expansion of the vector $(1, |\tau|,\tau^2)$ is periodic,
    \item if $\tau$ is a root of the polynomial $x^3-(a-1)x^2-ax-1$ where $a\geq 5$, then the Jacobi--Perron expansion of the vector $(1, |\tau|,\tau^2)$ is periodic.
\end{enumerate}

\end{theorem}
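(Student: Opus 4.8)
The plan is to recognize that this theorem is simply a consolidation of Propositions \ref{prop:JPAennol1}--\ref{prop:JPAennol6}, so that the only work is to match, for each root $\tau$, the middle coordinate $|\tau|$ of the vector $(1,|\tau|,\tau^2)$ against the sign convention used in the relevant proposition. The input I would use is the sign pattern of the roots: for $x^3+(a-1)x^2-ax-1$ the bounds \eqref{eq:estimatesennola} give $\rho>1>0$ and $\rho',\rho''<0$, while for $x^3-(a-1)x^2-ax-1$ the bounds in Subsection \ref{subsec:ennola2polynomial} give $\psi>a>0$ and $\psi',\psi''<0$.

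First I would dispose of part (1) by splitting into the three cases $\tau\in\{\rho,\rho',\rho''\}$. If $\tau=\rho$, then $|\tau|=\rho$ and $(1,|\tau|,\tau^2)=(1,\rho,\rho^2)$, whose periodicity is Proposition \ref{prop:JPAennol1}. If $\tau=\rho'$, then $\rho'<0$ forces $|\tau|=-\rho'$, so $(1,|\tau|,\tau^2)=(1,-\rho',\rho'^2)$, covered by Proposition \ref{prop:JPAennol2}; the case $\tau=\rho''$ is identical and reduces to Proposition \ref{prop:JPAennol3}. Part (2) I would handle in exactly the same way: $\tau=\psi$ gives $(1,\psi,\psi^2)$ via Proposition \ref{prop:JPAennol4} (here $\psi>0$), whereas $\tau=\psi'$ and $\tau=\psi''$ both satisfy $|\tau|=-\tau$ and hence reduce to Propositions \ref{prop:JPAennol5} and \ref{prop:JPAennol6}, respectively.

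I do not expect any real obstacle here, since all of the substantive effort---determining the integer-part vectors $\bm{b}^{(k)}$ and verifying the periodicity of each of the six expansions---is already carried out in the propositions. The sole point to check carefully is that each proposition was deliberately stated for the vector whose middle entry is positive (that is, $-\rho'$ rather than $\rho'$, and so on), which is precisely $|\tau|$ for the corresponding root; once this sign bookkeeping is confirmed, the theorem follows immediately by the case analysis above.
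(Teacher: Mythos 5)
Your proposal is correct and matches the paper's own proof, which likewise derives part (1) from Propositions \ref{prop:JPAennol1}--\ref{prop:JPAennol3} and part (2) from Propositions \ref{prop:JPAennol4}--\ref{prop:JPAennol6}. The only addition you make is to spell out the sign bookkeeping $|\tau|=\pm\tau$ explicitly, which the paper leaves implicit.
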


\begin{proof}
The first statement follows from Propositions \ref{prop:JPAennol1}, \ref{prop:JPAennol2} and \ref{prop:JPAennol3}, the second one from Propositions~\ref{prop:JPAennol4}, \ref{prop:JPAennol5} and \ref{prop:JPAennol6}.  
\end{proof}

\section{Indecomposability of (semi)convergents} \label{sec:semi}

In this section, we will discuss elements of the form (\ref{eq:semi}) and compare them with $\mathfrak{s}$-indecomposables in the corresponding fields. As we will see later, in some cases, we can obtain this way only $\mathfrak{s}$-indecomposables. However, some periodic JPA expansions can produce $\mathfrak{s}$-decomposables both as convergents and semiconvergents (see, e.g., Example \ref{ex:decomposable}).

Recall that in this paper, by semiconvergents of the Jacobi--Perron expansion of the vector $\bm{\beta}=\bm{\beta}^{(0)}$, we mean elements of the form
\begin{equation} 
\delta_{i,j}^{(k)}=\beta_i^{(k)}-j\beta_1^{(k)}
\end{equation}
where $2\leq i \leq n$, $0\leq j\leq \left\lfloor\frac{\beta_i^{(k)}}{\beta_1^{(k)}}\right\rfloor-1$ and $k\in\N_{0}$. Moreover, the elements with $j=0$ are called convergents.

\subsection{The simplest cubic fields} \label{subsec:semithesimplest}

Recall that in the simplest cubic fields generated by a root $\rho$ of the polynomial $x^3-ax^2-(a+3)x-1$, all $\mathfrak{s}$-indecomposables can be obtained as unit multiples of elements $1$, $1+\rho+\rho^2$ and elements of the form
\[
\theta_{v,W}=-v-(v(a+2)+1+W)\rho+(v+1)\rho^2
\] 
where $0\leq v\leq a$ and $0\leq W\leq a-v$ which form a triangle. In the following proposition, we show that only unit multiples of $\theta_{v,W}$ appear in the JPA expansion of the vector $(1,-\rho',\rho'^2)$ which was derived in Theorem \ref{thm:hjpasimplest}. Recall that $-2<\rho'<-1$ is a root of the polynomial $x^3-ax^2-(a+2)x-1$ where $a\geq -1$.

\begin{theorem} \label{thm:semicon}
The semiconvergents of the JPA expansion of $(1,-\rho',\rho'^2)$ are all $\mathfrak{s}$-indecomposable in their signatures.
\end{theorem}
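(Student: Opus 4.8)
The plan is to show that every semiconvergent is \emph{a priori} of one of three indecomposable shapes and then to invoke Theorem \ref{thm:indesimplest}. Concretely, I will argue that each $\delta_{i,j}^{(k)}$ is either a unit, or a unit times $1+\rho+\rho^2$, or a unit times a Galois conjugate of some $\theta_{v,W}$. Each of these is indecomposable in its signature: every unit is $\mathfrak{s}$-indecomposable in its signature; and if $\alpha=\varepsilon\beta$ with $\varepsilon\in\Z[\rho]^\times$ and $\beta$ is $\mathfrak{s}(\beta)$-indecomposable, then $\alpha$ is $\mathfrak{s}(\alpha)$-indecomposable, since any splitting $\alpha=\gamma_1+\gamma_2$ with $\mathfrak{s}(\gamma_t)=\mathfrak{s}(\alpha)$ transports, after multiplication by $\varepsilon^{-1}$, to a splitting of $\beta$ into two summands of signature $\mathfrak{s}(\beta)$. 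Finally, because the simplest cubic field is Galois and $\Z[\rho]$ is stable under its Galois group, a conjugate of an indecomposable is again indecomposable; this is exactly what lets me compare the convergents of Theorem \ref{thm:hjpasimplest}, which are written through $\rho'$, with the representatives $\theta_{v,W}$ and $1+\rho+\rho^2$, which are written through $\rho$.

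To carry this out I would proceed iteration by iteration. By periodicity it is enough to treat $\bm{\beta}^{(0)},\dots,\bm{\beta}^{(8)}$, since $\bm{\beta}^{(k+7)}$ equals $\bm{\beta}^{(k)}$ times the inverse Hasse--Bernstein unit (Proposition \ref{lm:iJPA-JPA,per}) and multiplication by a unit preserves the property to be proved. For each $k$ and each $i\in\{2,3\}$ the semiconvergents are $\delta_{i,j}^{(k)}=\beta_i^{(k)}-j\beta_1^{(k)}$ with $0\le j\le b_i^{(k)}-1$, where $b_2^{(k)}=a_1^{(k)}$ and $b_3^{(k)}=a_2^{(k)}$ are read off from the integer-part vectors $\bm{a}^{(k)}$ of Proposition \ref{prop:ijpasimplestperiodic4} via Proposition \ref{lm:iJPA-JPA}. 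Inserting the explicit $\beta_i^{(k)}$ of Theorem \ref{thm:hjpasimplest} and reducing through $f(\rho')=0$, I would express each $\delta_{i,j}^{(k)}$ in the basis $1,\rho',\rho'^2$ and then exhibit the unit (a monomial in the fundamental units $\rho'$ and $\rho'-1$) and the representative realizing the identification above.

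The phenomenon that makes this tractable is that an entire line of semiconvergents, with $(k,i)$ fixed and $j$ running over $0,\dots,b_i^{(k)}-1$, factors through one common unit. For instance $\beta_1^{(2)}=(1+\rho')^2$ and $\beta_3^{(2)}=-(1+\rho')$ are both units, so the line $\delta_{3,j}^{(2)}$ is a fixed unit times the degree-one line $\delta_{3,j}^{(1)}=(1+j)+j\rho'$, and such a degree-one line is in turn a power of $\rho'$ times a line of conjugates of the $\theta_{0,W}$ (recall $\theta_{0,W}=\rho(\rho-1-W)$). Thus each line consists, up to a common unit, of conjugates of the $\theta_{v,W}$ for a \emph{fixed} $v$ and consecutive values of $W$, together with at most a unit or the element $1+\rho+\rho^2$ at an endpoint value of $j$.

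I expect the genuine work to be the matching of ranges: one must check that for each $(k,i)$ the count $b_i^{(k)}$ agrees with the length of the targeted line of honest indecomposables, and that the affine reparametrization $j\mapsto W$ lands inside the admissible interval $0\le W\le a-v$, so that no semiconvergent is pushed past an edge of the triangle, where it would become decomposable. The endpoints $j=0$ and $j=b_i^{(k)}-1$ therefore need individual attention, as this is precisely where a unit or $1+\rho+\rho^2$ can appear in place of a $\theta$, while the interior values then follow uniformly from the common unit. The parity of $a$ splits $\bm{\beta}^{(5)},\bm{\beta}^{(6)},\bm{\beta}^{(7)}$ into two parallel computations. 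All of these are finite, explicit verifications relying only on the conjugate estimates \eqref{eq:estimates}; following the convention of the paper, the remaining routine coefficient identities can be deferred to the appendix.
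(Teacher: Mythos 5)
Your proposal is correct and follows essentially the same route as the paper: a finite, iteration-by-iteration verification (reduced to $k\le 8$ by periodicity) that each semiconvergent equals a unit times a conjugate of an element from Theorem \ref{thm:indesimplest}, which the paper records in Tables \ref{tab:semi aeven} and \ref{tab:semi aodd} with the unit and representative made explicit. Your additional remarks — that unit multiples and Galois conjugates of $\mathfrak{s}$-indecomposables remain indecomposable, and that each line of semiconvergents shares a common unit — are exactly the (unstated) justifications underlying the paper's tables.
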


\begin{proof}
By simple comparing and basic calculations, we can immediately decide that all the semiconvergents of $(1,-\rho',\rho'^2)$ are unit multiples of some element from Theorem \ref{thm:indesimplest}. The verification for $-1\leq a\leq 3$ is trivial and can be found in Appendix A8 of the arxiv version of this paper \cite{KST}. The results for  $a\geq 4$ are in Table \ref{tab:semi aeven} (for $a$ even) and Table \ref{tab:semi aodd} (for $a$ odd). Each of these tables contain the following information: norm of every semiconvergent $\delta_{i,j}^{(k)}$, and (totally positive) indecomposable integer $\alpha$ from Theorem \ref{thm:indesimplest} and a unit $\varepsilon\in\Z[\rho]$ (expressed using fundamental units $\rho$ and $\rho'$) such that $\delta_{i,j}^{(k)}=\alpha\varepsilon$.   
%detailed explanation

Moreover, we know that the JPA expansion is periodic, and so if $k\geq 5$, then $\delta_{i,j}^{(k+5)}= \varepsilon\delta_{i,j}^{(k)}$ for some unit $\varepsilon$. Thus, it suffices to only go through the iterations $k\leq 8$.
\end{proof}

\begin{table}[htbp]
\centering
\begin{tabular}{|c|c|c|c|}
\hline
 $\delta_{i,j}^{(k)}$ & Norm $N(\delta_{i,j}^{(k)})$ & Indecomposable integer $\alpha$ & Unit $\varepsilon$ \\
\hline
$\delta_{2,0}^{(1)}$ & $-(2a+3)$ & $\theta'_{0,0}$ & $-\rho^{-1}\rho'^{-1}$ \\
\hline
$\delta_{2,1}^{(1)}$ & $-1$ & $1$ & $-\rho^{-1}\rho'$ \\
\hline
$\delta_{3,j}^{(1)}$  &  & &   \\
$1\leq j\leq a$ & $-j^3+aj^2+(a+3)j+1$ & $\theta'_{a-j+1,0}$ & $\rho^{-2}\rho'^{-2}$ \\
\hline
$\delta_{2,0}^{(2)}$ & $2a+3$ & $\theta'_{0,0}$ & $\rho^{-2}\rho'^{-2}$ \\
\hline
$\delta_{3,0}^{(2)}$ & $1$ & $1$ & $\rho^{-1}$\\
\hline
$\delta_{3,j}^{(2)}$  &  &  &  \\
$1\leq j\leq a$ & $-j^3+aj^2+(a+3)j+1$ & $\theta'_{a-j+1,0}$& $\rho^{-3}\rho'^{-2}$\\
\hline
$\delta_{3,0}^{(3)}$ & $1$ & $1$ & $\rho^2$ \\ 
\hline
$\delta_{3,0}^{(4)}$ & $-1$ & $1$ & $-\rho^{-2}\rho'^{-1}$ \\
\hline
$\delta_{3,j}^{(4)}$  & $-(2a+3)j^3+a^2j^2$  & & \\
$1\leq j\leq A_0-1$ & $+(a^2+2a+3)j-1$ & $\theta'_{a-2j,j+1}$& $\rho^{-4}\rho'^{-2}$\\
\hline
$\delta_{3,0}^{(5)}$ & $2a+3$ & $\theta'_{0,0}$ & $\rho^{-3}\rho'^{-2}$\\
\hline
$\delta_{3,1}^{(5)}$ & $1$ & $1$ & $\rho^{-3}$ \\
\hline
$\delta_{3,0}^{(6)}$ & $-1$ & $1$ & $-\rho^{-3}\rho'^{-1}$ \\
\hline
$\delta_{2,j}^{(7)}$  & $j^3-(A_0-6)j^2-(6A_0-9)j$  & &  \\
$0\leq j\leq A_0-3$ & $-A_0^2 j+A_0^3-5A_0+3$ & $\theta'_{A_0-j-3,a-(A_0-j-3)}$& $\rho^{-5}\rho'^{-2}$\\
\hline
$\delta_{3,j}^{(7)}$  & $j^3-(A_0+6)j^2+(2A_0+9)j$  & &  \\
$0\leq j\leq A_0+1$ & $-A_0^2 j+A_0^3+4A_0^2+3A_0-1$ & $\theta'_{0,A_0+1-j}$ & $\rho^{-4}\rho'^{-2}$\\
\hline
$\delta_{3,A_0+2}^{(7)}$ & $1$ & $1$ & $\rho^{-4}$\\
\hline
$\delta_{2,0}^{(8)}$ & $-1$ & $1$ & $-\rho^{-5}\rho'^{-1}$\\
\hline
$\delta_{3,0}^{(8)}$ & $-1$ & $1$ & $-\rho^{-4}\rho'^{-1}$\\
\hline
$\delta_{3,1}^{(8)}$ & $1$ & $1$ & $\rho^{-4}\rho'^{-2}$\\
\hline
$\delta_{3,j}^{(8)}$  &  & & \\
$2\leq j\leq a+1$ & $-j^3+(a+3)j^2-aj-1$ & $\theta'_{a-j+2,0}$ &  $\rho^{-6}\rho'^{-4}$\\
\hline
\end{tabular}
%}
\caption{\textsc{The simplest cubic fields}: Semiconvergents $\delta_{i,j}^{(k)}$ of the JPA expansion of $(1,-\rho',\rho'^2)$ for $a=2A_0\geq 4$, their norm, the corresponding indecomposable integer $\alpha$ and the unit $\varepsilon$ such that $\delta_{i,j}^{(k)}=\alpha\varepsilon$.} \label{tab:semi aeven}
\end{table}

\begin{table}[htbp]
\centering
{\fontsize{9.9}{12} \selectfont
\begin{tabular}{|c|c|c|c|}
\hline
 $\delta_{i,j}^{(k)}$ & Norm $N(\delta_{i,j}^{(k)})$ & Indecomposable integer $\alpha$ & Unit $\varepsilon$ \\
 \hline
$\delta_{2,0}^{(1)}$ & $-(2a+3)$ & $\theta'_{0,0}$ & $-\rho^{-1}\rho'^{-1}$ \\
\hline
$\delta_{2,1}^{(1)}$ & $-1$ & $1$ & $-\rho^{-1}\rho'$ \\
\hline
$\delta_{3,j}^{(1)}$  &  & &   \\
$1\leq j\leq a$ & $-j^3+aj^2+(a+3)j+1$ & $\theta'_{a-j+1,0}$ & $\rho^{-2}\rho'^{-2}$ \\
\hline
$\delta_{2,0}^{(2)}$ & $2a+3$ & $\theta'_{0,0}$ & $\rho^{-2}\rho'^{-2}$ \\
\hline
$\delta_{3,0}^{(2)}$ & $1$ & $1$ & $\rho^{-1}$\\
\hline
$\delta_{3,j}^{(2)}$  &  &  &  \\
$1\leq j\leq a$ & $-j^3+aj^2+(a+3)j+1$ & $\theta'_{a-j+1,0}$& $\rho^{-3}\rho'^{-2}$\\
\hline
$\delta_{3,0}^{(3)}$ & $1$ & $1$ & $\rho^2$ \\ 
\hline
$\delta_{3,0}^{(4)}$ & $-1$ & $1$ & $-\rho^{-2}\rho'^{-1}$ \\
\hline
$\delta_{3,j}^{(4)}$  & $-(2a+3)j^3+a^2j^2$  & &  \\
$1\leq j\leq A_0-1$ & $+(a^2+2a+3)j-1$ & $\theta'_{a-2j,j+1}$ & $\rho^{-4}\rho'^{-2}$\\
\hline
$\delta_{2,0}^{(5)}$ & $3A_0^3+9A_0^2+6A_0-1$ & $\theta'_{1,A_0+1}$ &$\rho^{-4}\rho'^{-2}$\\
\hline
$\delta_{3,0}^{(5)}$ & $2a+3$ & $\theta'_{0,0}$ & $\rho^{-3}\rho'^{-2}$\\
\hline
$\delta_{3,1}^{(5)}$ & $1$ & $1$ & $\rho^{-3}$\\
\hline
$\delta_{3,0}^{(6)}$ & $-1$ & $1$ & $-\rho^{-3}\rho'^{-1}$ \\
\hline
$\delta_{2,j}^{(7)}$  & $j^3-(A_0-7)j^2-(8A_0-12)j$  & &  \\
$0\leq j\leq A_0-3$ & $-A_0^2 j+A_0^3+A_0^2-8A_0+5$ & $\theta'_{A_0-j-3,a-(A_0-j-3)}$ & $\rho^{-5}\rho'^{-2}$\\
\hline
$\delta_{3,j}^{(7)}$  & $j^3-(A_0+8)j^2+(2A_0+17)j$  & &  \\
$0\leq j\leq A_0+2$ & $-A_0^2 j+A_0^3+6A_0^2+7A_0-5$ & $\theta'_{0,A_0+2-j}$ & $\rho^{-4}\rho'^{-2}$\\
\hline
$\delta_{3,A_0+3}^{(7)}$ & $1$ & $1$ & $\rho^{-4}$\\
\hline
$\delta_{2,0}^{(8)}$ & $-1$ & $1$ & $-\rho^{-5}\rho'^{-1}$\\
\hline
$\delta_{3,0}^{(8)}$ & $-1$ & $1$ & $-\rho^{-4}\rho'^{-1}$\\
\hline
$\delta_{3,1}^{(8)}$ & $1$ & $1$ & $\rho^{-4}\rho'^{-2}$\\
\hline
$\delta_{3,j}^{(8)}$  &  & &  \\
$2\leq j\leq a+1$ & $-j^3+(a+3)j^2-aj-1$ & $\theta'_{a-j+2,0}$ & $\rho^{-6}\rho'^{-4}$\\
\hline
\end{tabular}
}
\caption{\textsc{The simplest cubic fields}: Semiconvergents $\delta_{i,j}^{(k)}$ of the JPA expansion of $(1,-\rho',\rho'^2)$ for $a=2A_0+1\geq 4$, their norm, the corresponding indecomposable integer $\alpha$ and the unit $\varepsilon$ such that $\delta_{i,j}^{(k)}=\alpha\varepsilon$.} \label{tab:semi aodd}
\end{table}

The previous theorem could lead one to hope that the situation is so nice also in general. Unfortunately, this is not the case.
In the following two examples, we will show that even in the simplest cubic fields, the periodicity of JPA expansion does not guarantee that the convergents (or semiconvergents) are $\mathfrak{s}$-indecomposable.

\begin{example} \label{ex:decomposable}
Let us consider the JPA expansion of $(1,\rho,\rho^2)$ where $3<\rho<4$ is the largest root of the polynomial $x^3-2x^2-5x-1$, i.e., $a=2$. It can be easily computed that it is periodic with the preperiod $\bm{\beta}^{(0)}=(1,\rho,\rho^2)$ and $\bm{\beta}^{(1)}=(-3+\rho,-12+\rho^2,1)$, and the period 
\begin{align*}
\bm{\beta}^{(2)}&=(-12+\rho^2,4-\rho,-3+\rho^2),\\  
\bm{\beta}^{(3)}&=(16-\rho-\rho^2,9+\rho-\rho^2,-12+\rho^2),\\
\bm{\beta}^{(4)}&=(-7+2\rho, -28+\rho+2\rho^2,16-\rho-\rho^2),\\
\bm{\beta}^{(5)}&=(21-13\rho+2\rho^2,107-27\rho-\rho^2,-7+2\rho).
\end{align*}
However, the convergent $\beta_1^{(2)}=-12+\rho^2$ is $\mathfrak{s}$-decomposable. It follows from the fact that its norm is $37$, and the only absolute values of norms of $\mathfrak{s}$-indecomposables in $\O_K=\Z[\rho]$ are $1$, $7$, $11$ and $19$. The same is also true for the convergent $\beta_1^{(3)}=16-\rho-\rho^2$ with the norm $49$ which is, moreover, totally positive.  

In the second example, we will show that even the $\mathfrak{s}$-indecomposability of convergents does not imply that all the semiconvergents are also $\mathfrak{s}$-indecomposable. 
Let us consider the JPA expansion of the vector $(1,\rho,\rho^2)$ where $1<\rho<2$ is the largest root of the polynomial $x^3+x^2-2x-1$, i.e., $a=-1$. It can be easily computed that it is periodic with the preperiod $(1,\rho,\rho^2)$ and $(-1+\rho, -1+\rho^2,1)$, and the period $(1-2\rho+\rho^2, 5-4\rho,-1+\rho)$ and $(5-4\rho, -5+9\rho-4\rho^2,1-2\rho+\rho^2)$. In this case, all convergents are units. Therefore, they are $\mathfrak{s}$-indecomposable in their signatures.   
On the other hand, the semiconvergent $\delta_{3,2}^{(2)}=-3+5\rho-2\rho^2$ has the norm $13$. Since all the possible absolute values of norms of $\mathfrak{s}$-indecomposables are $1$ and $7$, this semiconvergent is $\mathfrak{s}$-decomposable in $\O_K$.   
\end{example}

\subsection{Ennola's cubic fields} \label{subsec:semiennola}

Recall that in Ennola's cubic fields which are fields generated by a root $\rho$ of the polynomial $x^3+(a-1)x^2-ax-1$ where $a \in \N, a\geq 3$, all $\mathfrak{s}$-indecomposables can be obtained as unit multiples of elements of the form
\begin{enumerate}
\item $\kappa_w= 1+w \rho + \rho^2$ where $1\leq w \leq a-1 $,
\item $\lambda_{v,w}=-v-(a(v-1)+w)\rho+(a(v-1)+w+1)\rho^2$ where $1\leq v \leq a-1$ and $\max\{1,v-1\}\leq w \leq a-1$,
\item $\mu_u=-1-u\rho+(u+2)\rho^2$ where $0\leq u \leq a-2$.
\end{enumerate}
As in Subsection \ref{subsec:semithesimplest}, we will look at the semiconvergents of the JPA expansion of some vectors of the form $(1,|\theta|, \theta^2)$ and discuss whether they are $\mathfrak{s}$-indecomposable. As we will see, in some cases, we again obtain only $\mathfrak{s}$-indecomposable integers in $\Z[\rho]$. However, sometimes, we get also $\mathfrak{s}$-decomposable integers despite the fact that the JPA expansion is periodic.

\begin{theorem}\label{th.8.2}
Let $\rho$, $\rho'$, $\rho''$, $\psi$, $\psi'$ and $\psi''$ be as in Subsections $\ref{subsec:ennola1polynomial}$ and $\ref{subsec:ennola2polynomial}$. Then
\begin{enumerate}
    \item all semiconvergents of JPA expansions of vectors $(1,-\rho',\rho'^2)$, $(1,-\psi',\psi'^2)$ and $(1,-\psi'',\psi''^2)$ are $\mathfrak{s}$-indecomposable in their signatures,
    \item some semiconvergent of JPA expansions of vectors $(1,\rho,\rho^2)$, $(1,-\rho'',\rho''^2)$ and $(1,\psi,\psi^2)$ is $\mathfrak{s}$-decomposable in its signature.
\end{enumerate}
\end{theorem}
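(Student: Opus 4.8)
The plan is to treat the two assertions by different mechanisms, mirroring the split already visible in Theorem \ref{thm:semicon} and Example \ref{ex:decomposable}. For part (1), the strategy is a direct, fully explicit comparison: from the periodic JPA expansions computed in Propositions \ref{prop:JPAennol2}, \ref{prop:JPAennol5} and \ref{prop:JPAennol6}, I would list every semiconvergent $\delta_{i,j}^{(k)}=\beta_i^{(k)}-j\beta_1^{(k)}$ over one full preperiod-plus-period. It suffices to stop there, since periodicity gives $\delta_{i,j}^{(k+l_1)}=\varepsilon\,\delta_{i,j}^{(k)}$ for a fixed unit $\varepsilon$, and an element is $\mathfrak{s}$-indecomposable exactly when any of its unit multiples is indecomposable in the corresponding signature. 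For each $\delta_{i,j}^{(k)}$, written in the basis $1,\rho,\rho^2$ (resp. in the $\psi$-basis), I would exhibit an explicit unit $\varepsilon$ and one of the classified $\mathfrak{s}$-indecomposables $\kappa_w,\lambda_{v,w},\mu_u$ (or its negative) from Theorem \ref{thm:m1} with $\delta_{i,j}^{(k)}=\varepsilon\alpha$, recording the norm as confirmation. This is exactly the bookkeeping carried out in Tables \ref{tab:semi aeven}--\ref{tab:semi aodd} for the simplest cubic case, and I would organize the Ennola data into analogous tables in the appendix.

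The one structural point that must be handled with care, and which did not arise for the simplest cubic fields, is that $\Z[\rho]$ here does not contain units of all signatures. As noted after Proposition \ref{prop:indeennola}, the available units $\pm\rho^k(\rho-1)^l$ have signatures only in $\{(+,+,+),(-,-,-),(+,-,-),(-,+,+)\}$, so multiplication by units preserves the two disjoint signature families: the one containing the totally positive indecomposables $\kappa_w$, and the one containing the $(+,-,+)$-indecomposables $\lambda_{v,w},\mu_u$. Thus for each semiconvergent I would first read off which of the two families its signature lies in, and only then search for the matching representative within that family. Since Theorem \ref{thm:m1} (together with Propositions \ref{prop:indeennola} and \ref{prop:indeennolaneg}) gives a complete list of representatives in both families up to totally positive units, every semiconvergent that is a unit multiple of a listed representative is automatically $\mathfrak{s}$-indecomposable in its signature.

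For part (2) it suffices to produce, in each of the three expansions of Propositions \ref{prop:JPAennol1}, \ref{prop:JPAennol3} and \ref{prop:JPAennol4}, a single $\mathfrak{s}$-decomposable (semi)convergent. The cleanest certificate is the norm argument used in Example \ref{ex:decomposable}: I would compute the norm of a chosen convergent as a polynomial in $a$ and compare it against the finite list of absolute norms attainable by $\mathfrak{s}$-indecomposables. The latter is available from the norm formulas for $\kappa_s$, $\mu_u$ and $\lambda_{v,w}$ recorded in the proof of Proposition \ref{prop:smallestnorm}; since units have norm $\pm1$, a convergent whose absolute norm does not appear among these values (for the relevant range of $a$) cannot be a unit multiple of any indecomposable and is therefore $\mathfrak{s}$-decomposable. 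Alternatively, where it is more transparent, I would simply exhibit an explicit splitting $\delta=\beta+\gamma$ with $\beta,\gamma$ of the same signature, valid uniformly in $a$.

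The main obstacle is not conceptual but the volume and uniformity of the computation in part (1): each expansion must be expanded over its full period, every admissible pair $(i,j)$ enumerated, and, as with the simplest cubic fields, separate ranges or parities of $a$ may need to be split off and the smallest values of $a$ checked directly. The delicate point in part (2) is instead the \emph{selection}: one must pick a (semi)convergent whose norm polynomial provably avoids the indecomposable-norm list for \emph{all} $a$ in range, not merely for one numerical value, so that the decomposability conclusion is genuinely parametric. Establishing this non-representability uniformly in $a$ is where the real care lies.
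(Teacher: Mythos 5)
Your proposal matches the paper's own proof: the paper establishes the theorem by tabulating, over one preperiod-plus-period of each expansion, every semiconvergent together with an explicit unit and the matching representative from the classification of Theorem \ref{thm:m1} (Tables \ref{tab:ennola2} and \ref{tab:ennola1} in the main text, the rest in Appendix A8), marking with \xmark\ those whose norms fall outside the indecomposable-norm list and handling the small exceptional value $a=3$ separately. Your additional care about the two unit-inequivalent signature families and about making the norm-avoidance argument uniform in $a$ is exactly the bookkeeping the paper's tables implicitly carry out, so no further commentary is needed.
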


\begin{proof}
In Tables \ref{tab:ennola2} and \ref{tab:ennola1}, we can find results for the JPA expansions of $(1,\rho,\rho^2)$ and $(1,-\rho'',\rho''^2)$.
In these cases, we can find some $\mathfrak{s}$-indecomposables as semiconvergents which is denoted by the symbol \xmark. Note that Table \ref{tab:ennola1} does not include $a=3$. However, the table for this concrete case is almost the same except for the convergent $\delta_{3,0}^{(3)}$, which is $\mathfrak{s}$-indecomposable for $a=3$ and associated with $\mu_0$. Tables for the remaining cases are in Section Appendix A8 of the arxiv version of this paper \cite{KST}.   
\end{proof}

\begin{table}[htbp]
\centering
{\fontsize{9.9}{12} \selectfont
\begin{tabular}{|c|c|c|c|}
\hline
 $\delta_{i,j}^{(k)}$ & Norm $N(\delta_{i,j}^{(k)})$ & $\mathfrak{s}$-indecomposable integer $\alpha$ & Unit $\varepsilon$ \\
 \hline
 $\delta_{2,0}^{(1)}$ &   $ - 2 a+3$ & $\lambda_{a-1,a-1}$  & $\rho^{-2}$  \\
 \hline
 $\delta_{2,1}^{(1)}$ &  $1$ & $1$  & $\rho(\rho-1)$ \\
 \hline
 %$\delta_{2,2}^{(1)}$ &  $1$ & $1$  & $q^2$ \\
 %\hline
 $\delta_{3,j}^{(1)}$ &   &   &  \\
  $1\leq j \leq a-1$  & $-j^3+(a+1)j^2+(a+2)j+1$   & $\kappa_{a-j}$   &  $\rho^{-1}(\rho-1)$ \\
 \hline
  $\delta_{3,j}^{(1)}$ &   &   &  \\
  $a\leq j \leq a+1$  & $-j^3+(a+1)j^2+(a+2)j+1$   & \xmark   &   \\
 \hline
$\delta_{3,0}^{(2)}$ & $1$   & $1$  & $\rho-1 $ \\
 \hline
 $\delta_{3,j}^{(2)}$  &   &   &  \\
 $1\leq j \leq a-1$  & $-j^3+(a+1)j^2+(a+2)j+1$   & $\kappa_{a-j}$    & $\rho^{-1}(\rho-1)^2$   \\
 \hline
   $\delta_{3,j}^{(2)}$ &   &   &  \\
  $a\leq j \leq a+1$  & $-j^3+(a+1)j^2+(a+2)j+1$   & \xmark   &   \\
 \hline
$\delta_{3,0}^{(3)}$ & $1$  & $1$  & $(\rho-1)^2$ \\
 \hline
 $\delta_{3,1}^{(3)}$ & $1$  & $1$  & $\rho^{-1}(\rho-1)^2$ \\
 \hline
  $\delta_{3,j}^{(3)}$ &   &   &  \\
  $2\leq j \leq a$ & $-j^3+(a+4)j^2-(a+3)j+1$   & $\kappa_{a-j+1}$   & $\rho^{-2}(\rho-1)^3$   \\
 \hline
 $\delta_{3,j}^{(3)}$ &   &   &  \\
  $a+1\leq j \leq a+2$ & $-j^3+(a+4)j^2-(a+3)j+1$   & \xmark &    \\
 \hline
\end{tabular}
}
\caption{\textsc{Ennola's cubic fields}: Semiconvergents $\delta_{i,j}^{(k)}$ of the JPA expansion of $(1,\rho,\rho^2)$ where $1<\rho$ is the root of the polynomial $x^3+(a-1)x^2-ax-1$ where $a \geq 3$, their norm, the corresponding $\mathfrak{s}$-indecomposable integer $\alpha$ and the unit $\varepsilon$ such that $\delta_{i,j}^{(k)}=\alpha\varepsilon$.} \label{tab:ennola2}
\end{table}

\begin{table}[htbp]
\centering
{\fontsize{9.9}{12} \selectfont
\begin{tabular}{|c|c|c|c|}
\hline
 $\delta_{i,j}^{(k)}$ & Norm $N(\delta_{i,j}^{(k)})$ & $\mathfrak{s}$-indecomposable $\alpha$ & Unit $\varepsilon$ \\
\hline
$\delta_{2,j}^{(0)}$ &   &   &   \\
  $1\leq j\leq a-2$ & $ - j^3+(a-1)j^2+aj-1$   & $\lambda''_{a-j,a-1}$   & $-\rho''^{-2}(\rho''-1)^{-1}$  \\
\hline
$\delta_{3,1}^{(0)}$ & $ - 2 a+3$ &  $\lambda''_{a-1,a-1}$ & $\rho''^{-2}$  \\
\hline
$\delta_{3,2}^{(0)}$ & $ 2a^2-4a+1$ &  $\lambda''_{1,a-2}$ & $-\rho''^{-1}$  \\
\hline
$\delta_{3,3}^{(0)}$ & $ 6a^2-6a-11$ &  $\mu''_{a-3}$ & $-\rho''^{-1}$  \\
\hline
$\delta_{3,j}^{(0)}$  &   &   &   \\
  $4\leq j\leq a^2-2$ & $ - j^3+(a^2+1)j^2-(a^2+2a-2)j+1 $    & \xmark  &   \\
\hline
$\delta_{3,0}^{(2)}$ & $a^2-a-1$ & $\lambda''_{1,a-1}$  & $-\rho''^{-2}(\rho''-1)^{-1}$   \\  
\hline
$\delta_{2,j}^{(3)}$  &  $-j^3+(4a-4)j^2-(5a^2-11a+5)j $ &   &   \\
  $0\leq j \leq a-4$ &  $+2a^3-7a^2+7a-3$ & \xmark   &   \\
  \hline
$\delta_{2,a-3}^{(3)}$ & $2a+3$ & $\kappa''_{a-1}$  & $\rho''^{-2}$   \\
\hline
$\delta_{2,a-2}^{(3)}$ & $-1$ & $1$  & $-\rho''^{-1}$   \\
\hline
$\delta_{2,a-1}^{(3)}$ & $-1$ & $1$  & $-(\rho''-1)^{-1}$   \\
\hline
$\delta_{2,j}^{(3)}$  &  $-j^3+(4a-4)j^2-(5a^2-11a+5)j $ &   &   \\
  $a\leq j \leq 2a-3$ &  $+2a^3-7a^2+7a-3$ & $\lambda''_{2a-j-1,a-1}$   &  $-\rho''^{-3}(\rho''-1)^{-2}$  \\
  \hline
$\delta_{3,j}^{(3)}$ & $-j^3+(a^2-3a+1)j^2+(2a^3-4a^2+2)j$ &   &   \\
$0\leq j \leq a^2-a-2$ & $a^4-2a^3-a^2+2a+1$ & \xmark   &   \\
\hline
\end{tabular}
}
\caption{\textsc{Ennola's cubic fields}: Semiconvergents $\delta_{i,j}^{(k)}$ of the JPA expansion of $(1,-\rho'',\rho''^2)$ where $\rho''<-(a-1)$ is the root of the polynomial $x^3+(a-1)x^2-ax-1$ where $a \geq 4$, their norm, the corresponding $\mathfrak{s}$-indecomposable integer $\alpha$ and the unit $\varepsilon$ such that $\delta_{i,j}^{(k)}=\alpha\varepsilon$.} \label{tab:ennola1}
\end{table}

\section{Experiments on indecomposability of semiconvergents} \label{sec:exper}

For the simplest cubic and Ennola's cubic fields, we have found examples of Jacobi--Perron expansions whose convergents and semiconvergents are $\mathfrak{s}$-indecomposable. Moreover, as we will in the next section, some of these expansions can produce even more $\mathfrak{s}$-indecomposables. However, in other cases, some semiconvergents (or even convergents) are $\mathfrak{s}$-decomposable. In particular, the periodicity itself does not guarantee that we obtain only $\mathfrak{s}$-indecomposables. 

In this section, we provide some data for other totally real cubic fields. There are known some nice families of cubic irrationalities giving periodic Jacobi--Perron expansions. However, for example, those in Bernstein's papers \cite{Be1,Be2,Be3} are not suitable for our purposes since they do not belong to a totally real number field. For that reason, we decided to use a different approach.  
In this part, we discuss cases of fields of small discriminants obtained (together with a system of fundamental units) in \cite[pg. 149--153]{CS}. From the set from this paper, we have selected those fields which have units of all signatures and satisfy $\O_K=\Z[\rho]$ for one of the listed fundamental units.               The first condition follows from the fact that it is (slightly) easier to obtain totally positive indecomposables than the others. The second requirement is quite intuitive - it seems that units give more suitable expansions than other elements, and for the expansion of the vector of the form $(1,\rho,\rho^2)$, it is natural to consider $\mathfrak{s}$-indecomposables in $\Z[\rho]$.

For each field $K$ of this subset, we discuss several periodic expansions of vectors of the form $(1,\rho,\rho^2)$ where $\rho\in K$ is a suitable unit such that $0<\rho<2$. Again, this condition is intuitive; for example, in the case of the simplest cubic fields and Ennola's cubic fields, the expansions of such units give $\mathfrak{s}$-indecomposables rather than expansion corresponding to units $\rho>2$. Note that for some fields, we could not find any suitable expansion which would seem to be periodic, and, thus, these fields do not appear in our tables. However, we do not exclude the existence of such expansions.

Since we have units of all signatures in these fields, it suffices to determine only the representatives of totally positive indecomposables.
For that, we do not use the method described in Subsection~\ref{subsec:method}, but we proceed in the following way. For each of these fields, we find the set $S_T$ of all totally positive indecomposables up to some large trace $T$. This can be done by a computer program (for all these computations, we used Mathematica, and examples of used codes can be found at \url{https://sites.google.com/view/tinkovamagdalena/codes}) since the set of totally positive integers of a given trace is finite. Possibly, we do not get all representatives of $\mathfrak{s}$-indecomposables, but, in this task, it is not necessary to have all of them. Then, for several periodic JPA expansions, we check by a program if its (semi)convergents are associated with elements from the set $S_T$. If so, we have obtained an expansion giving only $\mathfrak{s}$-indecomposables. In the other case, we further check if the element which is not associated with elements from $S_T$, is indeed $\mathfrak{s}$-decomposable. In Table \ref{tab:exp1}, we show our results for fields of discriminant $\Delta_K$ up to $2101$; the results for $2177\leq\Delta_K\leq 6241$ are contained in Appendix A9 the arxiv version of this paper \cite{KST}.   

Moreover, these tables contain the following information: First of all, the discriminant $\Delta_K$ of a cubic field $K$. Note that we do not distinguish fields isomorphic by conjugation since, up to this conjugation, they have the same $\mathfrak{s}$-indecomposables. Then, there are several examples of polynomials such that some of their roots $\rho\in K$ satisfies that the vector $(1,\rho,\rho^2)$ has a periodic JPA expansion. Moreover, one can also find there approximate values of $\rho$ and of its conjugates $\rho'$ and $\rho''$, and the length of the preperiod and period of the expansion. The column titled as \textit{Conv.} summarizes information about convergents of the JPA expansion of $(1,\rho,\rho^2)$, and the column \textit{Semiconv.} about those semiconvergents which are not convergents. The symbol \cmark\, means that all the (semi)convergents of the expansion of $(1,\rho,\rho^2)$ are $\mathfrak{s}$-indecomposable in their signatures. The symbol \xmark\, stands for the situation that at least one of (semi)convergents is $\mathfrak{s}$-decomposable.

Moreover, the condition $\O_K=\Z[\rho]$ seems not to be true for many fields with units of all signatures listed in \cite{CS}. On the other hand, this is valid for some nice families of fields that do not possess all such units (this also includes Ennola's cubic fields which we discussed before in Section \ref{sec:jpaennola} or Subsection \ref{subsec:semiennola}). For that reason, we also study one more family of cubic fields generated by a root of the polynomial $x^3-(a+b)x^2+abx-1$ where $2\leq a\leq b-2$ \cite{Th}. For them, we know all totally positive indecomposables \cite[Proposition 5.3]{Ti2}. Moreover, for this family of fields, we have also constructed a program in Mathematica giving $\mathfrak{s}$-indecomposables which are not associated with totally positive indecomposables, This program is again based on traces (in this case, on traces after multiplication by elements of $\O_K^{\vee,\mathfrak{s}}$). The results for this family are in Table \ref{tab:expnotallsigns}.

To conclude, it seems that the behaviour of these expansions in general is rather unpredictable.

\begin{table}
\centering
\begin{tabular}{|c|c|c|c|c|c|c|c|c|}
\hline
$\Delta_K$ & Polynomial & $\rho$ & $\rho'$ & $\rho''$ & Preperiod & Period & Conv. & Semiconv.\\
\hline
\multirow{3}{*}{49} & $x^3-x^2-2x+1$ & $1.802$ & $0.445$ & $-1.247$ & $1$ & $2$ & \cmark & \cmark\\
\cline{2-9}
& $x^3-x^2-2x+1$ & $0.445$ & $1.802$  & $-1.247$ & $5$ & $2$ & \cmark & \cmark\\
\cline{2-9}
& $x^3+x^2-2x-1$ & $1.247$ & $-0.445$ & $-1.802$ & $2$ & $2$ & \cmark & \xmark\\
\hline
\multirow{3}{*}{81} & $x^3-3x+1$ & $1.532$ & $0.347$ & $-1.879$ & $2$ & $3$ & \cmark & \cmark\\
\cline{2-9}
& $x^3-3x+1$ & $0.347$ & $1.532$  & $-1.879$ & $3$ & $3$ & \cmark & \cmark\\
\cline{2-9}
& $x^3-3x-1$ & $1.879$ & $-1.532$ & $-0.347$ & $4$ & $6$ & \cmark & \xmark\\
\hline
\multirow{2}{*}{148} & $x^3+3x^2-x-1$ & $0.675$ & $-0.461$ & $-3.214$ & $2$ & $7$ & \cmark & \cmark\\
\cline{2-9}
& $x^3-3x^2-x+1$ & $0.461$ & $-0.675$  & $3.214$ & $7$ & $2$ & \cmark & \cmark\\
\hline
\multirow{2}{*}{169} & $x^3+x^2-4x+1$ & $1.377$ & $0.274$ & $-2.651$ & $2$ & $5$ & \cmark & \cmark\\
\cline{2-9}
& $x^3+x^2-4x+1$ & $0.274$ & $1.377$ & $-2.651$ & $3$ & $5$ & \cmark & \cmark\\
\hline
\multirow{2}{*}{321} & $x^3-x^2-4x+1$ & $0.239$ & $-1.7$ & $2.461$ & $5$ & $3$ & \cmark & \xmark\\
\cline{2-9}
& $x^3+x^2-4x-1$ & $1.7$ & $-0.239$  & $-2.461$ & $8$ & $3$ & \xmark & \xmark\\
\hline
\multirow{2}{*}{361} & $x^3+2x^2-5x+1$ & $1.285$ & $0.222$ & $-3.507$ & $2$ & $5$ & \cmark & \cmark\\
\cline{2-9}
& $x^3+2x^2-5x+1$ & $0.222$ & $1.285$ & $-3.507$ & $3$ & $5$ & \cmark & \cmark\\
\hline
\multirow{1}{*}{404} & $x^3+x^2-5x+1$ & $0.211$ & $1.655$ & $-2.866$ & $3$ & $3$ & \cmark & \cmark\\
\hline
\multirow{1}{*}{469} & $x^3+2x^2-4x-1$ & $1.391$ & $-0.227$ & $-3.164$ & $2$ & $5$ & \cmark & \xmark\\
\hline
\multirow{3}{*}{473} & $x^3-5x+1$ & $0.202$ & $2.128$ & $-2.33$ & $3$ & $3$ & \cmark & \cmark\\
\cline{2-9}
& $x^3-5x^2+1$ & $0.47$ & $4.959$ & $-0.429$ & $9$ & $2$ & \cmark & \cmark\\
\cline{2-9}
& $x^3+5x^2-1$ & $0.429$ & $-0.47$ & $-4.959$ & $2$ & $30$ & \xmark & \xmark\\
\hline
\multirow{1}{*}{564} & $x^3+7x^2+11x-1$ & $0.086$ & $-4.514$ & $-2.572$ & $2$ & $1$ & \xmark & \xmark\\
\hline
\multirow{1}{*}{621} & $x^3-6x^2+6x+1$ & $1.476$ & $-0.145$ & $4.669$ & $7$ & $9$ & \xmark & \xmark\\
\hline
\multirow{1}{*}{733} & $x^3+2x^2-6x+1$ & $0.178$ & $1.518$ & $-3.696$ & $3$ & $11$ & \xmark & \xmark\\
\hline
\multirow{2}{*}{785} & $x^3-2x^2-5x+1$ & $0.187$ & $-1.576$ & $3.388$ & $8$ & $14$ & \xmark & \xmark\\
\cline{2-9}
& $x^3+2x^2-5x-1$ & $1.576$ & $-0.187$ & $-3.388$ & $12$ & $4$ & \xmark & \xmark\\
\hline
\multirow{2}{*}{837} & $x^3-6x+1$ & $0.167$ & $-2.529$ & $2.361$ & $3$ & $3$ & \cmark & \cmark\\
\cline{2-9}
& $x^3-6x^2+1$ & $0.423$ & $-0.395$ & $5.972$ & $23$ & $10$ & \xmark & \xmark\\
\hline
\multirow{1}{*}{1300} & $x^3-7x^2+3x+1$ & $0.702$ & $-0.218$ & $6.516$ & $7$ & $5$ & \cmark & \cmark\\
\hline
\multirow{1}{*}{1345} & $x^3-7x+1$ & $0.143$ & $2.571$ & $-2.714$ & $3$ & $3$ & \cmark & \cmark\\
\hline
\multirow{2}{*}{1369} & $x^3+4x^2-7x+1$ & $1.187$ & $0.158$ & $-5.345$ & $2$ & $7$ & \cmark & \cmark\\
\cline{2-9}
& $x^3+4x^2-7x+1$ & $0.158$ & $1.187$ & $-5.345$ & $3$ & $7$ & \cmark & \cmark\\
\hline
\multirow{3}{*}{1425} & $x^3+7x^2+8x-1$ & $0.114$ & $-1.596$ & $-5.517$ & $2$ & $1$ & \xmark & \xmark\\
\cline{2-9}
& $x^3+8x^2-7x+1$ & $0.181$ & $0.626$ & $-8.808$ & $3$ & $13$ & \xmark & \cmark\\
\cline{2-9}
& $x^3+8x^2-7x+1$ & $0.626$ & $0.181$ & $-8.808$ & $6$ & $2$ & \xmark & \xmark\\
\hline
\multirow{2}{*}{1524} & $x^3-x^2-7x+1$ & $0.14$ & $3.133$ & $-2.273$ & $5$ & $3$ & \cmark & \xmark\\
\cline{2-9}
& $x^3-7x^2-x+1$ & $0.319$ & $7.121$ & $-0.44$ & $9$ & $6$ & \cmark & \cmark\\
\hline
\multirow{2}{*}{1573} & $x^3+8x^2+14x-1$ & $0.069$ & $-2.722$ & $-5.346$ & $2$ & $1$ & \xmark & \xmark\\
\cline{2-9}
& $x^3+14x^2-8x+1$ & $0.187$ & $0.367$ & $-14.554$ & $5$ & $9$ & \xmark & \xmark\\
\hline
\multirow{3}{*}{2101} & $x^3-4x^2-6x+1$ & $0.152$ & $-1.283$ & $5.131$ & $8$ & $11$ & \xmark & \xmark\\
\cline{2-9}
& $x^3+4x^2-6x-1$ & $1.283$ & $-0.152$ & $-5.131$ & $19$ & $7$ & \xmark & \xmark\\
\cline{2-9}
& $x^3-6x^2-4x+1$ & $0.195$ & $-0.779$ & $6.584$ & $5$ & $8$ & \xmark & \xmark\\
\hline
\end{tabular}
\caption{Indecomposability of (semi)convergents of JPA expansions of vectors of the form $(1,\rho,\rho^2)$ where $\rho$ is such that $\O_K=\Z[\rho]$; $K$ is an example of a totally real number field with the discriminant $\Delta_K\leq 2101$ and units of all signatures
} \label{tab:exp1}
\end{table}

\begin{table}
\centering
{\small
\begin{tabular}{|c|c|c|c|c|c|c|c|c|}
\hline
$\Delta_K$ & Polynomial & $\rho$ & $\rho'$ & $\rho''$ & Preperiod & Period & Conv. & Semiconv.\\
\hline
\multirow{4}{*}{229} & $x^3-4x+1$ & $1.861$ & $0.254$ & $-2.115$ & $6$ & $3$ & \cmark & \cmark\\
\cline{2-9}
& $x^3-4x+1$ & $0.254$ & $1.861$ & $-2.115$ & $3$ & $3$ & \cmark & \cmark\\
\cline{2-9}
& $x^3-8x^2+6x-1$ & $0.573$ & $0.243$ & $7.184$ & $5$ & $3$ & \xmark & \xmark\\
\cline{2-9}
& $x^3-8x^2+6x-1$ & $0.243$ & $0.573$ & $7.184$ & $2$ & $2$ & \cmark & \cmark\\
\hline
\multirow{4}{*}{761} & $x^3-7x^2+10x-1$ & $1.828$ & $0.108$ & $5.064$ & $18$ & $12$ & \xmark & \xmark\\
\cline{2-9}
& $x^3+x^2-6x+1$ & $1.892$ & $0.172$ & $-3.064$ & $6$ & $3$ & \cmark & \cmark\\
\cline{2-9}
& $x^3+x^2-6x+1$ & $0.172$ & $1.892$ & $-3.064$ & $3$ & $3$ & \cmark & \cmark\\
\cline{2-9}
& $x^3-10x^2+7x-1$ & $0.197$ & $0.547$ & $9.255$ & $2$ & $2$ & \cmark & \cmark\\
\hline
\multirow{4}{*}{985} & $x^3-x^2-6x+1$ & $0.163$ & $2.931$ & $-2.094$ & $5$ & $3$ & \cmark & \xmark\\
\cline{2-9}
& $x^3-15x^2+8x-1$ & $0.352$ & $0.196$ & $14.451$ & $12$ & $21$ & \xmark & \xmark\\
\cline{2-9}
& $x^3-15x^2+8x-1$ & $0.196$ & $0.352$ & $14.451$ & $2$ & $2$ & \cmark & \cmark\\
\cline{2-9}
& $x^3-6x^2-x+1$ & $0.341$ & $-0.478$ & $6.136$ & $3$ & $3$ & \cmark & \cmark\\
\hline
\multirow{4}{*}{1957} & $x^3-8x^2+12x-1$ & $0.088$ & $1.871$ & $6.041$ & $2$ & $16$ & \xmark & \xmark\\
\cline{2-9}
& $x^3+2x^2-8x+1$ & $1.912$ & $0.129$ & $-4.041$ & $6$ & $3$ & \cmark & \cmark\\
\cline{2-9}
& $x^3+2x^2-8x+1$ & $0.129$ & $1.912$ & $-4.041$ & $3$ & $3$ & \cmark & \cmark\\
\cline{2-9}
& $x^3-12x^2+8x-1$ & $0.166$ & $0.535$ & $11.3$ & $2$ & $2$ & \cmark & \cmark\\
\hline
\multirow{3}{*}{2597} & $x^3-24x^2+10x-1$ & $0.164$ & $0.258$ & $23.578$ & $2$ & $2$ & \cmark & \cmark\\
\cline{2-9}
& $x^3-8x^2-2x+1$ & $0.253$ & $-0.481$ & $8.228$ & $3$ & $3$ & \cmark & \cmark\\
\cline{2-9}
& $x^3+8x^2-2x-1$ & $0.481$ & $-0.253$ & $-8.228$ & $2$ & $2$ & \cmark & \cmark\\
\hline
\multirow{4}{*}{5521} & $x^3-3x^2-10x+1$ & $0.097$ & $4.971$ & $-2.068$ & $3$ & $13$ & \xmark & \xmark\\
\cline{2-9}
& $x^3-35x^2+12x-1$ & $0.204$ & $0.141$ & $34.655$ & $10$ & $13$ & \xmark & \xmark\\
\cline{2-9}
& $x^3-35x^2+12x-1$ & $0.141$ & $0.204$ & $34.655$ & $2$ & $2$ & \cmark & \cmark\\
\cline{2-9}
& $x^3-10x^2-3x+1$ & $0.201$ & $-0.483$ & $10.282$ & $3$ & $3$ & \cmark & \cmark\\
\hline
\multirow{3}{*}{6809} & $x^3+x^2-12x+1$ & $0.084$ & $2.951$ & $-4.035$ & $3$ & $3$ & \cmark & \cmark\\
\cline{2-9}
& $x^3-21x^2+10x-1$ & $0.142$ & $0.343$ & $20.515$ & $2$ & $2$ & \cmark & \cmark\\
\cline{2-9}
& $x^3-12x^2+x+1$ & $0.339$ & $-0.248$ & $11.909$ & $3$ & $3$ & \cmark & \cmark\\
\hline
\multirow{3}{*}{7249} & $x^3-x^2-12x+1$ & $0.083$ & $3.964$ & $-3.047$ & $5$ & $3$ & \cmark & \xmark\\
\cline{2-9}
& $x^3-28x^2+11x-1$ & $0.142$ & $0.255$ & $27.603$ & $2$ & $2$ & \cmark & \cmark\\
\cline{2-9}
& $x^3-12x^2-x+1$ & $0.252$ & $-0.328$ & $12.076$ & $3$ & $3$ & \cmark & \cmark\\
\hline
\multirow{3}{*}{8069} & $x^3+4x^2-12x+1$ & $1.935$ & $0.086$ & $-6.021$ & $6$ & $3$ & \cmark & \cmark\\
\cline{2-9}
& $x^3+4x^2-12x+1$ & $0.086$ & $1.935$ & $-6.021$ & $3$ & $3$ & \cmark & \cmark\\
\cline{2-9}
& $x^3-16x^2+10x-1$ & $0.125$ & $0.522$ & $15.353$ & $2$ & $2$ & \cmark & \cmark\\
\hline
\multirow{3}{*}{10309} & $x^3-4x^2-12x+1$ & $0.081$ & $5.979$ & $-2.06$ & $3$ & $7$ & \cmark & \cmark\\
\cline{2-9}
& $x^3-48x^2+14x-1$ & $0.124$ & $0.169$ & $47.707$ & $2$ & $2$ & \cmark & \cmark\\
\cline{2-9}
& $x^3-12x^2-4x+1$ & $0.167$ & $-0.485$ & $12.318$ & $3$ & $3$ & \cmark & \cmark\\
\hline
\multirow{3}{*}{13801} & $x^3+2x^2-15x+1$ & $0.067$ & $2.958$ & $-5.025$ & $3$ & $3$ & \cmark & \cmark\\
\cline{2-9}
& $x^3-24x^2+11x-1$ & $0.125$ & $0.341$ & $23.534$ & $2$ & $2$ & \cmark & \cmark\\
\cline{2-9}
& $x^3-15x^2+2x+1$ & $0.338$ & $-0.199$ & $14.861$ & $3$ & $3$ & \cmark & \cmark\\
\hline
\multirow{5}{*}{14089} & $x^3-11x^2+18x-1$ & $0.058$ & $1.927$ & $9.016$ & $2$ & $16$ & \xmark & \xmark\\
\cline{2-9}
& $x^3+5x^2-14x+1$ & $1.942$ & $0.073$ & $-7.016$ & $6$ & $3$ & \cmark & \cmark\\
\cline{2-9}
& $x^3+5x^2-14x+1$ & $0.073$ & $1.942$ & $-7.016$ & $3$ & $3$ & \cmark & \cmark\\
\cline{2-9}
& $x^3-18x^2+11x-1$ & $0.519$ & $0.111$ & $17.37$ & $5$ & $9$ & \xmark & \xmark\\
\cline{2-9}
& $x^3-18x^2+11x-1$ & $0.111$ & $0.519$ & $17.37$ & $2$ & $2$ & \cmark & \cmark\\
\hline
\end{tabular}
}
\caption{Indecomposability of (semi)convergents of JPA expansions of vectors of the form $(1,\rho,\rho^2)$ where $\rho$ is such that $\O_K=\Z[\rho]$; $K$ is an example of a totally real number field which does not have units of all signatures} \label{tab:expnotallsigns}
\end{table}

\section{More general semiconvergents} \label{sec:gensemi}

As we have seen in the previous section, for some expansions in our fields, convergents and semiconvergents can be $\mathfrak{s}$-indecomposable. However, even in the most favorable cases, we have obtained only a few elements from the set of representatives of $\mathfrak{s}$-indecomposables. For example, in the case of the simplest cubic fields, we have got some lines from the triangle of totally positive indecomposables $-v-(v(a+2)+1+W)\rho+(v+1)\rho^2$ where $0\leq v\leq a$ and $0\leq W\leq a-v$. Moreover, only one of these lines does not lie on the border of the triangle. In particular, they do not cover the whole triangle. And the exceptional indecomposable integer $1+\rho+\rho^2$ does not appear in the expansion at all. 

One way how to deal with this situation is to consider more general elements derived from convergents of expansions. Especially, semiconvergents are, in fact, some concrete linear combinations of convergents, and this could explain why we obtain only lines in the triangle. There is no obstacle to considering some other combinations to get other shapes like a triangle. Therefore, in this section, we will study in detail the expansions determined in the previous sections and try to find some other linear combinations giving $\mathfrak{s}$-indecomposables in our fields.

\subsection{The simplest cubic fields}

We will again start with the case of the simplest cubic fields, for which the convergents and semiconvergents of the expansion of $(1,-\rho',\rho'^2)$ are $\mathfrak{s}$-indecomposable. Recall that here, $-2<\rho'<-1$ is a root of the polynomial $x^3-ax^2-(a+3)x-1$ where $a\geq -1$. As we will see, in this case, it is possible to obtain all representatives of $\mathfrak{s}$-indecomposables, including the exceptional one. For that, we will use the following idea.

Let us consider some semiconvergent $\beta_2^{(k)}-i\beta_1^{(k)}$. From the construction, we know that its value is small, and the same is true for the convergent $\beta_3^{(k)}$. Therefore, if $\beta_2^{(k)}-i\beta_1^{(k)}-\beta_3^{(k)}$ is positive, it has even smaller value than $\beta_2^{(k)}-i\beta_1^{(k)}$. Thus, it could be a good candidate for an $\mathfrak{s}$-indecomposable integer. Moreover, if we want to obtain, e.g., a triangle of elements, these elements should have the same signature. Therefore, we can consider those elements of the form $\beta_2^{(k)}-i\beta_1^{(k)}-j\beta_3^{(k)}$ (and analogous elements for $\beta_3^{(k)}-i\beta_1^{(k)}$) which have the same signature as $\beta_2^{(k)}-i\beta_1^{(k)}$. Adding one more condition, we can obtain all representatives of $\mathfrak{s}$-indecomposables in the simplest cubic fields in the following way.  

\begin{proposition}
Let $\langle\bm{\beta}^{(k)}\rangle$ be the Jacobi--Perron expansion of the vector $(1,-\rho',\rho'^2)$ where $-2<\rho'<-1$ is a root of the polynomial $x^3-ax^2-(a+3)x-1$, and let $a\geq 15$. Suppose that the element $\beta_2^{(k)}-i\beta_1^{(k)}-j\beta_3^{(k)}$ satisfies the following conditions:
\begin{enumerate}
\item $1\leq i\leq \left\lfloor\frac{\beta_2^{(k)}}{\beta_1^{(k)}}\right\rfloor-1$,
\item $j\in \N_0$ is such that $\beta_2^{(k)}-i\beta_1^{(k)}-j\beta_3^{(k)}$ has the same signature as $\beta_2^{(k)}-i\beta_1^{(k)}$,
\item for all $i+1\leq l\leq \left\lfloor\frac{\beta_2^{(k)}}{\beta_1^{(k)}}\right\rfloor-1$, the element $(l-i)\beta_1^{(k)}-\beta_3^{(k)}$ does not have the same signature as $\beta_2^{(k)}-i\beta_1^{(k)}$.
\end{enumerate} 
Then $\beta_2^{(k)}-i\beta_1^{(k)}-j\beta_3^{(k)}$ is $\mathfrak{s}$-indecomposable in its signature. Similarly, suppose that the element $\beta_3^{(k)}-i\beta_1^{(k)}-j\beta_2^{(k)}$ satisfies the following conditions:
\begin{enumerate}
\item $1\leq i\leq \left\lfloor\frac{\beta_3^{(k)}}{\beta_1^{(k)}}\right\rfloor-1$,
\item $j\in \N_0$ is such that $\beta_3^{(k)}-i\beta_1^{(k)}-j\beta_2^{(k)}$ has the same signature as $\beta_3^{(k)}-i\beta_1^{(k)}$,
\item for all $i+1\leq l\leq \left\lfloor\frac{\beta_3^{(k)}}{\beta_1^{(k)}}\right\rfloor-1$, the element $(l-i)\beta_1^{(k)}-\beta_2^{(k)}$ does not have the same signature as $\beta_3^{(k)}-i\beta_1^{(k)}$.
\end{enumerate} 
Then $\beta_3^{(k)}-i\beta_1^{(k)}-j\beta_2^{(k)}$ is $\mathfrak{s}-$indecomposable in its signature $\mathfrak{s}$.

Moreover, this way, we obtain representatives of all indecomposable integers in $\Z[\rho]$ up to multiplication by units and conjugation.
\end{proposition}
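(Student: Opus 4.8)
The plan is to reduce the whole statement to the complete classification of indecomposables already available in Theorem \ref{thm:indesimplest}, exploiting that in the simplest cubic fields every $\mathfrak{s}$-indecomposable is a unit multiple of a totally positive one. The starting point is Theorem \ref{thm:hjpasimplest}: for each index $k$ in the preperiod and period (treating $a$ even and $a$ odd separately, and using the periodicity of the expansion modulo units to reduce to finitely many $k$) I would write the three convergents $\beta_1^{(k)},\beta_2^{(k)},\beta_3^{(k)}$ as explicit $\Z$-combinations of $1,\rho',\rho'^2$, and then expand $\beta_2^{(k)}-i\beta_1^{(k)}-j\beta_3^{(k)}$ and $\beta_3^{(k)}-i\beta_1^{(k)}-j\beta_2^{(k)}$ into integer combinations whose coordinates are affine in $i$ and $j$.

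The conceptual engine for the indecomposability half is the elementary criterion that, writing $\mathfrak{s}=(\mathfrak{s}_1,\mathfrak{s}_2,\mathfrak{s}_3)$ with $\mathfrak{s}_m\in\{\pm1\}$, an element $\alpha\in\O^{\mathfrak{s}}$ is $\mathfrak{s}$-indecomposable exactly when the signed box $\{x\in\R^3: 0<\mathfrak{s}_m x_m<\mathfrak{s}_m\sigma_m(\alpha)\text{ for all }m\}$ contains no point of $\sigma(\Z[\rho])$; indeed, any splitting $\alpha=\gamma+\delta$ with $\gamma,\delta$ of signature $\mathfrak{s}$ forces $\sigma_m(\gamma)$ strictly between $0$ and $\sigma_m(\alpha)$ in every embedding. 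I would then argue that conditions (1)--(3) are precisely what pin $\beta_2^{(k)}-i\beta_1^{(k)}-j\beta_3^{(k)}$, up to a totally positive unit (a power of the fundamental units $\rho,\rho'$ read off from the period) and up to the Galois conjugation relating $\rho'$ to $\rho$, onto a representative $\theta_{v,W}=-v-(v(a+2)+1+W)\rho+(v+1)\rho^2$ from Theorem \ref{thm:indesimplest}: condition (1) keeps $i$ in the semiconvergent range, condition (2) fixes the band of $j$ that keeps the element in the octant of signature $\mathfrak{s}$, and condition (3) is the extremality constraint which blocks the decompositions obtained by passing to a larger index $l>i$ (whose correction term is exactly the shifted convergent combination $(l-i)\beta_1^{(k)}-\beta_3^{(k)}$) by guaranteeing that this correction never has signature $\mathfrak{s}$. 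Once the explicit coordinates are matched to $\theta_{v,W}$, indecomposability follows from Theorem \ref{thm:indesimplest}; the family $\beta_3^{(k)}-i\beta_1^{(k)}-j\beta_2^{(k)}$ is treated by the same computation with the roles of $\beta_2^{(k)}$ and $\beta_3^{(k)}$ interchanged.

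For the final ``all representatives'' assertion I would run the matching in reverse: letting $k$ range over one period and $(i,j)$ over the ranges allowed by (1)--(3), I would verify that the resulting pairs $(v,W)$ exhaust $\{(v,W):0\le v\le a,\ 0\le W\le a-v\}$ and that the two remaining representatives $1$ and the exceptional $1+\rho+\rho^2$ also appear, the latter being exactly the interior point missed by ordinary semiconvergents and hence the motivation for the $-j\beta_3^{(k)}$ correction. The hypothesis $a\ge 15$ enters here: it ensures the integer parts $\lfloor\beta_i^{(k)}/\beta_1^{(k)}\rfloor$ are large enough that the index ranges in (1) and (3) are nondegenerate and that the sub-triangles contributed by the various iterations (in particular by the $A_0=\lfloor a/2\rfloor$-dependent ones) overlap so as to tile the full parameter region without gaps.

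I expect the surjectivity step to be the main obstacle, together with the exact translation of condition (3) into the boundary inequality $W\le a-v$ of the indecomposable triangle. The per-iteration indecomposability is essentially bookkeeping once the convergents are tabulated, but checking that \emph{every} admissible $(v,W)$ is attained up to units and conjugation -- and in particular recovering $1+\rho+\rho^2$ and correctly stitching together the even and odd cases as well as the two families $\beta_2^{(k)}-i\beta_1^{(k)}-j\beta_3^{(k)}$ and $\beta_3^{(k)}-i\beta_1^{(k)}-j\beta_2^{(k)}$ -- demands a careful case analysis, which is why the detailed verification is naturally relegated to the appendix.
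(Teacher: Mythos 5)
Your overall strategy coincides with the paper's: for each iteration $k$ of the expansion from Theorem \ref{thm:hjpasimplest} one expands $\beta_2^{(k)}-i\beta_1^{(k)}-j\beta_3^{(k)}$ and $\beta_3^{(k)}-i\beta_1^{(k)}-j\beta_2^{(k)}$ in the basis $1,\rho',\rho'^2$, determines which $(i,j)$ survive conditions (1)--(3), and identifies each survivor as a unit multiple of (a conjugate of) some $\theta_{v,W}$, of $1$, or of $1+\rho'+\rho'^2$, so that $\mathfrak{s}$-indecomposability is inherited from Theorem \ref{thm:indesimplest} via the fact that the simplest cubic fields have units of all signatures. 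The signed-box criterion you state is correct but ends up playing no role once this matching is carried out; condition (3) acts only as a filter selecting the admissible $(k,i,j)$, not as the mechanism proving indecomposability.

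The one step that would fail as written is the surjectivity check. The admissible parameters do \emph{not} produce pairs $(v,W)$ exhausting the full triangle $0\le v\le a$, $0\le W\le a-v$: the substantial contributions come from $k=2$ (unit multiples of $\theta'_{a+1-i-2j,j}$) and $k=4$, and together these cover only about one third of the triangle, namely the elements $\theta'_{a-v-W,v}$ with $0\le v\le A$ and $v\le W\le a-2v-1$ where $a=3A+a_0$. The paper closes this gap by invoking \cite[Subsection 5.1]{KT}: the triangle of indecomposables is stable under $\alpha\mapsto\alpha'\varepsilon_1$ and $\alpha\mapsto\alpha''\varepsilon_2$ for two explicit units, so a suitable one third of the triangle already generates all of it up to multiplication by units and conjugation --- exactly the equivalence permitted in the final claim. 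Your plan of verifying direct exhaustion of $\{(v,W)\}$ would therefore come up short; you need this symmetry (or an equivalent structural fact) to conclude, after which the remaining work is the bookkeeping you describe, including recovering $1+\rho'+\rho'^2$ from the $k=7$ iteration.
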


\begin{proof}
For $k=0$, we do not get any additional elements as $\left\lfloor\frac{\beta_2^{(0)}}{\beta_1^{(0)}}\right\rfloor=\left\lfloor\frac{\beta_3^{(0)}}{\beta_1^{(0)}}\right\rfloor=1$.

For $k=1$, the element $\delta_{2,1}^{(0)}=-1+\rho'^2-(-1-\rho')<1$ since $-1+\rho'^2<1$, therefore $\beta_2^{(1)}-\beta_1^{(1)}-\beta_3^{(1)}=\delta_{2,1}^{(0)}-1<0$. Thus, we do not obtain more elements. Likewise, the elements $\beta_3^{(1)}-i\beta_1^{(1)}$ are totally positive for all $1\leq i\leq \left\lfloor\frac{\beta_3^{(1)}}{\beta_1^{(1)}}\right\rfloor-1=a$, and it can be proved that this is not true for any element $\beta_3^{(1)}-i\beta_1^{(1)}-\beta_2^{(1)}$.

On the other hand, for $k=2$, we indeed obtain additional elements, namely $\beta_3^{(2)}-i\beta_1^{(2)}-j\beta_2^{(2)}$ where $1\leq i\leq a$ and $1\leq j\leq \left\lfloor\frac{a-(i-1)}{2}\right\rfloor$. These elements are unit multiples of indecomposable integers $\theta'_{a+1-i-2j,j}$.  

For $k=3$, our integers parts are $0$ and $1$. Thus, we do not obtain any additional elements.

In the next step, $\left\lfloor\frac{\beta_2^{(4)}}{\beta_1^{(4)}}\right\rfloor=0$ and $\left\lfloor\frac{\beta_3^{(4)}}{\beta_1^{(4)}}\right\rfloor=A_0$. Here, we get the element $\beta_3^{(4)}-\beta_1^{(4)}-(a-1)\beta_2^{(4)}$ which is a unit, and elements $\beta_3^{(4)}-i\beta_1^{(4)}-j\beta_2^{(4)}$ where $1\leq i\leq A_0-1$ and $1\leq j\leq a-2i$. These integers are unit multiples of $\theta'_{a-2i-j,i+j+1}$. 

For $k=5$, we obtain one additional elements only if $a$ is even. It is equal to $\beta_3^{(5)}-\beta_1^{(5)}-\beta_2^{(5)}$ and is associated with the indecomposable integer $\theta'_{A_0-2,a-(A_0-2)}$.
The sixth iteration does not produce more elements. 

The same is true for the first part of the seventh iteration, where the obtained elements do not have the required signature. On the contrary, in the second part, we obtain some elements. If $a=2A0$ is even, we get elements $\beta_3^{(7)}-i\beta_1^{(7)}-\beta_2^{(7)}$ where $1\leq i\leq 5$. In this case, $(A_0-1)\beta_1^{(7)}-\beta_2^{(7)}$ has the same signature as $\beta_3^{(7)}-i\beta_1^{(7)}$ which excludes elements with $i=1,2,3$. If $i=5$, then our elements is a unit. If $i=4$, our element is associated to the indecomposable integer $1+\rho'+\rho'^2$. If $a=2A0+1$ is odd, we obtain elements $\beta_3^{(7)}-i\beta_1^{(7)}-\beta_2^{(7)}$ where $1\leq i\leq 6$. As before, $(A_0-1)\beta_1^{(7)}-\beta_2^{(7)}$ has the same signature as $\beta_3^{(7)}-i\beta_1^{(7)}$; thus we can exclude $i=1,2,3,4$. If $i=6$, our element is again a unit. For $i=5$, our element is associated to the exceptional indecomposable integer $1+\rho'+\rho'^2$.       

And finally, for $k=8$, all the obtained elements do not have the required signature.

Now we will discuss the final part of the statement, i.e., if we obtain all representatives of $\mathfrak{s}$-indecomposable integers. In the seventh iteration, we get a unit and an element associated with the exceptional indecomposable integer $1+\rho'+\rho'^2$. On the other hand, for $k=2$, we obtain unit multiples of elements $\theta'_{a+1-i-2j,j}$ where $1\leq i\leq a$ and $0\leq j\leq \left\lfloor\frac{a-(i-1)}{2}\right\rfloor$. These elements cover more than one third of the triangle of indecomposables, and in particular all elements $-(a-v-W)-((a+2)(a-v-W)+1+v)\rho'+(a-v-W+1)\rho'^2$, where $0\leq v\leq A$ and $v\leq W\leq a-2v-1$ with $a=3A+a_0$, $a_0\in\{0,1,2\}$. Moreover, they also cover the element $\theta'_{A,A}$ if $3|a$. Note that in \cite[Subsection 5.1]{KT}, we show that if the triangle of indecomposable integers contains $\alpha$, then it also contains $\alpha'\varepsilon_1$ and $\alpha''\varepsilon_2$ where $\varepsilon_1$ and $\varepsilon_2$ are two concrete units. That means that in some sense, it is enough to consider only one third of the triangle. Therefore, up to multiplication and conjugation, we get all indecomposables of the triangle.    
\end{proof}

\subsection{Ennola's cubic fields}

In this section, we will focus on more general semiconvergents, i.e., on the other linear combinations of convergents of the JPA expansion in the Ennola's cubic fields.  
We will not discuss all possible combinations but instead, 
we will look at semiconvergents of form $j \beta^{(i)}_1+ k \beta^{(i)}_2+ l \beta^{(i)}_3 $ where at least one of the coefficients $j,k,l$ is fixed. 

First of all, let is introduce the following notation:
$$S=\{\lambda_{v,w}=-v-(a(v-1)+w)\rho+(a(v-1)+w+1)\rho^2 \text{ where } 1\leq v \leq a-1, \max\{1,v-1\}\leq w \leq a-1\}.$$
After multiplication by some unit and index shifting, the above set can be expressed as
$$\tilde{S}=\{\tilde{\lambda}_{v,u}=1+v-u+au+(a-u+au)\psi-(u+1)\psi^2\text{ where }1\leq v \leq a-3\text{, } 0 \leq u \leq v\text{ and }(v,u)\neq (1,0)\}$$
By the symbol $\langle\gamma_1,\ldots,\gamma_r\rangle$ where $\gamma_i\in K$ for all $i=1,\ldots,r$ and $r\in\N$, we will mean the lattice generated by the elements $\gamma_1,\ldots,\gamma_r$.

\begin{theorem}
We have
\begin{enumerate} 
\item Let $\tau$ be one of the roots of the polynomial polynomial $x^3+(a-1)x^2-ax-1 $ where $a\geq 3$. Let $ \langle \bm{\beta} ^{(k)}\rangle _{k \geq 0 }$ be the  periodic JPA expansion of the vector $(1, |\tau|,\tau^2)$. 
Then  whenever for some iteration $k$, there exists a unit $\varepsilon$ such that 
\[ L=\langle \alpha -\beta|\alpha, \beta \in \sigma(S) \rangle = \langle \beta _1^{(k)}\varepsilon, \beta _2^{(k)}\varepsilon  \rangle \]
and  
\[ |N(\beta_3^{(k)})|\leq  a^2-5a, \]
then  
\[\sigma(S)\subset \beta_3^{(k)} \varepsilon + L,\]
where $\sigma$ is the embedding corresponding to the root $\tau$.
\item Let $\tau$ be one of the roots of the polynomial $x^3-(a-1)x^2-ax-1 $ where $a\geq 5$. Let $ \langle \bm{\beta} ^{(k)}\rangle _{k \geq 0 }$ be the  periodic JPA expansion of the vector $(1, |\tau|,\tau^2)$. 
Then  whenever for some iteration $k$, there exists a unit $\varepsilon$ such that 
\[ L=\langle \alpha -\beta|\alpha, \beta \in \sigma(\tilde{S}) \rangle = \langle \beta _1^{(k)}\varepsilon, \beta _2^{(k)}\varepsilon  \rangle \]
and  
\[ |N(\beta_3^{(k)})|\leq  a^2-5a, \]
then  
\[\sigma(\tilde{S})\subset \beta_3^{(k)} \varepsilon + L,\]
where $\sigma$ is the embedding corresponding to the root $\tau$.
\end{enumerate}
\end{theorem}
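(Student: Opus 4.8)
The plan is to turn the set inclusion into a single coset membership. By definition $L=\langle\alpha-\beta\mid\alpha,\beta\in\sigma(S)\rangle$ is the lattice of differences of $\sigma(S)$, so $\sigma(S)$ already lies entirely in one coset $c+L$ of $L$ (take any $c\in\sigma(S)$). Consequently the asserted inclusion $\sigma(S)\subset\beta_3^{(k)}\varepsilon+L$ is equivalent to the single statement $\beta_3^{(k)}\varepsilon-c\in L$ for one (hence every) $c\in\sigma(S)$; that is, $\beta_3^{(k)}\varepsilon$ must lie in the same coset of $L$ as $S$.

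First I would make $L$ explicit through the codifferent. With $\delta=\tfrac{1}{f'(\rho)}(\rho+a)\in\Z[\rho]^{\vee,\mathfrak s}$ as in Proposition~\ref{prop:indeennolaneg}, the functional $\pi(\alpha):=\Tr(\alpha\delta)$ equals $c_1+c_2$ on $\alpha=c_0+c_1\tau+c_2\tau^2$, and a direct evaluation gives $\pi(\lambda_{v,w})=1$ for every generator of $S$. A short computation with these generators—consecutive $w$ yield $\tau^2-\tau\in L$, and passing from $v=1$ to $v=2$ yields $1\in L$—shows $L=\ker\pi=\langle 1,\tau^2-\tau\rangle$, a rank-$2$ lattice. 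Thus $\sigma(S)\subset\{\pi=1\}$, and the goal collapses to proving $\Tr(\beta_3^{(k)}\varepsilon\,\delta)=1$. Part~(2) runs identically with $\psi$ in place of $\rho$, the set $\tilde S$, and the functional $\pi(c_0+c_1\psi+c_2\psi^2)=c_1+(a-1)c_2$, which is again constantly $1$ on $\sigma(\tilde S)$ and whose kernel is the difference lattice.

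Next I would exploit the unimodularity of JPA: each iteration is an integral transformation of determinant $\pm1$, so $\{\beta_1^{(k)},\beta_2^{(k)},\beta_3^{(k)}\}$ is a $\Z$-basis of the order generated by $(1,|\tau|,\tau^2)$, and multiplying by the unit $\varepsilon$ preserves this. The hypothesis $L=\langle\beta_1^{(k)}\varepsilon,\beta_2^{(k)}\varepsilon\rangle$ says the first two of these basis vectors generate $\ker\pi$; since $\Z[\tau]/\ker\pi\cong\Z$ is then generated by the image of $\beta_3^{(k)}\varepsilon$, it follows that $\Tr(\beta_3^{(k)}\varepsilon\,\delta)=\pm1$.

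The crux—and where the norm bound $|N(\beta_3^{(k)})|\le a^2-5a$ is genuinely needed—is to rule out the value $-1$, since $\{\pi=-1\}=-\tau^2+L$ is a coset distinct from $\{\pi=1\}$ (as $2\tau^2\notin\ker\pi$) and would make the inclusion fail. Writing $\Tr(\beta_3^{(k)}\varepsilon\,\delta)=\sum_i\sigma_i(\beta_3^{(k)}\varepsilon)\,\sigma_i(\delta)$ and recalling that $\delta$ has signature $(+,-,+)$, the sign of this trace is governed by the signs of the conjugates of $\beta_3^{(k)}\varepsilon$. Here the norm bound, combined with the estimates \eqref{eq:estimatesennola} on $\rho,\rho',\rho''$ and the positivity of the JPA convergent $\beta_3^{(k)}$ under $\sigma$ (and the freedom to replace $\varepsilon$ by $-\varepsilon$, which leaves the lattice hypothesis intact), constrains $\beta_3^{(k)}\varepsilon$ tightly enough to force the trace to be $+1$. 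I expect this last positivity analysis—tracking the signs of all three embeddings of the bounded element $\beta_3^{(k)}\varepsilon$ and pinning the correct threshold $a^2-5a$—to be the main obstacle; once it is in place, $\beta_3^{(k)}\varepsilon$ lies in $\{\pi=1\}$, the coset of $S$, and the inclusion follows, with part~(2) concluded verbatim.
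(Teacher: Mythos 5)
Your approach is genuinely different from the paper's. The authors prove this theorem by a finite case check: they run through the explicit periodic expansions computed in Sections 6--7, and for each iteration satisfying the hypotheses they exhibit the identity $\beta_3^{(k)}-h\beta_1^{(k)}-j\beta_2^{(k)}=\tilde{\lambda}_{\bullet,\bullet}\varepsilon$ directly (only two sample cases are written out). Your reduction is more conceptual and, as far as it goes, correct: $\sigma(S)$ sits in a single coset of its own difference lattice $L$; the functional $\pi(c_0+c_1\tau+c_2\tau^2)=c_1+c_2$ (resp.\ $c_1+(a-1)c_2$) is identically $1$ on $\sigma(S)$ (resp.\ $\sigma(\tilde S)$) and has kernel exactly $L$, as your generator computation shows; and since each JPA step has determinant $1$, the triple $\{\beta_1^{(k)}\varepsilon,\beta_2^{(k)}\varepsilon,\beta_3^{(k)}\varepsilon\}$ is a $\Z$-basis of $\Z[\tau]$, so the hypothesis $L=\langle\beta_1^{(k)}\varepsilon,\beta_2^{(k)}\varepsilon\rangle$ forces $\pi(\beta_3^{(k)}\varepsilon)=\pm1$. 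That reduction is a real gain over the paper's enumeration.

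The gap is in your final step, and it is not a technical omission but a structural one. The hypotheses of the theorem are invariant under $\varepsilon\mapsto-\varepsilon$ (the lattice $\langle\beta_1^{(k)}\varepsilon,\beta_2^{(k)}\varepsilon\rangle$ is unchanged and the norm condition does not involve $\varepsilon$), while $\pi(\beta_3^{(k)}(-\varepsilon))=-\pi(\beta_3^{(k)}\varepsilon)$. So no amount of positivity analysis of the conjugates of $\beta_3^{(k)}\varepsilon$ can ``force the trace to be $+1$'': it is $+1$ for exactly one of $\pm\varepsilon$, and which one depends on a choice the hypotheses do not pin down. What your argument actually yields is the conclusion for $\varepsilon$ or for $-\varepsilon$, i.e.\ the theorem up to the sign of the unit --- which is defensible as a reading of the statement, but you should say so rather than promise an analysis that cannot close the sign. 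More seriously, your proof never uses the hypothesis $|N(\beta_3^{(k)})|\leq a^2-5a$ at all, whereas the authors explicitly assert this bound is necessary and point to the fifth iteration of the expansion of $(1,-\rho',\rho'^2)$, where the lattice condition holds, $|N(\beta_3^{(5)})|=a^2-a-1$, and the conclusion fails. This mismatch means you are proving a strictly weaker statement than the one the authors intend: containment in the coset with arbitrary integer coefficients $h,j\in\Z$, rather than the realization of every element of $\sigma(S)$ as a generalized semiconvergent $\beta_3^{(k)}\varepsilon-h\beta_1^{(k)}\varepsilon-j\beta_2^{(k)}\varepsilon$ with the constrained (nonnegative) coefficients that their explicit identities produce. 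Under the literal coset reading your skeleton is essentially complete (and shows the norm hypothesis is redundant); under the intended reading the entire content of the theorem lies in the coefficient ranges, which your argument does not address and which the paper handles only by explicit computation.
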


\begin{proof}
To prove this theorem, it is enough to check the statement for every expansion. For shortening, we will show just two interesting cases in the expansions of two of the roots of polynomial $x^3-(a-1)x^2-ax-1$.

In the tenth iteration of the expansion of $(1,-\psi',\psi'^2)$, the norm of $\beta_3^{(10)}$ is $-a^2+5a-5$ and 
$L=\langle \alpha -\beta|\alpha, \beta \in (S)' \rangle$. Let $\varepsilon=-a+5+7\psi'+(a+4)\psi'^2$.
Then
$$\beta_3^{(10)}-h\beta_1^{(10)}-j\beta_2^{(10)}=\tilde{\lambda}_{1+h+j,1+h}'\varepsilon.$$
Thus
\[(S)'\subset \beta_3^{(10)} \varepsilon^{-1} + L.\]

Second example is from the expansion $(1,-\psi'',\psi''^2)$ where in the fourth iteration, $N(\beta_3^{(4)})=2a^2-12a+17$. 
$L=\langle \alpha -\beta|\alpha, \beta \in (S)'' \rangle$, where $\varepsilon=\psi''+(a-1)\psi''^2$.
We have
\[ \beta_3^{(4)}-h\beta_1^{(4)}-j\beta_2^{(4)}=\tilde{\lambda}''_{a-4-h-2j,a-3-h-j} \varepsilon .\]
So again
\[(S)''\subset \beta_3^{(4)} \varepsilon^{-1} + L.\qedhere\]

\end{proof}

However, in this theorem, the condition $|N(\beta_3^{(k)})|\leq  a^2-5a $ is necessary. There is a counterexample in the JPA expansion of $(1, \rho', \rho'^2)$, where in the fifth iteration, the elements $\beta _1^{(5)}, \beta _2^{(5)}$ generate the basis of the lattice $L$ of the indecomposable elements $\lambda_{u,v}'$ and the norm of $\beta _3^{(5)}$ is $a^2-a-1$, but $\beta_2^{(k)}-h\beta_1^{(k)}-j\beta_3^{(k)}$ do not give all elements $\lambda_{u,v}'$.

We can look on the opposite implications of the theorem. There is easy to verify that if \[(\beta_3^{(k)}-h\beta_1^{(k)}-j\beta_2^{(k)}) \varepsilon\] cover $\sigma(S)$ for some unit $\varepsilon$ and the corresponding embedding $\sigma$, then  
\[ L=\langle \alpha -\beta|\alpha, \beta \in \sigma(S) \rangle \subseteq \langle \beta _1^{(k)}\varepsilon, \beta _2^{(k)}\varepsilon  \rangle. \]

This theorem is somewhat surprising, and the natural question is whether this is satisfied more generally.

\section{Other multidimensional continued fraction algorithms} \label{sec:othermulti}

As we have mentioned in the introduction, there exist many algorithms that generate multidimensional continued fractions. Our choice of the Jacobi--Perron algorithm was mostly influenced by its simplicity, amount of results, or by the existence of the Hasse--Bernstein unit. As we have seen, sometimes, it produces nice results, but so far, it has not given us the full answer to our question. Therefore, we have also looked at several other algorithms, and in this section, we summarize some of our experiments.

Brun's algorithm is one of the easiest ones \cite{Brun}. Let us show it for vectors of dimension 3. Let $\big(\beta_1^{(0)},\beta_2^{(0)},\beta_3^{(0)}\big)$ be the initial vector such that $\beta_i^{(0)}\geq 0$ for all $i$. Then in each iteration $k$, do the following:
\begin{enumerate}
\item $\big(\beta_1^{(k)},\beta_2^{(k)},\beta_3^{(k)}\big):=\big(\beta_{\pi(1)}^{(k)},\beta_{\pi(2)}^{(k)},\beta_{\pi(3)}^{(k)}\big)$ where $\pi$ is a permutation\ on set $\{1,2,3\}$ such that $\beta_{\pi(1)}^{(k)}\leq \beta_{\pi(2)}^{(k)}\leq \beta_{\pi(3)}^{(k)}$.
\item $\big(\beta_1^{(k+1)},\beta_2^{(k+1)},\beta_3^{(k+1)}\big):=\big(\beta_1^{(k)},\beta_2^{(k)},\beta_3^{k)}-\beta_2^{(k)}\big)$.
\end{enumerate}
We define periodicity, prepreriod and period in the same way as for Jacobi--Perron algorithm.

From our data, it seems that Brun's algorithm is not better than the Jacobi--Perron algorithm (rather worse). For example, when we take $\rho$ as a root of the polynomial $x^3-4x^2-7x-1$, i.e., the simplest cubic field with $a=4$, and consider the initial vector of the form $(1,|\rho|,\rho^2)$, it does not seem that the expansion is periodic for any root $\rho$. Note that the periodicity is crucial for us since up to multiplication by units, there are only finitely many $\mathfrak{s}$-indecomposables.

On the other hand, in Ennola's cubic fields, we can obtain a periodic expansion in the following way. Note that coordinates in each iteration are already ordered according to their sizes.

\begin{proposition}
Let $\rho>1$ be a root of the polynomial $x^3+(a-1)x^2-ax-1$ where $a\geq 3$. Then Brun's expansion of the vector $(1,\rho,\rho^2)$ is periodic with the period
\begin{align*}
\bm{\beta}^{(0)}&=(1,\rho,\rho^2),\\
\bm{\beta}^{(1)}&=(-\rho+\rho^2,1,\rho),\\
\bm{\beta}^{(2)}&=(-1+\rho,-\rho+\rho^2,1),\\
\bm{\beta}^{(k+2)}&=(-1+\rho,-\rho+\rho^2,1+k\rho-k\rho^2)\text{ where } 1\leq k \leq a-1.
\end{align*}
Moreover, for all $k\in\N_0$ and $1\leq i\leq 3$, the element $\beta_i^{(k)}$ is $\mathfrak{s}$-indecomposable in its signature.
\end{proposition}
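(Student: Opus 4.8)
The plan is to prove the two assertions separately: first the periodicity with the stated period, then the $\mathfrak{s}$-indecomposability of every convergent. For the periodicity I would run Brun's algorithm by hand through the first few iterations and then set up an induction for the body of the period. The opening steps are direct: starting from $\bm{\beta}^{(0)}=(1,\rho,\rho^2)$, which is already ordered since $1<\rho<\rho^2$, one subtraction gives $(1,\rho,\rho^2-\rho)$, and the estimate $\rho^2-\rho=\rho(\rho-1)<1$ (from the bounds \eqref{eq:estimatesennola}) forces the reordering to $\bm{\beta}^{(1)}=(\rho^2-\rho,1,\rho)$; a second subtraction together with $\rho-1<\rho(\rho-1)=\rho^2-\rho<1$ yields $\bm{\beta}^{(2)}=(\rho-1,\rho^2-\rho,1)$. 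The heart of the argument is the identity
\[
1+a\rho-a\rho^2=\rho^3-\rho^2=\rho^2(\rho-1),
\]
obtained by substituting $\rho^3=-(a-1)\rho^2+a\rho+1$ from the defining polynomial; this single identity does two jobs at once.

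Next I would prove by induction that subtracting $\beta_2^{(k+2)}=\rho^2-\rho$ from the largest entry carries $\bm{\beta}^{(k+2)}=(\rho-1,\rho^2-\rho,1+k\rho-k\rho^2)$ to $\bm{\beta}^{(k+3)}=(\rho-1,\rho^2-\rho,1+(k+1)\rho-(k+1)\rho^2)$ for $0\le k\le a-2$. The only point to check is that the new last entry remains largest, i.e. $1+(k+1)\rho-(k+1)\rho^2\ge\rho^2-\rho$, which is equivalent to $(k+2)(\rho^2-\rho)\le 1$; since $k+2\le a$ the worst case is governed by $a(\rho^2-\rho)=1-\rho^2(\rho-1)<1$, exactly the identity rearranged. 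For the closing step $k=a-1$, the same subtraction produces last entry $1+a\rho-a\rho^2=\rho^2(\rho-1)$, so $\bm{\beta}^{(a+1)}$ maps to $(\rho-1,\rho(\rho-1),\rho^2(\rho-1))=(\rho-1)(1,\rho,\rho^2)=(\rho-1)\bm{\beta}^{(0)}$, already correctly ordered because $\rho-1>0$ preserves the order of positive reals. As reordering and subtraction are equivariant under multiplication by the positive real $\rho-1$, this shows the expansion is purely periodic of period length $a+2$ with Hasse--Bernstein-type unit $\rho-1$.

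For the indecomposability, I would note that every entry occurring in $\bm{\beta}^{(0)},\bm{\beta}^{(1)},\bm{\beta}^{(2)}$ and in the first two slots of each $\bm{\beta}^{(k+2)}$ is one of $1,\rho,\rho^2,\rho-1,\rho^2-\rho=\rho(\rho-1)$, each a unit and hence $\mathfrak{s}$-indecomposable in its signature. The only genuine content is the last entry $1+k\rho-k\rho^2$ for $1\le k\le a-1$, whose signature computes to $(+,+,-)$ from the estimates on $\rho,\rho',\rho''$. Here I would multiply by the unit $\rho$ (of signature $(+,-,-)$) and use the minimal polynomial to obtain
\[
\rho(1+k\rho-k\rho^2)=-k-(ka-1)\rho+ka\rho^2=\lambda_{k,a-1},
\]
identifying it with the representative $\lambda_{v,w}$ at $v=k$, $w=a-1$, which indeed satisfies $1\le v\le a-1$ and $\max\{1,v-1\}\le w\le a-1$. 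Since $\lambda_{k,a-1}$ is $\mathfrak{s}$-indecomposable by Theorem~\ref{thm:m1} and multiplication by the unit $\rho^{-1}$ sends $\mathfrak{s}$-indecomposables to indecomposables in the shifted signature, $1+k\rho-k\rho^2=\rho^{-1}\lambda_{k,a-1}$ is indecomposable in its signature $(+,+,-)$. Finally, pure periodicity gives $\beta_i^{(k+a+2)}=(\rho-1)\beta_i^{(k)}$, so all remaining convergents are unit multiples of the finitely many just treated, and indecomposability propagates.

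I expect the main obstacle to be the bookkeeping of the reordering at each Brun step, namely ensuring that the subtracted entry stays the middle one throughout the entire period. The clean way around this is precisely the identity $1+a\rho-a\rho^2=\rho^2(\rho-1)$: it simultaneously certifies the order-preservation inequality $a(\rho^2-\rho)<1$ and exhibits the terminal vector as $(\rho-1)\bm{\beta}^{(0)}$, collapsing what looks like $a$ separate verifications into one algebraic fact. The indecomposability half is then essentially routine, reduced to the single identification with $\lambda_{k,a-1}$.
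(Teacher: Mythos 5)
Your proposal is correct and follows essentially the same route as the paper: the paper likewise treats the verification of the Brun expansion as a direct (if tedious) computation, observes that all entries except $\beta_3^{(k+2)}=1+k\rho-k\rho^2$ are units, and identifies $1+k\rho-k\rho^2=\rho^{-1}\lambda_{k,a-1}$ to conclude $\mathfrak{s}$-indecomposability. Your use of the identity $1+a\rho-a\rho^2=\rho^2(\rho-1)$ to certify both the ordering inequality and the return to $(\rho-1)\bm{\beta}^{(0)}$ is a tidy way to organize the step-by-step check that the paper leaves implicit, but it is a presentational refinement rather than a different argument.
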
  

\begin{proof}
In this case, Brun's expansion $(1,\rho,\rho^2)$ is simple, so it is trivial to verify its form. Regarding the second part of the statement, almost all elements $\beta_i^{(k)}$ are units unless $\beta_3^{(k+2)}=1+k\rho-k\rho^2$ with $1\leq k \leq a-1$. In that case, we have
\[
1+k\rho-k\rho^2=\rho^{-1}\lambda_{k,a-1}
\]
where the elements $\lambda_{k,a-1}$ are specific integers from the set of $\mathfrak{s}$-indecomposables in Ennola's cubic fields.
\end{proof}
However, the same is not valid for the other conjugates of $\rho$, for which the analogous expansions seem to be not periodic. Moreover, even in this case, we obtain only a short line of $\mathfrak{s}$-indecomposables from the triangle, so at the moment, there is no convincing reason to prefer Brun's algorithm to the Jacobi--Perron algorithm.

Our next choice was the algorithm of Tamura and Yasutomi \cite{TY} which is derived from the Jacobi--Perron algorithm and is believed to be periodic for real cubic field. We have tried it for several initial vectors from Ennola's cubic fields, and it does not seem to be as good as the Jacobi--Perron algorithm. Moreover, we started to conduct experiments with new algorithms defined by Karpenkov \cite{Kar} which always give periodic expansions in totally real fields, but at this moment, our results are preliminary. Regarding periodicity, another promising algorithm was proposed by Garrity \cite{Ga} and further developed by Dasaratha, Flapan, Garrity, Lee, Mihaila, Neumann-Chun, Peluse, and Stoffregen  \cite{DF+} and would be worth trying in the future investigation of our problem.

Furthermore, \v Rada, Starosta and Kala  \cite{RSK} also considered a different approach. Instead of fixing some known algorithm, they ``choose an algorithm" in each iteration. In particular, every step in most of the algorithms above corresponds to the multiplication by a matrix. For example, in the Jacobi--Perron algorithm, this matrix is
\[
M^{(k)}=
\left(
\begin{matrix}
-a_1 & 1 & 0\\
-a_2 & 0 & 1\\
1 & 0 & 0
\end{matrix}
\right)
\]
where $a_i=\left\lfloor\frac{\beta_{i+1}^{(k)}}{\beta_1^{(k)}}\right\rfloor$ for $i=1,2$. It means that $\bm{\beta}^{(k+1)}=M^{(k)}(\bm{\beta}^{(k)})^{T}$. The product of these matrices over the period is the matrix of the period; let us simply denote it by $M$. If $\bm{\beta}=\bm{\beta}^{(0)}$ has a purely periodic expansion with a matrix $M$, then necessarily 
\begin{equation} \label{eq:eigen}
\varepsilon\bm{\beta}=M\bm{\beta}
\end{equation}
for some unit $\varepsilon$, i.e., $\beta$ is an eigenvector of $M$. In our follow-up research, we firstly fix some suitable chosen matrix $M$ satisfying (\ref{eq:eigen}) for some unit $\varepsilon$. Then, we try to decompose $M$ into a product of matrices of some specific forms to get
\begin{enumerate}
\item only $\mathfrak{s}$-indecomposables,
\item all $\mathfrak{s}$-indecomposables up to multiplication by units.
\end{enumerate}
So far, we have obtained promising results for the simplest cubic fields which will appear in a forthcoming article.

\section*{Acknowledgments}

All authors were supported by grant 21-00420M from Czech Science Foundation (GA\v{C}R). E. S. was further supported by project GA UK 742120 (from Charles University) and by SVV-2020-260589. M. T. was further supported by grant 22-11563O from Czech Science Foundation (GA\v{C}R).

We also thank the anonymous referee for helpful suggestions and corrections.

\section*{Statements and Declarations}

\subsection*{Conflict of interest statement}
Not applicable.

\subsection*{Code availability statement}
All codes are available at \\https://sites.google.com/view/tinkovamagdalena/codes.

\subsection*{Data availability statement}
Not applicable.

\addresseshere

\renewcommand{\thesection}{A\arabic{section}}

\setcounter{section}{4}
\section{Preliminaries on the Jacobi--Perron algorithm}

In this part, we show proof of a well-known relation between inhomogeneous and homogenous versions of the Jacobi--Perron algorithm.

\begin{proposition}\label{lm:iJPA-JPAarxiv}
Let $\langle\bm{\alpha} ^{(k)}\rangle$ be the iJPA expansion of the vector $(\theta_1, \theta_2, \dots, \theta_{n-1})\in\R^{n-1}$. Then the JPA expansion of $\bm{\beta}^{(0)}= (1, \theta_1, \theta_2, \dots, \theta_{n-1})$ is equal to
\[ \bm{\beta}^{(k)}=\left(\delta_{k}, \alpha_1^{(k)} \delta_{k}, \dots, \alpha_{n-2}^{(k)} \delta_{k},\alpha_{n-1}^{(k)} \delta_{k}\right),\]
where $\delta_k=\frac{1}{\alpha_{n-1}^{(1)} \cdot \dots \cdot \alpha_{n-1}^{(k)}}$ for $k \geq 0$, and $\delta_{0}=1$. Moreover for every $i,k$, we have
\[  \Bigg\lfloor \frac{\beta_{i+1}^{(k)}}{\beta_1^{(k)}} \Bigg\rfloor = \big\lfloor \alpha_i^{(k)}\big\rfloor.\]
\end{proposition}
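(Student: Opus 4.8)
The plan is to establish both assertions simultaneously by induction on $k$. The base case $k=0$ is immediate: since $\delta_0=1$ and $\alpha_i^{(0)}=\theta_i$, the claimed formula reads $\bm{\beta}^{(0)}=(1,\theta_1,\dots,\theta_{n-1})$, which is exactly the starting vector. Before running the induction I would record the positivity fact that will be used repeatedly: for $k\geq 1$ each factor $\alpha_{n-1}^{(k)}=1/(\alpha_1^{(k-1)}-a_1^{(k-1)})$ is the reciprocal of a number in $(0,1)$ (the nonzero fractional part of $\alpha_1^{(k-1)}$), hence $\alpha_{n-1}^{(k)}>1$; consequently every $\delta_k$ is a product of positive numbers and satisfies $\delta_k>0$.

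For the inductive step I assume $\bm{\beta}^{(k)}=(\delta_k,\alpha_1^{(k)}\delta_k,\dots,\alpha_{n-1}^{(k)}\delta_k)$, so that $\beta_1^{(k)}=\delta_k$ and $\beta_{i+1}^{(k)}=\alpha_i^{(k)}\delta_k$. Because $\delta_k>0$, the common factor $\delta_k$ can be pulled out of the floor, giving
\[ b_{i+1}^{(k)}=\Big\lfloor \tfrac{\beta_{i+1}^{(k)}}{\beta_1^{(k)}}\Big\rfloor=\big\lfloor \alpha_i^{(k)}\big\rfloor=a_i^{(k)}, \]
which already proves the second assertion at level $k$. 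Substituting these integer parts into the JPA recurrence yields, for $1\leq j\leq n-1$,
\[ \beta_j^{(k+1)}=\beta_{j+1}^{(k)}-b_{j+1}^{(k)}\beta_1^{(k)}=(\alpha_j^{(k)}-a_j^{(k)})\delta_k, \]
together with $\beta_n^{(k+1)}=\beta_1^{(k)}=\delta_k$.

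To finish I would rewrite these entries using the iJPA recurrence and the relation $\delta_{k+1}=\delta_k/\alpha_{n-1}^{(k+1)}=(\alpha_1^{(k)}-a_1^{(k)})\delta_k$. The first component becomes $\beta_1^{(k+1)}=(\alpha_1^{(k)}-a_1^{(k)})\delta_k=\delta_{k+1}$. For $2\leq j\leq n-1$ the identity $\alpha_j^{(k)}-a_j^{(k)}=\alpha_{j-1}^{(k+1)}(\alpha_1^{(k)}-a_1^{(k)})$ gives $\beta_j^{(k+1)}=\alpha_{j-1}^{(k+1)}\delta_{k+1}$, and the last component satisfies $\beta_n^{(k+1)}=\delta_k=\alpha_{n-1}^{(k+1)}\delta_{k+1}$ since $\alpha_{n-1}^{(k+1)}=1/(\alpha_1^{(k)}-a_1^{(k)})$. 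These are precisely the coordinates of $(\delta_{k+1},\alpha_1^{(k+1)}\delta_{k+1},\dots,\alpha_{n-1}^{(k+1)}\delta_{k+1})$, closing the induction.

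The argument is essentially bookkeeping, so the only point genuinely requiring care is the positivity of $\delta_k$: this is what licenses removing the factor $\delta_k$ from the floor in the computation of $b_{i+1}^{(k)}$, and hence the clean identification $b_{i+1}^{(k)}=a_i^{(k)}$ on which everything else rests. I would therefore justify $\delta_k>0$ (equivalently $\alpha_{n-1}^{(k)}>0$ for all $k\geq 1$) explicitly, and keep the index shift between the $(n-1)$-dimensional iJPA vectors and the $n$-dimensional JPA vectors consistent throughout.
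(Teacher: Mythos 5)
Your proof is correct and follows essentially the same route as the paper's: induction on $k$, identifying $b_{i+1}^{(k)}=\lfloor\alpha_i^{(k)}\rfloor$ from the inductive hypothesis, and then rewriting $(\alpha_j^{(k)}-a_j^{(k)})\delta_k$ via the iJPA recurrence to recover $\alpha_{j-1}^{(k+1)}\delta_{k+1}$. The only cosmetic difference is your emphasis on $\delta_k>0$; since $\beta_{i+1}^{(k)}/\beta_1^{(k)}=\alpha_i^{(k)}\delta_k/\delta_k=\alpha_i^{(k)}$ by exact cancellation, all that is really needed is $\delta_k\neq 0$, which is how the paper treats it.
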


\begin{proof}
We will prove this by induction on $k$. 
For $i=0$, we have 
$\beta_1^{(0)}= 1 = \delta_{0}$,
%$$\beta_n^{(0)}= \theta_{n-1}=\alpha_{n-1}^{(0)}\delta_{0}$$ 
and for $1<i\leq n$, we obtain 
$$\beta_i^{(0)}=\theta_{i-1}=\alpha_{i-1}^{(0)}= \alpha_{i-1}^{(0)} \delta_{0}.$$ 
Therefore,
$$\bm{\beta}^{(0)}= \left(\delta_{0}, \alpha_1^{(0)} \delta_{0}, \dots, \alpha_{n-1}^{(0)} \delta_{0}\right).$$  

Let us now assume that the formula holds for $k$; we will prove it for $k+1$. We see that 
\[
\beta_{1}^{(k+1)} = \beta_2^{(k)}-\left\lfloor \frac{\beta_2^{(k)}}{\beta_1^{(k)}} \right\rfloor \beta_1^{(k)} = \alpha_1^{(k)} \delta_{k}-\left\lfloor \frac{\alpha_1^{(k)} \delta_{k}}{\delta_{k}} \right\rfloor \delta_{k} = (\alpha_1^{(k)}-\lfloor \alpha_1^{(k)} \rfloor) \delta_{k} 
=\frac{1}{\alpha_{n-1}^{(k+1)}} \delta_{k} = \delta_{k+1},
\]
\[
\beta_n^{(k+1)}=  \beta_1^{(k)} =\delta_{k}=\alpha_{n-1}^{(k+1)} \delta_{k+1},
\]
and for $1<i<n$, we get 
\begin{align*}
\beta_i^{(k+1)}&= \beta_{i+1}^{(k)}-\left\lfloor \frac{\beta_{i+1}^{(k)}}{\beta_1^{(k)}} \right\rfloor \beta_1^{(k)} = \alpha_i^{(k)} \delta_{k}-\left\lfloor \frac{\alpha_i^{(k)} \delta_{k}}{\delta_{k}} \right\rfloor \delta_{k}=  (\alpha_i^{(k)}-\lfloor \alpha_i^{(k)} \rfloor) \delta_{k}
\\
&=\frac{\left(\alpha_i^{(k)}-\lfloor \alpha_i^{(k)} \rfloor\right)}{\left(\alpha_1^{(k)}-\lfloor \alpha_1^{(k)} \rfloor\right)} \left(\alpha_1^{(k)}-\lfloor \alpha_1^{(k)} \rfloor\right) \delta_{k} = \alpha_{i-1}^{(k+1)} \frac{1}{\alpha_{n-1}^{(k+1)}} \delta_{k} = \alpha_{i-1}^{(k+1)} \delta_{k+1}.
\end{align*}  
 Hence, 
$$\bm{\beta}^{(k+1)}=\left(\delta_{k+1}, \alpha_1^{(k+1)} \delta_{k+1}, \dots, \alpha_{n-2}^{(k+1)} \delta_{k+1}, \alpha_{n-1}^{(k+1)} \delta_{k+1}\right),$$ 
and, from the equations above, it is easy to see that 
\[  \left\lfloor \frac{\beta_{i+1}^{(k)}}{\beta_1^{(k)}} \right\rfloor = \big\lfloor \alpha_i^{(k)}\big\rfloor.\qedhere\]
\end{proof}

Moreover, Proposition \ref{lm:iJPA-JPAarxiv} implies the following statement:

\begin{proposition} \label{lm:iJPA-JPA,perarxiv}
If the iJPA expansion of $(\theta_1, \theta_2, \dots, \theta_{n-1})\in\R^{n-1}$ is periodic with preperiod length $l_0$ and period length $l_1$, then the JPA expansion of $(1, \theta_1, \theta_2, \dots, \theta_{n-1})$ is also periodic with the same length of preperiod and period, and the unit $\varepsilon$ in the period of iJPA expansion is the inverse of the Hasse--Bernstein unit.
\end{proposition}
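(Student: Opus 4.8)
The plan is to derive everything from the explicit formula of Proposition \ref{lm:iJPA-JPAarxiv}, which already expresses the homogeneous convergents directly in terms of the inhomogeneous ones. Writing $\alpha_0^{(k)}:=1$ for convenience, that proposition gives $\beta_i^{(k)}=\alpha_{i-1}^{(k)}\delta_k$ for all $1\le i\le n$, where $\delta_k=\big(\alpha_{n-1}^{(1)}\cdots\alpha_{n-1}^{(k)}\big)^{-1}$ and $\delta_0=1$. Thus the whole vector $\bm\beta^{(k)}$ is obtained from $\bm\alpha^{(k)}$ by a single scalar factor $\delta_k$, and conversely the inhomogeneous data is recovered as the ratios $\alpha_i^{(k)}=\beta_{i+1}^{(k)}/\beta_1^{(k)}$. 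These two observations are the only tools I will need.

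First I would prove periodicity in the (possibly non-minimal) form. Fix $k\ge l_0$. By periodicity of the iJPA expansion, $\alpha_{i-1}^{(k+l_1)}=\alpha_{i-1}^{(k)}$, so for every $i$,
\[
\beta_i^{(k+l_1)}=\alpha_{i-1}^{(k+l_1)}\delta_{k+l_1}=\alpha_{i-1}^{(k)}\delta_{k+l_1}=\beta_i^{(k)}\frac{\delta_{k+l_1}}{\delta_k}.
\]
It remains to see that the factor $\delta_{k+l_1}/\delta_k$ is one and the same unit for all $k\ge l_0$. Telescoping the definition of $\delta_k$ gives
\[
\frac{\delta_{k+l_1}}{\delta_k}=\frac{1}{\alpha_{n-1}^{(k+1)}\cdots\alpha_{n-1}^{(k+l_1)}}.
\]
The key point is that for $k\ge l_0$ all of the indices $k+1,\dots,k+l_1$ lie in the periodic range and run over exactly one full period, so the product is invariant under $k\mapsto k+1$: the new factor $\alpha_{n-1}^{(k+1+l_1)}$ equals the dropped factor $\alpha_{n-1}^{(k+1)}$. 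Hence $\varepsilon:=\delta_{k+l_1}/\delta_k$ is independent of $k$, and taking $k=l_0$ together with $\alpha_{n-1}^{(l_0+l_1)}=\alpha_{n-1}^{(l_0)}$ identifies it as
\[
\varepsilon=\Big(\prod_{m=l_0}^{l_0+l_1-1}\alpha_{n-1}^{(m)}\Big)^{-1},
\]
the inverse of the Hasse--Bernstein unit; in particular $\varepsilon$ is a unit by \cite{BHunit}. This already yields $\beta_i^{(k+l_1)}=\beta_i^{(k)}\varepsilon$ for all $i$ and $k\ge l_0$, so the JPA expansion is periodic with preperiod at most $l_0$ and period dividing $l_1$.

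To upgrade this to the exact equality of lengths, I would use the equivalence of the two notions of periodicity via the ratio relation. If the JPA expansion were periodic with strictly smaller data $l_0'\le l_0$, $l_1'\mid l_1$ and unit $\varepsilon'$, then dividing $\beta_{i+1}^{(k+l_1')}=\beta_{i+1}^{(k)}\varepsilon'$ by $\beta_1^{(k+l_1')}=\beta_1^{(k)}\varepsilon'$ cancels $\varepsilon'$ and gives $\alpha_i^{(k+l_1')}=\alpha_i^{(k)}$ for $k\ge l_0'$; thus the iJPA expansion would be periodic with these shorter lengths, contradicting the minimality of $l_0,l_1$. Hence the minimal JPA preperiod and period lengths are exactly $l_0$ and $l_1$. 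I do not expect a genuine obstacle here, as the statement is essentially a corollary of Proposition \ref{lm:iJPA-JPAarxiv}; the only points needing care are the full-period invariance of $\prod_j\alpha_{n-1}^{(k+j)}$ (which is precisely what makes the scaling factor a single $k$-independent unit rather than a $k$-dependent one) and the index bookkeeping that pins $\varepsilon$ down as \emph{exactly} the inverse Hasse--Bernstein unit, with the minimality claim then following from the ratio relation.
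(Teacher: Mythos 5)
Your proof is correct and follows essentially the same route as the paper's: both telescope $\delta_{k+l_1}/\delta_k$ into the product $\big(\alpha_{n-1}^{(k+1)}\cdots\alpha_{n-1}^{(k+l_1)}\big)^{-1}$ and use periodicity of the $\alpha_{n-1}^{(m)}$ to identify this with the inverse Hasse--Bernstein unit. Your additional argument that the minimal JPA lengths cannot be strictly smaller (via cancelling $\varepsilon'$ in the ratios $\beta_{i+1}^{(k)}/\beta_1^{(k)}$) is a point the paper leaves implicit, and it is a welcome completion.
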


\begin{proof}
It follows from the previous proposition. It is easy to see that if $k\geq l_0$, then 
$$\delta_{k + l_1}=\frac{1}{\alpha_{n-1}^{(1)}\cdots \alpha_{n-1}^{(k)}\cdots\alpha_{n-1}^{(k + l_1)}}=\delta_{k} \frac{1}{\alpha_{n-1}^{(k+1)}\cdots\alpha_{n-1}^{(k + l_1)}}=\delta_{k}\frac{1}{\alpha_{n-1}^{(l_0)}\cdots \alpha_{n-1}^{(l_0+l_1-1)}}.$$ So, since $\bm{\alpha}^{k+l_1}=\bm{\alpha}^{(k)}$ for $k\geq l_0$, the JPA expansion of $(1, \theta_1, \theta_2, \dots, \theta_{n-1})$ is periodic, and $\varepsilon$ in its period is the inverse of Hasse--Bernstein unit.
\end{proof} 

\setcounter{section}{5}
\section{Jacobi-Perron algorithm in the simplest cubic fields}

In this part, we show several proofs of statements, which were left without a proof in Section \ref{sec:jpasimplest}. 

\subsection{Inhomogenous Jacobi-Perron expansion in the simplest cubic fields}
First of all, in Section \ref{sec:jpasimplest}, we discuss inhomogenous Jacobi-Perron expansion of the vector $(-\rho',\rho'^2)$ where $-2<\rho'<-1$ is a root of the polynomial $x^3-ax^2-(a+3)x-1$ where $a\geq -1$. We derive there its form for $a\geq 4$ (see Proposition \ref{prop:ijpasimplestperiodic4} or Proposition \ref{prop:ijpasimplestappendix} in this section). The expansion of the same vector for $-1\leq a\leq 3$ is also periodic. However, it has a different form, which we show in the following example.  

\begin{example}
For $-1\leq a\leq3$, we can use a computer program (the calculations were performed in Mathematica) to determine the inhomogenous Jacobi-Perron expansion of the vector $(-\rho',\rho'^2)$. We get the following results:
\begin{itemize}
\item For $a=-1$, the iJPA expansion of $(-\rho',\rho'^2)$ is periodic with the preperiod $\bm{\alpha}^{(0)}=(-\rho',\rho'^2)$, $\bm{a}^{(0)}=(1,3)$, and the period 
\[
\begin{array}{ll}
\bm{\alpha}^{(1)}=(5-\rho'-2\rho'^2,-2+\rho'^2), &\quad\bm{a}^{(1)}=(0,1),\\
\bm{\alpha}^{(2)}=(-1-\rho',\rho'^2), &\quad\bm{a}^{(2)}=(0,3).
\end{array}
\]
\item For $a=0$, our iJPA expansion is periodic with the preperiod 
\[
\begin{array}{ll}
\bm{\alpha}^{(0)}=(-\rho',\rho'^2), &\quad\bm{a}^{(0)}=(1,2), \\
\bm{\alpha}^{(1)}=(3-\rho'^2,-2-\rho'+\rho'^2), &\quad\bm{a}^{(1)}=(0,1),
\end{array}
\]
and the period 
\[
\begin{array}{ll}
\bm{\alpha}^{(2)}=(-1+\rho'^2,-\rho'),&\quad\bm{a}^{(2)}=(1,1),\\
\bm{\alpha}^{(3)}=(-\rho',-1-\rho'+\rho'^2),&\quad\bm{a}^{(3)}=(1,2),\\
\bm{\alpha}^{(4)}=(4-\rho'^2,-2-\rho'+\rho'^2), &\quad\bm{a}^{(4)}=(1,1).    
\end{array}
\]
\item For $a=1$, our iJPA expansion is periodic with the preperiod
\[
\begin{array}{ll} 
\bm{\alpha}^{(0)}=(-\rho',\rho'^2), &\quad\bm{a}^{(0)}=(1,1),\\
\bm{\alpha}^{(1)}=(1-\rho',-2-2\rho'+\rho'^2), &\quad\bm{a}^{(1)}=(2,2), 
\end{array}
\]
and the period 
\[
\begin{array}{ll} 
\bm{\alpha}^{(2)}=(5+\rho'-\rho'^2,-2-2\rho'+\rho'^2),&\quad\bm{a}^{(2)}=(1,2),\\
\bm{\alpha}^{(3)}=(-1+\rho'^2,-\rho'),&\quad\bm{a}^{(3)}=(0,1),\\
\bm{\alpha}^{(4)}=\big(\frac{4}{5}-\frac{1}{5}\rho'^2,-\frac{7}{5}-\rho'+\frac{3}{5}\rho'^2\big),&\quad\bm{a}^{(4)}=(0,1),\\
\bm{\alpha}^{(5)}=(-3-\rho'+\rho'^2,1-\rho'),&\quad\bm{a}^{(5)}=(0,2),\\
\bm{\alpha}^{(6)}=(-\rho',-1-2\rho'+\rho'^2),&\quad\bm{a}^{(6)}=(1,3).
\end{array}
\]
\item For $a=2$, our iJPA expansion is periodic with the preperiod 
\[
\begin{array}{ll}
\bm{\alpha}^{(0)}=(-\rho',\rho'^2),&\quad\bm{a}^{(0)}=(1,1),\\
\bm{\alpha}^{(1)}=(1-\rho',-2-3\rho'+\rho'^2),&\quad\bm{a}^{(1)}=(2,3),
\end{array}
\]
and the period 
\[
\begin{array}{ll}
\bm{\alpha}^{(2)}=(6+2\rho'-\rho'^2,-2-3\rho'+\rho'^2),&\quad\bm{a}^{(2)}=(1,3),\\
\bm{\alpha}^{(3)}=(-1+\rho'^2,-\rho'),&\quad\bm{a}^{(3)}=(0,1),\\
\bm{\alpha}^{(4)}=\big(\frac{6}{7}+\frac{1}{7}\rho'-\frac{1}{7}\rho'^2,-\frac{10}{7}-\frac{11}{7}\rho'+\frac{4}{7}\rho'^2\big),&\quad\bm{a}^{(4)}=(0,1),\\
\bm{\alpha}^{(5)}=(-3-2\rho'+\rho'^2,1-\rho'),&\quad\bm{a}^{(5)}=(1,2),\\
\bm{\alpha}^{(6)}=(-\rho',-1-3\rho'+\rho'^2),&\quad\bm{a}^{(6)}=(1,4).
\end{array}
\]
\item For $a=3$, our iJPA expansion is periodic with the preperiod 
\[
\begin{array}{ll}
\bm{\alpha}^{(0)}=(-\rho',\rho'^2),&\quad\bm{a}^{(0)}=(1,1),\\
\bm{\alpha}^{(1)}=(1-\rho',-2-4\rho'+\rho'^2),&\quad\bm{a}^{(1)}=(2,4),
\end{array}
\]
and the period 
\[
\begin{array}{ll}
\bm{\alpha}^{(2)}=(7+3\rho'-\rho'^2,-2-4\rho'+\rho'^2),&\quad\bm{a}^{(2)}=(1,4),\\
\bm{\alpha}^{(3)}=(-1+\rho'^2,-\rho'),&\quad\bm{a}^{(3)}=(0,1),\\
\bm{\alpha}^{(4)}=\big(\frac{8}{9}+\frac{2}{9}\rho'-\frac{1}{9}\rho'^2,-\frac{13}{9}-\frac{19}{9}\rho'+\frac{5}{9}\rho'^2\big),&\quad\bm{a}^{(4)}=(0,1),\\
\bm{\alpha}^{(5)}=(-3-3\rho'+\rho'^2,1-\rho'),&\quad\bm{a}^{(5)}=(2,2),\\
\bm{\alpha}^{(6)}=(-\rho',-1-4\rho'+\rho'^2),&\quad\bm{a}^{(6)}=(1,5).
\end{array}
\]   
\end{itemize}
\end{example}

Moreover, in Proposition \ref{prop:ijpasimplestperiodic4}, we show only a sketch of the proof for $a\geq 4$. Here, one can see it with all steps:

\begin{proposition} \label{prop:ijpasimplestappendix}
Let $a\geq 4$.
Then the Jacobi-Perron expansion of the vector $(-\rho',\rho'^2)$ is periodic. In particular, let $\lfloor\frac{a}{2}\rfloor=A_0$. Then the preperiod of the JPA expansion of $(-\rho',\rho'^2)$ is 
\[
\begin{array}{ll}
\bm{\alpha}^{(0)}=(-\rho',\rho'^2),&\quad\bm{a}^{(0)}=(1,1),\\
\bm{\alpha}^{(1)}=(1-\rho',-2-(a+1)\rho'+\rho'^2), &\quad\bm{a}^{(1)}=(2,a+1).
\end{array}
\] 
For $a$ even, the period is
\begin{align*}
&\bm{\alpha}^{(2)}=(a+4+a\rho'-\rho'^2,-2-(a+1)\rho'+\rho'^2),\\
&\bm{\alpha}^{(3)}=(-1+\rho'^2,-\rho'),\\
&\bm{\alpha}^{(4)}=\left(\frac{2a+2+(a-1)\rho'-\rho'^2}{2a+3},\frac{-3a-4-(a^2+3a+1)\rho'+(a+2)\rho'^2}{2a+3}\right),\\
&\bm{\alpha}^{(5)}= \left(-\left(A_0+2\right)-\left(A_0+1\right)\rho'+\rho'^2,1-\rho'\right),\\
&\bm{\alpha}^{(6)}=\Bigg(\frac{A_0^2+2A_0-1+(A_0-2)\rho'-\rho'^2}{A_0^3+4A_0^2+3A_0-1},\\&\hspace{3cm}\frac{-2A_0^2-3A_0+1-(2A_0^3+3A_0^2+A_0-1)\rho'+(A_0^2+A_0)\rho'^2}{A_0^3+4A_0^2+3A_0-1}\Bigg),\\
& \bm{\alpha}^{(7)}= (-(A_0+4)-2A_0\rho'+\rho'^2,A_0+2-\rho'),\\
& \bm{\alpha}^{(8)}=(-\rho',-1-(a+1)\rho'+\rho'^2),
\end{align*}
and the corresponding vectors of integers parts are $\bm{a}^{(2)}=(1,a+1)$, $\bm{a}^{(3)}=(0,1)$, $\bm{a}^{(4)}=(0,A_0)$, $\bm{a}^{(5)}=(0,2)$, $\bm{a}^{(6)}=(0,1)$, $\bm{a}^{(7)}=(A_0-2,A_0+3)$ and $\bm{a}^{(8)}=(1,a+2)$.

For $a$ odd, the period is
\begin{align*}
&\bm{\alpha}^{(2)}=(a+4+a\rho'-\rho'^2,-2-(a+1)\rho'+\rho'^2),\\
&\bm{\alpha}^{(3)}=(-1+\rho'^2,-\rho'),\\
&\bm{\alpha}^{(4)}=\left(\frac{2a+2+(a-1)\rho'-\rho'^2}{2a+3},\frac{-3a-4-(a^2+3a+1)\rho'+(a+2)\rho'^2}{2a+3}\right),\\
&\bm{\alpha}^{(5)}=\left(-\left(A_0+2\right)-\left(A_0+2\right)\rho'+\rho'^2,1-\rho'\right),\\
& \bm{\alpha}^{(6)}=\Bigg(\frac{A_0^2+3A_0-2+(A_0-2)\rho'-\rho'^2}{A_0^3+6A_0^2+7A_0-5},\\&\hspace{2cm}
\frac{-2A_0^2-5A_0+3-(2A_0^3+6A_0^2+2A_0-3)\rho'+(A_0^2+2A_0-1)\rho'^2}{A_0^3+6A_0^2+7A_0-5}\Bigg),\\
&\bm{\alpha}^{(7)}=(-(A_0+5)-(2A_0+1)\rho'+\rho'^2,A_0+3-\rho'),\\
& \bm{\alpha}^{(8)}=(-\rho',-1-(a+1)\rho'+\rho'^2),
\end{align*}
and the corresponding vectors of integers parts are $\bm{a}^{(2)}=(1,a+1)$, $\bm{a}^{(3)}=(0,1)$, $\bm{a}^{(4)}=(0,A_0)$, $\bm{a}^{(5)}=(1,2)$, $\bm{a}^{(6)}=(0,1)$, $\bm{a}^{(7)}=(A_0-2,A_0+4)$ and $\bm{a}^{(8)}=(1,a+2)$.
\end{proposition}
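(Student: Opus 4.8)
The plan is to verify the claimed expansion directly, iteration by iteration, exploiting the fact that the iJPA is a deterministic algorithm: once $\bm{\alpha}^{(k)}$ and the integer parts $\bm{a}^{(k)}=\big(\lfloor\alpha_1^{(k)}\rfloor,\lfloor\alpha_2^{(k)}\rfloor\big)$ are known, the next vector is forced by
\[
\bm{\alpha}^{(k+1)}=\left(\frac{\alpha_2^{(k)}-a_2^{(k)}}{\alpha_1^{(k)}-a_1^{(k)}},\ \frac{1}{\alpha_1^{(k)}-a_1^{(k)}}\right).
\]
So, starting from $\bm{\alpha}^{(0)}=(-\rho',\rho'^2)$, I would at each step first pin down the two floors $a_1^{(k)},a_2^{(k)}$, and then check that the right-hand side of the recurrence equals the asserted $\bm{\alpha}^{(k+1)}$. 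This second task is purely formal: every coordinate is a rational function in $\rho'$, and each identity follows by rationalizing denominators of the form $\frac{1}{c_0+c_1\rho'+c_2\rho'^2}$ and reducing modulo $\rho'^3=a\rho'^2+(a+3)\rho'+1$. I would display these reductions only in representative cases, e.g. $\alpha_2^{(1)}=\frac{1}{-\rho'-1}=-2-(a+1)\rho'+\rho'^2$, and leave the rest as routine.

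The genuine content, and the main obstacle, is the floor determination. For coordinates that are themselves conjugates of $\rho'$ (for instance $\alpha_2^{(1)}=\rho$, whence $a_2^{(1)}=a+1$) or simple polynomial expressions in $\rho'$, the floors follow at once by squeezing with the bounds $-1-\frac{1}{a+1}<\rho'<-1-\frac{1}{a+2}$ from (\ref{eq:estimates2}) together with the conjugate estimates (\ref{eq:estimates}); this is exactly the technique sketched for Proposition \ref{prop:ijpasimplestperiodic4}, and I would carry it out in full, e.g. bounding $\alpha_1^{(2)}=a+4+a\rho'-\rho'^2$ strictly between $1$ and $2$. The delicate point is the fourth iteration, where the parity of $a$ enters: $\alpha_2^{(4)}$ is a root of $h(x)=(2a+3)x^3-a^2x^2-(a^2+2a+3)x+1$, and I would locate it via the sign changes of $h$ at $0,1,\tfrac{a}{2},\tfrac{a+1}{2}$ to conclude $\lfloor\alpha_2^{(4)}\rfloor=\lfloor a/2\rfloor=A_0$. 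From this single floor the two branches of the expansion split, after which the remaining floors in iterations $5$--$8$ are obtained by the same squeezing, now phrased in terms of $A_0$ and treating the even and odd cases in parallel.

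To finish, I would establish periodicity by showing that the algorithm closes up, i.e. that applying one more step to $\bm{\alpha}^{(8)}$ returns $\bm{\alpha}^{(2)}$; concretely $\alpha_2^{(9)}=\frac{1}{-\rho'-1}=-2-(a+1)\rho'+\rho'^2=\alpha_2^{(2)}$, with the analogous identity for the first coordinate, both again via the minimal polynomial. This yields preperiod $\bm{\alpha}^{(0)},\bm{\alpha}^{(1)}$ and period $\bm{\alpha}^{(2)},\dots,\bm{\alpha}^{(8)}$ of length $7$. Finally, since (\ref{eq:estimates2}) is only guaranteed for $a\geq 7$, I would dispose of the three cases $4\leq a\leq 6$ by a direct (computer-assisted) check that the computed floors agree with the stated $\bm{a}^{(k)}$; the algebraic identities for $\bm{\alpha}^{(k+1)}$ hold uniformly in $a$ and so require no separate treatment. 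I expect the floor at iteration $4$ to be the only step needing real care, everything else being bookkeeping.
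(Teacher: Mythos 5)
Your proposal is correct and follows essentially the same route as the paper's own proof: iterate the algorithm, pin down the floors by squeezing with the bounds $-1-\frac{1}{a+1}<\rho'<-1-\frac{1}{a+2}$, isolate the fourth iteration as the delicate step via the sign changes of $h(x)=(2a+3)x^3-a^2x^2-(a^2+2a+3)x+1$ at $0,1,\tfrac{a}{2},\tfrac{a+1}{2}$ to get $\lfloor\alpha_2^{(4)}\rfloor=A_0$, close the period by checking that iteration $9$ reproduces iteration $2$, and dispose of $4\leq a\leq 6$ by direct verification. No substantive differences to report.
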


\begin{proof}
Recall that for $a\geq 7$, we have
\begin{equation} \label{eq:estimates2b}
 -1-\frac{1}{a+1}<\rho'<-1-\frac{1}{a+2}.  
\end{equation}
It is easy to verify that the statement holds in the concrete cases when $4\leq a\leq 6$. Therefore, in the rest of the proof, let us assume $a\geq7$.

\bigskip

\noindent
{\bf First iteration}
\nopagebreak

In the initial iteration, we have the vector $(-\rho',\rho'^2)$. We know that $\lfloor -\rho'\rfloor=1$, i.e., $a_1^{(0)}=1$. Using (\ref{eq:estimates2b}), we see that
\[
1<\rho'^2<\Big(1+\frac{1}{a+1}\Big)^2<1+\frac{2}{a+1}+\frac{1}{(a+1)^2}<2
\] 
for $a\geq 7$. It implies that $a_2^{(0)}=1$ in these cases of $a$. Then we have
\[
\alpha_1^{(1)}=\frac{\rho'^2-1}{-\rho'-1}=1-\rho'.
\]
The fact that
\[
\alpha_2^{(1)}=\frac{1}{-\rho'-1}=-2-(a+1)\rho'+\rho'^2
\] 
can be verified using $\rho'^3-a\rho'^2-(a+3)\rho'-1=0$.

\bigskip

\noindent
{\bf Second iteration}
\nopagebreak

We will proceed similarly with $\bm{\alpha}^{(1)}$. We can immediately conclude that $a_1^{(1)}=\lfloor 1-\rho'\rfloor=2$. On the other hand, 
the element $\alpha_2^{(1)}=-2-(a+1)\rho'+\rho'^2$ is a conjugate of $\rho'$, namely $\rho$, for which we have $a_2^{(1)}=\lfloor\rho\rfloor=a+1$. Then, it is easy to verify that 
\[
\frac{-2-(a+1)\rho'+\rho'^2-(a+1)}{1-\rho'-2}=a+4+a\rho'-\rho'^2 \quad\text{ and } \quad \frac{1}{1-\rho'-2}=-2-(a+1)\rho'+\rho'^2. 
\]
Thus, from now on, let us focus only on the determination of the integer parts, which is the most challenging part of each step.

\bigskip

\noindent
{\bf Third iteration}
\nopagebreak

Similarly as in the second iteration, we have $a_2^{(2)}=\lfloor \rho\rfloor=a+1$. For $a_1^{(2)}$, we can conclude that
\[
\alpha_1^{(2)}=a+4+a\rho'-\rho'^2>a+4-a\Big(1+\frac{1}{a+1}\Big)-\Big(1+\frac{1}{a+1}\Big)^2=1+\frac{a^2+a-1}{(a+1)^2}>1
\]
and
\[
\alpha_1^{(2)}=a+4+a\rho'-\rho'^2<a+4-a\Big(1+\frac{1}{a+2}\Big)-\Big(1+\frac{1}{a+2}\Big)^2=2-\frac{1}{(a+2)^2}<2.
\]
Therefore, $a_1^{(2)}=1$.

\bigskip

\noindent
{\bf Fourth iteration}
\nopagebreak

As before, we see that $a_1^{(3)}=\lfloor-1+\rho'^2\rfloor=0$ and $a_2^{(3)}=\lfloor-\rho'\rfloor=1$.

\bigskip

\noindent
{\bf Fifth iteration}
\nopagebreak

Let us know discuss the next iteration. First of all, we see that
\[
2a+2+(a-1)\rho'-\rho'^2<2a+2-(a-1)-1=a+2<2a+3,
\]
and thus
\[
a_1^{(4)}=\left\lfloor\frac{2a+2+(a-1)\rho'-\rho'^2}{2a+3}\right\rfloor=0.
\]

In the second part of this step, one can easily check that $\alpha_2^{(4)}$ is a root of the polynomial
\[
h(x)=(2a+3)x^3-a^2x^2-(a^2+2a+3)x+1,
\]
which has two positive and one negative root. In particular $h(0)=1>0$, $h(1)=1-2a^2<0$, $8h\big(\frac{a}{2}\big)=- a^3-8a^2-12a+8<0$ and $8h\big(\frac{a+1}{2}\big)=a^3+a^2-9a-1>0$. The element $\alpha_2^{(4)}$ is one of these two positive roots. Moreover, 
\[
-3a-4-(a^2+3a+1)\rho'+(a+2)\rho'^2>-3a-4+(a^2+3a+1)+(a+2)=a^2+a-1>2a+3,
\]
which implies $\alpha_2^{(4)}>1$. It follows that $\lfloor\alpha_2^{(4)}\rfloor=\lfloor\frac{a}{2}\rfloor=A_0$.

\bigskip

\noindent
{\bf Sixth iteration}
\nopagebreak

In the next step, for $a$ even, we can deduce that
\begin{multline*}
\alpha_1^{(5)}=-(A_0+2)-(A_0+1)\rho'+\rho'^2<-(A_0+2)+(A_0+1)\left(1+\frac{1}{2A_0+1}\right)+\left(1+\frac{1}{2A_0+1}\right)^2
\\=\frac{2A_0^2+7A_0+4}{(2A_0+1)^2}<1
\end{multline*}
for $A_0\geq 3$. This gives $a_1^{(5)}=0$ for $a$ even.

If $a$ is odd, we obtain
\begin{multline*}
\alpha_1^{(5)}=-(A_0+2)-(A_0+2)\rho'+\rho'^2<-(A_0+2)+(A_0+2)\left(1+\frac{1}{2A_0+2}\right)+\left(1+\frac{1}{2A_0+2}\right)^2
\\=\frac{6A_0^2+18A_0+13}{4(A_0+1)^2}<2
\end{multline*}
for $A_0\geq 3$. Moreover,
\[
\alpha_1^{(5)}>-(A_0+2)+(A_0+2)\left(1+\frac{1}{2A_0+3}\right)+\left(1+\frac{1}{2A_0+3}\right)^2=1+\frac{A_0+4}{2A_0+3}+\frac{1}{(2A_0+3)^2}>1.
\]
Therefore, $a_1^{(5)}=1$.

In the second part, we immediately obtain $a_2^{(5)}=\lfloor 1-\rho'\rfloor=2$.

\bigskip

\noindent
{\bf Seventh iteration}
\nopagebreak

For $a$ even, we have
\[
A_0^2+2A_0-1+(A_0-2)\rho'-\rho'^2<A_0^2+2A_0-1-(A_0-2)-1=A_0^2+A_0<A_0^3+4A_0^2+3A_0-1
\]
for $A_0\geq 1$. Similarly, for $a$ odd, we obtain
\[
A_0^2+3A_0-2+(A_0-2)\rho'-\rho'^2<A_0^2+3A_0-2-(A_0-2)-1=A_0^2+2A_0-1<A_0^3+6A_0^2+7A_0-5.
\]  
This gives $a_1^{(6)}=0$ for both cases of $a$.
 
Regarding the second coordinate, for $a$ even, we see that
\begin{multline*}
-2 A_0^2 - 3 A_0 + 1 - (2 A_0^3 + 3 A_0^2 + A_0 - 1)\rho' + (A_0^2 + 
    A_0) \rho'^2<\frac{8A_0^5+20A_0^4+18A_0^3+10A_0^2+A_0-1}{(2A_0+1)^2}\\<2(A_0^3+4A_0^2+3A_0-1)
\end{multline*}
and
\begin{multline*}
-2 A_0^2 - 3 A_0 + 1 - (2 A_0^3 + 3 A_0^2 + A_0 - 1)\rho' + (A_0^2 + 
    A_0) \rho'^2>\frac{8A_0^4+20A_0^3+14A_0^2+3A_0-2}{4(A_0+1)}\\>A_0^3+4A_0^2+3A_0-1
\end{multline*}
for $A_0\geq 3$.
Likewise, if $a$ is odd, we have
\begin{multline*}
-2 A_0^2 - 5 A_0 + 3 - (2 A_0^3 + 6 A_0^2 + 2 A_0 - 3) \rho' + (A_0^2 + 2 A_0 - 1) \rho'^2<\frac{8A_0^5+40A_0^4+64A_0^3+37A_0^2-8A_0-15}{4(A_0+1)^2}\\<2(A_0^3+6A_0^2+7A_0-5)
\end{multline*}
and
\begin{multline*}
-2 A_0^2 - 5 A_0 + 3 - (2 A_0^3 + 6 A_0^2 + 2 A_0 - 3) \rho' + (A_0^2 + 2 A_0 - 1) \rho'^2>\frac{8A_0^5+48A_0^4+96A_0^3+66A_0^2-11A_0-25}{(2A_0+3)^2}\\>A_0^3+6A_0^2+7A_0-5
\end{multline*}
for $A_0\geq 3$. This leads to $a_2^{(6)}=1$.

\bigskip

\noindent
{\bf Eighth iteration}
\nopagebreak

If $a$ is even, we can easily deduce that
\[
-(A_0 + 4) - 2 A_0 \rho' + \rho'^2<\frac{4A_0^3-4A_0^2-5A_0}{(2A_0+1)^2}<A_0-1
\]
and 
\[
-(A_0 + 4) - 2 A_0 \rho' + \rho'^2>\frac{4A_0^3-12A_0^2-7A_0}{4(A_0+1)^2}>A_0-2.
\]
In a similar manner, for $a$ odd, we have
\[
-(A_0 + 5) - (2 A_0 + 1) \rho' + \rho'^2<\frac{4A_0^3-10A_0-5}{4(A_0+1)^2}<A_0-1 
\]
and
\[
-(A_0 + 5) - (2 A_0 + 1) \rho' + \rho'^2>\frac{4A_0^3+4A_0^2-15A_0-17}{(2A_0+3)^2}>A_0-2.
\]
In both of these cases, we get $a_1^{(7)}=A_0-2$. 

In the second part, we immediately see that $a_2^{(7)}=\lfloor A_0+2-\rho'\rfloor=A_0+3$ for $a$ even, and $a_2^{(7)}=\lfloor A_0+3-\rho'\rfloor=A_0+4$ for $a$ odd.

\bigskip

\noindent
{\bf Ninth iteration}
\nopagebreak

In this step, we have $a_1^{(8)}=\lfloor-\rho'\rfloor=1$. Moreover,
$a_2^{(8)}=\lfloor-1-(a+1)\rho'+\rho'^2\rfloor=a+2$, which directly follows from the fact that $a_2^{(2)}=\lfloor-2-(a+1)\rho'+\rho'^2\rfloor=a+1$, which we have computed before. Then, it is trivial to check that $\frac{\alpha_2^{(8)}-(a+2)}{\alpha_1^{(8)}-1}=\alpha_1^{(2)}$ and $\frac{1}{\alpha_1^{(8)}-1}=\alpha_2^{(2)}$.
\end{proof}

\subsection{Homogenous Jacobi-Perron expansion in the simplest cubic fields}

Similarly as in the case of iJPA expansion, we can easily derive homogenous JPA expansion of the vector $(1,-\rho', \rho'^2)$ where $-2<\rho'<-1$ is a root of the polynomial $x^3-ax^2-(a+3)x-1$ with $-1\leq a\leq 3$. 

\begin{example} 
For $a=-1$, the expansion of $(1,-\rho', \rho'^2)$ is periodic with the preperiod $\bm{\beta}^{(0)}=(1,-\rho',\rho'^2)$, and the period
\begin{align*}
\bm{\beta}^{(1)}&=(-1-\rho',-3+\rho'^2,1),\\
\bm{\beta}^{(2)}&=(-3+\rho'^2,2+\rho',-1-\rho').
\end{align*}

For $a=0$, we get the expansion with the preperiod $\bm{\beta}^{(0)}=(1,-\rho',\rho'^2)$ and $\bm{\beta}^{(1)}=(-1-\rho',-2+\rho'^2,1)$, and the period
\begin{align*}
\bm{\beta}^{(2)}&=(-2+\rho'^2,2+\rho',-1-\rho'),\\
\bm{\beta}^{(3)}&=(4+\rho'-\rho'^2,1-\rho'-\rho'^2,-2+\rho'^2),\\
\bm{\beta}^{(4)}&=(-3-2\rho',-10-2\rho'+3\rho'^2,4+\rho'-\rho'^2).
\end{align*}

And finally, for $1\leq a\leq 3$, we obtain the expansion with the preperiod $\bm{\beta}^{(0)}=(1,-\rho',\rho'^2)$ and $\bm{\beta}^{(1)}=(-1-\rho',-1+\rho'^2,1)$, and the period
\begin{align*}
\bm{\beta}^{(2)}&=(1+2\rho'+\rho'^2,a+2+(a+1)\rho',-1-\rho'),\\
\bm{\beta}^{(3)}&=(a+1+(a-1)\rho'-\rho'^2,-(a+2)-(2a+3)\rho'-(a+1)\rho'^2,1+2\rho'+\rho'^2),\\
\bm{\beta}^{(4)}&=(-(a+2)-(2a+3)\rho'-(a+1)\rho'^2,-a-(a-3)\rho'+2\rho'^2,a+1+(a-1)\rho'-\rho'^2),\\
\bm{\beta}^{(5)}&=(-a-(a-3)\rho'+2\rho'^2,2a+3+(3a+2)\rho'+a\rho'^2,-(a+2)-(2a+3)\rho'-(a+1)\rho'^2),\\
\bm{\beta}^{(6)}&=(a^2+a+3+(a^2-a+5)\rho'-(a-2)\rho'^2,a-2-9\rho'-(a+5)\rho'^2,-a-(a-3)\rho'+2\rho'^2).
\end{align*}   
\end{example}

\section{Jacobi-Perron algorithm in  Ennola's cubic fields}
In this part, we will focus on both inhomogenous and homogenous JPA expansions for roots of the polynomial $x^3-(a-1)x^2-ax-1$ where $a\geq 5$ and provide proofs, which where omitted in Section \ref{sec:jpaennola}.  

\subsection{Inhomogeneou Jacobi-Perron algorithm in Ennola's cubic fields}

It this section, we will derive the iJPA expansions of the vector $(|\psi|, \psi^2)$, where $\psi$ is one of the roots of polynomial $x^3-(a-1)x^2-ax-1$ where $a\in \mathbb{N}$, $a \geq 5$. Let us denote this polynomial by $f(x)$ (or $f_a(x)$)  and the roots of the polynomial by $\psi, \psi', \psi''$, where $\psi'<\psi''<\psi$. 

We know that
\[ a<a+\frac{a-1}{a^3}<\psi<a+\frac{a^2-1}{a^4}<a+\frac{1}{a^2}.\]
\[-1 < -\frac{a-1}{a}<\psi'< -\frac{a-2}{a-1} < 0,\]
\[-1 < -\frac{1}{a-2}<\psi'' < -\frac{1}{a-1} < 0, \]

\subsubsection{The first root}
First of all, we will focus on
the iJPA expansion of the vector $(\psi, \psi^2)$. We show that it is periodic with preperiod length 2 and period length 1.

\begin{proposition}
Let $a\geq 5$.
Then the inhomogeneous Jacobi-Perron expansion of the vector $\left(\psi,\psi^2\right)$ is periodic. The preperiod of the iJPA expansion of $\left(\psi,\psi^2\right)$ is 
\[
\begin{array}{ll}
\bm{\alpha}^{(0)}=\left(\psi,\psi^2\right),&\quad\bm{a}^{(0)}=(a,a^2),\\
\bm{\alpha}^{(1)}=\left(a+ \psi,\psi+\psi^2\right), &\quad\bm{a}^{(1)}=(2a,a+a^2), 
\end{array}
\] 
and the period is
\[
\begin{array}{ll}
\bm{\alpha}^{(2)}=\left(a+1+ \psi,\psi+\psi^2\right), &\quad\bm{a}^{(2)}=(2a+1,a+a^2). %\\
%\bm{\alpha}^{(3)}=\left(a+1+ \psi'',\psi''+\psi''^2\right), &\quad\bm{a}^{(3)}=(2a+1,a+a^2). 
\end{array}
\] 
\end{proposition}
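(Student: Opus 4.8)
The plan is to compute the expansion directly from the iJPA recursion (with $n=3$, so that $\bm\alpha^{(k+1)}=\big(\tfrac{\alpha_2^{(k)}-a_2^{(k)}}{\alpha_1^{(k)}-a_1^{(k)}},\tfrac{1}{\alpha_1^{(k)}-a_1^{(k)}}\big)$), using the minimal polynomial of $\psi$ to simplify the quotients at each step, and then to verify that the third iteration reproduces the second. Throughout I would rely on the given estimate $a<\psi<a+\frac{1}{a^2}$ (and its consequence $a^2<\psi^2<a^2+1$ for $a\geq5$) to pin down all the floor vectors $\bm a^{(k)}$, which is the only part requiring genuine care. The single algebraic identity that drives everything is
\[
\frac{1}{\psi-a}=\psi+\psi^2,
\]
which follows by expanding $(\psi-a)(\psi+\psi^2)=\psi^3+(1-a)\psi^2-a\psi$ and substituting $\psi^3=(a-1)\psi^2+a\psi+1$ from $f(\psi)=0$; the expression collapses to $1$.

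First I would handle the zeroth iteration: the bounds give $\lfloor\psi\rfloor=a$ and $\lfloor\psi^2\rfloor=a^2$, so $\bm a^{(0)}=(a,a^2)$. Applying the recursion, $\alpha_1^{(1)}=\frac{\psi^2-a^2}{\psi-a}=\psi+a$ by the difference of squares, and $\alpha_2^{(1)}=\frac{1}{\psi-a}=\psi+\psi^2$ by the identity above, which is exactly $\bm\alpha^{(1)}$. Since $a<\psi<a+1$ we get $\lfloor a+\psi\rfloor=2a$, and since $a^2+a<\psi+\psi^2<a^2+a+1$ for $a\geq5$ (the fractional contributions $\tfrac{2}{a}+\tfrac{1}{a^2}+\cdots$ stay below $1$), we get $\lfloor\psi+\psi^2\rfloor=a+a^2$, so $\bm a^{(1)}=(2a,a+a^2)$.

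Next, for the second iteration the numerator factors as $\psi+\psi^2-a-a^2=(\psi-a)(1+\psi+a)$, whence $\alpha_1^{(2)}=1+a+\psi$, while $\alpha_2^{(2)}=\frac{1}{(a+\psi)-2a}=\frac{1}{\psi-a}=\psi+\psi^2$; this is $\bm\alpha^{(2)}$, with floors $\lfloor a+1+\psi\rfloor=2a+1$ and $\lfloor\psi+\psi^2\rfloor=a+a^2$, i.e. $\bm a^{(2)}=(2a+1,a+a^2)$. Finally, to establish period length $1$, I would compute $\bm\alpha^{(3)}$: both entries use the same numerator $\psi+\psi^2-a-a^2=(\psi-a)(1+\psi+a)$ and denominator $(a+1+\psi)-(2a+1)=\psi-a$ as in the second iteration, so $\alpha_1^{(3)}=1+a+\psi=\alpha_1^{(2)}$ and $\alpha_2^{(3)}=\frac{1}{\psi-a}=\psi+\psi^2=\alpha_2^{(2)}$. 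Hence $\bm\alpha^{(3)}=\bm\alpha^{(2)}$, so the expansion is periodic from index $2$ onward with preperiod length $2$ and period length $1$. The only genuine obstacle is the upper floor bound for $\psi+\psi^2$ (and its analogue for $\psi^2$); everything else is forced by the two factorization identities.
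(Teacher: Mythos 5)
Your proposal is correct and follows essentially the same route as the paper's proof in Appendix A7: pin down the floor vectors from the estimate $a<\psi<a+\tfrac{1}{a^2}$, use the identity $\tfrac{1}{\psi-a}=\psi+\psi^2$ (derived from the minimal polynomial) together with the factorizations of the numerators to compute each iteration, and conclude periodicity by checking $\bm{\alpha}^{(3)}=\bm{\alpha}^{(2)}$. The only cosmetic difference is that you factor $\psi+\psi^2-a-a^2=(\psi-a)(1+\psi+a)$ explicitly, which slightly streamlines the division the paper performs directly.
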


\begin{proof}
We will use the same technique as for the simplest cubic fields. Here, we know that $a<\psi< \frac{a^3+1}{a^2}$, and all coefficients $a_i^{(k)}\geq 0$ from the definition of the iJPA algorithm. \\

\noindent
\textbf{First iteration}
\nopagebreak

First of all, we prove that $\bm{a}^{(0)}=(a,a^2)$. 
We know that $a<\psi< a+1$, so $a^{(0)}_1=\lfloor \psi \rfloor = a$. And $a^2<\psi^2< \frac{a^6+2a^3+1}{a^4}=a^2+\frac{2a^3+1}{a^4}<a^2+1$. Hence $a^{(0)}_2=\lfloor \psi^2 \rfloor = a^2$. From this, we get
\begin{eqnarray*}
\alpha_1^{(1)} = \frac{\alpha_2^{(0)}-a_2^{(0)}}{\alpha_1^{(0)}-a_1^{(0)}}  =  \frac{\psi^2-a^2}{\psi -a}  =  a+ \psi
\end{eqnarray*}
and
\begin{eqnarray*}
\alpha_2^{(1)} = \frac{1}{\alpha_1^{(0)}-a_1^{(0)}}  =  \frac{1}{\psi -a}  =  \psi+\psi^2.
\end{eqnarray*}
Thus, the first iteration is $(a+ \psi,\psi+\psi^2)$.\\

\noindent
\textbf{Second iteration}
\nopagebreak

Now, we will prove that $\bm{a}^{(1)}=(2a,a+a^2)$.
From the first iteration, we know that $\lfloor \psi \rfloor = a$, so $a^{(1)}_1=\lfloor a+ \psi \rfloor = a+ \lfloor \psi \rfloor = 2a $. For $a^{(1)}_2$, we can compute that
\[
\alpha^{(1)}_2 = \psi+\psi^2 > a^2+a
\]
and 
\[
\alpha^{(1)}_2 = \psi+\psi^2 < \frac{a^3+1}{a^2}+\frac{a^6+2a^3+1}{a^4}= \frac{a^6+a^5+2a^3+a^2+1}{a^4} < a^2+a+1.
\]
Thus, $a^{(1)}_2=a^2+a$. Now, it is easy to show that
\begin{eqnarray*}
\alpha_1^{(2)} = \frac{\alpha_2^{(1)}-a_2^{(1)}}{\alpha_1^{(1)}-a_1^{(1)}}  =  \frac{\psi+\psi^2-a^2-a}{a+ \psi -2a}  = a+1+ \psi
\end{eqnarray*}
and
\begin{eqnarray*}
\alpha_2^{(2)} = \frac{1}{\alpha_1^{(1)}-a_1^{(1)}}  =  \frac{1}{a+ \psi -2a}  =  \psi+\psi^2.
\end{eqnarray*}
Therefore, $\bm{\alpha}^{(2)}=(a+1+ \psi,\psi+\psi^2)$.\\

\noindent
\textbf{Third iteration}
\nopagebreak

From the second iteration, we know that $a^{(2)}_1=\lfloor a+1+ \psi \rfloor=1+\lfloor a+ \psi \rfloor=2a+1$ and $a^{(2)}_2=\lfloor \psi+\psi^2 \rfloor=a^2+a$. Hence $\bm{a}^{(2)}=(2a+1,a^2+a)$. Moreover, we can easily check that
\begin{eqnarray*}
\alpha_1^{(3)} =\frac{\alpha_2^{(2)}-a_2^{(2)}}{\alpha_1^{(2)}-a_1^{(2)}}  =  \frac{\psi+\psi^2-a^2-a}{a+1+ \psi -2a-1}  =  a+1+ \psi
\end{eqnarray*}
and
\begin{eqnarray*}
\alpha_2^{(3)} = \frac{1}{\alpha_1^{(2)}-a_1^{(2)}}  =  \frac{1}{a+1+ \psi -2a-1}  =  \psi+\psi^2.
\end{eqnarray*}
Thus, $\alpha^{(2)}=(a+1+ \psi,\psi+\psi^2)$. We can see that the second iteration is the same as the third iteration, so we have proved that the iJPA expansion of the vector $(\psi, \psi^2)$ is periodic with preperiod length 2 and period length 1.
\end{proof}

\subsubsection{The second root}
In the next part, we will prove that the iJPA expansion of the vector $(-\psi', \psi'^2)$ is periodic with preperiod length 4 and period length 7.

\begin{proposition}
Let $a\geq 5$.
Then the inhomogeneous Jacobi-Perron expansion of the vector $(-\psi',\psi'^2)$ is periodic. The preperiod of the iJPA expansion of $(-\psi',\psi'^2)$ is 
\[
\begin{array}{ll}
\bm{\alpha}^{(0)}=\left(-\psi',\psi'^2\right),\\
\bm{\alpha}^{(1)}=\left(-\psi',a+(a-1)\psi'-\psi'^2\right), \\
\bm{\alpha}^{(2)}=\left(a^2-2a+1 + (a^2-2a+2)\psi'-(a-1)\psi'^2,a+(a-1)\psi'-\psi'^2\right), \\
\bm{\alpha}^{(3)}=\left(-\psi',-1-(a-1)\psi'+\psi'^2\right),
\end{array}
\] 
and the corresponding vectors of integers parts are $\bm{a}^{(0)}=(0,0)$, $\bm{a}^{(1)}=(0,1)$, $\bm{a}^{(2)}=(0,1)$ and $\bm{a}^{(3)}=(0,a-3)$.
The period is
\[
\begin{array}{ll}
\bm{\alpha}^{(4)}=\left(-a^2+3a-1 - (a^2-3a+3)\psi'+(a-2)\psi'^2,a+(a-1)\psi'-\psi'^2\right),\\
\bm{\alpha}^{(5)}=\left(\frac{-2-3\psi'+\psi'^2}{2a-7},\frac{-2-(2a-4)\psi'+\psi'^2}{2a-7}\right), \\
\bm{\alpha}^{(6)}=\left(a-1+(a-1)\psi'-\psi'^2,-a-(2a-1)\psi'+2\psi'^2\right), \\
\bm{\alpha}^{(7)}=\left(-3-(a-1)\psi'+\psi'^2,-1-a\psi'+\psi'^2\right),\\
\bm{\alpha}^{(8)}=\left(\frac{2a-5+3\psi'-\psi'^2}{2a-7},\frac{-4-(2a-1)\psi'+2\psi'^2}{2a-7}\right),\\
\bm{\alpha}^{(9)}=\left(\frac{-(a-2)^2-(a^2-3a+3)\psi'+(a-2)\psi'^2}{a^2-5a+5}, \frac{a^2-6a+7-(2a-4)\psi'+\psi'^2}{a^2-5a+5}\right) \\
\bm{\alpha}^{(10)}=\left(- \psi',-2-(a-1)\psi'+\psi'^2\right).%  \\
%\bm{\alpha}^{(11)}=\left(-a^2+3a-1 + (-a^2+3a-3)\psi+(a-2)\psi^2,a+(a-1)\psi-\psi^2\right),
\end{array}
\] 
and the corresponding vectors of integers parts are $\bm{a}^{(4)}=(0,1)$, $\bm{a}^{(5)}=(0,1)$, $\bm{a}^{(6)}=(0,a-2)$, $\bm{a}^{(7)}=(a-5,a-2)$, $\bm{a}^{(8)}=(0,1)$, $\bm{a}^{(9)}=(0,1)$ and $\bm{a}^{(10)}=(0,a-4)$.
\end{proposition}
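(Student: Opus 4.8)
The plan is to follow the same inductive scheme used for the simplest cubic fields in Proposition~\ref{prop:ijpasimplestappendix}: starting from $\bm{\alpha}^{(0)}=(-\psi',\psi'^2)$, I would compute each successive iterate directly from the defining recurrence
\[
\bm{\alpha}^{(k+1)}=\left(\frac{\alpha_2^{(k)}-a_2^{(k)}}{\alpha_1^{(k)}-a_1^{(k)}},\ \frac{1}{\alpha_1^{(k)}-a_1^{(k)}}\right),
\qquad a_i^{(k)}=\big\lfloor\alpha_i^{(k)}\big\rfloor,
\]
reducing every resulting expression to the basis $1,\psi',\psi'^2$ by means of the defining relation $\psi'^3=(a-1)\psi'^2+a\psi'+1$. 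In particular, the reciprocals appearing in the second coordinate are handled as in the very first step, where $\alpha_2^{(1)}=-1/\psi'=a+(a-1)\psi'-\psi'^2$ follows from this relation. Once the integer parts $\bm{a}^{(k)}$ are known, these algebraic reductions are purely mechanical, so the genuine content of the proof is the determination of the floors $a_1^{(k)}$ and $a_2^{(k)}$ at each step, after which periodicity is obtained by checking that the iterate following $\bm{\alpha}^{(10)}$ coincides with $\bm{\alpha}^{(4)}$.

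For the coordinates that are themselves algebraic integers (for instance $\alpha_2^{(1)}=a+(a-1)\psi'-\psi'^2$, or the first coordinates of the ``integral'' iterates), I would bound them directly using the estimates
\[
-1<-\frac{a-1}{a}<\psi'<-\frac{a-2}{a-1}<0,
\]
together with the companion bounds on $\psi$ and $\psi''$ whenever it is more convenient to recognize a coordinate as a conjugate of a previously computed quantity. Substituting the two endpoints of the $\psi'$-interval into the relevant linear-plus-quadratic expression and checking that both evaluations fall strictly between consecutive integers pins down the floor, exactly as in the simplest-cubic computation.

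The main obstacle will be the three ``fractional'' iterates $\bm{\alpha}^{(5)},\bm{\alpha}^{(8)},\bm{\alpha}^{(9)}$, whose coordinates carry denominators $2a-7$ and $a^2-5a+5$ arising from the norm of $\alpha_1^{(k)}-a_1^{(k)}$. Here the floor cannot be read off termwise, and I would instead argue by clearing the denominator: to show $a_i^{(k)}=m$ it suffices to verify $m(2a-7)\le(\text{numerator})<(m+1)(2a-7)$ (respectively with $a^2-5a+5$), and the numerator, being of the form $c_0+c_1\psi'+c_2\psi'^2$, is bounded by substituting the endpoints of the $\psi'$-interval. For these steps the first-order bounds above may be too coarse, so I expect to need sharper inequalities for $\psi'$ of the type used in Section~\ref{sec:Pyth} (derived from the minimal polynomial), just as the simplest-cubic proof required refined estimates in its middle iterations.

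Finally, I would close the argument by computing the iterate following $\bm{\alpha}^{(10)}=(-\psi',-2-(a-1)\psi'+\psi'^2)$ and confirming that it reproduces $\bm{\alpha}^{(4)}$ together with $\bm{a}^{(4)}=(0,1)$, which establishes that the expansion is periodic with preperiod length $4$ and period length $7$. Since the estimates used for the floors are valid only for sufficiently large $a$, I would additionally verify the finitely many small values of $a\ge 5$ not covered by these inequalities by direct computation, as was done for the simplest cubic fields.
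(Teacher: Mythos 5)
Your proposal matches the paper's proof essentially step for step: the paper computes each iterate from the recurrence, pins down every floor by substituting the endpoints $-\tfrac{a-1}{a}<\psi'<-\tfrac{a-2}{a-1}$ into the numerator (dividing by $2a-7$ or $a^2-5a+5$ for the fractional iterates), invokes exactly one sharper bound $\tfrac{-a^4+a^3+2a+2}{a^4}<\psi'$ at the fifth iteration, treats $a=5$ separately at one point in the seventh iteration, and closes by checking $\bm{\alpha}^{(11)}=\bm{\alpha}^{(4)}$. Your anticipated refinements (sharper estimates in the middle iterations, direct verification of small $a$) are precisely the adjustments the paper makes, so no substantive difference remains.
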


\begin{proof}
We will proceed similarly as before.

\bigskip

\noindent
\textbf{First iteration}
\nopagebreak

From the fact that $-1<\psi'<0$, we directly get
$
a^{(0)}_1  =  \lfloor \alpha^{(0)}_1 \rfloor  =  \lfloor -\psi' \rfloor  =  0,
$ and
$
a^{(0)}_2  =  \lfloor \alpha^{(0)}_2 \rfloor  =  \lfloor \psi'^2 \rfloor  =  0.
$
Now, it is easy to see that 
\[ 
\alpha_1^{(1)}  =  \frac{\alpha_2^{(0)}-a_2^{(0)}}{\alpha_1^{(0)}-a_1^{(0)}}  =  \frac{\psi'^2}{-\psi'}  =  -\psi'
\]
and 
\[
\alpha_2^{(1)}  =  \frac{1}{\alpha_1^{(0)}-a_1^{(0)}}  =  \frac{1}{-\psi'}  =  a + (a-1)\psi' - \psi'^2.
\]

\bigskip

\noindent
\textbf{Second iteration}
\nopagebreak

From the first iteration, we know that $a^{(1)}_1= \lfloor -\psi' \rfloor=0$. So, we only need to compute $a^{(1)}_2=\lfloor \alpha^{(1)}_2 \rfloor$: 
\[
\alpha^{(1)}_2  =  a+(a-1)\psi'-\psi'^2   >   a-(a-1)\frac{a-1}{a}-\frac{a^2-2a+1}{a^2}  =  1+\frac{a-1}{a^2}  >  1,
\]
\begin{eqnarray*}
\alpha^{(1)}_2 = a+(a-1)\psi'-\psi'^2   <   a-(a-1)\frac{a-2}{a-1}-\frac{a^2-4a+4}{a^2-2a+1}  
\\
= 2-\frac{a^2-4a+4}{a^2-2a+1} <  2.
\end{eqnarray*}
Hence, we have $\bm{a}^{(1)} =  (0,1)$.

Moreover, we can simply compute that
\begin{eqnarray*}
\alpha_1^{(2)} = \frac{\alpha_2^{(1)}-a_2^{(1)}}{\alpha_1^{(1)}-a_1^{(1)}} =  \frac{a+(a-1)\psi'-\psi'^2-1}{-\psi'} 
%\\
= a^2-2a+1 + (a^2-2a+2)\psi'-(a-1)\psi'^2
\end{eqnarray*}
and
\[
\alpha_2^{(2)}  =  \frac{1}{\alpha_1^{(1)}-a_1^{(1)}}  =  \frac{1}{-\psi'}  =  a + (a-1)\psi' - \psi'^2.
\] 
This means that $\bm{\alpha}^{(2)}=(a^2-2a+1 + (a^2-2a+2)\psi'-(a-1)\psi'^2,a + (a-1)\psi' - \psi'^2)$.

\bigskip

\noindent
\textbf{Third iteration}
\nopagebreak

No, we will show that $\bm{a}^{(2)}=(0,1)$:
\begin{align*}
\alpha^{(2)}_1&=a^2-2a+1 + (a^2-2a+2)\psi'-(a-1)\psi'^2
\\
&\hspace{1cm}< a^2-2a+1 - (a^2-2a+2)\frac{a-2}{a-1}-(a-1)\frac{a^2-4a+4}{a^2-2a+1}
\\
&\hspace{1.5cm}= a^2-2a+1 - (a^2-2a+2)+\frac{a^2-2a+2}{a-1}-\frac{a^2-4a+4}{a-1}
\\
&\hspace{2cm}= -1+ \frac{2a-2}{a-1}=-1+2 = 1.
\end{align*}
Since the element $\alpha^{(2)}_1$ is non-negative, we have $a^{(2)}_1=0$. From the second iteration, we can see that $a^{(2)}_2=a^{(1)}_2=1$.
Hence
\[
\alpha_1^{(3)}  =  \frac{\alpha_2^{(2)}-a_2^{(2)}}{\alpha_1^{(2)}-a_1^{(2)}}  =  \frac{a + (a-1)\psi' - \psi'^2-1}{a^2-2a+1 + (a^2-2a+2)\psi'-(a-1)\psi'^2}  =  -\psi'
\]
and
\begin{eqnarray*}
\alpha_2^{(3)}=\frac{1}{\alpha_1^{(2)}-a_1^{(2)}}  =  \frac{1}{a^2-2a+1 + (a^2-2a+2)\psi'-(a-1)\psi'^2}
=-1-(a-1)\psi'+\psi'^2.
\end{eqnarray*}
By this, we have verified that $\bm{\alpha}^{(3)}=(-\psi',-1-(a-1)\psi'+\psi'^2)$.

\bigskip

\noindent
\textbf{Fourth iteration}
\nopagebreak

As before, we first calculate $\bm{a}^{(3)}$. Clearly, $a^{(3)}_1 = 0$, and so it remains to compute $a^{(3)}_2$. We have
\begin{eqnarray*}
\alpha^{(3)}_2= -1-(a-1)\psi'+\psi'^2 >  -1+(a-1)\frac{a-2}{a-1}+\frac{a^2-4a+4}{a^2-2a+1} 
\\
= -1+a-2+\frac{1}{a}+\frac{a^2-4a+4}{a^2-2a+1}>a-3
\end{eqnarray*}
and 
\begin{eqnarray*}
\alpha^{(3)}_2= -1-(a-1)\psi'+\psi'^2 <-1+(a-1)\frac{a-1}{a}+\frac{a^2-2a+1}{a^2}
\\
= -1+a-2+\frac{1}{a}+1+\frac{-2a+1}{a^2}=a-2-\frac{a-1}{a^2}<a-2.
\end{eqnarray*}
Thus, we get $\bm{a}^{(3)}=(0,a-3)$. A trivial verification shows that
\begin{eqnarray*}
\alpha^{(4)}_1  =  \frac{\alpha_2^{(3)}-a_2^{(3)}}{\alpha_1^{(3)}-a_1^{(3)}}  = \frac{-1-(a-1)\psi'+\psi'^2-a+3}{-\psi'}
%\\
=-a^2+3a-1 - (a^2-3a+3)\psi'+(a-2)\psi'^2
\end{eqnarray*}
and, as before,
$
\alpha_2^{(4)} = (-\psi')^{-1}=a+(a-1)\psi'-\psi'^2.
$
Therefore, we have \[\bm{\alpha}^{(4)}=(-a^2+3a-1 - (a^2-3a+3)\psi'+(a-2)\psi'^2,a+(a-1)\psi'-\psi'^2).\]

\bigskip

\noindent
\textbf{Fifth iteration}
\nopagebreak

Firstly, we will show that $ \bm{a}^{(4)}=(0,1)$. The element $\alpha^{(4)}_1$ is greater or equal to zero, so we just need to show that $\alpha^{(4)}_1<1$. Here we need to use the fact that $\frac{-a^4+a^3+2a+2}{a^4}<\psi'$:
\begin{align*}
\alpha^{(4)}_1&=-a^2+3a-1 - (a^2-3a+3)\psi'+(a-2)\psi'^2 
\\
&\hspace{1cm}< -a^2+3a-1 - (a^2-3a+3)\frac{-a^4+a^3+2a+2}{a^4}+(a-2)\left(\frac{-a^4+a^3+2a+2}{a^4}\right)^2
\\
&\hspace{1.5cm}= \frac{a^8- 2 a^6+ 8 a^5- 2 a^4- 4 a^3- 12 a-8  }{a^8} < 1.
\end{align*}
Hence, $a^{(4)}_1=0$, and from the second iteration, we know that $a^{(4)}_2=1$. Now, it is easy to verify that 
\begin{eqnarray*}
\alpha^{(5)}_1  =  \frac{\alpha_2^{(4)}-a_2^{(4)}}{\alpha_1^{(4)}-a_1^{(4)}}  = \frac{a+(a-1)\psi'-\psi'^2-1}{-a^2+3a-1 - (a^2-3a+3)\psi'+(a-2)\psi'^2}
\\
=\frac{1}{2a-7}(-2-3\psi'+\psi'^2)
\end{eqnarray*}
and
\begin{eqnarray*}
\alpha^{(5)}_2  =   \frac{1}{\alpha_1^{(4)}-a_1^{(4)}}  = \frac{1}{-a^2+3a-1 -(a^2-3a+3)\psi'+(a-2)\psi'^2}
\\
=\frac{1}{2a-7}(-2-(2a-4)\psi'+\psi'^2).
\end{eqnarray*}
Therefore, we get $\bm{\alpha}^{(5)}=\big(\frac{1}{2a-7}(-2-3\psi'+\psi'^2),\frac{1}{2a-7}(-2-(2a-4)\psi'+\psi'^2)\big)$.

\bigskip

\noindent
\textbf{Sixth iteration}
\nopagebreak

Now, we will compute $\bm{a}^{(5)}$. For $\alpha^{(5)}_1$, we can see that
\begin{eqnarray*}
\alpha^{(5)}_1= \frac{1}{2a-7}(-2-3\psi'+\psi'^2) < \frac{1}{2a-7}\left(-2+3\frac{a-1}{a}+\frac{a^2-2a+1}{a^2}\right)
\\
= \frac{1}{2a-7}\left( -2+3-\frac{3}{a}+1+\frac{-2a+1}{a^2} \right) = \frac{1}{2a-7}\frac{2a^2-5a+1}{a^2} <1.
\end{eqnarray*}
Since $\alpha^{(5)}_1 \geq 0$, we get $a_1^{(5)}=0$. For $\alpha^{(5)}_2$, it holds that
\begin{align*}
\alpha^{(5)}_2&= \frac{1}{2a-7}\left( -2-(2a-4)\psi'+\psi'^2 \right)
\\
&\hspace{1cm}> \frac{1}{2a-7}\left( -2+(2a-4)\frac{a-2}{a-1}+\frac{a^2-4a+4}{a^2-2a+1} \right)
\\
&\hspace{1.5cm}= \frac{1}{2a-7}\left( -2+2a-6+ \frac{2}{a-1} + 1+ \frac{-2a+3}{a^2-2a+1}\right)
\\
&\hspace{2cm}= 1+ \frac{1}{(2a-7)(a^2-2a+1)} >1
\end{align*}
and
\begin{align*}
\alpha^{(5)}_2&= \frac{1}{2a-7}\left( -2-(2a-4)\psi'+\psi'^2 \right)
\\
&\hspace{1cm}< \frac{1}{2a-7}\left( -2-(2a-4)\frac{a-1}{a}+\frac{a^2-2a+1}{a^2} \right)
\\
&\hspace{1.5cm}= \frac{1}{2a-7}\left(-2+2a-6+\frac{4}{a}+1+\frac{-2a+1}{a^2} \right)
\\
&\hspace{2cm}= 1+\frac{2a+1}{a^2(2a-7)}<2.
\end{align*}
Hence $\bm{a}^{(5)}=(0,1)$. Now it is easy to verify that
\begin{eqnarray*}
\alpha_1^{(6)}=\frac{\alpha_2^{(5)}-a_2^{(5)}}{\alpha_1^{(5)}-a_1^{(5)}}  =   \frac{\frac{1}{2a-7}\left( -2-(2a-4)\psi'+\psi'^2 \right)-1}{\frac{1}{2a-7}\left(-2-3\psi'+\psi'^2 \right)}
%\\
= a-1+(a-1)\psi'-\psi'^2
\end{eqnarray*}
and
\begin{eqnarray*}
\alpha^{(6)}_2  =  \frac{1}{\alpha_1^{(5)}-a_1^{(5)}}  =   \frac{1}{\frac{1}{2a-7}\left(-2-3\psi'+\psi'^2 \right)} 
%\\
= -a-(2a-1)\psi'+2\psi'^2.
\end{eqnarray*}
Therefore, we get $\bm{\alpha}^{(6)}=(a-1+(a-1)\psi'-\psi'^2,-a-(2a-1)\psi'+2\psi'^2)$.

\bigskip

\noindent
\textbf{Seventh iteration}
\nopagebreak

From the second iteration, we know that $a^{(1)}_2=1=\lfloor a+(a-1)\psi'-\psi'^2 \rfloor$. Hence, it is clear that $a^{(6)}_1=\lfloor a-1+(a-1)\psi'-\psi'^2 \rfloor = 0$. Thus, we only need to compute $a^{(6)}_2$. For $a>5$, it holds that 
\begin{multline*}
\alpha_2^{(6)} = -a-(2a-1)\psi'+2\psi'^2 > -a+(2a-1)\frac{a-2}{a-1}+2\frac{a^2-4a+4}{a^2-2a+1}
\\
= -a+2a-3-\frac{1}{a-1}+2+\frac{-4a+6}{a^2-2a+1} = a-1+\frac{-5a+7}{a^2-2a+1} 
%\\
> a-2.
\end{multline*}
For $a=5$, we have $f_5(x)=x^3-4x^2-5x-1$ and $\psi'<\frac{-31}{40}$. Then we get 
\begin{eqnarray*}
\alpha_2^{(6)} = -5-9\psi'+2\psi'^2 > -5+9\frac{31}{40}+2\frac{31^2}{40^2}
%\\
= \frac{2541}{800} > 3 = a-2.
\end{eqnarray*}
Therefore, $\alpha_2^{(6)}>a-2$ for all $a \geq 5$. For $\alpha_2^{(6)}$, it also holds that
 \begin{multline*}
\alpha_2^{(6)} = -a-(2a-1)\psi'+2\psi'^2 < -a+(2a-1)\frac{a-1}{a}+2\frac{a^2-2a+1}{a^2}
\\
= -a+2a-3+\frac{1}{a}+2+\frac{-4a+2}{a^2} =a-1+\frac{-3a+2}{a^2} < a-1.
\end{multline*}
Hence $\bm{a}^{(6)}=(0,a-2)$. 
%Using $(a-1+(a-1)\psi''-\psi''^2)(-1+(-a)\psi''+\psi''^2)=1$, 
Moreover, we can verify that
\begin{eqnarray*}
\alpha_1^{(7)}=\frac{\alpha_2^{(6)}-a_2^{(6)}}{\alpha_1^{(6)}-a_1^{(6)}}  =  \frac{-a-(2a-1)\psi'+2\psi'^2-a+2}{a-1+(a-1)\psi'-\psi'^2}
%\\
= -3-(a-1)\psi'+\psi'^2
\end{eqnarray*}
and
\begin{eqnarray*}
\alpha_2^{(7)}=\frac{1}{\alpha_1^{(6)}-a_1^{(6)}}  =  \frac{1}{a-1+(a-1)\psi'-\psi'^2}
%\\
= -1-a\psi'+\psi'^2.
\end{eqnarray*}
Therefore, $\bm{\alpha}^{(7)}=(-3-(a-1)\psi'+\psi'^2,-1-a\psi'+\psi'^2)$.

\bigskip

\noindent
\textbf{Eighth iteration}
\nopagebreak

In this part, we will show that $\bm{a}^{(7)}=(a-5,a-2)$. For $\alpha^{(7)}_1$, we have
\begin{multline*}
\alpha^{(7)}_1 = -3-(a-1)\psi'+\psi'^2 > -3+(a-1)\frac{a-2}{a-1}+\frac{a^2-4a+4}{a^2-2a+1}
\\
= -3+a-2+1+\frac{-2a+3}{a^2-2a+1} = a-4+\frac{-2a+3}{a^2-2a+1} > a-5
\end{multline*}
and
\begin{multline*}
\alpha^{(7)}_1 = -3-(a-1)\psi'+\psi'^2 < -3+(a-1)\frac{a-1}{a}+\frac{a^2-2a+1}{a^2}
\\
= -3+a-2+\frac{1}{a}+1+\frac{-2a+1}{a^2} = a-4+\frac{-a+1}{a^2} <a-4.
\end{multline*}
For $\alpha^{(7)}_2$, we can see that
\begin{multline*}
\alpha^{(7)}_2 = -1-a\psi'+\psi'^2 > -1+a\frac{a-2}{a-1}+\frac{a^2-4a+4}{a^2-2a+1} 
\\
= -1+a-1-\frac{1}{a-1} +1+\frac{-2a+3}{a^2-2a+1} = a-1+\frac{-3a+4}{a^2-2a+1} 
%\\
> a-2
\end{multline*}
and
\begin{multline*}
\alpha^{(7)}_2 = -1-a\psi'+\psi'^2 < -1+a\frac{a-1}{a}+\frac{a^2-2a+1}{a^2} 
%\\
= -1+a-1+1+\frac{-2a+1}{a^2} < a-1.
\end{multline*}
Now, 
it is easy to verify the following equalities:
\begin{eqnarray*}
\alpha_1^{(8)}=\frac{\alpha_2^{(7)}-a_2^{(7)}}{\alpha_1^{(7)}-a_1^{(7)}}  =  \frac{-1-a\psi'+\psi'^2-a+2}{-3-(a-1)\psi'+\psi'^2-a+5}
%\\
= \frac{1}{2a-7}(2a-5+3\psi'-\psi'^2),
\end{eqnarray*}
\begin{eqnarray*}
\alpha_2^{(8)}=\frac{1}{\alpha_1^{(7)}-a_1^{(7)}}  =  \frac{1}{-3-(a-1)\psi'+\psi'^2-a+5}
%\\
= \frac{1}{2a-7}(-4-(2a-1)\psi'+2\psi'^2).
\end{eqnarray*}
Therefore, $\bm{\alpha}^{(8)}=\left(\frac{1}{2a-7}(2a-5+3\psi'-\psi'^2), \frac{1}{2a-7}(-4-(2a-1)\psi'+2\psi'^2) \right)$.

\bigskip

\noindent
\textbf{Ninth iteration}
\nopagebreak

Firstly, we will compute $\bm{a}^{(8)}$ in the following way:
\begin{multline*}
\alpha^{(8)}_1=\frac{1}{2a-7}(2a-5+3\psi'-\psi'^2) < \frac{1}{2a-7}\left(2a-5-3\frac{a-2}{a-1}-\frac{a^2-4a+4}{a^2-2a+1}\right)
\\
= \frac{1}{2a-7}\left( 2a-5-3+\frac{3}{a-1}-\frac{a^2-4a+4}{a^2-2a+1} \right) 
%\\
=  1-\frac{2a^2-9a+8}{(2a-7)(a^2-2a+1)}<1. 
\end{multline*}
Since $\alpha_1^{(8)} \geq 0$, we get $a_1^{(8)}=0$. For $a_2^{(8)}$, we see that
\begin{multline*}
\alpha^{(8)}_2=\frac{1}{2a-7}(-4-(2a-1)\psi'+2\psi'^2) 
%\\
> \frac{1}{2a-7} \left( -4+(2a-1)\frac{a-2}{a-1}+2\frac{a^2-4a+4}{a^2-2a+1} \right) 
\\
= \frac{1}{2a-7}\left( -4+2a-3-\frac{1}{a-1}+2+\frac{-4a+6}{a^2-2a+1} \right) 
%\\
= 1+\frac{1}{2a-7}\left( 2 + \frac{-5a+7}{a^2-2a+1} \right)>1 
\end{multline*}
and
\begin{multline*}
\alpha^{(8)}_2=\frac{1}{2a-7}(-4-(2a-1)\psi'+2\psi'^2)  
%\\ 
< \frac{1}{2a-7}\left(-4+(2a-1)\frac{a-1}{a}+2\frac{a^2-2a+1}{a^2}\right) 
\\
= \frac{1}{2a-7}\left( -4+2a-3+\frac{1}{a}+2+\frac{-4a+2}{a^2} \right) 
%\\
= 1+ \frac{1}{2a-7}\left( 2 + \frac{-3a+2}{a^2} \right)<2.
\end{multline*}
Hence $\bm{a}^{(8)}=(0,1)$. Now, it is easy to find the value of  $\bm{\alpha}^{(9)}$:
\begin{align*}
\alpha_1^{(9)}&=\frac{\alpha_2^{(8)}-a_2^{(8)}}{\alpha_1^{(8)}-a_1^{(8)}}  =  \frac{\frac{1}{2a-7}(-4-(2a-1)\psi'+2\psi'^2)-1}{\frac{1}{2a-7}(2a-5+3\psi''--\psi'^2)}
\\
&= \frac{1}{a^2-5a+5}(-(a-2)^2-(a^2-3a+3)\psi'+(a-2)\psi'^2)
\end{align*}
and
\begin{eqnarray*}
\alpha_2^{(9)}=\frac{1}{\alpha_1^{(8)}-a_1^{(8)}}  =  \frac{1}{\frac{1}{2a-7}(2a-5+3\psi'-\psi'^2)}
%\\
= \frac{1}{a^2-5a+5}(a^2-6a+7-(2a-4)\psi'+\psi'^2).
\end{eqnarray*} 
Therefore, we get 
\begin{multline*}
\bm{\alpha}^{(9)}=\bigg(\frac{1}{a^2-5a+5}(-(a-2)^2-(a^2-3a+3)\psi'+(a-2)\psi'^2),\\ \frac{1}{a^2-5a+5}(a^2-6a+7-(2a-4)\psi'+\psi'^2)\bigg).
\end{multline*}

\bigskip

\noindent
\textbf{Tenth iteration}
\nopagebreak

Here, we will show that $\bm{a}^{(9)}=(0,1)$, and we will start with $a_1^{(9)}$:
\begin{align*}
\alpha_1^{(9)}&=\frac{1}{a^2-5a+5}(-(a-2)^2-(a^2-3a+3)\psi'+(a-2)\psi'^2)
\\
&\hspace{1cm}<\frac{1}{a^2-5a+5}\left( -(a-2)^2+(a^2-3a+3)\frac{a-1}{a}+(a-2)\frac{a^2-2a+1}{a^2} \right)
\\
&\hspace{1.5cm}=\frac{1}{a^2-5a+5}\left( -a^2+4a-4 +a^2-4a+6-\frac{3}{a}+a-4+\frac{5a-2}{a^2} \right)
\\
&\hspace{2cm}=\frac{1}{a^2-5a+5}\left( a-2 +\frac{2a-2}{a^2} \right) \;<\;1. 
\end{align*}
So, $a^{(9)}_1=0$ since $\alpha^{(9)}_1$ is non-negative. For $a_2^{(9)}$, we get the following inequalities:
\begin{align*}
\alpha_2^{(9)}&= \frac{1}{a^2-5a+5}(a^2-6a+7-(2a-4)\psi'+\psi'^2)
\\
&\hspace{1cm}>\frac{1}{a^2-5a+5}\left(a^2-6a+7+(2a-4)\frac{a-2}{a-1}+\frac{a^2-4a+4}{a^2-2a+1}\right)
\\
&\hspace{1.5cm}=\frac{1}{a^2-5a+5}\left( a^2-6a+7+2a-4-2+\frac{2}{a-1}+1+\frac{-2a+3}{a^2-2a+1} \right)
\\
&\hspace{2cm}=1+\frac{1}{a^2-5a+5}\left( a-3+\frac{1}{a^2-2a+1} \right)>1
\end{align*}
and
\begin{align*}
\alpha_2^{(9)}&= \frac{1}{a^2-5a+5}(a^2-6a+7-(2a-4)\psi'+\psi'^2)
\\
&\hspace{1cm}<\frac{1}{a^2-5a+5}\left( a^2-6a+7+(2a-4)\frac{a-1}{a}+\frac{a^2-2a+1}{a^2}\right)
\\
&\hspace{1.5cm}=\frac{1}{a^2-5a+5}\left( a^2-6a+7+2a-6+\frac{4}{a}+1+\frac{-2a+1}{a^2}\right)
\\
&\hspace{2cm}=1+\frac{1}{a^2-5a+5}\left( a-3+\frac{2a+1}{a^2} \right) <2.
\end{align*}
Thus, $\bm{a}^{(9)}=(0,1)$.

Furthermore, we can see that
\begin{eqnarray*}
\alpha_1^{(10)}=\frac{\alpha_2^{(9)}-a_2^{(9)}}{\alpha_1^{(9)}-a_1^{(9)}}  =  \frac{ \frac{1}{a^2-5a+5}(a^2-6a+7-(2a-4)\psi'+\psi'^2)-1}{\frac{1}{a^2-5a+5}(-(a-2)^2-(a^2-3a+3)\psi'+(a-2)\psi'^2)}
= - \psi',
\end{eqnarray*}
\begin{align*}
\alpha_2^{(10)}&=\frac{1}{\alpha_1^{(9)}-a_1^{(9)}}  =  \frac{ 1}{\frac{1}{a^2-5a+5}(-(a-2)^2-(a^2-3a+3)\psi'+(a-2)\psi'^2)}
\\
&= -2-(a-1)\psi'+\psi'^2.
\end{align*}
Hence, the tenth iteration is $\bm{\alpha}^{(10)}=(- \psi', -2-(a-1)\psi'+\psi'^2)$.

\bigskip

\noindent
\textbf{Eleventh iteration}
\nopagebreak

From the first iteration, we know that $a_1^{(10)}=\lfloor -\psi' \rfloor=0$. Moreover, the fourth iteration gives $\lfloor -1-(a-1)\psi'+\psi'^2 \rfloor = a-3$. Hence $a^{(10)}_2=\lfloor -2-(a-1)\psi'+\psi'^2 \rfloor = a-4$. Furthermore, a trivial verification shows that
\begin{align*}
\alpha_1^{(11)}&=\frac{\alpha_2^{(10)}-a_2^{(10)}}{\alpha_1^{(10)}-a_1^{(10)}}=\frac{-2-(a-1)\psi'+\psi'^2-a+4}{- \psi'}
\\
&= -a^2+3a-1 - (a^2-3a+3)\psi'+(a-2)\psi'^2,
\end{align*}
\begin{eqnarray*}
\alpha_2^{(11)}=\frac{1}{\alpha_1^{(10)}-a_1^{(10)}}=\frac{1}{- \psi'}
%\\
= a+(a-1)\psi'-\psi'^2.
\end{eqnarray*}
Therefore,  $\bm{\alpha}^{(11)}=(-a^2+3a-1 - (a^2-3a+3)\psi'+(a-2)\psi'^2,  a+(a-1)\psi'-\psi'^2)=\bm{\alpha}^{(4)}$. By this, we have shown that the iJPA expansion of the vector $(-\psi', \psi'^2)$ is periodic with preperiod length 4 and period length 7 and derived its form. 
\end{proof}

\subsubsection{The third root}
And finally, we will prove that the iJPA expansion of the vector $(-\psi'', \psi''^2)$ is periodic with preperiod length 3 and period length 3.

\begin{proposition}
Let $a\geq 5$.
Then the inhomogeneous Jacobi-Perron expansion of the vector $\left(-\psi'',\psi''^2\right)$ is periodic. The preperiod of the iJPA expansion of $\left(-\psi'',\psi''^2\right)$ is 
\[
\begin{array}{ll}
\bm{\alpha}^{(0)}=\left(-\psi'',\psi''^2\right),&\quad\bm{a}^{(0)}=(0,0),\\
\bm{\alpha}^{(1)}=\left(-\psi'',a+(a-1)\psi''-\psi''^2\right), &\quad\bm{a}^{(1)}=(0,a-2), \\
\bm{\alpha}^{(2)}=\left(a+1 + (2a-1)\psi''-2\psi''^2,a+(a-1)\psi''-\psi''^2\right), &\quad\bm{a}^{(2)}=(a-2,a-2), 
\end{array}
\] 
and the period is
\[
\begin{array}{ll}
\bm{\alpha}^{(3)}=\left(\frac{2a^2-9a+10+(a^2-2a-1)\psi''-(a-2)\psi''^2}{2a^2-12a+17}, \frac{2a^2-8a+8+(2a^2-7a+6)\psi''-(2a-5)\psi''^2}{2a^2-12a+17}\right),\\
\bm{\alpha}^{(4)}=\left(1- \psi'', a-2+a\psi''-\psi''^2\right), \\
\bm{\alpha}^{(5)}=\left(a + (2a-1)\psi''-2\psi''^2, a+(a-1)\psi''-\psi''^2 \right).
%\bm{\alpha}^{(6)}=\left(\frac{2a^2-9a+10+(a^2-2a-1)\psi'+(2-a)\psi'^2}{2a^2-12a+17}, \frac{2a^2-8a+8+(2a^2-7a+6)\psi'+(5-2a)\psi'^2}{2a^2-12a+17}\right),
\end{array}
\] 
and the corresponding vectors of integers parts are $\bm{a}^{(3)}=(1,1)$, $\bm{a}^{(4)}=(1,a-4)$ and $\bm{a}^{(5)}=(a-3,a-2)$. %and $\bm{a}^{(6)}=(1,1)$.
\end{proposition}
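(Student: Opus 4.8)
The plan is to follow the same iteration-by-iteration strategy used for the first two roots, computing the iJPA expansion directly from the defining recursion
\[\alpha_1^{(k+1)}=\frac{\alpha_2^{(k)}-a_2^{(k)}}{\alpha_1^{(k)}-a_1^{(k)}},\qquad \alpha_2^{(k+1)}=\frac{1}{\alpha_1^{(k)}-a_1^{(k)}},\]
and verifying periodicity by matching $\bm{\alpha}^{(6)}$ with $\bm{\alpha}^{(3)}$. First I would record the estimate $-\tfrac{1}{a-2}<\psi''<-\tfrac{1}{a-1}$ together with the minimal-polynomial identity $\psi''^3-(a-1)\psi''^2-a\psi''-1=0$, which rearranges to $\tfrac{1}{-\psi''}=a+(a-1)\psi''-\psi''^2$; this single algebraic fact drives every rational simplification appearing in the expansion.

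For each of the six iterations the work splits into two parts. The routine part is the rational arithmetic: once the integer-part vector $\bm{a}^{(k)}$ is known, the two new coordinates $\alpha_1^{(k+1)},\alpha_2^{(k+1)}$ are obtained by clearing denominators and reducing modulo the minimal polynomial, exactly as in the $\psi$ and $\psi'$ cases. The substantive part is pinning down the floors $a_1^{(k)}=\lfloor\alpha_1^{(k)}\rfloor$ and $a_2^{(k)}=\lfloor\alpha_2^{(k)}\rfloor$. Here I would substitute the upper and lower bounds for $\psi''$ into each coordinate and show that the resulting value lies strictly between consecutive integers, thereby reading off the floor; this yields $\bm{a}^{(0)}=(0,0)$, $\bm{a}^{(1)}=(0,a-2)$, $\bm{a}^{(2)}=(a-2,a-2)$, and then $\bm{a}^{(3)}=(1,1)$, $\bm{a}^{(4)}=(1,a-4)$, $\bm{a}^{(5)}=(a-3,a-2)$.

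I expect the main obstacle to be precisely the third iteration, where $\alpha_1^{(3)}$ and $\alpha_2^{(3)}$ carry the denominator $2a^2-12a+17$. The values there sit close to an integer boundary, so the crude bounds $-\tfrac{1}{a-2}<\psi''<-\tfrac{1}{a-1}$ may fail to separate the fractional part cleanly from $0$ or $1$; as in the analogous spot of the $\psi'$ expansion (where a sharper estimate of the form $\psi''>\frac{p(a)}{a^4}$ was required for a suitable polynomial $p$), I would derive a tighter one-sided bound on $\psi''$ directly from the defining polynomial to force the correct floor. I would also handle the smallest case $a=5$ separately by an explicit numerical check, since the asymptotic inequalities valid for $a\geq 6$ can degenerate at the boundary of the range, mirroring the dedicated $a=5$ computation already carried out for the seventh iteration of the $\psi'$ expansion. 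Once all six vectors $\bm{a}^{(k)}$ are fixed and the coordinates computed, periodicity is immediate: applying the recursion to $\bm{\alpha}^{(5)}=\bigl(a+(2a-1)\psi''-2\psi''^2,\,a+(a-1)\psi''-\psi''^2\bigr)$ with $\bm{a}^{(5)}=(a-3,a-2)$ returns $\bm{\alpha}^{(6)}=\bm{\alpha}^{(3)}$, closing a period of length $3$ after a preperiod of length $3$.
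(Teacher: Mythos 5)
Your proposal matches the paper's proof: the same iteration-by-iteration computation of the floors from the bounds $-\tfrac{1}{a-2}<\psi''<-\tfrac{1}{a-1}$ and the identity $\tfrac{1}{-\psi''}=a+(a-1)\psi''-\psi''^2$, closing the period by verifying $\bm{\alpha}^{(6)}=\bm{\alpha}^{(3)}$. The only small discrepancy is where the delicate estimate actually occurs: in the paper the crude bounds suffice for the iteration carrying the denominator $2a^2-12a+17$, while the sharper bound (namely $-\tfrac{4}{13}<\psi''$, invoked only for $a=5$) is needed instead when establishing $a_1^{(2)}=a-2$.
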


\begin{proof}
We use the same technique as for the other root. For the third root, we know that $\frac{-1}{a-2}<\psi''< \frac{-1}{a-1}$. 

\bigskip

\noindent
\textbf{First iteration}
\nopagebreak

Similarly as before, $|\psi''|<1$, which gives $\bm{a}^{(0)}=(0,0)$ and 
$ \bm{\alpha} ^{(1)} = (- \psi'', a + (a-1) \psi''- \psi''^2) $.

\bigskip

\noindent
\textbf{Second iteration}
\nopagebreak

In this part, we will prove that $\bm{a}^{(1)}=(0,a-2)$. From the previous iteration, we know that $a^{(1)}_1=\lfloor \alpha^{(1)}_1 \rfloor=0$. Thus, it is enough to verify that $a^{(1)}_2=\lfloor \alpha^{(1)}_2 \rfloor=a-2$. We have
\begin{multline*}
\alpha_2^{(1)} = a + (a-1) \psi''- \psi''^2  >  a - (a-1) \frac{1}{a-2}- \frac{1}{a^2-4a+4}
\\
= a-1-\frac{1}{a-2}-\frac{1}{a^2-4a+4}  =  a-1-\frac{a-1}{a^2-4a+4}  >  a-2
\end{multline*}
and
\[
\alpha_2^{(1)} = a + (a-1) \psi''- \psi''^2 <  a - (a-1) \frac{1}{a-1}- \frac{1}{a^2-2a+1}
= a-1- \frac{1}{a^2-2a+1}  < a-1.
\]
Therefore, we get
\begin{eqnarray*}
\alpha_1^{(2)} = \frac{\alpha_2^{(1)}-a_2^{(1)}}{\alpha_1^{(1)}-a_1^{(1)}}  =  \frac{a+(a-1)\psi''-\psi''^2-a+2}{-\psi''} 
%\\
= a+1 + (2a-1)\psi''-2\psi''^2
\end{eqnarray*}
and
\[
\alpha_2^{(2)}  =  \frac{1}{\alpha_1^{(1)}-a_1^{(1)}}  =  \frac{1}{-\psi''}  =  a + (a-1)\psi'' - \psi''^2.
\]
Hence $\bm{\alpha}^{(2)}=(a+1 + (2a-1)\psi''+(-2)\psi''^2, a+(a-1)\psi''-\psi''^2)$.

\bigskip

\noindent
\textbf{Third iteration}
\nopagebreak

First, we will compute $\bm{a}^{(2)}=(a-2,a-2)$. From the previous iteration, we know that $a_2^{(2)}= a-2$. Thus, it is enough to compute $a_1^{(2)}$. We can deduce that
\begin{multline*}
\alpha_1^{(2)} = a+1 + (2a-1)\psi''-2\psi''^2  >  a+1 - (2a-1)\frac{1}{a-2}-2\frac{1}{a^2-4a+4}
\\  =  a-1+\frac{-3a+4}{a^2-4a+4}  > a-2 
\end{multline*}
for $a\geq 6$. If $a=5$, we can easily check that $-\frac{4}{13}<\psi''$ and
\[
6+9\psi''-2\psi''^2>6-9\frac{4}{13}-\Big(\frac{4}{13}\Big)^2=\frac{514}{169}>3=a-2.
\]
On the other hand,
\begin{multline*}
\alpha_1^{(2)} = a+1 + (2a-1)\psi''-2\psi''^2  <  a+1 - (2a-1)\frac{1}{a-1}-2\frac{1}{a^2-2a+1}
\\
 =  a-1-\frac{a+1}{a^2-2a+1}  < a-1.
\end{multline*}
Now, it is easy to verify that
\begin{align*}
\alpha_1^{(3)} &=\frac{\alpha_2^{(2)}-a_2^{(2)}}{\alpha_1^{(2)}-a_1^{(2)}}  =  \frac{ a+(a-1)\psi''-\psi''^2-a+2}{a+1 + (2a-1)\psi''-2\psi''^2-a+2} 
\\
&= \frac{1}{2a^2-12a+17}(2a^2-9a+10+(a^2-2a-1)\psi''-(a-2)\psi''^2)
\end{align*}
and
\begin{align*}
\alpha_2^{(3)} &= \frac{1}{\alpha_1^{(2)}-a_1^{(2)}}  =  \frac{1}{a+1 + (2a-1)\psi''-2\psi''^2-a+2} 
\\
 &=  \frac{1}{2a^2-12a+17}(2a^2-8a+8+(2a^2-7a+6)\psi''-(2a-5)\psi''^2)).
\end{align*}
Therefore, 
\begin{multline*}
\bm{\alpha}^{(3)}=\Big(\frac{1}{2a^2-12a+17}(2a^2-9a+10+(a^2-2a-1)\psi''-(a-2)\psi'^2),\\ \frac{1}{2a^2-12a+17}(2a^2-8a+8+(2a^2-7a+6)\psi''-(2a-5)\psi''^2)\Big).
\end{multline*}

\bigskip

\noindent
\textbf{Fourth iteration}
\nopagebreak

We will show that $\bm{a}^{(3)}=(1,1)$. For $\alpha_1^{(3)}$, we can conclude that
\begin{align*}
\alpha_1^{(3)} &= \frac{1}{2a^2-12a+17}(2a^2-9a+10+(a^2-2a-1)\psi''-(a-2)\psi''^2 
\\
& \hspace{1cm}>  \frac{1}{2a^2-12a+17}\left(2a^2-9a+10-(a^2-2a-1)\frac{1}{a-2}-(a-2)\frac{1}{a^2-4a+4}\right)
\\
&\hspace{1.5cm}= \frac{1}{2a^2-12a+17}\left(2a^2-9a+10-a+\frac{1}{a-2}-\frac{a-2}{a^2-4a+4}\right)
\\
 &\hspace{2cm}= 1+ \frac{1}{2a^2-12a+17}\left(2a-7\right)  > 1 
\end{align*}
and
\begin{align*}
\alpha_1^{(3)} &= \frac{1}{2a^2-12a+17}(2a^2-9a+10+(a^2-2a-1)\psi''-(a-2)\psi''^2 )
\\
& \hspace{1cm}<  \frac{1}{2a^2-12a+17}\left(2a^2-9a+10-(a^2-2a-1)\frac{1}{a-1}-(a-2)\frac{1}{a^2-2a+1}\right)
\\
&\hspace{1.5cm}= \frac{1}{2a^2-12a+17}\left(2a^2-9a+10-a+1+\frac{2}{a-1}-\frac{a-2}{a^2-2a+1}\right)
\\
 &\hspace{2cm}= 1+ \frac{1}{2a^2-12a+17}\left(2a-6+\frac{a}{a^2-2a+1}\right)  <  2.
\end{align*}
For $\alpha_2^{(3)}$, we get 
\begin{align*}
\alpha_2^{(3)} &= \frac{1}{2a^2-12a+17}(2a^2-8a+8+(2a^2-7a+6)\psi''-(2a-5)\psi''^2) 
\\
&\hspace{1cm} >  \frac{1}{2a^2-12a+17}\left(2a^2-8a+8-(2a^2-7a+6)\frac{1}{a-2}-(2a-5)\frac{1}{a^2-4a+4}\right)
\\
&\hspace{1.5cm}= \frac{1}{2a^2-12a+17}\left(2a^2-8a+8-2a+3-(2a-5)\frac{1}{a^2-4a+4}\right)
\\
 &\hspace{2cm}= 1+ \frac{1}{2a^2-12a+17}\left(2a-6-(2a-5)\frac{1}{a^2-4a+4} \right)  > 1 
\end{align*}
and
\begin{align*}
\alpha_2^{(3)} &= \frac{1}{2a^2-12a+17}(2a^2-8a+8+(2a^2-7a+6)\psi''-(2a-5)\psi''^2) 
\\
& \hspace{1cm}<  \frac{1}{2a^2-12a+17}\left(2a^2-8a+8-(2a^2-7a+6)\frac{1}{a-1}-(2a-5)\frac{1}{a^2-2a+1}\right)
\\
&\hspace{1.5cm}= \frac{1}{2a^2-12a+17}\left(2a^2-8a+8-2a+5-\frac{1}{a-1}-(2a-5)\frac{1}{a^2-2a+1}\right)
\\
 &\hspace{2cm}= 1+ \frac{1}{2a^2-12a+17}\left(2a-4-\frac{3a-6}{a^2-2a+1}\right)  < 2.
\end{align*}
Hence, $\bm{a}^{(3)}=(1,1)$. Now,  
it is easy to check that
\begin{align*}
\alpha_1^{(4)} &= \frac{\alpha_2^{(3)}-a_2^{(3)}}{\alpha_1^{(3)}-a_1^{(3)}}  =  \frac{ \frac{1}{2a^2-12a+17}(2a^2-8a+8+(2a^2-7a+6)\psi''-(2a-5)\psi''^2)-1}{\frac{1}{2a^2-12a+17}(2a^2-9a+10+(a^2-2a-1)\psi''-(a-2)\psi''^2)-1} 
\\
&= 1- \psi''
\end{align*}
and
\begin{align*}
\alpha_2^{(4)} &= \frac{1}{\alpha_1^{(3)}-a_1^{(3)}} =  \frac{1}{\frac{1}{2a^2-12a+17}(2a^2-9a+10+(a^2-2a-1)\psi''-(a-2)\psi''^2)-1} 
\\
 &=  a-2+a\psi''-\psi''^2.
\end{align*}
Thus, the fourth iteration is $(1- \psi'', a-2+a\psi''-\psi''^2)$.

\bigskip

\noindent
\textbf{Fifth iteration}
\nopagebreak

In this part, we will show that $\bm{a}^{(4)}=(1,a-4)$. From the first iteration, we know that $\lfloor -\psi'' \rfloor = 0$. Thus, it is clear that $a_1^{(4)}=\lfloor 1-\psi'' \rfloor = 1$. Therefore, it is enough to show that $a_2^{(4)}= a-4$. We see that
\begin{multline*}
\alpha_2^{(4)} = a-2+a\psi''-\psi''^2 > a-2-\frac{a}{a-2}-\frac{1}{a^2-4a+4}
\\
= a-2-1-\frac{2}{a-2}-\frac{1}{a^2-4a+4}  =  a-3-\frac{2a-3}{a^2-4a+4} >a-4
\end{multline*}
and
\begin{multline*}
\alpha_2^{(4)} = a-2+a\psi''-\psi''^2 < a-2-\frac{a}{a-1}-\frac{1}{a^2-2a+1}
\\
= a-2-1-\frac{1}{a-1}-\frac{1}{a^2-2a+1}  =  a-3-\frac{a}{a^2-2a+1} <a-3.
\end{multline*}
It is easy to verify that
\begin{eqnarray*}
\alpha_1^{(5)} = \frac{\alpha_2^{(4)}-a_2^{(4)}}{\alpha_1^{(4)}-a_1^{(4)}}  =  \frac{ a-2+a\psi''-\psi''^2-a+4}{1- \psi''-1} 
%\\
= a + (2a-1)\psi''-2\psi''^2
\end{eqnarray*}
and
\begin{eqnarray*}
\alpha_2^{(5)} = \frac{1}{\alpha_1^{(4)}-a_1^{(4)}}  =  \frac{1}{1- \psi''-1}
%\\
 =  a+(a-1)\psi''-\psi''^2.
\end{eqnarray*}
%Here, we used the fact that $(1- \psi'-1)(a+(a-1)\psi'-\psi'^2)=1$.
Therefore, $\bm{\alpha}^{(5)}=(a + (2a-1)\psi''-2\psi''^2, a+(a-1)\psi''-\psi''^2)$.

\bigskip

\noindent
\textbf{Sixth iteration}
\nopagebreak

From the third iteration, we know that $\lfloor a+1 + (2a-1)\psi'-2\psi'^2  \rfloor = a-2$. Thus, $a_1^{(5)}=\lfloor a + (2a-1)\psi''-2\psi''^2 \rfloor = a-3 $. Moreover, from the second iteration, we also know that $a_2^{(5)}=\lfloor a+(a-1)\psi''-\psi''^2 \rfloor = a-2$. Therefore, $\bm{a}^{(5)}=(a-3,a-2)$. 
Now it easy to check that
\begin{align*}
\alpha_1^{(6)} &= \frac{\alpha_2^{(5)}-a_2^{(5)}}{\alpha_1^{(5)}-a_1^{(5)}}  =  \frac{a+(a-1)\psi''-\psi''^2-a+2}{a + (2a-1)\psi''-2\psi''^2-a+3} 
\\
&= \frac{1}{2a^2-12a+17}(2a^2-9a+10+(a^2-2a-1)\psi''-(a-2)\psi''^2)
\end{align*}
and
\begin{align*}
\alpha_2^{(6)} &= \frac{1}{\alpha_1^{(5)}-a_1^{(5)}}  = \frac{1}{a + (2a-1)\psi''-2\psi''^2-a+2} 
\\
 &=  \frac{1}{2a^2-12a+17}(2a^2-8a+8+(2a^2-7a+6)\psi''-(2a-5)\psi''^2).
\end{align*}

Therefore, we have proved that 
\begin{multline*}
\bm{\alpha}^{(6)}= \Big(\frac{1}{2a^2-12a+17}(2a^2-9a+10+(a^2-2a-1)\psi''-(a-2)\psi''^2),\\ \frac{1}{2a^2-12a+17}(2a^2-8a+8+(2a^2-7a+6)\psi''-(2a-5)\psi''^2)\Big).
\end{multline*}
We can see that $\bm{\alpha}^{(6)}=\bm{\alpha}^{(3)}$, which means that the iJPA expansion of the vector $(-\psi'', \psi''^2)$ is periodic with period length 3 and preperiod length 3.
\end{proof}

\section{Indecomposability of (semi)convergents}

In this part, we show several results about semiconvergents and their indecomposability.

\subsection{The simplest cubic fields}

We again start with the simplest cubic fields. In Section \ref{sec:semi}, we did not describe in detail the cases when $-1\leq a\leq 3$. In particular, we have omitted the comparison of semiconvergents and $\mathfrak{s}$-indecomposables. In all these cases, we obtain only $\mathfrak{s}$-indecomposables. However, the JPA expansion is different for these a few values of $a$. Our results are contained in Table~\ref{tab:semi a=-1} for $a=-1$, in Table~\ref{tab:semi a=0} for $a=0$, and in Table~\ref{tab:semi 1<=a<=3} for $1\leq a\leq 3$.   

\begin{table}
\centering
\begin{tabular}{|c|c|c|c|}
\hline
 $\delta_{i,j}^{(k)}$ & Norm $N(\delta_{i,j}^{(k)})$ & Indecomposable integer $\alpha$ & Unit $\varepsilon$ \\
\hline
$\delta_{3,1}^{(0)}$ & $-1$ & $1$ & $-\rho\rho'$ \\
\hline 
$\delta_{3,2}^{(0)}$ & $1$ & $1$ & $\rho$ \\
\hline
$\delta_{3,0}^{(2)}$ & $1$ & $1$ & $\rho^{-1}$ \\
\hline
$\delta_{3,1}^{(2)}$ & $-1$ & $1$ & $-\rho'^{-1}$ \\
\hline
$\delta_{3,2}^{(2)}$ & $1$ & $1$ & $\rho'^{-2}$ \\
\hline
\end{tabular}
\caption{\textsc{The simplest cubic fields}: Semiconvergents $\delta_{i,j}^{(k)}$ of the JPA expansion of $(1,-\rho',\rho'^2)$ for $a=-1$, their norm, the corresponding indecomposable integer $\alpha$ and the unit $\varepsilon$ such that $\delta_{i,j}^{(k)}=\alpha\varepsilon$.} \label{tab:semi a=-1}
\end{table}

\begin{table}
\centering
\begin{tabular}{|c|c|c|c|}
\hline
 $\delta_{i,j}^{(k)}$ & Norm $N(\delta_{i,j}^{(k)})$ & Indecomposable integer $\alpha$ &  Unit $\varepsilon$ \\
\hline
$\delta_{3,1}^{(0)}$ & $-3$ & $\theta'_{0,0}$ & $-\rho^{-1}\rho'^{-1}$  \\
\hline 
$\delta_{2,0}^{(2)}$ & $3$ & $\theta'_{0,0}$ & $\rho^{-2}\rho'^{-2}$ \\
\hline
$\delta_{3,0}^{(2)}$ & $1$ & $1$ & $\rho^{-1}$\\ 
\hline
$\delta_{2,0}^{(3)}$ & $-1$ & $1$ & $-\rho^{-2}\rho'^{-1}$ \\
\hline
$\delta_{3,0}^{(3)}$ & $-1$ & $1$ & $-\rho^{-1}\rho'^{-1}$ \\
\hline
$\delta_{3,1}^{(3)}$ & $1$ & $1$ & $\rho^{-1}\rho'^{-2}$ \\
\hline
$\delta_{2,0}^{(4)}$ & $3$ & $\theta'_{0,0}$ & $\rho^{-3}\rho'^{-4}$\\
\hline
$\delta_{3,0}^{(4)}$ & $1$ & $1$ & $\rho^{-2}\rho'^{-2}$\\ 
\hline
\end{tabular}
\caption{\textsc{The simplest cubic fields}: Semiconvergents $\delta_{i,j}^{(k)}$ of the JPA expansion of $(1,-\rho',\rho'^2)$ for $a=0$, their norm, the corresponding indecomposable integer $\alpha$ and the unit $\varepsilon$ such that $\delta_{i,j}^{(k)}=\alpha\varepsilon$.} \label{tab:semi a=0}
\end{table} 

\begin{table}
\centering
%{\footnotesize
\begin{tabular}{|c|c|c|c|}
\hline
 $\delta_{i,j}^{(k)}$ & Norm $N(\delta_{i,j}^{(k)})$ & Indecomposable $\alpha$ & Unit $\varepsilon$ \\
\hline
$\delta_{2,0}^{(1)}$ & $-(2a+3)$ & $\theta'_{0,0}$ & $-\rho^{-1}\rho'^{-1}$ \\
\hline 
$\delta_{2,1}^{(1)}$ & $-1$ & $1$ & $-\rho^{-1}\rho'$ \\
\hline
$\delta_{3,j}^{(1)}$  &  &  &  \\
$1\leq j\leq a$ & $-j^3+aj^2+(a+3)j+1$ & $\theta'_{a-j+1,0}$ & $\rho^{-2}\rho'^{-2}$\\
\hline
$\delta_{2,0}^{(2)}$ & $2a+3$ &  $\theta'_{0,0}$ & $\rho^{-2}\rho'^{-2}$\\
\hline
$\delta_{3,0}^{(2)}$ & $1$ & $1$ & $\rho^{-1}$\\
\hline
$\delta_{3,j}^{(2)}$  &  &  &  \\
$1\leq j\leq a$ & $-j^3+aj^2+(a+3)j+1$ & $\theta'_{a-j+1,0}$ & $\rho^{-3}\rho'^{-2}$ \\
\hline
$\delta_{3,0}^{(3)}$ & $1$ &  $1$ & $\rho^{-2}$\\
\hline
$\delta_{3,0}^{(4)}$ & $-1$ & $1$ & $-\rho^{-2}\rho'^{-1}$\\
\hline 
$\delta_{2,j}^{(5)}$  & $j^3-(2a-3)j^2$  &  & \\
$0\leq j\leq a-2$ & $+(a^2-5a)j+2a^2-1$ & $\theta'_{a-j-2,a-(a-j-2)}$ & $\rho^{-4}\rho'^{-2}$\\
\hline
$\delta_{3,0}^{(5)}$ & $2a+3$ &  $\theta'_{0,0}$ & $\rho^{-3}\rho'^{-2}$\\
\hline
$\delta_{3,1}^{(5)}$ & $1$ & $1$ & $\rho^{-3}$\\
\hline
$\delta_{2,0}^{(6)}$ & $-1$ & $1$ & $-\rho^{-4}\rho'^{-1}$\\
\hline
$\delta_{3,0}^{(6)}$ & $-1$ & $1$ & $-\rho^{-3}\rho'^{-1}$ \\
\hline
$\delta_{3,1}^{(6)}$ & $1$ & $1$ & $\rho^{-3}\rho'^{-2}$\\
\hline
$\delta_{3,j}^{(6)}$  &  & &  \\
$2\leq j\leq a+1$ & $-j^3+(a+3)j^2-aj-1$ & $\theta'_{a-j+2,0}$ & $\rho^{-5}\rho'^{-4}$\\
\hline
\end{tabular}
%}
\caption{\textsc{The simplest cubic fields}: Semiconvergents $\delta_{i,j}^{(k)}$ of the JPA expansion of $(1,-\rho',\rho'^2)$ for $1\leq a\leq 3$, their norm and the corresponding indecomposable integer $\alpha$ and the unit $\varepsilon$ such that $\delta_{i,j}^{(k)}=\alpha\varepsilon$.} \label{tab:semi 1<=a<=3}
\end{table}

\subsection{Ennola's cubic fields}
Let $\rho''<\rho'<\rho$ be roots of the 
polynomial $x^3+(a-1)x^2-ax-1$ where $a\geq 3$. Then $\mathfrak{s}$-indecomposable integers in $\Z[\rho]$ are associated with elements of the form
\begin{enumerate}
\item $\kappa_w= 1+w \rho + \rho^2$ where $1\leq w \leq a-1 $,
\item $\lambda_{v,w}=-v-(a(v-1)+w)\rho+(a(v-1)+w+1)\rho^2$ where $1\leq v \leq a-1$ and $\max\{1,v-1\}\leq w \leq a-1$,
\item $\mu_u=-1-u\rho+(u+2)\rho^2$ where $0\leq u \leq a-2$.
\end{enumerate}

Moreover, in this part, we will consider roots of the polynomial $x^3-(a-1)x^2-ax-1$ where $a\geq 5$. Recall that we denote these roots as $\psi'<\psi''<\psi$. Then (after multiplication by some particular units and shifting of indeces) the $\mathfrak{s}$-indecomposables above can be rewritten as
 \begin{enumerate}
\item $\Tilde{\kappa}_w=1-w+aw+(1-w+aw)\psi - w \psi^2$ where $1\leq w \leq a-3$,
\item $\Tilde{\lambda}_{v,u}=1+v-u+au+(a-u+au)\psi-(u+1)\psi^2$ where $1\leq v \leq a-3$, $ 0 \leq u \leq v$ and $(v,u)\neq (1,0)$,
\item $\Tilde{\mu}_{z}=z+2+(z+4)\psi+\psi^2$ where $0\leq z \leq a-4$.
\end{enumerate}

Recall that in Section \ref{sec:semi}, we have discussed indecomposability of semiconvergents of JPA expansions of vectors corresponding to each of these roots and show the proof only for two of them. Now, we will also focus on the remaining cases.  

\begin{theorem}
We have
\begin{enumerate}
    \item all semiconvergents of JPA expansions of vectors $(1,-\rho',\rho'^2)$, $(1,-\psi',\psi'^2)$ and $(1,-\psi'',\psi''^2)$ are $\mathfrak{s}$-indecomposable in their signatures,
    \item some semiconvergent of JPA expansions of vectors $(1,\rho,\rho^2)$, $(1,-\rho'',\rho''^2)$ and $(1,\psi,\psi^2)$ is $\mathfrak{s}$-decomposable in its signature.
\end{enumerate}
\end{theorem}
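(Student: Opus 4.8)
The plan is to compare, for each of the six initial vectors, every semiconvergent $\delta_{i,j}^{(k)}=\beta_i^{(k)}-j\beta_1^{(k)}$ of its explicitly known JPA expansion against the complete list of $\mathfrak{s}$-indecomposables. The expansions are recorded in Propositions \ref{prop:JPAennol1}--\ref{prop:JPAennol6}, while Theorem \ref{thm:m1} together with Proposition \ref{prop:indeennola} provides representatives of all $\mathfrak{s}$-indecomposables in $\Z[\rho]$ (up to totally positive units), namely the $\kappa_s$, $\lambda_{v,w}$, and $\mu_u$. Since each expansion is periodic, it suffices to examine the finitely many semiconvergents occurring in one preperiod together with one period; in each iteration $k$ the index $j$ ranges only over $0\le j\le b_i^{(k)}-1$, which is bounded by the explicitly computed floor vectors $\bm{b}^{(k)}$.

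The key reduction I would use throughout is that multiplication by a unit preserves indecomposability: if $\delta=\alpha\varepsilon$ with $\alpha$ an $\mathfrak{s}$-indecomposable and $\varepsilon\in\Z[\rho]^{\times}$, then $\delta$ is $\mathfrak{s}(\delta)$-indecomposable, because any decomposition $\delta=\beta+\gamma$ in the signature of $\delta$ would yield $\alpha=\beta\varepsilon^{-1}+\gamma\varepsilon^{-1}$ in the signature of $\alpha$. Hence for part $(1)$ it is enough, for each of the three vectors $(1,-\rho',\rho'^2)$, $(1,-\psi',\psi'^2)$, $(1,-\psi'',\psi''^2)$, to exhibit for every semiconvergent a unit $\varepsilon$ and one of the listed representatives $\alpha$ with $\delta_{i,j}^{(k)}=\alpha\varepsilon$. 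This is a direct verification: one expands $\alpha\varepsilon$ in the basis $1,\rho,\rho^2$ (respectively $1,\psi,\psi^2$) and matches coefficients against the expression for $\delta_{i,j}^{(k)}$ obtained from the corresponding proposition. The norm column serves as a fast consistency check, since $|N(\delta_{i,j}^{(k)})|=|N(\alpha)|$ whenever such a factorization exists.

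For part $(2)$ the strategy is the norm obstruction already illustrated in Example \ref{ex:decomposable}. For each of the three vectors $(1,\rho,\rho^2)$, $(1,-\rho'',\rho''^2)$, $(1,\psi,\psi^2)$ I would single out one explicit semiconvergent --- for instance the entries marked in Tables \ref{tab:ennola2} and \ref{tab:ennola1} with $j$ slightly exceeding $a$ --- compute its norm as a polynomial in $a$, and show this value is attained by none of the indecomposable norms. The finite list of possible absolute norms of $\mathfrak{s}$-indecomposables is read off from Proposition \ref{prop:smallestnorm}: the norms of $\kappa_s$, $\mu_u$, and $\lambda_{v,w}$ as functions of their indices, together with $1$ for the units. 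Since a genuine $\mathfrak{s}$-indecomposable is a totally positive unit multiple of one of these representatives, its absolute norm equals that of a representative; a semiconvergent whose norm lies outside the list therefore cannot be $\mathfrak{s}$-indecomposable and is automatically $\mathfrak{s}$-decomposable.

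The main obstacle I anticipate is not conceptual but the volume and uniformity of the computation. In part $(1)$ one must carry out the coefficient matching for every coordinate, every admissible $j$, and every iteration of three separate periodic expansions, treating the parity or small-$a$ exceptions (such as the $a=3$ case discussed in connection with Table \ref{tab:ennola1}) separately. In part $(2)$ the delicate point is verifying that the chosen semiconvergent's norm polynomial truly avoids every indecomposable norm polynomial for all $a$ in the relevant range ($a\ge 3$, respectively $a\ge 5$); this amounts to a finite family of polynomial inequalities in $a$, which I would settle by the same border-point analysis used in the proof of Proposition \ref{prop:smallestnorm}, reducing each comparison to the extreme values of the indices.
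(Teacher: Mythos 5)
Your plan for part (1) coincides with what the paper actually does: for each semiconvergent of the three ``good'' expansions it exhibits a representative $\alpha$ from Theorem \ref{thm:m1} and a unit $\varepsilon$ with $\delta_{i,j}^{(k)}=\alpha\varepsilon$ (recorded in Tables \ref{tab:enn_firstpoly_2ndroot}, \ref{tab:enn_2ndpoly_2ndroot}, \ref{tab:enn_2ndpoly_3rdroot}), using periodicity to reduce to finitely many iterations and the observation that unit multiplication preserves indecomposability. That half of the proposal is fine.

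The gap is in part (2). Your argument rests on the norm being a complete obstruction: you pick one semiconvergent per expansion and claim its norm polynomial avoids every indecomposable norm for all $a$ in range. But the norm test is only sufficient for decomposability, not necessary, and for the natural candidates it genuinely fails for infinitely many $a$. Concretely, in the expansion of $(1,\rho,\rho^2)$ the decomposable semiconvergent $\delta_{3,a}^{(1)}=1+a-a\rho$ (one of the two entries marked \xmark\ in Table \ref{tab:ennola2} at level $k=1$) is totally positive with $N(\delta_{3,a}^{(1)})=2a^2+2a+1$, while $N(\kappa_s)=2a^2+2a+1+s\bigl(s^2-(2a-1)s+(a^2-3a-2)\bigr)$; these coincide exactly when $s^2-(2a-1)s+(a^2-3a-2)=0$ has an admissible integer root, i.e.\ whenever $8a+9$ is a perfect square (already at $a=5$, where $N(\delta_{3,5}^{(1)})=61=N(\kappa_1)$, and again at $a=9,14,\dots$). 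For those $a$ the norm argument says nothing, and you must instead verify non-association (e.g.\ that $\delta_{3,a}^{(1)}\kappa_s^{-1}$ is not a totally positive unit) or produce an explicit decomposition into two elements of the same signature. A second, related overstatement is that the comparison with the two-parameter family $|N(\lambda_{v,w})|$ can be settled by the ``border-point analysis'' of Proposition \ref{prop:smallestnorm}: that analysis locates the extremal values of the norm over the index range, which suffices for a minimum but not for showing that a given value is never attained in the interior. The clean repair --- and what the \xmark\ entries of the paper's tables implicitly certify --- is to check directly that the chosen semiconvergent is not associated to any representative on the complete list (or, more simply, to write down an explicit decomposition $\delta=\beta+\gamma$ with $\beta,\gamma$ of the same signature and verify the sign conditions on the conjugates); for a totally positive candidate such as $1+a-a\rho$ this is a short computation because the totally positive indecomposables are only $1$ and the $\kappa_s$ by Proposition \ref{prop:indeennola}.
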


\begin{proof}
The results for $(1,\rho,\rho^2)$ and $(1,-\rho'',\rho''^2)$ are in Tables \ref{tab:ennola2} and \ref{tab:ennola1} in Section \ref{sec:semi}. The semiconvergents of $(1,-\rho',\rho'^2)$ are discuss in Table \ref{tab:enn_firstpoly_2ndroot}, of $(1,\psi,\psi^2)$ in Table \ref{tab:enn_2ndpoly_1stroot}, of $(1,-\psi',\psi'^2)$ in Table \ref{tab:enn_2ndpoly_2ndroot}, and of $(1,-\psi'',\psi''^2)$ in Table \ref{tab:enn_2ndpoly_3rdroot}.      
\end{proof}

\begin{table}[htbp]
\centering
{\fontsize{9.9}{12} \selectfont
\begin{tabular}{|c|c|c|c|}
\hline
 $\delta_{i,j}^{(k)}$ & Norm $N(\delta_{i,j}^{(k)})$ & $\mathfrak{s}$-indecomposable integer $\alpha$ & Unit $\varepsilon$ \\
   \hline
 $\delta_{3,1}^{(1)}$ & $-2a+3$ & $\lambda'_{a-1,a-1}$ & $\rho'^{-2}(\rho'-1)^{-1}$  \\
 \hline
 $\delta_{3,j}^{(1)}$  & &   &  \\
   $2 \leq j \leq a-1$ &  $j^3-aj^2-(a-1)j+1$   & $\lambda'_{1,j-1}$   & $(\rho'-1)^{-1}$   \\
\hline
 $\delta_{2,j}^{(2)}$ & $-j^3+(2a-2)j^2-(a^2-3a+1)j$ &  &  \\
   $0\leq j \leq a-3$   & $-a^2+a+1$   & $\lambda'_{j+1,a-1}$   & $(\rho'-1)^{-1}$   \\
\hline
 $\delta_{3,1}^{(2)}$ & $2a-3$ & $\lambda'_{a-1,a-1}$ & $-\rho'^{-1}(\rho'-1)^{-1}$  \\
 \hline
 $\delta_{3,j}^{(2)}$  &  &  &  \\
   $2\leq j \leq a-1$  & $ - j^3+aj^2+(a-1)j-1$   & $\lambda'_{1,j-1}$   & $-\rho'(\rho'-1)^{-1}$   \\
  \hline
   $\delta_{3,0}^{(4)}$ & $ - 2 a+3$ & $\lambda'_{a-1,a-1}$  & $(\rho'-1)^{-1}$ \\
 \hline
   $\delta_{3,j}^{(5)}$  & $j^3-(2a-2)j^2+(a^2-3a+1)j$  &  &  \\
   $0\leq j \leq a-3$  & $+a^2-a-1$   & $\lambda'_{j+1,a-1}$   & $-\rho'(\rho'-1)^{-1}$   \\
  \hline
 $\delta_{3,0}^{(6)}$ & $-1$  & $1$ & $-\rho'^3$ \\
\hline
 $\delta_{3,0}^{(7)}$ & $1$ & $1$  &  $\rho'^3(\rho'-1)^{-1}$ \\
 \hline
 $\delta_{3,0}^{(8)}$ & $2a-3$ & $\lambda'_{a-1,a-1}$ & $-\rho'(\rho'-1)^{-1}$ \\
 \hline 
  $\delta_{3,1}^{(8)}$ & $-a^2+a+1$ & $\lambda'_{1,a-1}$ & $\rho'^2(\rho'-1)^{-1}$ \\
 \hline  
 $\delta_{3,j}^{(8)}$  & $j^3-j^2-(a^2+a-4)j$ &  &   \\
   $2\leq j \leq a-1$  & $+2a-3$    & $\lambda'_{2,j-1}$   & $\rho'^3(\rho'-1)^{-2}$   \\
 \hline
 $\delta_{2,j}^{(9)}$ & $-j^3+(2a-5)j^2-(a^2-7a+8)j $ &  &  \\
   $0\leq j \leq a-4$   & $-2a^2+6a-3$   & $\lambda'_{j+2,a-1}$   & $\rho'^3(\rho'-1)^{-2}$   \\
 \hline
 $\delta_{3,0}^{(9)}$ & $-1$ & $1$ &  $-\rho'^4(\rho'-1)^{-1}$ \\
 \hline
  $\delta_{3,1}^{(9)}$ & $2a-3$ & $\lambda'_{a-1,a-1}$ &  $-\rho'^2(\rho'-1)^{-2}$ \\
 \hline
 $\delta_{3,j}^{(9)}$  &  &  &  \\
  $2\leq j \leq a-1$  & $-j^3+aj^2+(a-1)j-1$   & $\lambda'_{1,j-1}$   &  $-\rho'^4(\rho'-1)^{-2}$ \\
 \hline
\end{tabular}
}
\caption{\textsc{Ennola's cubic fields}: Semiconvergents $\delta_{i,j}^{(k)}$ of the JPA expansion of $(1,-\rho',\rho'^2)$ where $-1<\rho'<0$ is a root of the polynomial $x^3+(a-1)x^2-ax-1$ where $a \geq 3$, their norm, the corresponding $\mathfrak{s}$-indecomposable integer $\alpha$ and the unit $\varepsilon$ such that $\delta_{i,j}^{(k)}=\alpha\varepsilon$.} \label{tab:enn_firstpoly_2ndroot}
\end{table}

\begin{table}[htbp]
\centering
{\fontsize{9.9}{12} \selectfont
\begin{tabular}{|c|c|c|c|}
\hline
 $\delta_{i,j}^{(k)}$ & Norm $N(\delta_{i,j}^{(k)})$ & $\mathfrak{s}$-indecomposable integer $\alpha$ & Unit $\varepsilon$ \\
  \hline
   $\delta_{2,j}^{(0)}$  &  &  &  \\
  $1\leq j \leq a-3$  & $-j^3+(a-1)j^2+aj+1$   & $\tilde{\kappa}_{j}$     &  $\rho^{-1}$ \\
   \hline
   $\delta_{2,j}^{(0)}$  &  &  &  \\
  $a-2\leq j \leq a-1$  & $-j^3+(a-1)j^2+aj+1$   & \xmark &   \\
 \hline
   $\delta_{3,1}^{(0)}$ & $2a-1$ & $\tilde{\kappa}_1$ & $(\rho-1)^{-1}$  \\  
   \hline
   $\delta_{3,j}^{(0)}$  &  &  &  \\
  $2\leq j \leq a^2-1$  & $-j^3+(a^2+1)j^2-(a^2-2a+2)j+1$   & \xmark     &   \\
 \hline
    $\delta_{2,j}^{(1)}$  & $-j^3+(4a-1)j^2-(5a^2-3a)j$  &  &  \\
  $0\leq j \leq a-2$  &  $+2a^3-2a^2+1$   & \xmark     &   \\
 \hline
 $\delta_{2,a-1}^{(1)}$ & $1$ & $1$ & $\rho-1$  \\
 \hline
 $\delta_{2,a}^{(1)}$ & $1$ & $1$ & $\rho^{-1}(\rho-1)$  \\
 \hline
 $\delta_{2,j}^{(1)}$  & $-j^3+(4a-1)j^2-(5a^2-3a)j$  &  &  \\
  $a+1\leq j \leq 2a-3$  &  $+2a^3-2a^2+1$   & $\tilde{\kappa}_{j-a}$     & $\rho^{-2}(\rho-1)^2$  \\
 \hline
  $\delta_{2,j}^{(1)}$  & $-j^3+(4a-1)j^2-(5a^2-3a)j$  &  &  \\
  $2a-2\leq j \leq 2a-1$  &  $+2a^3-2a^2+1$   & \xmark     &   \\
 \hline
  $\delta_{3,j}^{(1)}$  &   &  &  \\
  $1\leq j \leq a^2+a-1$  &  $-j^3+(a^2+a)j^2+(2a+1)j+1$   & \xmark     &   \\
 \hline
     $\delta_{2,j}^{(2)}$  & $-j^3+(4a+2)j^2-(5a^2+5a+1)j$  &  &  \\
  $0\leq j \leq a-1$  &  $+2a^3+3a^2+a+1$   & \xmark     &   \\
 \hline
 $\delta_{2,a}^{(2)}$ & $1$ & $1$ & $\rho^{-1}(\rho-1)^3$  \\
 \hline
 $\delta_{2,a+1}^{(2)}$ & $1$ & $1$ & $\rho^{-2}(\rho-1)^3$  \\
 \hline
 $\delta_{2,j}^{(2)}$  & $-j^3+(4a+2)j^2-(5a^2+5a+1)j$  &  &  \\
  $a+2\leq j \leq 2a-2$  &  $+2a^3+3a^2+a+1$   & $\tilde{\kappa}_{j-a-1}$     & $\rho^{-3}(\rho-1)^4$  \\
 \hline
 $\delta_{2,j}^{(2)}$  & $-j^3+(4a+2)j^2-(5a^2+5a+1)j$  &  &  \\
  $2a-1\leq j \leq 2a$  &  $+2a^3+3a^2+a+1$   & \xmark     &   \\
 \hline
  $\delta_{3,0}^{(2)}$ & $1$ & $1$ & $\rho^{-1}(\rho-1)^2$  \\
 \hline
  $\delta_{3,j}^{(2)}$  &   &  &  \\
  $1\leq j \leq a^2+a-1$  &  $-j^3+(a^2+a)j^2+(2a+1)j+1$   & \xmark     &   \\
 \hline 
\end{tabular}
}
\caption{\textsc{Ennola's cubic fields}: Semiconvergents $\delta_{i,j}^{(k)}$ of the JPA expansion of $(1,\psi,\psi^2)$ where $a<\psi$ is a root of the polynomial $x^3-(a-1)x^2-ax-1$ where $a \geq 5$, their norm, the corresponding $\mathfrak{s}$-indecomposable integer $\alpha$ and the unit $\varepsilon$ such that $\delta_{i,j}^{(k)}=\alpha\varepsilon$.} \label{tab:enn_2ndpoly_1stroot}
\end{table}

\begin{table}[htbp]
\centering
{\fontsize{9.9}{12} \selectfont
\begin{tabular}{|c|c|c|c|}
\hline
 $\delta_{i,j}^{(k)}$ & Norm $N(\delta_{i,j}^{(k)})$ & $\mathfrak{s}$-indecomposable integer $\alpha$ & Unit $\varepsilon$ \\
 \hline
$\delta_{3,j}^{(3)}$  &   &  &  \\
  $1\leq j \leq a-4$  &  $-j^3+(2a-3)j^2-(a^2-3a+2)j+1$   & $\tilde{\lambda}'_{j,j}$ & $\rho'(\rho'-1)^{-2}$   \\
 \hline 
  $\delta_{3,0}^{(4)}$ & $1 $ & $1 $ & $\rho'(\rho'-1)^{-1}$ \\
\hline
$\delta_{3,0}^{(5)}$ & $-1$ & $1 $ & $-\rho'(\rho'-1)^{-2}$ \\
\hline
$\delta_{3,0}^{(6)}$ & $-2a+7 $ & $\tilde{\lambda}'_{a-3,a-3} $ & $\rho'(\rho'-1)^{-2}$ \\
 \hline
   $\delta_{3,j}^{(6)}$ &  &  &  \\
   $1\leq j \leq a-3$  & $-j^3+j^2+(a^2-3a-2)j-2a+7$   & $\tilde{\lambda}'_{j,1}$   & $-\rho'^2(\rho'-1)^{-2}$   \\
    \hline
 $\delta_{2,j}^{(7)}$  & $j^3-(2a-9)j^2+(a^2-11a+26)j$ & &   \\
   $0\leq j \leq a-6$ & $+2a^2-14a+23$   & $\tilde{\lambda}'_{j+2,j+2}$   & $-\rho'^3(\rho'-1)^{-3}$  \\
 \hline
   $\delta_{3,0}^{(7)}$ & $1 $ & $1 $ & $\rho'^{2}(\rho'-1)^{-2}$ \\
 \hline
  $\delta_{3,1}^{(7)}$ & $-2a+7 $ & $\tilde{\lambda}'_{a-3,a-3} $ & $\rho'^{2}(\rho'-1)^{-3}$ \\
 \hline
   $\delta_{3,j}^{(7)}$  &  &  &   \\
  $2\leq j \leq a-3$ & $j^3-(a-2)j^2-(a-3)j+1$ & $\tilde{\lambda}'_{j,0}$   &  $\rho'^{3}(\rho'-1)^{-2} $ \\
 \hline 
 $\delta_{3,0}^{(8)}$ & $-1 $ & $1 $ & $-\rho'^{3}(\rho'-1)^{-2}$ \\
 \hline
 $\delta_{3,0}^{(9)}$ & $2a-7$ & $\tilde{\lambda}'_{a-3,a-3} $ & $-\rho'^3(\rho'-1)^{-3}$ \\
 \hline 
   $\delta_{3,j}^{(10)}$ & $-j^3+(2a-6)j^2-(a^2-7a+11)j$  &   & \\
   $0\leq j \leq a-5$  & $-a^2+5a-5$   & $\tilde{\lambda}'_{j+1,j+1}$   & $\rho'^4(\rho'-1)^{-3}$   \\
  \hline
 \end{tabular}
}
\caption{\textsc{Ennola's cubic fields}: Semiconvergents $\delta_{i,j}^{(k)}$ of the JPA expansion of $(1,-\psi',\psi'^2)$ where $-1<\psi'<-\frac{1}{2}$ is a root of the polynomial $x^3-(a-1)x^2-ax-1$ where $a \geq 5$, their norm, the corresponding $\mathfrak{s}$-indecomposable integer $\alpha$ and the unit $\varepsilon$ such that $\delta_{i,j}^{(k)}=\alpha\varepsilon$.} \label{tab:enn_2ndpoly_2ndroot}
\end{table}

\begin{table}[htbp]
\centering
{\fontsize{9.9}{12} \selectfont
\begin{tabular}{|c|c|c|c|}
\hline
 $\delta_{i,j}^{(k)}$ & Norm $N(\delta_{i,j}^{(k)})$ & $\mathfrak{s}$-indecomposable integer $\alpha$ & Unit $\varepsilon$ \\
  \hline
   $\delta_{3,1}^{(1)}$ & $1 $ & $1$ & $\rho''(\rho''-1)^{-1}$ \\
 \hline
  $\delta_{3,j}^{(1)}$  &  &  &  \\
  $2\leq j \leq a-3$ &  $j^3-aj^2+(a-1)j+1$  & $\tilde{\lambda}''_{a-j-1,a-j-1} $   &  $(\rho''-1)^{-2}$  \\
 \hline
   $\delta_{2,0}^{(2)}$ & $-a^2+5a-5$ & $\tilde{\lambda}''_{1,1}$ & $(\rho''-1)^{-2}$ \\
 \hline
   $\delta_{2,j}^{(2)}$  & $-j^3+2j^2+(a^2-3a-3)j $ &  &  \\
   $1\leq j \leq a-3$  & $-a^2+5a-5$   & $\tilde{\lambda}''_{a-3,a-2-j}$   & $-\rho''^{-1}(\rho''-1)^{-2}$   \\
   \hline
   $\delta_{3,1}^{(2)}$ & $-1$ & $1$ & $-\rho''(\rho''-1)^{-2}$ \\
 \hline
    $\delta_{3,j}^{(2)}$ &  &  &   \\
    $2\leq j \leq a-3$  & $-j^3+aj^2-(a-1)j-1$   & $\tilde{\lambda}''_{a-j-1,a-j-1}$   & $-(\rho''-1)^{-3}$   \\
 \hline
  $\delta_{2,0}^{(3)}$ & $a^2-5a+5$ & $\tilde{\lambda}''_{1,1}$ & $-(\rho''-1)^{-3}$ \\
 \hline 
   $\delta_{2,0}^{(4)}$ & $2a-1$ & $\tilde{\kappa}''_{1}$ & $\rho''^{-2}(\rho''-1)^{-2}$ \\
 \hline 
   $\delta_{3,j}^{(4)}$ & $j^3-(2a-7)j^2+(a^2-9a+18)j$ &  &  \\
   $0\leq j \leq a-5$   & $+2a^2-12a+17$   & $\tilde{\lambda}''_{a-3-j,0}$   & $-\rho''^{-1}(\rho''-1)^{-2}$   \\
 \hline
  $\delta_{2,j}^{(5)}$  & $-j^3-j^2+(a^2-3a-2)j $ &  &  \\
   $0\leq j \leq a-4$ & $+2a-7$   & $\tilde{\lambda}''_{a-3,a-3-j}$   &   $-\rho''^{-2}(\rho''-1)^{-3}$ \\
 \hline
    $\delta_{3,0}^{(5)}$  & $-1 $  & $1$ & $-\rho''^{-1}(\rho''-1)^{-2}$ \\
 \hline
  $\delta_{3,1}^{(5)}$  & $-1 $  & $1$ & $-(\rho''-1)^{-3}$ \\
 \hline
   $\delta_{3,j}^{(5)}$  &   &   &   \\
   $2\leq j \leq a-3$  & $-j^3+aj^2-(a-1)j-1$   & $\tilde{\lambda}''_{a-j-1,a-j-1}$   & $-\rho''^{-1}(\rho''-1)^{-4}$   \\
 \hline
\end{tabular}
}
\caption{\textsc{Ennola's cubic fields}: Semiconvergents $\delta_{i,j}^{(k)}$ of the JPA expansion of $(1,-\psi'',\psi''^2)$ where $-\frac{1}{2}<\psi''<0$ is a root of the polynomial $x^3-(a-1)x^2-ax-1$ where $a \geq 5$, their norm, the corresponding $\mathfrak{s}$-indecomposable integer $\alpha$ and the unit $\varepsilon$ such that $\delta_{i,j}^{(k)}=\alpha\varepsilon$.} \label{tab:enn_2ndpoly_3rdroot}
\end{table}

\section{Experiments on indecomposability of semiconvergents}

In this section, we show more results on indecomposability of semiconvergents for cubic number fields. In particular, Table \ref{tab:exp2} is a continuation of Table \ref{tab:exp1} from Section \ref{sec:exper}.

\begin{table}
\centering
\begin{tabular}{|c|c|c|c|c|c|c|c|c|}
\hline
$\Delta_K$ & Polynomial & $\rho$ & $\rho'$ & $\rho''$ & Preperiod & Period & Conv. & Semiconv.\\
\hline
\multirow{1}{*}{2177} & $x^3+8x^2+13x-1$ & $0.074$ & $-2.393$ & $-5.68$ & $2$ & $1$ & \xmark & \xmark\\
\hline
\multirow{3}{*}{2233} & $x^3-x^2-8x+1$ & $0.123$ & $3.319$ & $-2.443$ & $5$ & $3$ & \cmark & \xmark\\
\cline{2-9}
& $x^3-8x^2-x+1$ & $0.301$ & $8.108$ & $-0.409$ & $4$ & $2$ & \xmark & \xmark\\
\cline{2-9}
& $x^3+8x^2-x-1$ & $0.409$ & $-0.301$ & $-8.108$ & $4$ & $7$ & \cmark & \xmark\\
\hline
\multirow{1}{*}{2804} & $x^3+x^2-9x+1$ & $0.113$ & $2.474$ & $-3.587$ & $3$ & $3$ & \cmark & \cmark\\
\hline
\multirow{4}{*}{2857} & $x^3+2x^2-9x+1$ & $0.114$ & $2.086$ & $-4.2$ & $3$ & $3$ & \cmark & \cmark\\
\cline{2-9}
& $x^3-9x^2+2x+1$ & $0.479$ & $8.759$ & $-0.238$ & $11$ & $2$ & \cmark & \cmark\\
\cline{2-9}
& $x^3+9x^2+2x-1$ & $0.238$ & $-0.479$ & $-8.759$ & $2$ & $30$ & \xmark & \xmark\\
\cline{2-9}
& $x^3+8x^2+11x-1$ & $0.085$ & $-1.886$ & $-6.2$ & $2$ & $1$ & \xmark & \xmark\\
\hline
\multirow{2}{*}{2993} & $x^3-8x^2+9x+1$ & $1.485$ & $-0.102$ & $6.617$ & $9$ & $12$ & \cmark & \cmark\\
\cline{2-9}
& $x^3+8x^2+9x-1$ & $0.102$ & $-1.485$ & $-6.617$ & $2$ & $1$ & \xmark & \xmark\\
\hline
\multirow{3}{*}{3137} & $x^3-3x^2-8x+1$ & $0.12$ & $-1.788$ & $4.668$ & $5$ & $17$ & \xmark & \xmark\\
\cline{2-9}
& $x^3-8x^2-3x+1$ & $0.214$ & $-0.559$ & $8.345$ & $20$ & $8$ & \xmark & \xmark\\
\cline{2-9}
& $x^3+8x^2-3x-1$ & $0.559$ & $-0.214$ & $-8.345$ & $2$ & $10$ & \cmark & \xmark\\
\hline
\multirow{1}{*}{3281} & $x^3+4x^2-9x+1$ & $0.117$ & $1.513$ & $-5.63$ & $3$ & $11$ & \xmark & \xmark\\
\hline
\multirow{2}{*}{3569} & $x^3-9x^2-2x+1$ & $0.243$ & $-0.448$ & $9.205$ & $7$ & $5$ & \xmark & \xmark\\
\cline{2-9}
& $x^3+9x^2-2x-1$ & $0.448$ & $-0.243$ & $-9.205$ & $6$ & $6$ & \xmark & \xmark\\
\hline
\multirow{3}{*}{3604} & $x^3+5x^2-9x+1$ & $1.306$ & $0.119$ & $-6.425$ & $2$ & $21$ & \cmark & \cmark\\
\cline{2-9}
&$x^3+5x^2-9x+1$ & $0.119$ & $1.306$ & $-6.425$ & $9$ & $30$ & \xmark & \xmark\\
\cline{2-9}
&$x^3+9x^2+5x-1$ & $0.156$ & $-0.766$ & $-8.39$ & $2$ & $13$ & \cmark & \cmark\\
\hline
\multirow{1}{*}{3892} & $x^3+9x^2+17x-1$ & $0.057$ & $-2.799$ & $-6.258$ & $2$ & $1$ & \xmark & \xmark\\
\hline
\multirow{2}{*}{3969} & $x^3+6x^2-9x+1$ & $1.138$ & $0.121$ & $-7.259$ & $2$ & $7$ & \cmark & \cmark\\
\cline{2-9}
& $x^3+6x^2-9x+1$ & $0.121$ & $1.138$ & $-7.259$ & $3$ & $7$ & \cmark & \cmark\\
\hline
\multirow{1}{*}{4065} & $x^3+10x^2+23x-1$ & $0.043$ & $-3.685$ & $-6.357$ & $2$ & $1$ & \xmark & \xmark\\
\hline
\multirow{1}{*}{5297} & $x^3-11x+1$ & $0.091$ & $3.27$ & $-3.361$ & $3$ & $3$ & \cmark & \cmark\\
\hline
\multirow{2}{*}{5369} & $x^3-9x^2+10x+1$ & $1.411$ & $-0.092$ & $7.681$ & $4$ & $4$ & \xmark & \xmark\\
\cline{2-9}
& $x^3+9x^2+10x-1$ & $0.092$ & $-1.411$ & $-7.681$ & $2$ & $1$ & \xmark & \xmark\\
\hline
\multirow{4}{*}{5684} & $x^3+3x^2-11x+1$ & $0.093$ & $2.073$ & $-5.167$ & $3$ & $3$ & \cmark & \cmark\\
\cline{2-9}
& $x^3+9x^2+13x-1$ & $0.073$ & $-1.907$ & $-7.167$ & $2$ & $1$ & \xmark & \xmark\\
\cline{2-9}
& $x^3-11x^2+3x+1$ & $0.482$ & $-0.194$ & $10.711$ & $11$ & $2$ & \cmark & \cmark\\
\cline{2-9}
& $x^3+11x^2+3x-1$ & $0.194$ & $-0.482$ & $-10.711$ & $2$ & $30$ & \xmark & \xmark\\
\hline
\multirow{2}{*}{5697} & $x^3+9x^2+12x-1$ & $0.079$ & $-1.73$ & $-7.348$ & $2$ & $1$ & \xmark & \xmark\\
\cline{2-9}
& $x^3+12x^2-9x+1$ & $0.136$ & $0.578$ & $-12.714$ & $3$ & $8$ & \xmark & \xmark\\
\hline
\multirow{1}{*}{6185} & $x^3+4x^2-11x+1$ & $0.094$ & $1.801$ & $-5.895$ & $3$ & $3$ & \cmark & \cmark\\
\hline
\multirow{2}{*}{6241} & $x^3+7x^2-10x+1$ & $1.122$ & $0.108$ & $-8.23$ & $2$ & $7$ & \cmark & \cmark\\
\cline{2-9}
& $x^3+7x^2-10x+1$ & $0.108$ & $1.122$ & $-8.23$ & $3$ & $7$ & \cmark & \cmark\\
\hline
\end{tabular}
\caption{Indecomposability of (semi)convergents of JPA expansions of vectors of the form $(1,\rho,\rho^2)$ where $\rho$ is such that $\O_K=\Z[\rho]$; $K$ is an example of a totally real number field with the discriminant $2177\leq\Delta_K\leq 6241$ and units of all signatures} \label{tab:exp2}
\end{table}


\begin{thebibliography}{MMM} 
	

\bibitem[Ad]{Ad} B. Adam, \textit{Voronoi-algorithm expansion of two families with period length going to infinity}, Math. Comp. \textbf{64} (1995), 1687--1704.


\bibitem[Be1]{Be1} L. Bernstein, \textit{Periodical Continued Fractions for Irrationals of Degree n by Jacobi’s Algorithm}, J. Reine Angew. Math. \textbf{213} (1964), 31--38.

\bibitem[Be2]{Be2} L. Bernstein, \textit{New Infinite Classes of Periodic Jacobi--Perron Algorithms}, Pacific J. Math. \textbf{16} (1965), 1--31.

\bibitem[Be3]{Be3} L. Bernstein, \textit{Representation of $(D^n-d)^{1/n}$ as a Periodic Continued Fraction by Jacobi's Algorithm}, Math. Nachr. \textbf{29} (1965), 179--200.

\bibitem[Be4]{Bebook} L. Bernstein, \textit{The Jacobi--Perron algorithm – Its theory and application}, Lecture Notes in Mathematics 207, Springer-Verlag, Berlin, New York, 1971.

\bibitem[BeH]{BHunit} L. Bernstein, H. Hasse, \textit{Einheitenberechnung mittels des Jacobi--Perronschen Algorithmus}, J. Reine Angew. Math. \textbf{218} (1965), 51--69.  


\bibitem[BK]{BK} V. Blomer, V. Kala, \textit{Number fields without $n$-ary universal quadratic forms}, Math. Proc. Cambridge Philos. Soc. \textbf{159}  (2015), 239--252. 


\bibitem[Bru]{Brun} V. Brun, \emph{En generalisation av kjedebroken}, Skr. Vid. Selsk. Kristiana, Mat. Nat. K.1. \textbf{6} (1919), 1--29.


\bibitem[Br]{Bru} H. Brunotte, \textit{Zur Zerlegung totalpositiver Zahlen in Ordnungen totalreeller algebraischer Zahlk\"orper}, Arch. Math. (Basel) \textbf{41} (1983), 502--503.


\bibitem[CEP]{CEP} J. W. S. Cassels, W. J. Ellison, A. Pfister, \textit{On sums of squares and on elliptic curves over function fields}, J. Number Theory \textbf{3} (1971), 125--149.

\bibitem[CL+]{CLSTZ} M. \v{C}ech, D. Lachman, J. Svoboda, M. Tinkov\'a, K. Zemkov\'a, \textit{Universal quadratic forms and indecomposables over biquadratic fields}, Math. Nachr. \textbf{292} (2019), 540--555.

\bibitem[CDLR]{CDLR} M. D. Choi, Z. D. Dai, T. Y. Lam, B. Reznick, \textit{The Pythagoras number of some affine algebras and local algebras}, J. Reine Angew. Math. \textbf{336} (1982), 45--82.

\bibitem[CF]{CF} S. Chowla, J. Friedlander, \textit{Class numbers and quadratic residues}, Glasgow Math. J. \textbf{17} (1976), 47--52.

\bibitem[CS]{CS} T. Cusick, L. Schoenfeld, \textit{A Table of Fundamental Pairs of Units in Totally Real Cubic Fields}, Math. Comp. \textbf{48} (1987),  147--158.

\bibitem[DF+]{DF+} K. Dasaratha, L. Flapan, T. Garrity, C. Lee, C. Mihaila, N. Neumann-Chun, S. Peluse, M. Stoffregen, \textit{Cubic irrationals and periodicity via a family of multi-dimensional continued fraction algorithms}, Monatsh. Math. \textbf{174} (2014), 549--566.
  
\bibitem[DS]{DS} A. Dress, R. Scharlau, \textit{Indecomposable totally positive numbers in real quadratic orders}, J. Number Theory \textbf{14} (1982), 292--306.

\bibitem[DPLR]{DPpern} E. Dubois, R. Paysant-Le Roux, \textit{Une application des nombres de Pisot à l’algorithme de Jacobi--Perron}, Monatsh. Math. \textbf{98} (1984), 145--155.

\bibitem[En1]{En} V. Ennola, \textit{Cubic number fields with exceptional units}, Computational Number Theory (A.Peth\" o et al., eds.), de Gruyter, Berlin, 1991, 103--128.

\bibitem[En2]{En1} V. Ennola, \textit{Fundamental units in a family of cubic fields}, 	J. Théor. Nombres Bordeaux \textbf{16} (2004), 569--575. 

\bibitem[Ga]{Ga} T. Garrity, \textit{On Periodic Sequences for Algebraic Numbers}, J. Number Theory \textbf{88} (2001), 86--103.

\bibitem[GMT]{GMT} D. Gil Muñoz, M. Tinkov\'a, \textit{Additive structure of non-monogenic simplest cubic fields}, Ramanujan J., to appear. \href{https://arxiv.org/abs/2212.00364}{arxiv:2212.00364}


\bibitem[HH]{HH} Z. He, Y. Hu, \textit{Pythagoras number of quartic orders containing $\sqrt{2}$}, 10 pp.
\href{https://arxiv.org/abs/2204.10468}{arxiv:2204.10468}


\bibitem[Ho]{Ho} D. W. Hoffmann, \textit{Pythagoras numbers of fields}, J. Amer. Math. Soc. \textbf{12} (1999), 839--848.


\bibitem[HKK]{HKK} J. S. Hsia, Y. Kitaoka, M. Kneser, \emph{Representations of positive definite
	quadratic forms}, J. Reine Angew. Math. \textbf{301} (1978), 132--141.


\bibitem[Ja]{Jac} C. G. J. Jacobi, \textit{Allgemeine Theorie der kettenbruch\"ahnlichen Algorithmen, in welche jede Zahl aus drei vorhergehenden gebildet wird}, J. Reine Angew. Math. \textbf{69} (1868), 29--64.	

\bibitem[JK]{JK} S. W. Jang, B. M. Kim, \textit{A refinement of the Dress-Scharlau theorem}, J. Number Theory \textbf{158} (2016), 234--243.

\bibitem[Ka]{Ka} V. Kala, \textit{Universal quadratic forms and elements of small norm in real quadratic fields}, Bull. Aust. Math. Soc. \textbf{94}  (2016), 7--14.

\bibitem[Ka2]{Ka2} V. Kala, \textit{Norms of indecomposable integers in real quadratic fields}, J. Number Theory \textbf{166} (2016), 193--207.

\bibitem[Ka3]{Ka3} V. Kala, \textit{Number fields without universal quadratic forms of small rank exist in most degrees}, Math. Proc. Cambridge Philos. Soc. \textbf{174} (2023), 225--231.

\bibitem[Ka4]{Ka survey} V. Kala, \textit{Universal quadratic forms and indecomposables in number fields: A survey}, Commun. Math. \textbf{31} (2023), Special issue: Euclidean lattices: theory and applications, 81--114.
 

\bibitem[KST]{KST} V. Kala, E. Sgallov\'a, M. Tinkov\'a, \textit{Arithmetic of cubic number fields: Jacobi--Perron, Pythagoras, and indecomposables}, arxiv version, 61 pp. \href{https://arxiv.org/abs/2303.00485}{arxiv:2303.00485} 

\bibitem[KS]{KS} V. Kala, J. Svoboda, \emph{Universal quadratic forms over multiquadratic fields}, Ramanujan J. \textbf{48} (2019), 151--157.

\bibitem[KT]{KT} V. Kala, M. Tinková, \textit{Universal quadratic forms, small norms and traces in families of number fields}, Int. Math. Res. Not. IMRN (2023), 7541--7577. 

\bibitem[KY]{KY} V. Kala, P. Yatsyna, \emph{Lifting problem for universal quadratic forms},  Adv. Math. \textbf{377} (2021), 107497, 24 pp.

\bibitem[KY2]{KY2} V. Kala, P. Yatsyna, \emph{On Kitaoka’s conjecture and lifting problem for universal quadratic forms}, Bull. Lond. Math. Soc. \textbf{55} (2023), 854--864.


\bibitem[Kar]{Kar} O. Karpenkov, \textit{On Hermite's problem, Jacobi--Perron type algorithms, and Dirichlet groups},  Acta Arith. \textbf{203} (2022), 27--48.

\bibitem[Kr]{Kr} J. Kr\'asensk\'y, \textit{A cubic ring of integers with the smallest Pythagoras number}, Arch. Math. (Basel) \textbf{118} (2022), 39--48.

\bibitem[KRS]{KRS} J. Kr\'asensk\'y, M. Ra\v{s}ka, E. Sgallov\'a, \textit{Pythagoras numbers of orders in biquadratic fields}, Expo. Math. \textbf{40} (2022), 1181--1228. 

\bibitem[KTZ]{KTZ} J. Kr\'asensk\'y, M. Tinkov\'a, K. Zemkov\'a, \textit{There are no universal ternary quadratic forms over biquadratic fields}, Proc. Edinb. Math. Soc. \textbf{63} (2020), 861--912. 


\bibitem[LP]{LP} F. Lemmermeyer, A. Peth\"{o}, \textit{Simplest Cubic Fields}, Manuscripta Math. \textbf{88} (1995), 53--58.


\bibitem[LR]{LR} C. Levesque, G. Rhin, \textit{Two Families of Periodic Jacobi Algorithms with Period Lengths Going to Infinity}, J. Number Theory \textbf{37} (1991), 173--180.

\bibitem[Ma]{Ma} S. H. Man, \textit{Minimal rank of universal lattices and number of indecomposable elements in real multiquadratic fields}, Adv. Math. \textbf{447} (2024), 109694, 38 pp.

\bibitem[MVV]{MVV} Z. Mas\'akov\'a, T. V\'avra, F. Veneziano, \textit{Finiteness and periodicity of continued fractions over quadratic number fields}, Bull. Soc. Math. Fr. \textbf{150} (2022), 77--109.

\bibitem[Na]{N} W. Narkiewicz, \emph{Elementary and analytic theory of algebraic numbers}, 3rd Edition, Springer-Verlag, Berlin, 2004.

\bibitem[Ne]{Ne} J. Neukirch, \textit{Algebraic Number Theory}, Grundlehren der mathematischen Wissenschaften \textbf{322}, Springer, 1999.

\bibitem[Pe1]{Pe2} O. Perron, \textit{Grundlagen f\"ur eine Theorie des Jacobischen Kettenbruchalgorithmus}, Math. Ann. \textbf{64} (1907), 1--76. 

\bibitem[Pe2]{Pe} O. Perron, \textit{Die Lehre von den Kettenbr\" uchen}, B. G. Teubner, 1913.

\bibitem[Pet]{Pet} M. Peters, \textit{Quadratische Formen \"uber Zahlringen},  Acta Arith. \textbf{24} (1973), 157--164.

\bibitem[Pf]{Pf} A. Pfister,\textit{Quadratic forms with applications to algebraic geometry and topology}, London Math. Soc. Lect. Notes \textbf{217}, Cambridge University Press, 1995.

\bibitem[Pr]{Pr} A. Prestel, \textit{Remarks on the Pythagoras and Hasse number of real fields}, J. Reine Angew. Math. \textbf{303/304} (1978), 284--294.

\bibitem[RSK]{RSK} H. Řada, Š. Starosta, V. Kala, \textit{Periodicity of general multidimensional continued fractions using repetend matrix form}, Expo. Math. \textbf{42} (2024), 125571, 36 pp.

\bibitem[Sch]{Sc2} R. Scharlau, \emph{On the Pythagoras number of orders in totally real number fields}, J. Reine Angew. Math. \textbf{316} (1980), 208--210.
	
\bibitem[Schw]{Sch} F. Schweiger, \textit{Multidimensional continued fractions}, Oxford University Press, Oxford, 2000.

\bibitem[Sg]{Sg} E. Sgallov\'a, \textit{Periodicity of Jacobi--Perron algorithm}, Master's thesis, Charles University, 2021.

\bibitem[Sh]{Sh} D. Shanks, \textit{The simplest cubic number fields}, Math. Comp. \textbf{28}  (1974), 1137--1152.


\bibitem[Si]{Si3} C. L. Siegel, \emph{Sums of $m$-th powers of algebraic integers},  Ann. of Math. \textbf{46} (1945),  313--339.

\bibitem[TB]{TB} M. Taljaoui, M. Bouhamza, \textit{Periodic Jacobi--Perron Algorithms in Cubic Fields and Fundamental Units}, P-Adic Num. Ultrametr. Anal. Appl. \textbf{11} (2019), 248--254.

\bibitem[TY]{TY} J.-I. Tamura, S.-I. Yasutomi, \emph{A New Multidimensional Continued Fraction Algorithm}, Math. Comp. \textbf{78} (2009), 2209--2222.

\bibitem[Th]{Th} E. Thomas, \textit{Fundamental units for orders in certain cubic number fields}, J. Reine Angew. Math. \textbf{310} (1979), 33--55.

\bibitem[ThV]{ThV} E. Thomas, A. T. Vasquez, \textit{ On the resolution of cusp singularities and the Shintani decomposition in totally real cubic number fields}, Math. Ann. \textbf{247} (1980), 1--20.

\bibitem[Ti1]{Ti1} M. Tinková, \textit{On the Pythagoras number of the simplest cubic fields}, Acta Arith. \textbf{208} (2023), 325--354.

\bibitem[Ti2]{Ti2} M. Tinkov\'a, \textit{Trace and norm of indecomposable integers in cubic orders}, Ramanujan J. \textbf{61} (2023), 1121--1144.

\bibitem[Ti3]{Ti3} M. Tinkov\'a, \textit{Arithmetics of number fields and generalized continued fractions}, Ph.D. thesis, Charles University, 2021.

\bibitem[TV]{TV} M. Tinkov\'a, P. Voutier, \textit{Indecomposable integers in real quadratic fields}, J. Number Theory \textbf{212} (2020), 458--482.

\bibitem[Vo]{Vo} P. Voutier, \textit{Families of periodic Jacobi--Perron algorithms for all period lengths}, J. Number Theory \textbf{168} (2016), 472--486.

\bibitem[Ya]{Ya} P. Yatsyna, \emph{A lower bound for the rank of a universal
	quadratic form with integer coefficients in a totally real field}, Comment. Math. Helvet.  \textbf{94} (2019), 221--239.

\end{thebibliography}
\end{document}